\newcommand{\bc}{\begin{center}}
\newcommand{\ec}{\end{center}}
\newcommand{\bq}{\begin{quote}}
\newcommand{\eq}{\end{quote}}
\newcommand{\bqtn}{\begin{quotation}}
\newcommand{\eqtn}{\end{quotation}}
\newcommand{\beq}{\begin{equation}}
\newcommand{\eeq}{\end{equation}}
\newcommand{\bearr}{\begin{eqnarray}}
\newcommand{\eearr}{\end{eqnarray}}
\newcommand{\bearrn}{\begin{eqnarray*}}
\newcommand{\eearrn}{\end{eqnarray*}}
\newcommand{\bi}{\begin{itemize}}
\newcommand{\ei}{\end{itemize}}
\newcommand{\be}{\begin{enumerate}}
\newcommand{\ee}{\end{enumerate}}
\newcommand{\bthe}{\begin{theorem}}
\newcommand{\ethe}{\end{theorem}}
\newcommand{\blem}{\begin{lemme}}
\newcommand{\elem}{\end{lemme}}
\newcommand{\bsolu}{\begin{solution}}
\newcommand{\esolu}{\end{solution}}
\newcommand{\bexer}{\begin{exercise}}
\newcommand{\eexer}{\end{exercise}}
\newcommand{\ca}{\alpha}
\newcommand{\cb}{\beta}
\newcommand{\cP}{\phi}
\newcommand{\cQ}{\chi}
\newcommand{\cR}{\psi}
\newcommand{\ba}{\begin{array}}
\newcommand{\ea}{\end{array}}
\newtheorem{theoreme}{Theorem}[section]
\newtheorem{theorem}[theoreme]{Theorem}
\newtheorem{lemme}[theoreme]{Lemma}
\newtheorem{lemma}[theoreme]{Lemma}
\newtheorem{proposition}[theoreme]{Proposition}
\newtheorem{definition}[theoreme]{Definition}
\newtheorem{corollaire}[theoreme]{Corollary}
\newtheorem{solution}[theoreme]{Solution}
\newtheorem{exercise}[theoreme]{Exercise}
\newcommand{\bdefi}{\begin{definition}}
\newcommand{\edefi}{\end{definition}}
\newcommand{\brk}{\begin{remarque}}
\newcommand{\erk}{\end{remarque}}
\newcommand{\bpp}{\begin{proposition}}
\newcommand{\epp}{\end{proposition}}
\newcommand{\bpf}{\begin{proof}}
\newcommand{\epf}{\end{proof}}
\newcommand{\bcor}{\begin{corollaire}}
\newcommand{\ecor}{\end{corollaire}}
\newcommand{\bsol}{\begin{solution}}
\newcommand{\esol}{\end{solution}}
\theoremstyle{definition}
\newtheorem{remarque}[theoreme]{Remark}
\title{A complete classification of cubic function fields over any finite field}
\author{Sophie Marques and Kenneth Ward}
\begin{document}
\large
\selectlanguage{english}
%\bc
\maketitle
\begin{abstract}
We classify all cubic function fields over any finite field, particularly developing a complete Galois theory which includes those cases when the constant field is missing certain roots of unity. In doing so, we find criteria which allow one to easily read ramification and splitting data from the generating equation, in analogy to the known theory for Artin-Schreier and Kummer extensions. We also describe explicit irreducibility criteria, integral bases, and Galois actions in terms of canonical generating equations.
\end{abstract}

\noindent \quad {\footnotesize MSC Code (primary): 11T22}

\noindent \quad {\footnotesize MSC Codes (secondary):  11T55, 11R32, 14G15, 11R58}

\noindent \quad {\footnotesize Keywords: Cyclotomy, cubic, function field, finite field, Galois}	

\tableofcontents

\section*{Introduction}
Let $p > 0$ be a prime integer, $\mathbb{F}_q$ a finite field with $q=p^n$ elements, and $K$ a function field with field of constants $\mathbb{F}_q$. Let $L/K$ be a Galois cubic extension. In general, if the field characteristic is equal to 3, then Artin-Schreier theory is used to describe the extension $L/K$, which is given by an equation $y^3 - y = a$, for $a \in K$, whereas if the characteristic is not equal to 3 and the constant field contains a primitive third root of unity, then Galois theory is well understood via Kummer theory, which gives a generating equation $y^3 = a$, for $a \in K$ (see for example \cite{Sti,Vil}). The situation is more delicate when the constant field does not contain a primitive third root of unity. Our goal in this paper is to investigate \emph{all} Galois cubic extensions of function fields $K$ over finite fields in any characteristic, and in doing so, we are able to give a canonical Galois theory for cubic function fields over finite fields when Artin-Schreier and Kummer theory cannot be used. 

\begin{comment}
To this end, we first prove (Corollary 1.4 and Theorem 1.5) that any cubic extension has a primitive element whose minimal polynomial is either of the form
\begin{enumerate} 
\item $P(X)= X^3 -a$, for some $a\in K$, 
\item or $X^3 +X^2 +b$, for some $b \in K$ that is equivalent to the existence of a generator whose minimal polynomial is of the form
\begin{enumerate}
\item $P(X)= X^3-3X-a$, for some $a\in K$, in characteristic different from $3$, and
\item $P(X) = X^3 -aX -a^2$, for some $a\in K$, in characteristic $3$. 
\end{enumerate}
\end{enumerate} 
\end{comment}
We refer to extensions with generating equation $X^3 =a$ as \emph{purely cubic}. If the characteristic is different from $3$, we give a simple criterion which determines whether or not a given extension of function fields is purely cubic. If such an extension $L/K$ is not purely cubic, then we show that it has a generation of the form $L = K(y)$, with $y$ such that $$y^3 -3y-a=0\quad\quad\quad\quad\quad (a \in K).$$ We will study this form in detail, as it is central to Galois theory when Artin-Schreier and Kummer theory are not useful. We show that it is possible, and in fact practical, to read the ramification and splitting directly from this form. In particular, this may be done in terms of the factorisation of $a$. This gives a version of \emph{standard form}, which is well-known for Kummer and Artin-Schreier extensions (and we will henceforth refer to our form as standard due to this analogy). We are also able to completely describe the following using our standard form in conjunction with Artin-Schreier and Kummer theory:

\begin{itemize} \item Irreducibility criteria for degree 3 polynomials; \item Galois cubic extensions of function fields in any characteristic; \item Galois actions and their connections to splitting; and\item Algorithms for providing integral bases.\end{itemize}

We emphasise that the advantage of this approach is the ability to obtain all of the above information concisely using our standard form. When joined with classical Artin-Schreier and Kummer theory, the results which follow therefore provide a complete study of Galois structure for cubic function fields over finite fields.

For ease of reading, we have divided this paper in the following way. Section 1 is devoted to reducing general minimal polynomials of cubic extensions $L/K$ to standard forms. In this way, we recover Artin-Schreier and Kummer theory. For completeness, Section 2 examines the structure of the Galois closure of $L/K$ when the extension is not necessarily Galois. Section 3 reviews the Artin-Schreier and its ramification theory, and Section 4 does just the same for Kummer extensions. Section 5 addresses the matter of cubic extensions of function fields over finite fields when Artin-Schreier and Kummer theory cannot be used, which we note is the crux and \emph{raison d'\^{e}tre} of this paper. Owing to the length of the arguments, we relegate a portion of the full proofs of Section 5 to the Appendix.

\section*{Acknowledgements}

The authors thank Ben Blum Smith for valuable comments and discussions on Theorem \ref{qneq1mod3Galois}. \\ Kenneth Ward thanks the CAS Mellon Fund at American University for its generous support.

\section{Minimal polynomials for generators of cubic extensions}
In this section, we prove that any cubic extension has a primitive element whose minimal polynomial is either of the form
\begin{enumerate} 
\item $T(X)= X^3 -a$, for some $a\in K$ or
\item $T(X) = X^3 +X^2 +b$ for some $b \in K$, and that this is equivalent to the existence of a generator whose minimal polynomial is of the form
\begin{enumerate}
\item $T(X)= X^3-3X-a$, for some $a\in K$, in characteristic different from $3$.
\item $T(X) = X^3 +aX +a^2$, for some $a\in K$, in characteristic $3$. 
\end{enumerate}
\end{enumerate} 
We note that the first case yields a purely inseparable extension in characteristic $3$. 
\begin{definition} 
A cubic extension $L/K$ is called \emph{purely cubic} if there exists a primitive element $y$ for $L/K$ such that the minimal polynomial of $y$ over $K$ is of the form $y^3 =a$, with $a \in K$.
\end{definition}
 If the characteristic is different from $3$, we will give a simple criterion (Corollary 1.4) which determines whether or not a given extension of function fields is purely cubic. The following Lemma proves that one can find a generator with no square term. This is well known and originally due to Tartaglia, as in his contribution to the \emph{Ars Magna}, ``On the cube and first power equal to the number" \cite[Chapter XV, p. 114]{Cardano}.  (For a more modern reference, see also \cite[p. 112]{modernreference}.)
\begin{lemme}\label{linear}
Suppose that $p\neq 3$, and let $L/K$ be a separable extension of degree $3$. Then there exists an explicit primitive element $y$ with characteristic polynomial of the form $T(X)= X^3 + a X + b$, with $a,b \in K$. 
\end{lemme}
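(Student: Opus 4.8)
The plan is to use the classical ``depressing the cubic'' substitution, which is available precisely because $3$ is invertible in $K$ under the hypothesis $p \neq 3$. First, since $L/K$ is separable of degree $3$, the primitive element theorem furnishes some $z$ with $L = K(z)$; its minimal polynomial over $K$ is monic of degree $[K(z):K] = 3$, say $f(X) = X^3 + c_2 X^2 + c_1 X + c_0$ with $c_0, c_1, c_2 \in K$. This is also the characteristic polynomial of multiplication by $z$ on $L$, since its degree equals $\dim_K L$.

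Next, I would set $y := z + \tfrac{c_2}{3}$, which makes sense as $p \neq 3$. Since $z = y - \tfrac{c_2}{3} \in K(y)$ and $y \in K(z)$, we get $K(y) = K(z) = L$, so $y$ is again a primitive element. Its minimal (hence characteristic) polynomial is $T(X) = f\!\left(X - \tfrac{c_2}{3}\right)$; expanding, the coefficient of $X^2$ becomes $-3\cdot\tfrac{c_2}{3} + c_2 = 0$, so $T(X) = X^3 + aX + b$ with the explicit values $a = c_1 - \tfrac{c_2^2}{3}$ and $b = c_0 - \tfrac{c_1 c_2}{3} + \tfrac{2 c_2^3}{27}$ in $K$. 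One may also record that $T$ is separable, its roots being the translates by $\tfrac{c_2}{3}$ of the distinct roots of $f$, so $y$ is a genuine separable generator.

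There is essentially no obstacle here: the only place the hypothesis $p \neq 3$ enters is the division by $3$ needed to eliminate the quadratic term, and everything else is the routine polynomial expansion above — which is exactly why the result goes back to Tartaglia. The single point worth stating carefully in the write-up is that the substitution preserves the property of being a primitive element (so that $T$ remains the characteristic polynomial and not merely a factor of it), which follows immediately from $K(y) = K(z)$.
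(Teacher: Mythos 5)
Your proof is correct and is exactly the classical Tartaglia substitution $y = z + \tfrac{c_2}{3}$ that the paper invokes (the paper cites the \emph{Ars Magna} and omits the proof rather than writing it out). The computation of $a = c_1 - \tfrac{c_2^2}{3}$ and $b = c_0 - \tfrac{c_1 c_2}{3} + \tfrac{2c_2^3}{27}$ checks out, and you correctly note the only use of $p \neq 3$ and the preservation of primitivity.
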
 
The discriminant of the polynomial $T(X)= X^3 + a X + b$ of Lemma \ref{linear} is equal to $-4a^3 - 27 b^2$. We will use this form of the discriminant later. In the following Theorem, we show that one may find a generator of the form announced at the outset of this section. This will be crucial in the sequel; one major goal of this analysis is to use the generator of a cubic extension to study ramification, reducibility, and integral basis construction.
\begin{theoreme} \label{quadratic}
Let $p\neq 3$. Let $L/K$ be a separable extension of degree $3$ with generating equation $$T(y) = y^3 + e y^2 + fy + g = 0,$$ where $e,f,g \in K$. 
\begin{enumerate}[(a)] \item Suppose that $3eg \neq f^2$. Then there exists a primitive element $z$ of $L/K$ with characteristic polynomial of the form $T(X)= X^3 + X^2 + b$, with $b \in K$. \item Suppose that $3eg = f^2$. Then there exists a primitive element $z$ of $L/K$ with characteristic polynomial of the form $T(X)=X^3 - b$, with $b \in K$. \end{enumerate} Furthermore, in each case, this primitive element is explicitly determined.
\end{theoreme}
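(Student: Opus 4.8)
The plan is to reduce the problem to completing the cube in a depressed cubic, but---crucially---to depress the \emph{reciprocal} of $T$ rather than $T$ itself, because it is the reciprocal whose depressed linear coefficient detects the condition $3eg=f^2$. (Throughout, a primitive element of the degree-$3$ extension $L/K$ has characteristic polynomial equal to its minimal polynomial, so it suffices to exhibit minimal polynomials.)

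First I would record two normalizations. Since $T$ is the minimal polynomial of an element generating a degree-$3$ extension it is irreducible; hence $g\neq 0$ (otherwise $X\mid T(X)$), so $y\neq 0$ and $w:=1/y$ is again a primitive element of $L/K$. Dividing $T(y)=0$ by $y^3$ yields $gw^3+fw^2+ew+1=0$, i.e. $w^3+\tfrac{f}{g}w^2+\tfrac{e}{g}w+\tfrac1g=0$, and since $p\neq 3$ I may complete the cube: $u:=w+\tfrac{f}{3g}$ is a primitive element (it differs from $w$ by a constant, so $K(u)=K(w)=L$) satisfying $u^3+Au+B=0$ with
$$A=\frac{3eg-f^2}{3g^2},\qquad B=\frac{27g^2-9efg+2f^3}{27g^3}.$$
Moreover $B\neq 0$ unconditionally: if $B=0$ then $X^3+AX=X(X^2+A)$, being a monic cubic of which $u$ is a root, would be the minimal polynomial of $u$, contradicting irreducibility; in particular $u\neq 0$.

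Now I split on the hypothesis. If $3eg=f^2$ then $A=0$, so $u^3=-B$ with $B\neq 0$, and the characteristic polynomial of the primitive element $u=\tfrac1y+\tfrac{f}{3g}$ is $X^3-b$ with $b=-B=\tfrac{9efg-2f^3-27g^2}{27g^3}$; this is case (b). If $3eg\neq f^2$ then $A\neq 0$, and I set $v:=\tfrac{B}{A}\cdot\tfrac1u$, which is again a primitive element since $K(v)=K(1/u)=K(u)=L$. Dividing $u^3+Au+B=0$ by $u^3$ and substituting $\tfrac1u=\tfrac{A}{B}v$ gives $1+\tfrac{A^3}{B^2}v^2+\tfrac{A^3}{B^2}v^3=0$, hence, multiplying by $B^2/A^3$,
$$v^3+v^2+\frac{B^2}{A^3}=0,$$
so the characteristic polynomial of $v$ is $X^3+X^2+b$ with $b=B^2/A^3$; this is case (a). Unwinding the substitutions yields the explicit data: $v=\dfrac{(27g^2-9efg+2f^3)\,y}{3(3eg-f^2)(3g+fy)}$ and $b=\dfrac{(27g^2-9efg+2f^3)^2}{27(3eg-f^2)^3}$.

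The whole content is the first observation---that passing to $1/y$ before depressing makes the linear coefficient proportional to $3eg-f^2$---after which the argument is a finite chain of substitutions, each legitimate because $p\neq 3$, $g\neq 0$, $B\neq 0$, and (in case (a)) $A\neq 0$. The only real labor in the write-up is bookkeeping: confirming the formulas for $A$, $B$ and the resulting $b$ and $v$, and checking at each step that the element produced still generates $L/K$, which is automatic since $[L:K]=3$ is prime and none of the substitutions lands in $K$. One could instead start from a depressed cubic furnished by Lemma~\ref{linear}, but since the dichotomy is stated in terms of the original coefficients $e,f,g$ it is more direct to depress the reciprocal of $T$ as above.
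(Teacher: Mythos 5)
Your proof is correct and is essentially the paper's argument in a different factorization: your composite substitution $y\mapsto 1/y\mapsto u\mapsto 1/u\mapsto v$ is exactly the paper's single rational transformation $y\mapsto y'=\frac{3gy}{fy+3g}$ followed by the scaling $z=\frac{\gamma}{\alpha}y'$ (one checks $\frac{B}{A}=\frac{\gamma}{\alpha}$ and $1/u=y'$), and your final $b=B^2/A^3$ coincides with the paper's $\frac{(27g^2-9efg+2f^3)^2}{27(3eg-f^2)^3}$. Your nonvanishing check $B\neq 0$ via irreducibility of the depressed cubic plays the same role as the paper's evaluation $\gamma=(-f/3g)^3S(-3g/f)\neq 0$, and in case (b) your generator is simply the reciprocal of theirs.
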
 
\begin{proof} 
\begin{enumerate}[(a)] \item 
Let $y$ a generating element of $L/K$ then there exist $e,f, g \in K$, $g \neq 0$ such that the minimal polynomial of $y$ is of the form:
$$S(X) = X^3 + e X^2 + fX + g$$ 
This may be seen by performing the rational transformation $$y \rightarrow y'=\frac{y}{\frac{f}{3g} y +1},$$
as $g\neq 0$ by irreducibility of $S(X)$. It follows that $y=\frac{y'}{1 - \frac{fy'}{3g}}$, which yields $$\left(\frac{y'}{1 - \frac{fy'}{3g}}\right)^3 + e \left(\frac{y'}{1 - \frac{fy'}{3g}}\right)^2 + f\left(\frac{y'}{1 - \frac{fy'}{3g}}\right) + g = 0.$$ Multiplication by $(1 - \frac{fy'}{3g})^3$ then yields \begin{align}\label{gamma}\notag 0 &= y'^3 + ey'^2 \left({1 - \frac{fy'}{3g}}\right) + f y' \left({1 - \frac{fy'}{3g}}\right)^2 + g \left({1 - \frac{fy'}{3g}}\right)^3 \\&= \left(1 -\frac{ef}{3g}+ \frac{2f^3}{27g^2}\right) y'^3 + \left(e-\frac{f^2}{3g}\right) y'^2 + g.\end{align} Let $\gamma =1 -\frac{ef}{3g}+ \frac{2f^3}{27g^2}$. If $f=0$, then $\gamma=1\neq 0$, and otherwise,
\begin{align*} \gamma &=1 + e\left(-\frac{f}{3g}\right) + f\left(-\frac{f}{3g}\right)^2 + g \left(-\frac{f}{3g}\right)^3 \\ &=\left(-\frac{f}{3g}\right)^3\left(\left(-\frac{3g}{f}\right)^3 + e\left(-\frac{3g}{f}\right)^2 + f\left(-\frac{3g}{f}\right) + g \right)\\&= \left(-\frac{f}{3g}\right)^3S\left(-\frac{3g}{f}\right). \end{align*} As the polynomial $S(X)= X^3 + eX^2+fX + g$ is irreducible over $K$, it follows that $-\frac{3g}{f}$ cannot be a root of $S$, whence $$\gamma =\left(-\frac{f}{3g}\right)^3S\left(-\frac{3g}{f}\right) \neq 0.$$ Therefore, in any case, $\gamma \neq 0$. We let $\alpha = e-\frac{f^2}{3g}$. Dividing \eqref{gamma} by $\gamma$, we obtain \begin{equation}\label{gammazero} y'^3 + \frac{\alpha}{\gamma} y'^2 + \frac{g}{\gamma} =0.\end{equation} By assumption, $3eg \neq f^2$, so that $\alpha \neq 0$. We let $z= \frac{\gamma}{\alpha}y'$. Hence, with 
$$b=\frac{\gamma^2 g }{\alpha^3} =  \frac{ \left( 1 -\frac{ef}{3g}+ \frac{2f^3}{27g^2}\right)^2 g }{\left(  e-\frac{f^2}{3g}\right)^3} =\frac{ ( 27 g^2 -9efg +2f^3)^2}{27(3ge-f^2)^3} $$ and this choice of $z$, we obtain $z^3 + z^2 + b = 0$, as desired.
\item This follows as in the previous case by \eqref{gammazero} and $b = -\frac{g}{\gamma}$.
\end{enumerate}
\end{proof}
We note that the discriminant of the polynomial $T(X)$ of Lemma \ref{quadratic}(a) is equal to $-4b - 27 b^2$.
\begin{corollaire} \label{linearthree}
Let $p\neq 3$. Let $L/K$ be a separable extension of degree $3$ with generating equation $$T(y) = y^3 + e y^2 + fy + g = 0,$$ where $e,f,g \in K$.
  \begin{enumerate}[(a)] \item Suppose that $3eg \neq f^2$. Then there exists a primitive element $z$ for $L/K$ with characteristic polynomial of the form $T(X)= X^3 -3 X - b$, with $b \in K$. 
\item Suppose that $3eg = f^2$. Then there exists a primitive element $z$ for $L/K$ with characteristic polynomial of the form $T(X)=X^3 - b$, with $b \in K$. \end{enumerate}
In each case, this primitive element is explicitly determined.
\end{corollaire}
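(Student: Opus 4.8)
The plan is to deduce this corollary directly from Theorem~\ref{quadratic} by a single affine change of variable, using only that $3$ is invertible in $K$ (which holds since $p \neq 3$). Part (b) requires no work at all: under the hypothesis $3eg = f^2$, Theorem~\ref{quadratic}(b) already produces an explicit primitive element $z$ of $L/K$ whose characteristic polynomial has the form $X^3 - b$.

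For part (a), Theorem~\ref{quadratic}(a) supplies an explicit primitive element $w$ of $L/K$ satisfying $w^3 + w^2 + b_0 = 0$, where $b_0 = \dfrac{(27g^2 - 9efg + 2f^3)^2}{27(3ge - f^2)^3}$. I would then set $z = 3w + 1$. Since $3 \in K^{\times}$, the map $w \mapsto 3w + 1$ is a $K$-affine automorphism of $L$, so $z$ is again a primitive element of $L/K$, and it remains only to compute its characteristic polynomial. Expanding,
\[
z^3 - 3z = (3w+1)^3 - 3(3w+1) = 27w^3 + 27w^2 + 9w + 1 - 9w - 3 = 27(w^3 + w^2) - 2 = -27 b_0 - 2 ,
\]
so that $z$ satisfies $z^3 - 3z - b = 0$ with $b = -27 b_0 - 2 = -\dfrac{(27g^2 - 9efg + 2f^3)^2}{(3ge - f^2)^3} - 2$. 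Since this polynomial has degree $3$ and $[L:K] = 3$, it is the characteristic (and minimal) polynomial of $z$, and it is explicitly determined by $e,f,g$. The particular substitution $z = 3w+1$ is essentially forced: the shift $w \mapsto w - \tfrac13$ removes the square term from $X^3 + X^2 + b_0$, producing linear coefficient $-\tfrac13$, and the dilation by $3$ then rescales this to $-3$ (the alternative scaling by $-3$ works equally well).

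I do not expect any genuine obstacle here; the two points to handle carefully are (i) confirming that the affine change of variable preserves the property of being a primitive element, which is immediate since it is invertible over $K$, and (ii) the bookkeeping of the explicit constant $b$ in terms of $e,f,g$, which is a direct substitution of the formula from Theorem~\ref{quadratic}. As a sanity check one can verify that the discriminant of $X^3 - 3X - b$ is $-4(-3)^3 - 27b^2 = 108 - 27b^2$, which should agree (up to the square of the change-of-variable scaling factor, here $3^6 = 729$) with the discriminant $-4b_0 - 27b_0^2$ of $X^3 + X^2 + b_0$ recorded after Theorem~\ref{quadratic}.
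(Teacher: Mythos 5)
Your proof is correct and follows essentially the same route as the paper: the paper first shifts by $-\tfrac13$ to kill the square term and then rescales by $3$, which is exactly your single affine substitution $z = 3w+1$, and your resulting constant $b = -27b_0 - 2$ agrees with the paper's $b = -2 - 27a$. Part (b) is likewise handled in the paper as an immediate consequence of Theorem~\ref{quadratic}(b), just as you do.
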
\label{characteristic}
\begin{proof} \begin{enumerate}[(a)] \item By Lemma \ref{quadratic}, if $3eg \neq f^2$, then there exists an explicitly determined primitive element $z$ for $L/K$ with characteristic polynomial of the form $P(X)= X^3 + X^2 + a$, with $a \in K$. We then perform the linear change of variable $z= z' - \frac{1}{3}$ and \begin{align*}0&= \left(z' - \frac{1}{3}\right)^3 + \left(z'  - \frac{1}{3}\right)^2 +a\\ &= z'^3 - z'^2 +\frac{1}{3}z'-\frac{1}{27} + z'^2 -\frac{2}{3}z' +\frac{1}{9}+a\\ &=  z'^3-\frac{1}{3}z'+\frac{2}{27}+a.\end{align*}
Letting $z'= \frac{z''}{3}$, we therefore obtain
 $$0=3^3\left[\left(\frac{z''}{3}\right)^3-\frac{1}{3}\frac{z''}{3}  +\frac{2}{27}+a\right] = z''^3 -3 z'' +2+ 27 a.$$ Letting $$b= -2 -27a=-2-\frac{ ( 27 g^2 -9efg +2f^3)^2}{(3ge-f^2)^3} ,$$ it follows that $z''$ is the desired primitive element. 
 \item This case follows immediately from Lemma \ref{quadratic}(b).
\end{enumerate}
\end{proof}
We note that the discriminant of the polynomial $P(X)$ of Corollary \ref{characteristic}(a) is equal to $-27(-4 + b^2)$. If the characteristic is equal to $3$, we find two forms for the minimal polynomial of a generator of a cubic extension: 
\begin{theoreme} 
Suppose the characteristic of $k$ is $3$. Let $L/K$ be a extension of degree $3$. 
%with generating equation $$T(y) = y^3 + e y^2 + fy + g = 0,$$ where $e,f,g \in K$. 
Then either:
\begin{enumerate}
\item $L/K$ is separable, there is a primitive element $z$ such that its minimal polynomial is equal to $T(X)= X^3 + X^2 +b$, and this primitive element is explicitly determined; or 
\item $L/K$ is purely inseparable and there is a primitive element $z$ such that its minimal polynomial is equal to $T(X)= X^3  +b$.
\end{enumerate} 
\end{theoreme}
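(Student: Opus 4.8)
The plan is to split according to whether $L/K$ is separable. Since $[L:K]=3$ is prime, the extension has no proper intermediate field, so it is either separable or purely inseparable; I will show the first case yields alternative (1) and the second yields alternative (2), exhibiting the generator $z$ explicitly each time.

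\emph{Purely inseparable case.} Take any $z\in L\setminus K$. Then $L=K(z)$ by the absence of intermediate fields, and since $z$ is purely inseparable of degree $3$ over $K$ its minimal polynomial must be $X^{3}-z^{3}$ with $z^{3}\in K$ (in characteristic $3$ the sign of the constant term is immaterial). This is alternative (2), and in fact every generator of $L/K$ already has this shape; consistently, such a polynomial has vanishing derivative, which is exactly why this case is inseparable.

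\emph{Separable case.} Fix any primitive element $y$ with minimal polynomial $S(X)=X^{3}+eX^{2}+fX+g$. First note $g=-N_{L/K}(y)\neq 0$ since $y\neq 0$, and $(e,f)\neq(0,0)$, for otherwise $S(X)=X^{3}+g$ has zero derivative and $L/K$ would not be separable. The key structural remark — and the reason characteristic $3$ is excluded from Lemma \ref{linear} and Corollary \ref{linearthree} — is that the substitution $y\mapsto y-e/3$ removing a quadratic term is unavailable, so one cannot pass to a form $X^{3}+aX+b$; the target must instead be $X^{3}+X^{2}+b$, which retains its $X^{2}$ term. Now the minimal polynomial of $1/y$ (legitimate since $g\neq 0$, whence $y\neq 0$) is $X^{3}+(f/g)X^{2}+(e/g)X+1/g$; since $(e,f)\neq(0,0)$, at least one of $y$ and $1/y$ has a minimal polynomial with nonzero $X^{2}$-coefficient, so after possibly replacing $y$ by $1/y$ I may assume $e\neq 0$. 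Then $y/e$ has minimal polynomial $X^{3}+X^{2}+(f/e^{2})X+g/e^{3}$, and translating by $c=f/e^{2}$ gives $X^{3}+X^{2}+b$: indeed $(X+c)^{3}=X^{3}+c^{3}$ in characteristic $3$, so the $X^{2}$-coefficient is unchanged, while the new $X$-coefficient is $2c+f/e^{2}=3c=0$, and $b=c^{3}+c^{2}+(f/e^{2})c+g/e^{3}$. The resulting primitive element is $z=(ey-f)/e^{2}$ (or the corresponding expression in $1/y$); it generates $L$ since it is the image of $y$ under an invertible $K$-rational transformation, so $[K(z):K]=3$ and the displayed monic cubic is indeed its minimal polynomial.

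The main obstacle is conceptual rather than computational: one must notice at the outset that characteristic $3$ blocks the completion of the cube, commit to the normal form $X^{3}+X^{2}+b$, and then observe that between a generator and its reciprocal one can always arrange a nonzero quadratic coefficient, after which a single scaling and a single translation finish the argument. The remaining identities are short and mechanical, and the only side point to check is that the exhibited elements genuinely generate $L/K$, which is immediate.
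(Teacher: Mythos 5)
Your proof is correct and follows essentially the same route as the paper: in the separable case you use the reciprocal $1/y$ to force a nonzero quadratic coefficient, then scale and translate (exploiting $(X+c)^3=X^3+c^3$ in characteristic $3$) to reach $X^3+X^2+b$, arriving at exactly the same constant $b=f^3/e^6+2f^2/e^4+g/e^3$ as the paper; the only cosmetic difference is that you scale before translating, whereas the paper translates first. Your explicit treatment of the purely inseparable case is a welcome addition, since the paper leaves it implicit.
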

\begin{proof} 
Let $y$ a generating element of $L/K$ then there exist $e,f, g \in K$, $g \neq 0$ such that the minimal polynomial of $y$ is of the form:
$$S(X) = X^3 + e X^2 + fX + g$$ Clearly, $L/K$ is separable if, and only if, not both $e$ and $f$ are zero. 

Suppose then that $L/K$ is separable.
%either $e$ or $f$ is not zero. 
If $e=0$, then as $T(y)$ is irreducible, $g \neq 0$. Let $y'= 1/y$. Then
$$y'^3 + \frac{f}{g} y'^2 +\frac{1}{g}=0.$$ 
If $L/K$ separable then $f\neq 0$, and taking $z=  \frac{g}{f}y'$, we obtain $$z^3 +  z^2 +\frac{g^2}{f^3}=0.$$ 
If, on the other hand, $e\neq 0$, then let $y' = y -\frac{f}{e}$. As the field characteristic is equal to 3, then
\begin{align*} 0&=\left(y' +\frac{f}{e}\right)^3 + e \left(y' +\frac{f}{e}\right)^2 + f\left(y' +\frac{f}{e}\right) +g \\ 
&=y'^3 +\frac{f^3}{e^3}+ e y'^2 +2fy'+ \frac{f^2}{e}+ fy' + \frac{f^2}{e} +g \\ 
&=y'^3 + e y'^2 + \frac{2f^2}{e} +g+\frac{f^3}{e^3}. 
\end{align*}
With $z= \frac{1}{e} y'$, we therefore obtain $$0 = z^3 + z^2 + \frac{2f^2}{e^4} +\frac{g}{e^3}+\frac{f^3}{e^6}.$$
\end{proof} 
\begin{corollaire} \label{char3generator}
Let $p= 3$. Let $L/K$ be a separable extension of degree $3$. Then there is a primitive element $z$ such that its minimal polynomial is of the form $T(X)= X^3 + bX +b^2$. Furthermore, this primitive element is explicitly determined.
\end{corollaire}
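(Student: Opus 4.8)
The plan is to read this off the preceding theorem with one substitution. That theorem already provides, for a separable cubic $L/K$ in characteristic $3$, an \emph{explicit} primitive element $z$ whose minimal polynomial has the shape $X^3 + X^2 + c$ for some $c \in K$ (I rename the theorem's constant to $c$ to avoid a clash with the $b$ in the present statement). First I would record that $c \neq 0$: otherwise $X^3 + X^2 + c = X^2(X+1)$ would be reducible, contradicting $[L:K]=3$. In particular $z \neq 0$, since $z^3 + z^2 + c = 0$ would force $c = 0$ if $z = 0$, so the element $c/z$ makes sense.

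Next I would pass to the desired generator by an \emph{inversion}, rather than the translation $z \mapsto z - \tfrac{1}{3}$ used in Corollary \ref{linearthree} — the latter being unavailable in characteristic $3$. Concretely, set $w = c/z$. Since $z = c/w$ we have $K(w) = K(z) = L$, so $w$ is again a primitive element. Substituting $z = c/w$ into $z^3 + z^2 + c = 0$ gives $c^3/w^3 + c^2/w^2 + c = 0$; multiplying by $w^3$ and dividing by the nonzero element $c$ yields
$$w^3 + c\,w + c^2 = 0.$$
Since $w$ generates the degree-$3$ extension $L/K$, its minimal polynomial has degree $3$, and this monic cubic vanishing at $w$ must therefore be it. Thus, taking $b := c$, the primitive element $w$ has minimal polynomial $X^3 + bX + b^2$, exactly the claimed form.

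Finally, for the explicitness assertion I would just note that $z$ and $c$ are explicitly given by the preceding theorem in terms of the coefficients $e,f,g$ of any chosen generating polynomial of $L/K$ (with the value of $c$ depending on whether $e = 0$), hence so are $w = c/z$ and $b = c$. There is essentially no obstacle here: the only point worth flagging is the choice of transformation — scaling the naive inversion $1/z$ by precisely $c$ is what makes the new constant term equal the square of the new linear coefficient — after which a one-line computation closes the argument.
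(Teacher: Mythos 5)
Your proposal is correct and is essentially the paper's own proof: the paper also starts from $z^3+z^2+c=0$, inverts to $1/z$, and then scales by $c$, which is exactly your single substitution $w=c/z$. The extra remarks on $c\neq 0$ and on the cubic being the minimal polynomial are fine but add nothing beyond the paper's argument.
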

\begin{proof} 
From the previous Theorem, there is a primitive element $y'$ such that  $$y'^3 +y'^2 + b=0.$$
Let $y''=1/y'$; then $$by''^3 +y'' +1=0.$$
Finally, letting $z=by''$, we obtain $$z^3 +b z +b^2=0.$$
\end{proof} 
\begin{comment}
If $L/K$ is a separable extension of degree $3$ with generating equation $$T(y) = y^3 + e y^2 + fy + g = 0,$$ where $e,f,g \in K$, then let $$P_1(X)=X^3 - 3X-b,\quad P_2(X)=X^3 -b, \quad\text{and} \quad P_3(X) = X^3 + bX + b^2\quad\quad (b \in K)$$ be the forms for a primitive element $z$ of $L/K$, at least one of which is guaranteed to exist by parts (a) and (b) of Corollary \ref{linearthree} and Corollary \ref{char3generator}. Thus, if $b \in \mathbb{F}_q$ for any of these forms, then by definition of the constant field, it follows that $L/K$ is constant, and that $L=\mathbb{F}_{q^3}K$. 
 {\color{red} This paragraph is confusing to me I put a part in comment as not clear to me}
% Otherwise $L/K$ is given according to one of the forms $P_i(X)$, where $b \in K\backslash\mathbb{F}_q$. In particular, if $K=\mathbb{F}_q(x)$, then $b$ is a non-constant rational function of $x$ with coefficients in $\mathbb{F}_q$. We establish more specific criteria for $b$ later when $L/K$ is Galois.
\end{comment}
\section{Galois closure of cubic extensions}
We begin this Section with a well-known result on the Galois group of a cubic extension (see for instance \cite[Theorems 1.1, 2.1, 2.6]{Con}): 
\begin{lemme}\label{disc} 
Let $p\neq 2$. Suppose that the cubic extension $L/K$ is separable. Then $L/K$ is Galois if, and only if, the discriminant is a square in $K$. Furthermore, in this case, $\text{Gal}(L/K) = A_3$.
\end{lemme}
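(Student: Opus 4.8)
The plan is to leverage the normalized generator from Corollary \ref{characteristic}(a), which is available precisely because the hypothesis $p \neq 2$ also forces $p \neq 3$ when the extension is separable and non-purely-inseparable; so one reduces immediately to a generator $z$ with minimal polynomial $P(X) = X^3 - 3X - b$, whose discriminant was computed above to be $-27(b^2-4) = 27(4-b^2)$. (If the extension happens to be purely cubic, $T(X) = X^3 - b$ with discriminant $-27b^2$, and the same argument below applies with $\sqrt{-27b^2} \in K \iff \sqrt{-3} \in K$.)

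The core argument is the standard Galois-theoretic one. First I would recall that for a separable cubic, the Galois group of the splitting field is a transitive subgroup of $S_3$, hence either $A_3 = C_3$ or all of $S_3$; and $L/K$ itself is Galois if and only if $L$ equals its own splitting field, i.e. if and only if $[\text{splitting field}:K] = 3$, i.e. if and only if the Galois group is $A_3$. So it suffices to show: the Galois group lies in $A_3$ $\iff$ the discriminant $\Delta$ is a square in $K$. For this, fix roots $z_1, z_2, z_3$ of $P$ in an algebraic closure and set $\delta = (z_1 - z_2)(z_1 - z_3)(z_2 - z_3)$, so that $\delta^2 = \Delta$. Any $\sigma \in \Gal(\text{split}/K)$ permutes the $z_i$, and one checks $\sigma(\delta) = \operatorname{sgn}(\sigma)\,\delta$ (here $p \neq 2$ is exactly what makes $+\delta \neq -\delta$, so that $\operatorname{sgn}$ is detected). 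Hence $\delta$ is fixed by the whole Galois group $\iff$ every $\sigma$ is even $\iff$ the group is contained in $A_3$; and $\delta$ fixed by the whole group is equivalent (by the fundamental theorem of Galois theory, the extension being separable) to $\delta \in K$, i.e. $\Delta$ a square in $K$. Combining, $L/K$ Galois $\iff \Gal \subseteq A_3 \iff \Delta \in (K^\times)^2 \cup\{0\}$, and since $L/K$ is separable $\Delta \neq 0$, so $\Delta$ is a nonzero square; and in the Galois case $\Gal(L/K) = A_3$ by the dichotomy above.

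The one genuinely delicate point — and the reason the hypothesis $p \neq 2$ is stated — is the claim $\sigma(\delta) = \operatorname{sgn}(\sigma)\delta$ together with the equivalence ``$\delta$ fixed by all of $\Gal$'' $\iff$ ``$\delta \in K$''. In characteristic $2$ one would have $\delta = -\delta$ and this criterion collapses (one needs the Berlekamp discriminant / Artin--Schreier analogue instead), so I would make explicit that $2 \neq 0$ in $K$ is used to separate $\delta$ from $-\delta$. A secondary, purely bookkeeping obstacle is handling the branch where the separable cubic is of purely cubic type $X^3 - b$ versus the $X^3 - 3X - b$ type: I would simply note the discriminant formulas for both forms and observe that being a square is an intrinsic property of the extension (any two discriminants of generating polynomials of $L/K$ differ by a square in $K^\times$), so it does not matter which generator one picks. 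With those two points addressed, the rest is the textbook computation and an invocation of the fundamental theorem of Galois theory.
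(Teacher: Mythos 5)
The paper does not prove this lemma at all: it is quoted as a known result with a citation to Conrad's notes, so there is no internal argument to compare against. Your core argument is the standard one and is correct: pass to the splitting field $M$, note that $\mathrm{Gal}(M/K)$ is a transitive subgroup of $S_3$ and hence $A_3$ or $S_3$, observe that $L/K$ is Galois precisely when $L=M$, i.e.\ when the group is $A_3$, and detect this via $\delta=\prod_{i<j}(z_i-z_j)$ with $\sigma(\delta)=\operatorname{sgn}(\sigma)\delta$, using $p\neq 2$ to distinguish $\delta$ from $-\delta$ and separability to guarantee $\delta\neq 0$. Your remark that the square class of the discriminant is independent of the chosen generator is also the right way to make ``the discriminant'' well defined in the statement.

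One assertion in your setup is false and should be deleted: $p\neq 2$ does \emph{not} force $p\neq 3$ for a separable cubic --- Artin--Schreier extensions in characteristic $3$ are separable cubics, and the paper in fact applies this very lemma in characteristic $3$ (in the proof of Theorem \ref{AS}, where Galois $\Rightarrow$ the discriminant $-4b^3$ of $X^3+bX+b^2$ is a square). Consequently the reduction to the normal form $X^3-3X-b$ of Corollary \ref{characteristic}(a) is unavailable in characteristic $3$. This does not sink the proof, because your $\delta$-argument never uses the normal form: it applies verbatim to any generator of any separable cubic. So the fix is simply to drop the opening reduction (or restrict it to $p\neq 2,3$ and note it is cosmetic) and run the sign argument on an arbitrary generator, which is what you in effect do anyway.
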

We denote the minimal polynomial of a primitive element $y$ of $L/K$ as $$T(X)=X^3 + e X^2 + f X + g,\quad\quad e,f,g\in K.$$ As $T(X)$ is irreducible, it is of course necessary that $g \neq 0$. As $T(X)$ has degree $3$ in $X$, it follows that $T(X)$ is irreducible over $K$ if, and only if, it possesses no root in $K$. For ease of notation, we denote by $L^{G_K}/K$ the Galois closure of $L/K$. We have $\text{Gal}(L^{G_K}|K) \unlhd S_3$, as the Galois group $G_K$ permutes the roots of the minimal polynomial of a primitive element of $L/K$.
The following two results are general and hold for any characteristic; these give the explicit construction of the Galois closure (Ibid.).
\begin{lemma} \label{resolventl}
Let $L/K$ be a separable extension of degree $3$ with generating equation $$y^3 + e y^2 + fy + g = 0,$$ where $e,f,g \in K$ with roots $\alpha , \beta , \gamma$ in the splitting field $L^s$ of $L$. The two elements of $L^s$ $$\alpha^2 \beta + \beta^2 \gamma + \gamma^2 \alpha \quad\text{and}\quad  \beta^2 \alpha + \alpha^2 \gamma + \gamma^2 \beta$$ are roots of the polynomial $$R(X)= X^2 + (ef -3 g) X + ( e^3 g+ f^3 +9g^2 - 6 efg),$$ which has the same discriminant as $$T(X)=X^3 + e X^2 + fX + g.$$ The quadratic function $R(X)$ is called the \emph{quadratic resolvent} of $T(X)$.
\end{lemma}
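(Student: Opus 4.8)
The plan is to exploit the symmetric-function structure of the two quantities. Write
$u=\alpha^2\beta+\beta^2\gamma+\gamma^2\alpha$ and $v=\alpha\beta^2+\beta\gamma^2+\gamma\alpha^2$, so that $u$ and $v$ are by definition the roots of $X^2-(u+v)X+uv$; it then suffices to identify $u+v$ and $uv$ with the coefficients appearing in $R(X)$. Throughout I would use $\alpha+\beta+\gamma=-e$, $\alpha\beta+\beta\gamma+\gamma\alpha=f$, $\alpha\beta\gamma=-g$.

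\emph{Step 1 (the linear coefficient).} The sum $u+v$ is symmetric, and expanding $(\alpha+\beta+\gamma)(\alpha\beta+\beta\gamma+\gamma\alpha)$ gives $(u+v)+3\alpha\beta\gamma$. Hence $u+v=(-e)(f)-3(-g)=-ef+3g$, so the linear coefficient of $R$ is $-(u+v)=ef-3g$.

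\emph{Step 2 (the difference, and the discriminant).} Grouping terms, $u-v=\alpha\beta(\alpha-\beta)+\beta\gamma(\beta-\gamma)+\gamma\alpha(\gamma-\alpha)$, and a short direct factorisation — or the observation that this is an alternating polynomial of degree $3$, hence a scalar multiple of the Vandermonde, the scalar being read off from one coefficient — shows $u-v=-(\alpha-\beta)(\beta-\gamma)(\gamma-\alpha)$. Therefore $(u-v)^2=\bigl((\alpha-\beta)(\beta-\gamma)(\gamma-\alpha)\bigr)^2$ is exactly the discriminant $D_T$ of $T$.

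\emph{Step 3 (the constant coefficient and the discriminant claim).} Now $4uv=(u+v)^2-(u-v)^2=(ef-3g)^2-D_T$, and inserting the standard formula $D_T=e^2f^2-4f^3-4e^3g-27g^2+18efg$ and simplifying yields $uv=e^3g+f^3+9g^2-6efg$, which is the constant term of $R$. Finally $D_R=(u+v)^2-4uv=(u-v)^2=D_T$, so $R$ and $T$ have the same discriminant with no further work. There is no conceptual obstacle here; the only things to carry out with care are the alternating-polynomial identity of Step 2 and the bookkeeping when substituting the formula for $D_T$ in Step 3. (One could instead expand $uv$ directly as a degree-$6$ symmetric polynomial and reduce it to elementary symmetric functions, but routing through $u-v$ and the Vandermonde is cleaner and delivers the discriminant assertion for free.)
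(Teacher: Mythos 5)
Your argument is correct, and it is worth noting at the outset that the paper does not actually prove this lemma: it is quoted from the literature (the citation to \cite{Con} just before the statement), so your symmetric-function computation supplies a self-contained proof where the paper has none. Steps 1 and 2 are fine as written: the expansion of $(\alpha+\beta+\gamma)(\alpha\beta+\beta\gamma+\gamma\alpha)$ gives $u+v=-ef+3g$, and the direct factorisation $u-v=\alpha\beta(\alpha-\beta)+\beta\gamma(\beta-\gamma)+\gamma\alpha(\gamma-\alpha)=-(\alpha-\beta)(\beta-\gamma)(\gamma-\alpha)$ checks out, giving $(u-v)^2=D_T$ and hence $D_R=(u+v)^2-4uv=(u-v)^2=D_T$.

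The one point you must tighten is the division by $4$ in Step 3. The lemma is stated for arbitrary characteristic, and the paper leans on it precisely in characteristic $2$ (the quadratic resolvent exists in the paper exactly because the discriminant criterion of its Lemma 2.1 requires $p\neq 2$), where $(u+v)^2-(u-v)^2=4uv$ degenerates to $0=0$ and tells you nothing about $uv$. The repair is routine but should be said: treat $\alpha,\beta,\gamma$ as indeterminates and carry out the whole computation in $\mathbb{Z}[\alpha,\beta,\gamma]$, where $4uv=4(e^3g+f^3+9g^2-6efg)$ does imply $uv=e^3g+f^3+9g^2-6efg$ since that ring is a domain of characteristic $0$; the resulting identity between integral symmetric polynomials then specialises to every field. (Alternatively, expand $uv$ directly into elementary symmetric functions.) The same caveat applies, more mildly, to your "alternating polynomial, hence a multiple of the Vandermonde" shortcut in Step 2, which is a characteristic-$\neq 2$ argument; the direct factorisation you also offer is the one that works universally, so keep that one.
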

\begin{theoreme} \label{resolvent}
Suppose $T(X) \in K[X]$ is a separable irreducible cubic polynomial which defines a cubic field extension $L/K$. Then $L^{G_K}/K$ has automorphism group over $K$ equal to: 
\begin{enumerate}[(a)]
\item $A_3$ if its quadratic resolvent $R(X)$ is reducible over $K$; and
\item $S_3$ if its quadratic resolvent $R(X)$ is irreducible over $K$.\end{enumerate} Furthermore, in case (b), the Galois closure $L^{G_K}$ of $L/K$ is equal to $K(\alpha ,\delta)$, where $\alpha$ is any root of $T(X)$, and $\delta$ is either root of the quadratic resolvent of $T(X)$. If $p \neq 2$, then $L^{G_K}$ is also equal to $K(\alpha ,\sqrt{D})$, where $D$ is the discriminant of $T(X)$. 
\end{theoreme}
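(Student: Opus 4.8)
The plan is to deduce everything from the standard correspondence between the Galois group of a cubic and the discriminant, together with the explicit description of the roots of the quadratic resolvent given in Lemma \ref{resolventl}. First I would recall that since $T(X)$ is separable and irreducible, its Galois group $G_K = \Gal(L^{G_K}/K)$ is a transitive subgroup of $S_3$, hence equals either $A_3$ or $S_3$. By Lemma \ref{resolventl}, the two quantities $u_1 = \alpha^2\beta + \beta^2\gamma + \gamma^2\alpha$ and $u_2 = \beta^2\alpha + \alpha^2\gamma + \gamma^2\beta$ are the roots of $R(X)$, and by inspection these are swapped by any odd permutation of $\{\alpha,\beta,\gamma\}$ and fixed by $A_3$. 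Hence the splitting field of $R(X)$ over $K$ is exactly the fixed field $\left(L^{G_K}\right)^{A_3\cap G_K}$, which is $K$ if $G_K = A_3$ and a quadratic extension of $K$ if $G_K = S_3$. This gives the dichotomy: $R(X)$ splits over $K$ (equivalently, $R(X)$ is reducible over $K$, since $R$ has degree $2$) precisely when $G_K = A_3$, and $R(X)$ is irreducible over $K$ precisely when $G_K = S_3$. That proves parts (a) and (b).

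For the description of the Galois closure in case (b), I would argue as follows. In the $S_3$ case, $[L^{G_K}:K] = 6$, while $[K(\alpha):K] = 3$ and $[K(\delta):K] = 2$ for $\delta$ a root of $R(X)$. Since $K(\alpha) \subseteq L^{G_K}$ and $K(\delta) \subseteq L^{G_K}$ (the root $\delta = u_i$ lies in $L^{G_K}$, being a polynomial in the roots of $T$), the compositum $K(\alpha,\delta)$ is contained in $L^{G_K}$; as $2$ and $3$ are coprime, $[K(\alpha,\delta):K]$ is divisible by $6$, hence equals $6$, forcing $K(\alpha,\delta) = L^{G_K}$. The final sentence, for $p \neq 2$, follows because Lemma \ref{resolventl} asserts $R(X)$ and $T(X)$ have the same discriminant $D$; when $p\neq 2$ one has $K(\delta) = K(\sqrt{D})$ (the splitting field of a separable quadratic is generated by the square root of its discriminant), so $K(\alpha,\delta) = K(\alpha,\sqrt{D})$.

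The main obstacle, such as it is, is purely bookkeeping: verifying that $u_1$ and $u_2$ are genuinely interchanged by odd permutations and fixed by $A_3$ (so that they are not both fixed by all of $G_K$, which would make $R$ split regardless), and confirming that $u_1 \neq u_2$ in the separable case so that $R(X)$ is itself separable — this last point is exactly where separability of $T(X)$ is used, via the equality of discriminants in Lemma \ref{resolventl}, since $D \neq 0$ forces $R$ to have distinct roots. One should also note the one genuinely characteristic-sensitive wrinkle: in characteristic $2$ the passage $K(\delta) = K(\sqrt{D})$ can fail, which is precisely why the last sentence is stated only for $p\neq 2$, while parts (a) and (b) and the identification $L^{G_K} = K(\alpha,\delta)$ hold in all characteristics. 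I would keep the argument uniform in $p$ up to that final remark.
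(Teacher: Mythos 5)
Your proof is correct. The paper does not prove this theorem itself but cites it to the literature (\cite[Theorems 1.1, 2.1, 2.6]{Con}), and your argument is precisely the standard one: the two roots $u_1,u_2$ of $R(X)$ are fixed by $A_3$ and interchanged by transpositions, $(u_1-u_2)^2$ equals the discriminant of $T$ (so separability of $T$ gives $u_1\neq u_2$ and hence the clean dichotomy), and the coprimality of $2$ and $3$ forces $K(\alpha,\delta)=L^{G_K}$ in the $S_3$ case, with the $p\neq 2$ caveat for $K(\delta)=K(\sqrt{D})$ correctly isolated.
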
 
%We now suppose that $K= \mathbb{F}_q (x)$ and 
We now suppose that $L/K$ is a separable but non-Galois extension.\\

 \begin{itemize}
 \item[$\circ$] Suppose moreover that the field characteristic is not $2$ or $3$; then either
 \begin{itemize} 
 \item[$\square$] $L/K$ is purely cubic and the Galois closure is equal to $K(y,\xi)$, where $y$ generates a cubic extension whose generating equation is $X^3-a$ and $\xi$ is a primitive third root of unity. 
 $$\xymatrix{ & L^{G_K} \ar@{-}[rd]^{y^3=a} \ar@{-}[ld]_{\xi^2+\xi+1=0}& \\ K(y) \ar@{-}[rd]_{y^3=a} && K(\xi) \ar@{-}[ld]^{\xi^2+\xi +1 =0}\\  & K&}$$
 Note that $K(\xi)/K$ is a constant extension, the constant field of $L^{G_K}$ is $\mathbb{F}_q(\xi)$, and $L^{G_K}/K(\xi)$ is a Kummer extension.   \\ 
 
 \item[$\square$] $L/K$ has a primitive element $y$ with minimal polynomial of the form $X^3 -3X-a$, for some $a \in K$ and the Galois closure is $K(y,\gamma)$ with $\gamma^2=-3(a^2-4)$.
  $$\xymatrix{ &L^{G_K} \ar@{-}[rd]^{y^3-3y=a} \ar@{-}[ld]_{\gamma^2=-3(a^2-4)}& \\ K(y) \ar@{-}[rd]_{y^3-3y=a} && K(\gamma)\ar@{-}[ld]^{\gamma^2=-3(a^2-4)}\\  & K&}$$
 \end{itemize} 
 Note that $\gamma^2=-3(a^2-4)$ is a Kummer extension, and thus the ramified places are those $\mathfrak{p}$ such that $(v_{\mathfrak{p}} (-3(a^2-4)),2)=1$. Thus, any place $\mathfrak{p}$ of $K$ which ramifies in $K(\gamma)$ must satisfy $v_{\mathfrak{p}} (a)=0$. Moreover, we will prove later that the ramified places of $y^3-3y=a$ are the places $\mathfrak{p}$ such that $v_{\mathfrak{p}} (a)<0$ and $3 \nmid v_{\mathfrak{p}} (a)$ (see Theorem 5.10). In particular, there are no places fully ramified in the Galois closure $L^{G_K}$.  \\ 
 
\item[$\circ$]  If the field characteristic is equal to $3$, then $L/K$ has a primitive element $y$ whose minimal polynomial is $X^3 -aX-a^2$ for some $a\in K$. Then the Galois closure $L^{G_K}/K$ of $L/K$ is $L(z, \gamma)$ where $\gamma^2 =a$, and $z = \gamma y$ generates a cubic Artin-Schreier equation with generating equation $z^3 -z=\gamma$. (See Theorem \ref{AS})
 $$\xymatrix{ & L^{G_K} \ar@{-}[rd]^{z^3-z=\gamma } \ar@{-}[ld]_{\gamma^2=a}& \\ K(y) \ar@{-}[rd]_{y^3-ay=a^2} && K(\gamma) \ar@{-}[ld]^{\gamma^2 =a}\\  & K&}$$
We note that $K(\gamma )/ K$ is Kummer extension, and that $L^{G_K} / K(\gamma)$ is an Artin-Schreier extension.\\ 

\item[$\circ$] If the field characteristic is equal to $2$, then either \\ 

 \begin{itemize} 
 \item[$\square$] $L/K$ is purely cubic and the Galois closure is $L(y,\xi)$, where $y$ generates a cubic extension whose generating equation is $X^3-a$ and $\xi$ is a primitive third root of unity. Indeed, the quadratic resolvent is $X^2+ a X +a^2$ and $\xi a$ is a root of this polynomial.
  $$\xymatrix{ & L^{G_K} \ar@{-}[rd]^{y^3=a} \ar@{-}[ld]_{\gamma^2+ a \gamma +a^2=0}& \\ K(y) \ar@{-}[rd]_{y^3=a} && K(\xi) \ar@{-}[ld]^{\xi^2-\xi =1}\\  & K&}$$
  Note again that $K(\xi)/K$ is a constant extension, whence the constant field of $L^{G_K}$ is equal to $\mathbb{F}_q(\xi)$. \\
  
 \item[$\square$] $L/K$ has a primitive element $y$ with minimal polynomial of the form $P(X)=X^3 -3X-a$ for some $a \in K$ and the Galois closure is equal to $L(y,\gamma )$, where $\gamma$ is an Artin-Schreier generator of degree $2$ whose minimal polynomial is $X^2 -X = \frac{1+a^2}{a^2}$. Indeed, the quadratic resolvent of $P(X)$ is equal to $X^2+ aX+ (1+a^2)$.
   $$\xymatrix{ & L^{G_K} \ar@{-}[rd]^{y^3-3y=a} \ar@{-}[ld]_{z^2+ a z +(1+a^2)=0}& \\ K(y) \ar@{-}[rd]_{y^3-3y=a} && K(\gamma) \ar@{-}[ld]^{\gamma^2-\gamma =\frac{1+a^2}{a^2}}\\  & K&}$$
We note that $K(\gamma ) /K$ is an Artin-Schreier extension. 
 \end{itemize} 
\end{itemize} 

\section{Cubic extensions in characteristic $3$}  
As we have seen in Section $2$, separable cubic extensions of function fields in characteristic $3$ have primitive elements with minimal equation of the form $T(X) = X^3+bX+b^2$. If the extension $L/K$ is Galois, this extension is Artin-Schreier as is well-known. We reprove this basic result in Galois theory using our previous standard form.
\begin{theoreme} \label{AS}
Let $p= 3$, and let $L/K$ be a Galois extension of degree $3$. Then there is a primitive element $z$ such that its minimal polynomial is of the form $R(z)= z^3 -z -a$. Furthermore, this primitive element is explicitly determined.
\end{theoreme}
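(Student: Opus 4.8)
The plan is to reduce everything to the standard form of Corollary \ref{char3generator} and then rescale the generator once. Since $L/K$ is Galois of degree $3$ it is in particular separable, so Corollary \ref{char3generator} supplies an explicitly determined primitive element $y$ of $L/K$ whose minimal polynomial is $T(X) = X^3 + bX + b^2$ for some $b \in K$; irreducibility of $T$ forces $b \neq 0$.

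The one substantive step is to deduce from the Galois hypothesis that $-b$ is a square in $K$. Since $p = 3 \neq 2$, Lemma \ref{disc} applies: $L/K$ is Galois if and only if the discriminant of $T$ is a square in $K$. Using the formula $-4a^3 - 27b^2$ for the discriminant of $X^3 + aX + b$ (recorded after Lemma \ref{linear}) with $(a,b) = (b,b^2)$, the discriminant equals $-4b^3 - 27b^4 = -b^3$ in characteristic $3$. Writing $-b^3 = s^2$ with $s \in K^\times$ (recall $b \neq 0$), we get $-b = s^2/b^2 = (s/b)^2$, so $b = -c^2$ with $c = s/b \in K^\times$. (One may instead argue through the quadratic resolvent of Lemma \ref{resolventl}, which for $T$ is $X^2 + b^3$; by Theorem \ref{resolvent} it must be reducible over $K$ because $L = L^{G_K}$, which again forces $-b^3$, hence $-b$, to be a square.)

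Then I would substitute $b = -c^2$ into $T(y) = 0$ to obtain $y^3 - c^2 y + c^4 = 0$, divide through by $c^3$, and set $z = y/c$, which yields $z^3 - z + c = 0$, i.e. $z^3 - z = -c$. Since $c \in K^\times$ we have $K(z) = K(y) = L$, so $z$ has degree $3$ over $K$ and its minimal polynomial is exactly $R(X) = X^3 - X - a$ with $a = -c$, the desired Artin--Schreier form. Explicitness propagates: $y$ and $b$ are explicit from Corollary \ref{char3generator}, $c$ is a square root of $-b$, and $z = y/c$.

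I do not expect a genuine obstacle here. The only point requiring care is the characteristic-$3$ discriminant computation that turns the word ``Galois'' into the statement ``$-b$ is a square in $K$''; once that is in hand, the passage to $z^3 - z = a$ is a one-line rescaling of the generator.
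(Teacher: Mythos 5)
Your proposal is correct and follows essentially the same route as the paper: reduce to the form $X^3+bX+b^2$ via Corollary \ref{char3generator}, use the Galois hypothesis and the characteristic-$3$ discriminant $-b^3$ (the paper writes $-4b^3$, which is the same thing mod $3$) to conclude $-b$ is a square, and rescale the generator by a square root of $-b$. The alternative resolvent argument you sketch is a harmless variant of the same computation.
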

\begin{proof} 
By the previous result, we know that there is a primitive element $y'$ such that its minimal polynomial is of the form $$S(X)=X^3 +b X + b^2.$$ The discriminant of such a polynomial is equal to $-4b^3$. As $L/K$ is Galois, the discriminant is a square, thus $-b$ is a square, say $-b= a^2$. With $z= y'/a$, it follows that $z^3 -z =a$. 
\end{proof} 
Although the Artin-Schreier theory is well-known, we will draw certain important parallels to its fundamental results in other types of extensions, particularly those of the form $y^3-3y=a$ which occur when $q\equiv -1 \mod 3$. For the proof of the following classical result, we refer the reader to \cite[Proposition 5.8.6]{Vil}.
\begin{lemma} \label{genAS}
Let $\text{\emph{char}}(\mathbb{F}_q) = 3$, and let $L_i = K(z_i )/K$ $(i = 1, 2)$ be two cyclic extensions of degree $3$ of the form $z_i^3 - z_i = a_i \in K$, $i = 1, 2$. Then the following statements are equivalent:
\begin{enumerate}
\item $L_1 = L_2$.
\item $z_1 = j z_2 + b$ for $1 \leq j \leq 2$ and $b \in K$.
\item $a_1 = ja_2 + (b^3 - b)$ for $1 \leq j \leq 2$ and $b \in K$.
\end{enumerate}
\end{lemma}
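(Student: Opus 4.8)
The plan is to run the cycle of implications $(2)\Rightarrow(1)$, $(1)\Rightarrow(2)$, $(2)\Leftrightarrow(3)$, all built on one elementary observation: the Artin--Schreier operator $\wp\colon x\mapsto x^3-x$ is additive and $\mathbb{F}_3$-linear on any field containing $\mathbb{F}_3$ (so that $\wp(jx)=j\,\wp(x)$ for $j\in\mathbb{F}_3$, since $j^3=j$), and its kernel is exactly $\mathbb{F}_3$. Since $\mathbb{F}_3\subseteq\mathbb{F}_q\subseteq K$, any element of $\ker\wp$ may be absorbed harmlessly into an element of $K$; keeping track of these $\mathbb{F}_3$-constants is the only bookkeeping required.

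First I would dispatch $(2)\Rightarrow(1)$, which is immediate: if $z_1=jz_2+b$ with $j\in\{1,2\}$ and $b\in K$, then $z_1\in K(z_2)=L_2$, so $L_1=K(z_1)\subseteq L_2$, and equality follows by comparing degrees over $K$.

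The one genuinely nontrivial step is $(1)\Rightarrow(2)$. Put $L:=L_1=L_2$ and fix a generator $\sigma$ of $\Gal(L/K)\cong\mathbb{Z}/3\mathbb{Z}$. Each $z_i$ is a root of $X^3-X-a_i\in K[X]$, whose three roots are $z_i,\,z_i+1,\,z_i+2$; hence $\sigma(z_i)=z_i+m_i$ for some $m_i\in\mathbb{F}_3$, and $m_i\neq 0$, for otherwise $z_i$ would be fixed by $\Gal(L/K)$ and thus lie in $K$, contradicting $[L:K]=3$. So $m_1,m_2\in\{1,2\}$. Setting $j:=m_1m_2^{-1}\in\{1,2\}$ and $w:=z_1-jz_2$, one computes $\sigma(w)=(z_1+m_1)-j(z_2+m_2)=w+(m_1-jm_2)=w$, so $w$ is $\sigma$-fixed, hence lies in $K$; calling it $b$ gives $z_1=jz_2+b$, which is $(2)$. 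I expect the index arithmetic in $\mathbb{F}_3$ (checking $j\neq 0$ and that $m_2^{-1}$ is legitimate) to be the only place requiring mild care here.

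Finally, for $(2)\Leftrightarrow(3)$ I would apply $\wp$ to the relation $z_1=jz_2+b$: additivity and $\mathbb{F}_3$-linearity give $a_1=\wp(z_1)=j\,\wp(z_2)+\wp(b)=ja_2+(b^3-b)$, which is $(3)$. Conversely, from $a_1=ja_2+(b^3-b)$ one gets $\wp(z_1-jz_2-b)=a_1-ja_2-(b^3-b)=0$, so $z_1-jz_2-b\in\ker\wp=\mathbb{F}_3\subseteq K$; absorbing this constant into $b$ produces $z_1=jz_2+b'$ with $b'\in K$, i.e.\ $(2)$. This closes the cycle and proves the Lemma.
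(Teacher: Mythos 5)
Your proof is correct. The paper does not prove this lemma itself but cites it to \cite[Proposition 5.8.6]{Vil}, and your argument is exactly the standard one underlying that reference: $(1)\Rightarrow(2)$ via comparing the $\mathbb{F}_3$-translation constants $\sigma(z_i)=z_i+m_i$ and taking the $\sigma$-invariant combination $z_1-m_1m_2^{-1}z_2$, and $(2)\Leftrightarrow(3)$ via additivity and $\mathbb{F}_3$-linearity of $\wp(x)=x^3-x$ together with $\ker\wp=\mathbb{F}_3\subseteq K$. All steps, including the absorption of the $\mathbb{F}_3$-constant in the $(3)\Rightarrow(2)$ direction, are sound.
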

The following Theorem gives ramification data, genus and Galois action for Artin-Schreier extensions (see \cite[Proposition 3.7.8]{Sti}). 
\begin{theoreme} \label{ASGal}
Let $K/\mathbb{F}_q$ be an algebraic function field of characteristic $3 $. Suppose that $a \in K $ is an element
which satisfies the condition $$a \neq w^3 - w \ for \ all \ w \in K.$$ Let $L= K(y)$ with $y^3 - y = a$. Such an extension $L/K$ is called an \emph{Artin-Schreier extension} of $K$. We then have:
\begin{enumerate}
\item $L/K$ is a cyclic Galois extension of degree $3$. The automorphisms of $L/K$ are given by $\sigma_l (y) = y + l$ ($l= 0, 1,2$).
\item A place $\mathfrak{p}$ of $K$ is ramified in $L$ if, and only if, there is an element $z \in K$ satisfying
$$v_{\mathfrak{p}} (a - (z^3 - z)) = - m_{\mathfrak{p}} < 0\quad\text{and}\quad m_{\mathfrak{p}} \not\equiv 0 \mod  3.$$ For a place $\mathfrak{p}$ of $K$ ramified in $L$, let $\mathfrak{P}$ denote the unique place of $L$ lying over $\mathfrak{p}$. Then the differential exponent $\alpha(\mathfrak{P}|\mathfrak{p})$ is given by $$\alpha(\mathfrak{P}|\mathfrak{p}) = 2(m_{\mathfrak{p}}+1).$$
\item If at least one place $\mathfrak{Q}$ satisfies $v_{\mathfrak{Q}} (a)>0$, then $\mathbb{F}_q$ is algebraically closed in $L$ and $$g_L = 3 g_K  -2 + \sum_{\mathfrak{p} \text{ ramified in $L$}} ( m_{\mathfrak{p}} + 1) \deg(\mathfrak{p}),$$
where $g_L$ (resp. $g_K$) is the genus of $L/\mathbb{F}_q$ (resp. $K/\mathbb{F}_q$) and $\deg(\mathfrak{p})$ is the degree of the place $\mathfrak{p}$ of $K$.
\end{enumerate}\end{theoreme}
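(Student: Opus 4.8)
The plan is to establish (1), then the technical core (2), and finally to deduce (3) from (2) by Riemann--Hurwitz. For (1): the polynomial $X^3-X-a$ is irreducible over $K$ because, being of degree $3$, it would otherwise have a root $w\in K$, forcing $a=w^3-w$ against the hypothesis; it is separable since its derivative is $3X^2-1=-1$; and it is normal over $K$ because if $y$ is a root then so are $y+1,y+2$ (using $l^3=l$ in $\mathbb{F}_3$), so $L=K(y)$ is a splitting field. Hence $L/K$ is Galois, and the maps $\sigma_l\colon y\mapsto y+l$ ($l=0,1,2$) exhaust $\operatorname{Gal}(L/K)\cong\mathbb{Z}/3\mathbb{Z}$.

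For (2), fix a place $\mathfrak{p}$ of $K$; the preliminary move is to reduce $a$ to \emph{standard form at $\mathfrak{p}$}. Replacing $a$ by $a-(z^3-z)$ with $z\in K$ leaves $L$ unchanged (Lemma \ref{genAS}), and by iterating such replacements one can arrange that either $v_{\mathfrak{p}}(a)\ge 0$, or $v_{\mathfrak{p}}(a)=-m_{\mathfrak{p}}<0$ with $3\nmid m_{\mathfrak{p}}$: whenever $v_{\mathfrak{p}}(a)=-3k<0$, write $a=ut^{-3k}+(\text{higher order})$ for a uniformiser $t$ and a unit $u$, use that the (finite, hence perfect) residue field makes $\bar u$ a cube $\overline{w_0}^{\,3}$, and subtract $(w_0t^{-k})^3-w_0t^{-k}$ to strictly raise $v_{\mathfrak{p}}$; since the valuation is bounded above through this process, it terminates. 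Once $a$ is in standard form at $\mathfrak{p}$: if $v_{\mathfrak{p}}(a)\ge0$ then $y$ is $\mathcal{O}_{\mathfrak{p}}$-integral and $\operatorname{disc}(X^3-X-a)=4-27a^2=1$ is a unit, so $\mathfrak{p}$ is unramified, with decomposition governed by the factorisation of $X^3-X-\bar a$ over the residue field; if $v_{\mathfrak{p}}(a)=-m$ with $3\nmid m$ then, for $\mathfrak{P}$ over $\mathfrak{p}$ with ramification index $e$, extending $v_{\mathfrak{p}}$ to $v_{\mathfrak{P}}$ and comparing valuations in $y^3-y=a$ gives $v_{\mathfrak{P}}(y)=-em/3$, forcing $3\mid e$, hence $e=3$ (totally ramified) and $v_{\mathfrak{P}}(y)=-m$. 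For the differential exponent, choose integers $\alpha,\beta$ with $3\alpha-m\beta=1$ (so $3\nmid\beta$); then $\pi:=t^{\alpha}y^{\beta}$ is a uniformiser of $\mathfrak{P}$, and from $\sigma_l(\pi)/\pi=(1+ly^{-1})^{\beta}=1+\beta ly^{-1}+\cdots$ with $v_{\mathfrak{P}}(y^{-1})=m$ one reads off $v_{\mathfrak{P}}(\sigma_l(\pi)-\pi)=m+1$ for $l\ne0$; thus the unique (lower-numbering) ramification break of $\mathfrak{P}/\mathfrak{p}$ is $m$, and $\alpha(\mathfrak{P}|\mathfrak{p})=(3-1)(m+1)=2(m+1)$. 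Combining the two cases also yields the asserted characterisation of the ramified places.

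For (3), granting that $\mathbb{F}_q$ is algebraically closed in $L$, Riemann--Hurwitz gives $2g_L-2=3(2g_K-2)+\deg\mathfrak{D}$, where by (2) the different $\mathfrak{D}$ is supported on the ramified $\mathfrak{p}$, each contributing $2(m_{\mathfrak{p}}+1)\deg\mathfrak{p}$ (the place of $L$ above has the same degree, as $f=1$ and the constant fields coincide); rearranging yields the stated formula for $g_L$. That $\mathbb{F}_q$ is algebraically closed in $L$ follows because, $[L:K]=3$ being prime, the only alternative is $L=\mathbb{F}_{q^3}K$, which is everywhere unramified; but a place $\mathfrak{Q}$ with $v_{\mathfrak{Q}}(a)>0$ reduces $X^3-X-a$ to $X(X-1)(X-2)$, so $\mathfrak{Q}$ splits completely in $L$, which is incompatible with the splitting behaviour of places in a constant extension.

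The main obstacle is the different computation in (2): since the ramification index $e=3$ equals the characteristic, the tame value $e-1$ is useless, and one must exhibit an explicit uniformiser of $\mathfrak{P}$ from the B\'ezout identity relating $3$ and $m$, compute $v_{\mathfrak{P}}(\sigma_l(\pi)-\pi)$ by hand, and then invoke the formula $d=(|G|-1)(t+1)$ for a wildly, totally ramified cyclic extension of degree $p$ with a single break $t$. A secondary delicate point is the constant-field step of (3): the splitting argument pins down the constant field cleanly only once one knows the completely split place $\mathfrak{Q}$ has degree prime to $3$ (or, globally, that $a\notin\mathbb{F}_q+\{z^3-z:z\in K\}$, i.e.\ that the extension is genuinely ramified), so some care is needed in how the hypothesis on $\mathfrak{Q}$ is used.
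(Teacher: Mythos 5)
Your proofs of parts (1) and (2) are correct and follow the standard route; the paper itself offers no proof of this theorem, deferring to Stichtenoth (Proposition 3.7.8), and your argument essentially reconstructs his: local normalisation of $a$ via $a \mapsto a-(z^3-z)$ using perfectness of the finite residue field, the valuation argument forcing $e=3$ when $v_{\mathfrak{p}}(a)=-m$ with $3\nmid m$, the explicit uniformiser $t^{\alpha}y^{\beta}$ from a B\'ezout relation, and Hilbert's different formula with the single break at $m$. Two points you gloss over are routine but worth recording: the well-definedness of $m_{\mathfrak{p}}$ (independence of the normalising $z$), and the consistency of your ramified/unramified dichotomy, both follow because $(z_1^3-z_1)-(z_2^3-z_2)=(z_1-z_2)^3-(z_1-z_2)$ in characteristic $3$, and $w^3-w$ always has $\mathfrak{p}$-valuation either nonnegative or a negative multiple of $3$.

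The genuine gap is in part (3), and you have correctly sensed where it is, but it cannot be repaired as stated: the hypothesis that $v_{\mathfrak{Q}}(a)>0$ for at least one place $\mathfrak{Q}$ does \emph{not} imply that $\mathbb{F}_q$ is algebraically closed in $L$. Take $q=3$, $K=\mathbb{F}_3(x)$ and $a=x^3-x+1$. Then $a\neq w^3-w$ for all $w\in K$, and $y-x$ satisfies $(y-x)^3-(y-x)=1$, so $L=\mathbb{F}_{27}(x)$ is the constant extension (for which the asserted genus formula would give $g_L=-2$); yet $x^3-x+1$ is irreducible over $\mathbb{F}_3$, so $a$ has a zero at the degree-$3$ place it defines. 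This is exactly the failure mode you flagged: that place does split completely in $L$, but complete splitting of a place whose degree is divisible by $3$ is perfectly compatible with $L/K$ being constant. The statement as transcribed in the paper misquotes Stichtenoth, whose hypothesis is that at least one place $\mathfrak{Q}$ has $m_{\mathfrak{Q}}>0$, i.e.\ ramifies; under that hypothesis the constant-field claim is immediate (constant extensions are unramified and $[L:K]=3$ is prime), and the rest of your Riemann--Hurwitz computation goes through verbatim. Alternatively, your splitting argument does close the gap whenever the place $\mathfrak{Q}$ with $v_{\mathfrak{Q}}(a)>0$ has degree prime to $3$.
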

\begin{remarque} Suppose $p=3$. Let $L/K$ be a cubic extension. By Corollary \ref{char3generator}, we know that there is a primitive element $z$ for $L$ with minimal polynomial $T(X) = X^3 -b X -b^2$, with $b \in K$. By the results of Section 2, we know that the Galois closure $L^{G_K}/ K$ can be expressed as a tower $L^{G_K}= K(y, \beta )/K(\beta ) / K$ with $y$ that has minimal polynomial $S(X)= X^3 - X -\beta$ where $\beta^2 =b$. As $[K(y, \beta ):K(\beta )]=3$ is coprime to $[K(\beta ):K]=2$, the ramified places of $L/K$ are the places of $K$ that ramify in $K(y, \beta )/K(\beta )$. These are precisely those places $\mathfrak{p}$ such that there is  place $\mathfrak{P}$ in $K(\beta )$ above $\mathfrak{p}$ and an element $a \in K(\beta )$ such that $$v_{\mathfrak{p}} (\beta - (a^3-a))=-m<0 \quad \text{and} \quad  gcd(m, 3)= 1.$$
\end{remarque} 
If $p = 3$, for a Galois cubic extension $L/\mathbb{F}_q(x)$, it is known (see \cite[Example 5.8.8]{Vil}) via a process originally due to Hasse that a generating equation of the form $P_3(X) = X^3 - X - a$ may be transformed into another generating equation $X^3 - X - c$ where the ramified places of $L/\mathbb{F}_q(x)$ are given by the places $\mathfrak{p}$ of $\mathbb{F}_q(x)$ for which $v_\mathfrak{p}(c) < 0$, and for each such place, $(v_\mathfrak{p}(c),3) = 1$. In this case, the equation $X^3 - X - c$ is said to be in \emph{standard form}. 

We recall when an Artin-Schreier extension is constant.

\begin{theorem} \label{ASconstant} Suppose that $p=3$, $L/\mathbb{F}_q(x)$ is cubic and Galois, it is thus an Artin-Schreier extension with a primitive element $y$ of $L/\mathbb{F}_q(x)$ and generating equation $X^3 - X - c$ in standard form. Then $c \in \mathbb{F}_q$ if, and only if, $L/\mathbb{F}_q(x)$ is a constant extension (whence $L = \mathbb{F}_{q^3}(x)$).
\end{theorem}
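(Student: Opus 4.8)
The plan is to prove the equivalence by establishing the two implications separately, using in both directions the standard-form description of the ramified places (Theorem \ref{ASGal}(2)) together with elementary facts about the rational function field $\mathbb{F}_q(x)$ and about constant field extensions.

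First I would treat the implication ``$L/\mathbb{F}_q(x)$ constant $\Longrightarrow c\in\mathbb{F}_q$''. If $L=\mathbb{F}_{q^3}(x)$, then $L/\mathbb{F}_q(x)$ is a constant field extension and hence unramified at every place of $\mathbb{F}_q(x)$. Since the generating equation $X^3-X-c$ is assumed to be in standard form, the places of $\mathbb{F}_q(x)$ ramified in $L$ are exactly the poles of $c$; for the purpose of this implication one only needs the easy half of that statement, namely that a pole $\mathfrak{p}$ of $c$ (necessarily of order prime to $3$ in standard form) is ramified, which follows from Theorem \ref{ASGal}(2) applied with $z=0$. As there are no ramified places, $c$ can have no poles in $\mathbb{F}_q(x)$ at all, and an element of a rational function field without poles is a constant, so $c\in\mathbb{F}_q$.

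Next I would prove the converse ``$c\in\mathbb{F}_q\Longrightarrow L/\mathbb{F}_q(x)$ constant''. The key observation is that $y$ is now algebraic over $\mathbb{F}_q$, since it satisfies $y^3-y-c=0$ with $c\in\mathbb{F}_q$. Because $[L:\mathbb{F}_q(x)]=3$ and $y$ is a primitive element of $L/\mathbb{F}_q(x)$, the polynomial $X^3-X-c$ is the minimal polynomial of $y$ over $\mathbb{F}_q(x)$, hence irreducible over $\mathbb{F}_q(x)$ and a fortiori over $\mathbb{F}_q$; thus $\mathbb{F}_q(y)$ is the degree-$3$ extension $\mathbb{F}_{q^3}$ of $\mathbb{F}_q$. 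Consequently $L=\mathbb{F}_q(x)(y)=\mathbb{F}_q(y)(x)=\mathbb{F}_{q^3}(x)$, which is a constant extension, completing the argument.

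I do not expect a genuine obstacle here: the statement is essentially a bookkeeping consequence of the standard form together with the fact that constant field extensions are everywhere unramified. The one point to watch is to invoke the standard-form property in the correct direction in the first implication (that every pole of $c$ ramifies in $L$), so that unramifiedness forces $c$ to be pole-free; the example $X^3-X-1$ over $\mathbb{F}_3(x)$, which gives the constant extension $\mathbb{F}_{27}(x)$, shows that the equivalence is not vacuous.
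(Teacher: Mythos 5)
Your proof is correct and follows essentially the same route as the paper: the forward direction is the observation that $c\in\mathbb{F}_q$ makes $y$ a constant generating $\mathbb{F}_{q^3}$, and the reverse direction uses that a pole of $c$ would ramify by the standard form while constant extensions are unramified (the paper phrases this as the contrapositive, but the argument is identical). Your added detail that a pole-free element of $\mathbb{F}_q(x)$ is a constant is the same fact the paper uses implicitly.
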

\begin{proof}
If $c \in \mathbb{F}_q$, then by definition of the generating equation $y^3 - y - c = 0$, $L/\mathbb{F}_q(x)$ is obtained by adjoining constants. For the converse, suppose that $c \notin \mathbb{F}_q$. Then there would exist a place $\mathfrak{p}$ of $\mathbb{F}_q(x)$ such that $v_\mathfrak{p}(c) < 0$. As the generating equation is in standard form, it follows that $\mathfrak{p} $ is (fully) ramified in $L$. As constant extensions are unramified (see for example \cite[Theorem 6.1.3]{Vil}), it follows that $L/\mathbb{F}_q(x)$ cannot be a constant extension.
\end{proof}

We conclude this section by giving an integral basis for Artin-Schreier extensions \cite[Theorem 9]{MadMad}.
\begin{theoreme}
Let $L$ be an Artin-Schreier extension of $\mathbb{F}_q (x)$ with generating equation $y^3-y = a$, where the factorisation of $a$ in $\mathbb{F}_q (x)$ is given by 
$$a = \frac{Q}{\prod_{i=1}^l P_i^{\lambda_i}},$$ where  $P_i \in \mathbb{F}_q[x]$ and $(\lambda_i ,3)=1$ for each $i=1,\ldots,l$. (This is known to exist; see \cite[Example 5.8.8]{Vil}.) Then $\{1, S_1 y, S_2 y^2\}$ is an integral basis of $L$ over $\mathbb{F}_q(x)$, where for each $j=1,2$,
$$S_j= \prod_{i=1}^l P_i^{r_{i,j}},$$ with $r_{i,j} =1 + \left\lfloor \frac{j \lambda_i}{3} \right\rfloor$ , where $\left\lfloor x \right\rfloor$ denotes the integral part of $x$.
\end{theoreme}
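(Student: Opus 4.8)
The plan is to prove that the $\mathbb{F}_q[x]$-module
\[ M \;=\; \mathbb{F}_q[x]\cdot 1 \;\oplus\; \mathbb{F}_q[x]\cdot S_1 y \;\oplus\; \mathbb{F}_q[x]\cdot S_2 y^2 \]
coincides with the integral closure $\mathcal{O}$ of $\mathbb{F}_q[x]$ in $L$ (a free $\mathbb{F}_q[x]$-module of rank $3$, whose $\mathbb{F}_q[x]$-bases are exactly the integral bases of $L$ over $\mathbb{F}_q(x)$). This reduces to two points: (1) each $S_j y^j$ is integral over $\mathbb{F}_q[x]$, so that $M\subseteq\mathcal{O}$; and (2) $\operatorname{disc}(1,S_1y,S_2y^2)$ generates, as an ideal of $\mathbb{F}_q[x]$, the discriminant ideal $\operatorname{disc}(\mathcal{O})$, so that the index $[\mathcal{O}:M]$ is a unit and $M=\mathcal{O}$.

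For (1) I argue place by place among the finite places of $\mathbb{F}_q(x)$. If $\mathfrak{p}\nmid\prod_iP_i$, then $v_{\mathfrak{p}}(a)\geq 0$, so $y$ is a root of the monic polynomial $Y^3-Y-a\in\mathcal{O}_{\mathfrak{p}}[Y]$ and is integral at $\mathfrak{p}$; as $S_j\in\mathbb{F}_q[x]$, also $v_{\mathfrak{P}}(S_jy^j)\geq 0$ for all $\mathfrak{P}\mid\mathfrak{p}$. If $\mathfrak{p}=P_i$, then since $a$ is in standard form Theorem \ref{ASGal}(2) shows $P_i$ is totally ramified, with a unique $\mathfrak{P}\mid P_i$ and $e(\mathfrak{P}\mid P_i)=3$; comparing valuations in $y^3-y=a$ (the term $y^3$ dominates, as $v_{\mathfrak{P}}(y)<0$) gives $v_{\mathfrak{P}}(y)=v_{P_i}(a)=-\lambda_i$, so that
\[ v_{\mathfrak{P}}(S_j y^j) \;=\; 3r_{i,j} - j\lambda_i \;=\; 3 - (j\lambda_i \bmod 3) \;\in\; \{1,2\}, \]
using $3\nmid j\lambda_i$. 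Hence $S_jy^j\in\mathcal{O}$ in every case. (Note that at $P_i$ the generators $1,S_1y,S_2y^2$ have $\mathfrak{P}$-valuations $0$, $3-(\lambda_i\bmod 3)$, $3-(2\lambda_i\bmod 3)$, a complete residue system modulo $e=3$; this already yields $M=\mathcal{O}$ locally at $P_i$ by the standard lemma on integral bases for extensions of discrete valuation rings, giving an alternative to the global discriminant comparison below.)

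For (2), since $S_1,S_2\in\mathbb{F}_q[x]$ we have $\operatorname{disc}(1,S_1y,S_2y^2)=(S_1S_2)^2\operatorname{disc}(1,y,y^2)$, and $\operatorname{disc}(1,y,y^2)$ is the discriminant of $Y^3-Y-a$, which equals $1$ in characteristic $3$ (its derivative $3Y^2-1=-1$ is a nonzero constant, so $\operatorname{Res}(Y^3-Y-a,-1)=-1$). Thus, up to a unit, $\operatorname{disc}(M)=(S_1S_2)^2=\prod_i P_i^{2(r_{i,1}+r_{i,2})}$. On the other hand $\operatorname{disc}(\mathcal{O})=\prod_{\mathfrak{p}}\mathfrak{p}^{d_{\mathfrak{p}}}$, and for the totally ramified, residually trivial place $P_i$ we have $d_{P_i}=\alpha(\mathfrak{P}\mid P_i)=2(\lambda_i+1)$ by Theorem \ref{ASGal}(2) (with $m_{P_i}=\lambda_i$), while $d_{\mathfrak{p}}=0$ for $\mathfrak{p}\nmid\prod_iP_i$. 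So it suffices to check the elementary identity
\[ r_{i,1}+r_{i,2} \;=\; 2 + \Big\lfloor\tfrac{\lambda_i}{3}\Big\rfloor + \Big\lfloor\tfrac{2\lambda_i}{3}\Big\rfloor \;=\; \lambda_i+1 \qquad (3\nmid\lambda_i), \]
which is immediate on writing $\lambda_i=3k+r$ with $r\in\{1,2\}$. Then $\operatorname{disc}(M)$ and $\operatorname{disc}(\mathcal{O})$ generate the same ideal, so from $\operatorname{disc}(M)=[\mathcal{O}:M]^2\operatorname{disc}(\mathcal{O})$ we conclude $[\mathcal{O}:M]\in\mathbb{F}_q^{\times}$ and $M=\mathcal{O}$, i.e. $\{1,S_1y,S_2y^2\}$ is an integral basis.

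The conceptual subtlety — and the main (mild) obstacle — is that in characteristic $3$ the naive $\{1,y,y^2\}$ has unit discriminant yet is \emph{not} an integral basis, because $y$ itself is not integral at the $P_i$; the multipliers $S_j$ are forced simultaneously by integrality ($3r_{i,j}\geq j\lambda_i$) and by matching the wild different exponents $2(\lambda_i+1)$ ($\sum_j r_{i,j}=\lambda_i+1$), and the identity $\lfloor\lambda_i/3\rfloor+\lfloor 2\lambda_i/3\rfloor=\lambda_i-1$ says exactly that the minimal choice compatible with integrality already realises the discriminant bound. Everything else is routine bookkeeping.
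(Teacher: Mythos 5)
The paper does not prove this statement at all --- it is quoted directly from the cited reference (Theorem 9 of the work labelled MadMad in the bibliography) --- so there is no internal proof to compare against; your argument must stand on its own, and it does. The three ingredients are each verified correctly: (i) at the unique place $\mathfrak{P}$ over $P_i$ one has $v_{\mathfrak{P}}(y)=-\lambda_i$ and $v_{\mathfrak{P}}(P_i)=3$, so $v_{\mathfrak{P}}(S_jy^j)=3r_{i,j}-j\lambda_i=3-(j\lambda_i\bmod 3)\in\{1,2\}$, which together with integrality at the places prime to $\prod_i P_i$ gives $M\subseteq\mathcal{O}$; (ii) $\mathrm{disc}(Y^3-Y-a)=-4(-1)^3-27a^2=4=1$ in characteristic $3$, so $\mathrm{disc}(M)=(S_1S_2)^2$ up to units; (iii) the identity $\lfloor\lambda/3\rfloor+\lfloor 2\lambda/3\rfloor=\lambda-1$ for $3\nmid\lambda$ matches $2(r_{i,1}+r_{i,2})=2(\lambda_i+1)$ against the discriminant exponent coming from the different exponent $2(m_{P_i}+1)$ of Theorem \ref{ASGal}(2), where the standard-form hypothesis lets you take $z=0$ and $m_{P_i}=\lambda_i$. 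The index formula then forces $[\mathcal{O}:M]\in\mathbb{F}_q^{\times}$ and $M=\mathcal{O}$. Your parenthetical local alternative is also sound: at each $P_i$ the generators have $\mathfrak{P}$-valuations $\{0,1,2\}$ (a complete residue system modulo $e=3$ because $(\lambda_i,3)=1$), which gives a local integral basis for a totally ramified extension with trivial residue extension, while away from the $P_i$ the unit discriminant of $Y^3-Y-a$ settles the matter; this route avoids the global different computation entirely.
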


\section{Purely cubic extensions and Kummer extensions} 
In this section, we obtain a criterion for the coefficients of the minimal polynomial of a cubic extension $L/K$ to determine whether or not the extension is purely cubic.
\begin{theoreme} \label{purelycubic} Suppose $p\neq 3$. Let $L/K$ be a cubic extension with constant field $\mathbb{F}_q$ and a primitive element $z$ whose minimal polynomial is 
$$T(X)= X^3 + e X^2 +f X + g $$ where $e,f,g \in K$. If $3eg\neq f^2$, $L/K$ admits an explicitly determined primitive element $y$ such that $y^3  -3 y = a $ where
$$a = -2-\frac{ ( 27 g^2 -9efg +2f^3)^2}{27(3ge-f^2)^3}.$$
Then, $L/K$ is purely cubic if, and only if,  
\begin{itemize} 
\item[$\circ$] either $3eg=f^2$ or $a^2-4$ is a square in $K$, if $char(\mathbb{F}_q) \neq 2$. (The generator is explicitly determined.)
\item[$\circ$] either $eg=f^2$ or the polynomial $X^2 - X = \frac{1}{a^2}$ has a root in $K$, if $char(\mathbb{F}_q) = 2$. \\
\end{itemize} 
More precisely, then for any extension $L/K$ admitting a primitive element $y$ such that $y^3  -3 y = a$ with $a \in K$,
\begin{enumerate}
\item {\large If $p\neq 2$ and $a^2-4$ is a square, say, $a^2-4 = \delta^2$, then $u = y^2 + k y  -2 $ is such that $u^3= \delta^3 k$,  where $ k =  \frac{-a \pm \delta }{2}$.}
\item {\large If $p=2$ and $K= \mathbb{F}_q(x)$,  the polynomial $X^2 - X = \frac{1}{a^2}$ has a root in $K$ if and only if $a= \frac{Z^2}{W(Z+W)}$ for some $W$, $Z\in \mathbb{F}_q[x]$ with $(W, Z)=1$ and $u= y^2 +ky$ is such that $$u^3=  \frac{W^4 (Z+W)^2}{P^3},\quad\text{ where }\quad k = \frac{ W^7(Z+W)^5}{Z^{12}}.$$}
\end{enumerate}
\end{theoreme}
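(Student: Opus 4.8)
The plan is to reduce all three criteria in the statement to the single, characteristic-free condition that $X^{2}+aX+1$ has a root $k$ in $K$: the discriminant of this quadratic is $a^{2}-4$, so over a field of characteristic $\neq 2$ the condition is exactly ``$a^{2}-4$ is a square in $K$'', while the substitution $X=aX'$ (legitimate since $a\neq 0$ by irreducibility of $X^{3}-3X-a$) rewrites it as $X'^{2}-X'=1/a^{2}$ in characteristic $2$. The alternative ``$3eg=f^{2}$'' is disposed of immediately: Corollary \ref{linearthree}(b) then furnishes a primitive element with minimal polynomial $X^{3}-b$, so $L/K$ is purely cubic by definition. Hence from now on I assume $3eg\neq f^{2}$, fix the primitive element $y$ with $y^{3}-3y=a$ provided by Corollary \ref{linearthree}(a), and prove that $L/K$ is purely cubic if and only if $X^{2}+aX+1$ has a root in $K$.

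The ``if'' direction will simultaneously produce the explicit generators of parts (1) and (2). Given a root $k\in K$ of $X^{2}+aX+1$, set $u=y^{2}+ky-2$; in characteristic $2$ the term $-2$ vanishes, recovering the $u=y^{2}+ky$ of part (2). Expanding $u^{3}$ and repeatedly using $y^{3}=3y+a$ to lower the degree in $y$, one finds that the coefficients of $y^{2}$ and of $y$ are multiples of $k^{2}+ak+1$ and hence vanish, so $u^{3}\in K$; a short further simplification with $k^{2}=-ak-1$ identifies the constant term as $\delta^{3}k$, where $\delta^{2}=a^{2}-4$ (so $\delta=a$ in characteristic $2$) — this is precisely the claim of (1), and over $\mathbb{F}_q(x)$ it becomes the stated rational function once $a$ is replaced by its parametrised form. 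That $u\notin K$ is clear, for otherwise $y$ would be a root of the quadratic $X^{2}+kX-(u+2)\in K[X]$, contradicting $[K(y):K]=3$. Hence $L=K(u)$ with $u^{3}\in K$, so $L/K$ is purely cubic. (Irreducibility of $X^{3}-3X-a$ forces $a\neq\pm 2$, hence $\delta\neq 0$ and $k\neq 0$, so $u^{3}\neq 0$ and $X^{3}-\delta^{3}k$ is genuinely the minimal polynomial of $u$.)

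For the ``only if'' direction I would pass to the Galois closure $N=L^{G_K}$ and use that it is \emph{intrinsic} to $L/K$. Let $M_{0}$ be the fixed field of the unique subgroup of order $3$ of $\Gal(N/K)\unlhd S_{3}$, so that $M_{0}=K$ exactly when $L/K$ is Galois. Applying Lemma \ref{resolventl} and Theorem \ref{resolvent} to $y^{3}-3y=a$, its quadratic resolvent is $X^{2}+3aX+9(a^{2}-3)$, one has $N=K(y)\cdot M_{0}$, and $M_{0}$ is the quadratic resolvent extension: $M_{0}=K(\sqrt{-3(a^{2}-4)})$ if $p\neq 2$, while $M_{0}$ is defined by $X^{2}-X=(1+a^{2})/a^{2}$ if $p=2$. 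On the other hand, if $L=K(w)$ with $w^{3}\in K$, then $N$ is also the splitting field $K(w,\xi)$ of $X^{3}-w^{3}$, and the order-$3$ subgroup of $\Gal(N/K)$ fixes $\xi$, so $M_{0}=K(\xi)$ — which is $K(\sqrt{-3})$ if $p\neq 2$ and the extension defined by $X^{2}-X=1$ if $p=2$. Comparing the two descriptions of $M_{0}$ gives: for $p\neq 2$, that $a^{2}-4=\bigl(-3(a^{2}-4)\bigr)/(-3)$ is a square in $K$; and for $p=2$, that $(1+a^{2})/a^{2}$ and $1$ represent the same class modulo $X^{2}-X$, hence so does $1/a^{2}=(1+a^{2})/a^{2}-1$, i.e. $X^{2}-X=1/a^{2}$ is solvable in $K$. (For $p\neq 2$ the Galois case can alternatively be read off from Lemma \ref{disc}, Kummer theory forcing a primitive cube root of unity into $K$.)

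Finally, part (2) needs the arithmetic fact that over $K=\mathbb{F}_q(x)$ the equation $X^{2}-X=1/a^{2}$ is solvable if and only if $a=Z^{2}/(W(Z+W))$ for coprime $W,Z\in\mathbb{F}_q[x]$; I would establish it by partial fractions. Writing a putative solution in lowest terms as $h=B/A$, the element $h^{2}-h=B(A+B)/A^{2}$ is again in lowest terms, and matching it against $1/a^{2}$ (also in lowest terms) forces — up to constants absorbed into $\mathbb{F}_q^{\times}$, using that $\mathbb{F}_q$ is perfect, and using coprimality — a factorisation $B=B_{1}^{2}$ and $A+B=B_{2}^{2}$; then $A$ is a constant multiple of $(B_{1}+B_{2})^{2}$, and the change of variables $W=B_{1}$, $Z=B_{1}+B_{2}$ brings $a$ to the asserted shape, with solution $h=W^{2}/Z^{2}$ and corresponding root $k=ah$ of $X^{2}+aX+1$. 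The \emph{main obstacle} I anticipate is exactly this characteristic-$2$ bookkeeping over the rational function field, together with the (mechanical but lengthy) polynomial identity underlying $u^{3}=\delta^{3}k$; the structural core — the equivalence of the three criteria and the uniqueness-of-Galois-closure comparison — is short, the remaining computations being routine if somewhat involved.
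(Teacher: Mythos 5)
Your proposal is correct, and the sufficiency half (together with the explicit formulas of parts (1) and (2)) coincides with the paper's computation: the paper also writes a general primitive element $u=jy^{2}+ky+l$, forces $j=1$, $l=-2$, and finds that the vanishing of the $y$- and $y^{2}$-coefficients of $u^{3}$ is exactly the relation $k^{2}+ak+1=0$, after which the constant term collapses to $\delta^{3}k$; your char-$2$ partial-fraction analysis over $\mathbb{F}_q[x]$ is likewise essentially the paper's. Where you genuinely diverge is the necessity direction. The paper gets it from the same elimination: after substituting $l=-2$ it derives a quadratic $(a_{}^{2}-4)^{3}-b(a^{2}-4)^{2}-b^{2}=0$ in the candidate cube $b=u^{3}$, whose discriminant $a^{2}(a^{2}-4)^{3}$ yields the square/Artin--Schreier criterion directly, so one computation delivers the equivalence in both directions \emph{and} classifies every pure-cube generator. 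You instead invoke the intrinsic quadratic resolvent field $M_{0}$ of the Galois closure, computed once from $X^{3}-3X-a$ (via Lemma \ref{resolventl}) and once from a hypothetical $X^{3}-c$ presentation (where $M_{0}=K(\xi)$, as in the Section 2 diagrams), and compare; equality of the two quadratic (resp.\ degree-$2$ Artin--Schreier) extensions forces $9(a^{2}-4)\in K^{*2}$ (resp.\ $1/a^{2}\in\wp(K)$), including the degenerate Galois case $M_{0}=K$, consistently with Corollaries \ref{kummer} and \ref{purely}. Your route buys a shorter, more structural proof of necessity at the cost of not exhibiting all pure-cube generators (which the statement does not require); the paper's buys uniformity, since the one system of equations simultaneously proves the criterion and produces $k$, $l$, and $b$ explicitly. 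Both arguments are sound; the only points demanding care in yours — that $a\neq 0,\pm 2$ so the resolvent comparison is non-degenerate, and the verification that the $y$- and $y^{2}$-coefficients of $u^{3}$ are multiples of $k^{2}+ak+1$ — you have flagged and they check out.
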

\begin{proof}
From Theorem \ref{linearthree}, we know that either $3eg=f^2$ and $L/K$ is purely cubic, or that $L/K$ admits a primitive element $y$ such that $y^3  -3 y = a $ with $$a= -2-\frac{ ( 27 g^2 -9efg +2f^3)^2}{27(3ge-f^2)^3} \in K.$$
The proof that follows does not use this form of $a$, and is therefore valid for any extension $L/K$ with a primitive element $y$   such that $y^3  -3 y = a $.

We note that any primitive element is of the form $ u=  j y^2 + k y + l  $ for $j,k \in K$ not both equal to $0$ and $l \in K$. Thus, we wish to determine which extensions admit a primitive element $u$ such that $u^3 =b$, for some $b \in K$. We have 
\begin{align*} u^3 = &( j y^2 + k y + l )^3 \\
=& j^3y^6+3kj^2y^5+3j(lj+k^2)y^4+k(k^2+6lj)y^3+3l(lj+k^2)y^2+3l^2ky+l^3\\
=& 9kj^2y^3+(9j^3+3jl^2+3k^2l+9lj^2+9k^2j+3j^2ak)y^2\\
&+(6j^3a+3l^2k+18lkj+3k^3+3j^2al+3jak^2)y+j^3a^2+ak^3+6kalj+l^3 \\ 
=& (9j^3+3jl^2+3k^2l+9lj^2+9k^2j+3j^2ak)y^2\\
&+(27kj^2+6j^3a+3l^2k+18lkj+3k^3+3j^2al+3jak^2)y\\
&+9j^2ak+ak^3+6kalj+l^3+j^3a^2.
\end{align*}
As $1,y, y^2$ form a basis of $L/K$, we then have that
$$\left\{ \begin{array}{llll} 
3j^3+jl^2+k^2l+3lj^2+3k^2j+j^2ak &=& 0 & (1) \\ 
9kj^2+2j^3a+l^2k+6lkj+k^3+j^2al+jak^2&=& 0 & (2)  \\ 
9j^2ak+ak^3+6kalj+l^3+j^3a^2 &=& b & (3) \\ 
\end{array} \right.$$ 
If $j$ were equal to $0$, then from $(1)$, either $k=0$ or $l=0$, whence from $(2)$, $k=0$, and thus that $u$ would not be a primitive element for $L$. Therefore, $j \neq 0$. Without loss of generality, we may suppose that $j=1$. Thus, we obtain
$$\left\{ \begin{array}{cclc} 
3+l^2+k^2l+3l+3k^2+ak &=& 0 &  (1) \\ 
9k+2a+l^2k+6lk+k^3+al+ak^2 &=& 0 & (2) \\ 
9ak+ak^3+6kal+l^3+a^2 &=& b & (3) \\ 
\end{array} \right.$$ 
Evaluating $k\cdot (1) -  (2)$ yields $$(k^3-3k-a)(l+2)=0.$$ As $k^3-3k-a\neq 0$ by irreducibility of the polynomial $T(X)= X^3 -3X-a$ over $K$, it follows that $l = -2$. By substitution in $(1)$, $(2)$, $(3)$ respectively,  we obtain 
$$\left\{ \begin{array}{lccc} 1+ka+k^2&=& 0 & (4)\\ 
k(1+ka+k^2) &=& 0 & (5)\\ 
ak^3-3ak+(-8+a^2) &=& b & (6)\\ 
\end{array} \right.$$ 
Evaluating $a\cdot (5) - (6)$, we obtain
$$a^2k^2+4ak-(-8+a^2)+b =0 \ (7).$$
%From $(4)$, we get either $j=0$ or $j^2+kaj+k^2=0$. But $j \neq 0$ since if $j=0$ then from $(6)$ then $k=0$ impossible. Similarly, $k \neq0$. Thus, $j^2+kaj+k^2=0 \ (8)$. \\
Via $(7) - a^2\cdot (4)$, we find $$-a(a^2-4)k-2(a^2-4)+b=0.$$ Thus, $$k = \frac{-2(a^2-4)+b}{a(a^2-4)}.$$ 
%From $(4)$, we get either $j=0$ or $j^2+kaj+k^2=0$. But $j \neq 0$ since if $j=0$ then from $(6)$ then $k=0$ impossible. Similarly, $k \neq0$. 
By substitution of $$k = \frac{-2(a^2-4)+b}{a(a^2-4)}$$ in $(4)$, we have 
\begin{equation}\label{quaddisc} (a^2-4)^3-b(a^2-4)^2-b^2=0.\end{equation} 
This is a quadratic in $b$ with discriminant equal to $$\Delta= a^2 (a^2-4)^3.$$ 
\begin{enumerate}
\item  If $p\neq 2$, then the quadratic \eqref{quaddisc} has a solution, if and only if, $a^2 -4$ is a square, say $a^2 -4 = \delta^2$. Then, $$ b = -\frac{ (a^2-4)^2 \pm a  (a^2-4) \delta }{ 2} ,$$ and thus
$$ k = \frac{-2(a^2-4)+b}{a(a^2-4)} = \frac{-a \pm \delta }{2} $$
Note that 
$$b =   (a^2-4) \delta k .$$ 
\item If $p=2$ this quadratic, then \eqref{quaddisc} has a solution, if and only if, $$a^6- a^4X -X^2 =0$$ has a root $b$ in $K$, which is equivalent to that $X^2 - X - \frac{1}{a^2}$ has a root $c$ in $K$ with $c= a^2 b$. Now, we write 
$c= \frac{C}{D}$ and $a = \frac{P}{Q}$ with $P , Q , C, D \in  \mathbb{F}_q [x]$ and $(P,Q)=1$ and $(C,D)=1$, then we have 
$$\left( \frac{C}{D} \right)^2 + \frac{C}{D}= \frac{C^2 + CD}{D^2}=\frac{C(C + D)}{D^2} =\frac{ Q^2 }{P^2}$$ 
As $(C,D)=1$, we can assume without loss of generality that $ C(C+D) = Q^2$ and $D^2=P^2$. As $p=2$, this implies that $ D=P$ and $C(C+P)=Q^2$. The polynomials $C$ and $C+P$ are coprime as $(C,D)=1$; as $D=P$ and $\mathbb{F}_q[x]$ is a prime factorisation domain, we have that $C$ and $C+P$ are therefore square, and thus that there exist $W$ and $V \in \mathbb{F}_q$ with $(W,V)=1$ such that $C= W^2$ and $C+P= V^2$. As a consequence, $P = W^2 + V^2$ and $WV = Q$. Finally, putting $Z= W+V$, we get 
$$P= Z^2 , \quad Q=W(Z+W),  C=W^2 \quad and \quad D=P,$$
with $(Z,W)=1$. Hence, 
$$c= \frac{W^2}{Z^2},$$
$$b = \frac{c}{a^2} = \frac{W^2 Q^2}{Z^2 P^2}= \frac{W^4 (Z+W)^2}{P^3}$$
and 
$$ k = \frac{b}{a^3}= \frac{ W^7(Z+W)^5}{P^6}.$$
By construction, if $a$ is of this form, then the polynomial $X^2 - X - \frac{1}{a^2}$ has a root $c=\frac{W^2}{Z^2}$. The Theorem follows.
\end{enumerate} 
\end{proof} 
The following Corollary is also well known. We choose to use the previous Theorem to reprove it directly.
\begin{corollaire} \label{kummer}
Suppose that $K$ contains a primitive third root of unity. (This is equivalent to $q\equiv 1 \mod 3$.) Then a geometric cubic extension $L/K$ is Galois, if and only if, $L/K$ purely cubic extension. In this case, $L/K$ is called a \emph{Kummer extension}.
\end{corollaire}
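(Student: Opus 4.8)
The plan is to prove the equivalence in both directions, invoking Theorem~\ref{purelycubic} for the substantive implication, exactly as the remark preceding the statement announces. I would first dispose of the parenthetical claim: since the field of constants of $K$ is $\mathbb{F}_q$, every root of unity of $K$ already lies in $\mathbb{F}_q$, and as $\mathbb{F}_q^{*}$ is cyclic of order $q-1$, the field $K$ contains a primitive cube root of unity $\xi$ if and only if $3\mid q-1$, i.e.\ $q\equiv 1\bmod 3$. Note that this forces $p\neq 3$; in particular $L/K$ is automatically separable, so Theorem~\ref{purelycubic}, Lemma~\ref{disc} and Theorem~\ref{resolvent} all apply.

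For the implication ``purely cubic $\Rightarrow$ Galois'', write $L=K(y)$ with $y^{3}=a$ for some $a\in K^{*}$ (and $a\notin(K^{*})^{3}$, by irreducibility of $X^{3}-a$). Since $p\neq 3$, the polynomial $X^{3}-a$ is separable, and since $\xi\in K$ its three roots $y,\xi y,\xi^{2}y$ all lie in $L=K(y)$; hence $L$ is the splitting field of $X^{3}-a$ over $K$, so $L/K$ is normal and separable, that is, Galois (with group $A_{3}$).

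For the converse, take a primitive element $z$ of $L/K$ with minimal polynomial $X^{3}+eX^{2}+fX+g$. If $3eg=f^{2}$, then by Theorem~\ref{purelycubic} (equivalently Corollary~\ref{linearthree}(b)) the extension $L/K$ is already purely cubic, so the implication holds trivially; assume therefore $3eg\neq f^{2}$, so that $L/K$ has a primitive element $y$ with $y^{3}-3y=a$ for the explicit $a\in K$ of Theorem~\ref{purelycubic}, which tells us that $L/K$ is purely cubic if and only if $a^{2}-4$ is a square in $K$ (when $p\neq 2$), respectively $X^{2}-X=1/a^{2}$ has a root in $K$ (when $p=2$). It therefore remains to show that each of these conditions is equivalent to $L/K$ being Galois. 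When $p\neq 2$, by Lemma~\ref{disc} the extension $L/K$ is Galois if and only if the discriminant $-27(a^{2}-4)$ of $X^{3}-3X-a$ is a square in $K$; but $(2\xi+1)^{2}=-3$, so $-27=(3(2\xi+1))^{2}$ is a square in $K$, whence $-27(a^{2}-4)$ is a square exactly when $a^{2}-4$ is, which is precisely the condition of Theorem~\ref{purelycubic}. When $p=2$, the extension $L/K$ is Galois if and only if its Galois closure has automorphism group $A_{3}$, i.e.\ by Theorem~\ref{resolvent} if and only if the quadratic resolvent of $X^{3}-3X-a$ is reducible over $K$; by Lemma~\ref{resolventl} this resolvent is $R(X)=X^{2}+aX+(1+a^{2})$, and substituting $X=aY$ and dividing by $a^{2}$ shows that $R$ is reducible over $K$ if and only if $Y^{2}+Y=1/a^{2}+1$ has a root in $K$. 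Since $q\equiv 1\bmod 3$ forces $q=2^{2m}$, we have $\mathbb{F}_4\subseteq\mathbb{F}_q\subseteq K$, so there exists $\omega\in K$ with $\omega^{2}+\omega=1$; translating $Y\mapsto Y+\omega$ then shows that $Y^{2}+Y=1/a^{2}+1$ has a root in $K$ if and only if $Y^{2}+Y=1/a^{2}$ does, i.e.\ (since $-1=1$) if and only if $X^{2}-X=1/a^{2}$ has a root in $K$ --- again precisely the condition of Theorem~\ref{purelycubic}.

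Each implication is individually short, so the real obstacle is only the bookkeeping at the two points where the hypothesis $\xi\in K$ is genuinely used: in characteristic $\neq 2$ it makes $-3$, hence $-27$, a square in $K$, while in characteristic $2$ it puts $\mathbb{F}_4$ inside $K$ and thereby identifies the Artin--Schreier classes of $1/a^{2}$ and $1/a^{2}+1$. I would also note in passing that the word ``geometric'' plays no role in the equivalence as stated, and that in the excluded case $3eg=f^{2}$ the extension is simultaneously purely cubic and, by the second paragraph, Galois, so the equivalence holds there too.
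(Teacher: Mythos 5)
Your proof is correct and follows essentially the same route as the paper's: the forward direction via the roots $y,\xi y,\xi^2 y$ lying in $L$, and the converse by reducing to the form $X^3-3X-a$ and matching the Galois criterion (square discriminant when $p\neq 2$, reducible resolvent when $p=2$) against the purely-cubic criterion of Theorem~\ref{purelycubic}, with $\xi\in K$ supplying the square root of $-3$ in odd characteristic and the Artin--Schreier identification of $1/a^2$ with $1/a^2+1$ in characteristic $2$. The only cosmetic difference is that in characteristic $2$ you translate the normalized Artin--Schreier variable by $\omega$, whereas the paper shifts a root of the resolvent by $\xi a$; these are the same idea.
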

\begin{proof} 
Suppose that $K$ contains a primitive third root of unity $\xi$. If $L/K$ is purely cubic, then we may find a primitive element $y$ such that its minimal polynomial is $y^3 = a$, for some $a \in K$. Clearly, $y$, $\xi y$, and $\xi^2 y$ are the roots of the minimal polynomial of $y$, and they are all contained in $L$. Thus, $L/K$ is Galois and $\text{Gal}(L/K) = \mathbb{Z}/3 \mathbb{Z}$.

Suppose now that $L/K$ is Galois, and let $y$ be a primitive element of $L/K$ with minimal equation $y^3 + e y^2 + fy +g=0$. By Theorem \ref{linearthree}, if $3eg= f^2$, then $L/K$ is purely cubic. Suppose then that $3eg\neq f^2$. Thus, there exists a primitive element $z$ with minimal polynomial $T(X) = X^3 - 3 X -a$. The discriminant of this polynomial is equal to $\Delta=-27(-4+a^2)$. 
\begin{itemize}
\item[$\circ$] If $p \neq 2$, then as the extension $L/K$ is Galois, the discriminant $\Delta$ is a square in $K$. As $K$ contains a primitive third root of unity, $-3$ is a square in $K$, 
%($-3$ is the discriminant of the minimal quadratic polynomial of $\xi$, $X^2 +X +1$). 
whence $(a^2-4)$ must also be a square in $K$, and by the previous Theorem, it follows that $L/K$ is purely cubic. 
\item[$\circ$] If $p =2$, then the quadratic resolvent of $T(X)$ is equal to $X^2 +aX + (1+a^2)$. As $L/K$ is Galois, this polynomial has a root in $K$, say, $\delta$. From the previous Theorem, $L/K$ is purely cubic if, and only if, $X^2 - X - \frac{1}{a^2}$ has a root in $K$, which is equivalent to $S(X)=X^2+aX+1$ having a root in $K$. Note that there is a root of $S(X)$ of the form $\delta +u$, as then \begin{align*} S(\delta +u ) &= (\delta +u)^2 + a (\delta +u) + 1 \\&= (u + \delta)^2+ a (\delta +u) +1   \\&= u^2 +au^2 +a^2.\end{align*} The element $u=\xi a$ is a solution to this equation, where $\xi$ is a root of unity, whence $L/K$ is purely cubic. 
%by the previous Theorem. 
\end{itemize} 
\end{proof} 
\begin{corollaire}\label{purely} Let $p\neq 3$.
 A purely cubic extension $L/K$ is Galois if, and only if, $K$ contains a primitive third root of unity.
 %, or equivalently, $q\equiv 1 \mod 3$.
\end{corollaire}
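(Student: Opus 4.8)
The plan is to prove both implications directly from the generating equation $y^{3}=a$ furnished by the definition of purely cubic, using the hypothesis $p\neq 3$ only to guarantee that a third root of unity $\xi$ different from $1$ exists in $\overline{K}$ (in characteristic $3$ one has $X^{2}+X+1=(X-1)^{2}$, and indeed a purely cubic extension there is inseparable, hence never Galois, so the restriction $p\neq 3$ is natural).

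For the implication ``$K$ contains a primitive third root of unity $\Rightarrow$ $L/K$ Galois'', I would take a purely cubic $L=K(y)$ with $y^{3}=a\in K$; then $a\neq 0$ and $X^{3}-a$ is the minimal polynomial of $y$, hence irreducible, and it is separable because $p\neq 3$ makes its derivative $3X^{2}$ coprime to it (as $a\neq 0$). Since $\xi\in K\subseteq L$, the three roots $y,\xi y,\xi^{2}y$ all lie in $L$, so $L$ is the splitting field over $K$ of a separable polynomial and therefore $L/K$ is Galois. This is exactly the argument already carried out in the proof of Corollary \ref{kummer}, and I would simply point to it; note that it does not use the geometric hypothesis present there.

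For the converse, suppose $L/K$ is purely cubic and Galois, and write again $L=K(y)$, $y^{3}=a\in K$, with $X^{3}-a$ irreducible over $K$. Because $L/K$ is normal and contains the root $y$, the polynomial $X^{3}-a$ splits completely in $L$; letting $\xi$ be a primitive third root of unity in $\overline{K}$ (which exists and satisfies $\xi\neq 1$ precisely because $p\neq 3$), the root $\xi y$ lies in $L$, hence $\xi=(\xi y)\,y^{-1}\in L$. Thus $K(\xi)\subseteq L$, so $[K(\xi):K]$ divides $[L:K]=3$; but $\xi$ is a root of $X^{2}+X+1$, so $[K(\xi):K]\le 2$, forcing $[K(\xi):K]=1$, i.e. $\xi\in K$.

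I do not anticipate a genuine obstacle: the only points needing care are recording $a\neq 0$ so that $X^{3}-a$ is separable, and invoking $p\neq 3$ to ensure the existence of a third root of unity distinct from $1$. One could alternatively deduce the statement from Corollary \ref{kummer} once the ``only if'' direction is in hand, but the direct argument above is shorter and sidesteps the geometric hypothesis appearing in that corollary.
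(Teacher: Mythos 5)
Your proof is correct, and it takes a genuinely different route from the paper. The paper splits into the cases $p\neq 2$ and $p=2$: for odd characteristic it invokes Lemma \ref{disc}, computing the discriminant of $X^3-\alpha$ to be $-27\alpha^2$, so that $L/K$ is Galois iff $-3$ is a square in $K$ iff $K$ contains a primitive third root of unity; for $p=2$ it instead invokes Theorem \ref{resolvent}, observing that the quadratic resolvent $X^2+\alpha X+\alpha^2$ is reducible iff $X^2+X+1$ is. Your argument replaces both cases by elementary Galois theory applied uniformly: the ``if'' direction is the splitting-field observation already present in the proof of Corollary \ref{kummer} (that $y,\xi y,\xi^2 y$ all lie in $L$ once $\xi\in K$), and the ``only if'' direction extracts $\xi=(\xi y)y^{-1}\in L$ from normality and then forces $\xi\in K$ from the incompatibility of $[K(\xi):K]\le 2$ with $[K(\xi):K]\mid 3$. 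This is shorter, avoids the characteristic-$2$ dichotomy entirely, and works over any field of characteristic $\neq 3$; what the paper's version buys in exchange is that it exercises the discriminant and resolvent criteria that the rest of Section 5 relies on. Your bookkeeping is also complete: you record that $a\neq 0$ (from irreducibility) so that $X^3-a$ is separable when $p\neq 3$, which is exactly the point where the hypothesis $p\neq 3$ enters.
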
 
\begin{proof} 
Suppose that $L/K$ is purely cubic, so that $y^3 = \alpha$ for some $\alpha \in K$.
\begin{enumerate} \item Case 1: $p\neq 2$. By Lemma \ref{disc}, $L/K$ is Galois, if and only if, $d_{L/K} =-27\alpha^2$ equal to a square in $K$. This is equivalent to $-3$ being a square in $K$, which in turn is equivalent to $K$ containing a primitive third root of unity. 
\item Case 2: $p = 2$. By Lemma \ref{resolvent}, the extension $L/K$ is Galois if, and only if, the resolvent polynomial $R(X)= X^2 + \alpha  X + \alpha^2$ is reducible, which is true if, and only if, $X^2 +X +1$ is reducible. That is, $K$ contains a primitive third root of unity.
%On the other hand, $q \equiv 1 \mod 3$ is equivalent to the existence of an element $\xi \in \mathbb{F}_q$ such that $\xi^2 + \xi +1=0$, i.e., a primitive $3^{rd}$ root of unity in $\mathbb{F}_q$.	
\end{enumerate}
Thus, in either case, the result follows.
\end{proof}
%Kummer theory will also be used in our characterisation of extensions with generating equation $y^3 - 3y - a = 0$. 
We will give the equivalent form of the next result later for extensions with generating equation $y^3 - 3y - a = 0$. For this reason, we recall some of the well-known results from the theory (for the proof, see \cite[Proposition 5.8.7]{Vil}).
\begin{lemma} \label{genK}
Let $q \equiv 1 \mod 3$. Let $L_i = K(z_i )$ ($i = 1, 2$) be two cyclic extensions of $K$ of degree 3,
given by generating equations $z_i^3 = a_i$ . The following statements are equivalent:
\begin{enumerate}
\item $L_1 = L_2$.
\item $z_1 = z_2^jc$ for all $1 \leq j \leq 2$ and $c \in K$.
\item $a_1 = a_2^jc^3$ for all $1 \leq j \leq 2$ and $c \in K$.
\end{enumerate}
\end{lemma}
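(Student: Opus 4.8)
The plan is to treat this as a standard exercise in Kummer theory for the cyclic group of order $3$, exploiting the fact that $q \equiv 1 \bmod 3$ guarantees a primitive cube root of unity $\xi \in \mathbb{F}_q \subseteq K$, so that both $L_i = K(z_i)$ are cyclic Kummer extensions of degree $3$ sitting inside a fixed algebraic closure $\overline{K}$ of $K$. Throughout I would record the elementary facts that $a_i \neq 0$ (otherwise $X^3 - a_i$ is not irreducible), hence $z_i \neq 0$, and that $X^3 - a_i$ is the minimal polynomial of $z_i$ over $K$ since $[L_i:K] = 3$; in particular $\{1, z_i, z_i^2\}$ is a $K$-basis of $L_i$ and $z_i, z_i^2 \notin K$. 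I read each of the three conditions with the quantifier ``there exists $j \in \{1,2\}$ and $c \in K$''.

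For the cycle of implications I would first dispatch $(2) \Leftrightarrow (3)$, which is purely formal: if $z_1 = z_2^j c$ with $c \in K$, then cubing gives $a_1 = z_1^3 = (z_2^3)^j c^3 = a_2^j c^3$; conversely, if $a_1 = a_2^j c^3$ then $\bigl(z_1/(c z_2^j)\bigr)^3 = a_1/(c^3 a_2^j) = 1$, so $z_1/(c z_2^j)$ is a cube root of unity in $\overline{K}$ and hence lies in $\{1,\xi,\xi^2\} \subseteq K$, giving $z_1 = (\zeta c) z_2^j$ with $\zeta c \in K$. The direction $(2) \Rightarrow (1)$ is immediate: $z_1 = z_2^j c \in K(z_2) = L_2$ forces $L_1 = K(z_1) \subseteq L_2$, and equality follows since both have degree $3$ over $K$.

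The substantive step is $(1) \Rightarrow (2)$. Write $L := L_1 = L_2$ and fix the generator $\sigma$ of $\Gal(L/K) \cong \mathbb{Z}/3\mathbb{Z}$ normalized by $\sigma(z_2) = \xi z_2$. Since $\sigma$ fixes $a_1 = z_1^3$, we get $\bigl(\sigma(z_1)/z_1\bigr)^3 = 1$, so $\sigma(z_1) = \xi^m z_1$ for some $m \in \{0,1,2\}$; and $m \neq 0$, for otherwise $z_1$ would be $\sigma$-fixed, hence in $K$, contradicting $[K(z_1):K] = 3$. Now expand $z_1 = \alpha + \beta z_2 + \gamma z_2^2$ in the basis $\{1, z_2, z_2^2\}$ and apply $\sigma$: comparing coefficients in $\xi^m(\alpha + \beta z_2 + \gamma z_2^2) = \alpha + \beta \xi z_2 + \gamma \xi^2 z_2^2$ yields $\alpha = 0$ (since $\xi^m \neq 1$) together with $\beta(\xi^m - \xi) = 0$ and $\gamma(\xi^m - \xi^2) = 0$. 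When $m = 1$ this forces $\gamma = 0$, so $z_1 = \beta z_2$; when $m = 2$ it forces $\beta = 0$, so $z_1 = \gamma z_2^2$. In either case $z_1 = c z_2^j$ with $j \in \{1,2\}$ and $c \in K$ (and $c \neq 0$ since $z_1 \neq 0$), which is $(2)$.

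I do not anticipate a genuine obstacle here; the only point requiring a little care is keeping straight that the relevant cube roots of unity all lie in $K$ precisely because $q \equiv 1 \bmod 3$, which is exactly the hypothesis that makes Kummer theory applicable. Alternatively, one may simply cite \cite[Proposition 5.8.7]{Vil}, but the argument above is short enough to include in full.
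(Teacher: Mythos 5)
Your proof is correct and is the standard Kummer-theoretic argument: the formal equivalence of (2) and (3) via cubing and the fact that all cube roots of unity lie in $K$, together with the eigenvector computation $\sigma(z_1)=\xi^m z_1$ in the basis $\{1,z_2,z_2^2\}$ for the substantive direction. The paper itself offers no proof of this lemma, deferring entirely to Proposition 5.8.7 of the cited reference (Vil), and your argument is precisely the one that reference supplies, so there is nothing to compare beyond noting that your write-up is self-contained where the paper merely cites.
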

The next Theorem gives ramification data, genus and Galois action for Kummer extensions \cite[Proposition 3.7.3]{Sti}.
\begin{theoreme} \label{KGal}
Let $K/\mathbb{F}_q$ be an algebraic function field of characteristic $p > 0$ with $q \equiv 1 \mod 3$. Suppose that $a \in K $ is an element which satisfies the condition
$$a\neq  w^3, \ for \ all \ w \in K. $$
%\quad\text{and}\quad a \notin \mathbb{F}_q.$$
Let $L= K(y)$ with $y^3 = a$, so that $L/K$ is a Kummer extension. We then have:
\begin{enumerate}
\item $L/K$ is a cyclic Galois extension of degree $3$. The automorphisms of $L/K$ are given by $\sigma (y) = \xi y$, with $\xi $ a primitive $3^{rd}$ root of unity.
\item A place $\mathfrak{p}$ of $K$ is ramified in $L/K$ if, and only if, $(v_{\mathfrak{p}} (a ),3)=1$. For a place $\mathfrak{p}$ of $K$ ramified in $L$, denote by $\mathfrak{P}$ the unique place of $L$ lying over $\mathfrak{p}$. Then the differential exponent $d(\mathfrak{P}|\mathfrak{p})$ is given by
$$d(\mathfrak{P}|\mathfrak{p}) = 2.$$
\item If at least one place $\mathfrak{Q}$ satisfies $v_{\mathfrak{Q}} (a)>0$, then $\mathbb{F}_q$ is algebraically closed in $L$, and $$g_L = 3 g_K  -2 + \sum_{(v_\mathfrak{p}(a),3)=1}  deg(\mathfrak{p}),$$
where $g_L$ (resp. $g_K$) is the genus of $L/\mathbb{F}_q$ (resp. $K/\mathbb{F}_q$).
\end{enumerate}
\end{theoreme}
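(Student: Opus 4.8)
\emph{Proof proposal.} The plan is to follow the classical route for Kummer extensions, treating the three assertions in turn. For \textbf{(1)}, observe first that $q\equiv 1\bmod 3$ forces $p\neq 3$ and guarantees that $\mathbb{F}_q$ already contains a primitive cube root of unity $\xi$. The polynomial $X^3-a\in K[X]$ has no root in $K$ (a root would be a $w$ with $a=w^3$), and being cubic it is therefore irreducible, so $[L:K]=3$; its three roots $y,\xi y,\xi^2 y$ all lie in $L$, so $L/K$ is normal and separable, hence Galois, and $y\mapsto\xi y$ extends to a $K$-automorphism of order $3$ generating $\mathrm{Gal}(L/K)\cong\mathbb{Z}/3\mathbb{Z}$.

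For \textbf{(2)}, I would work locally: fix a place $\mathfrak{p}$ of $K$ and a place $\mathfrak{P}$ above it in $L$, with $e=e(\mathfrak{P}|\mathfrak{p})\in\{1,3\}$ since $[L:K]$ is prime. Applying $v_{\mathfrak{P}}$ to $y^3=a$ gives $3\,v_{\mathfrak{P}}(y)=e\,v_{\mathfrak{p}}(a)$; if $3\nmid v_{\mathfrak{p}}(a)$ this forces $e=3$, so $\mathfrak{p}$ is totally ramified. Conversely, if $3\mid v_{\mathfrak{p}}(a)$, pick $u\in K$ with $3\,v_{\mathfrak{p}}(u)=v_{\mathfrak{p}}(a)$ and replace the generating data by $a'=au^{-3}$, $y'=yu^{-1}$, so that $L=K(y')$, $y'^{\,3}=a'$ and $v_{\mathfrak{p}}(a')=0$; since $\mathrm{char}\,\mathbb{F}_{\mathfrak{p}}=p\neq 3$ and $\overline{a'}\neq 0$, the reduction $X^3-\overline{a'}$ is separable over $\mathbb{F}_{\mathfrak{p}}$, so by Hensel's lemma / Kummer's theorem $\mathfrak{p}$ is unramified (splitting according to whether $\overline{a'}$ is a cube in $\mathbb{F}_{\mathfrak{p}}$). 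This establishes the ramification criterion; and for a ramified $\mathfrak{p}$ we have $e=3$ with $p\nmid e$, so the extension is tame, whence $d(\mathfrak{P}|\mathfrak{p})=e-1=2$, while the fundamental identity forces a single $\mathfrak{P}$ above $\mathfrak{p}$ with $f(\mathfrak{P}|\mathfrak{p})=1$ --- a point needed below.

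For \textbf{(3)}, the constant field $\mathbb{F}_{q'}$ of $L$ satisfies $[\mathbb{F}_{q'}:\mathbb{F}_q]\mid[L:K]=3$; were it equal to $3$, then $L=\mathbb{F}_{q'}K$ would be a constant extension and hence unramified at every place, so it suffices to produce one ramified place. By the criterion in (2) this amounts to exhibiting a place $\mathfrak{p}$ with $\gcd(v_{\mathfrak{p}}(a),3)=1$, which the hypothesis on $\mathfrak{Q}$ provides (after the harmless cube-adjustment of $a$ used above); thus $\mathbb{F}_q$ is algebraically closed in $L$. It then remains to apply the Riemann--Hurwitz formula $2g_L-2=3(2g_K-2)+\sum_{\mathfrak{P}}d(\mathfrak{P}|\mathfrak{p})\deg\mathfrak{P}$: each ramified $\mathfrak{p}$ has a single $\mathfrak{P}$ above it with $f=1$ and $d(\mathfrak{P}|\mathfrak{p})=2$, so the different contributes $2\sum_{(v_{\mathfrak{p}}(a),3)=1}\deg\mathfrak{p}$, and solving for $g_L$ yields the stated genus formula.

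The step I expect to be the main obstacle is the local analysis in \textbf{(2)}, specifically the converse direction: showing that $3\mid v_{\mathfrak{p}}(a)$ forces $\mathfrak{p}$ to be unramified requires reducing $a$ to a $\mathfrak{p}$-unit and correctly invoking Hensel/Kummer, and pinning down the differential exponent as exactly $2$ rests on tameness, which is precisely where the standing hypothesis $q\equiv 1\bmod 3$ (equivalently $p\neq 3$) is indispensable. A secondary subtlety is the constant-field argument in \textbf{(3)}, where one must be careful to extract a genuinely ramified place; everything else is formal bookkeeping with Riemann--Hurwitz.
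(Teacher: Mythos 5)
The paper does not prove this statement at all: it is quoted as a known result with the citation to Stichtenoth, Proposition 3.7.3, so there is no in-paper argument to compare against. Your proposal is the standard textbook proof, and parts (1) and (2) are correct as written: the irreducibility of $X^3-a$, the Galois action via $\xi$, the valuation computation $3v_{\mathfrak{P}}(y)=e\,v_{\mathfrak{p}}(a)$ forcing $e=3$ when $3\nmid v_{\mathfrak{p}}(a)$, the reduction to a $\mathfrak{p}$-unit plus Kummer's theorem for the converse, and tameness giving $d(\mathfrak{P}|\mathfrak{p})=e-1=2$ are all exactly right, as is the Riemann--Hurwitz bookkeeping once the constant field is known.

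There is, however, a genuine gap in part (3), at the step where you claim that the hypothesis ``some $\mathfrak{Q}$ has $v_{\mathfrak{Q}}(a)>0$'' yields, ``after the harmless cube-adjustment,'' a place with $\gcd(v_{\mathfrak{p}}(a),3)=1$. It does not: replacing $a$ by $au^{-3}$ changes every valuation by a multiple of $3$, so it can never convert a valuation divisible by $3$ into one coprime to $3$. Indeed the implication you need is false as stated --- take $K=\mathbb{F}_q(x)$ and $a=ux^3$ with $u\in\mathbb{F}_q^*$ a non-cube; then $a$ is not a cube in $K$, $v_{(x)}(a)=3>0$, yet $L=K(\sqrt[3]{u})=\mathbb{F}_{q^3}(x)$ is a constant extension. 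The defect is really in the theorem as transcribed: Stichtenoth's actual hypothesis for this part is the existence of a place $\mathfrak{Q}$ with $\gcd(v_{\mathfrak{Q}}(a),3)=1$, i.e.\ a ramified place. With that corrected hypothesis your argument for (3) --- a constant extension is unramified everywhere, so one ramified place forces $\mathbb{F}_q$ to be algebraically closed in $L$, after which Riemann--Hurwitz gives the genus --- is complete. You should either flag the misstated hypothesis or prove (3) under the correct one; as it stands the sentence deducing a ramified place from $v_{\mathfrak{Q}}(a)>0$ is a non sequitur.
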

We recall when an Kummer extension is constant.
\begin{theorem} \label{Kconstant} Suppose that $q \equiv 1 \text{ mod } 3$, and that $L/\mathbb{F}_q(x)$ is cubic and Galois, it is thus a Kummer extension and there is a primitive element $y$ of $L/\mathbb{F}_q(x)$ having irreducible polynomial $T(X)=X^3 -b$. Then, $L/\mathbb{F}_q(x)$ is a constant extension (whence $L = \mathbb{F}_{q^3}(x)$) if, and only if, $b = u \beta^3$ with $u\in \mathbb{F}_q$, $u$ is not a cube and $\beta \in \mathbb{F}_q(x)$.
\end{theorem}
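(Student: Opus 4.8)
The plan is to obtain both implications directly from Lemma~\ref{genK}, after setting up a fixed ``reference'' constant Kummer extension. First I would choose, once and for all, a non-cube $u\in\mathbb{F}_q$ (one exists because $q\equiv 1\bmod 3$ forces the cube map on $\mathbb{F}_q^{\ast}$ to be three-to-one). Since $q\equiv 1\bmod 3$ the field $\mathbb{F}_q$ contains a primitive cube root of unity and has characteristic $\neq 3$, so $X^3-u$ is separable and has no root in $\mathbb{F}_q$, hence is irreducible over $\mathbb{F}_q$ and, $\mathbb{F}_q$ being algebraically closed in $\mathbb{F}_q(x)$, also over $K:=\mathbb{F}_q(x)$. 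Fixing a root $\theta$ inside a fixed algebraic closure of $K$, we get $\mathbb{F}_q(\theta)=\mathbb{F}_{q^3}$ and hence $K(\theta)=\mathbb{F}_{q^3}(x)$, a cyclic (in fact constant) cubic extension of $K$ with generating equation $\theta^3=u$. Note that $X^3-b$ is irreducible over $K$ by hypothesis, so both $K(y)$ and $K(\theta)$ fall under the scope of Lemma~\ref{genK}.

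For ($\Leftarrow$): if $b=u\beta^3$ with $\beta\in\mathbb{F}_q(x)$, then taking $j=1$, $c=\beta$ in Lemma~\ref{genK} (applied to $y^3=b$ and $\theta^3=u$) yields $K(y)=K(\theta)$, that is, $L=\mathbb{F}_{q^3}(x)$, a constant extension. For ($\Rightarrow$): if $L=\mathbb{F}_{q^3}(x)$, then $K(y)=L=\mathbb{F}_{q^3}(x)=K(\theta)$, so Lemma~\ref{genK} provides $j\in\{1,2\}$ and $c\in\mathbb{F}_q(x)$ with $b=u^{j}c^{3}$; since $\mathbb{F}_q^{\ast}$ is cyclic and $\gcd(j,3)=1$, the element $u^{j}$ is again a non-cube in $\mathbb{F}_q$, so $b=u^{j}c^{3}$ is of the required shape.

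The only remaining point is to reconcile the condition ``$b=u^{j}c^{3}$ for some $j\in\{1,2\}$'' coming out of Lemma~\ref{genK} with ``$b=u'\beta^{3}$ with $u'\in\mathbb{F}_q$ a non-cube'' as in the statement: one direction is the observation just made, and for the other one uses that the non-cubes of $\mathbb{F}_q^{\ast}$ form exactly the two cosets $u(\mathbb{F}_q^{\ast})^{3}$ and $u^{2}(\mathbb{F}_q^{\ast})^{3}$, so any non-cube $u'$ equals $u^{j}v^{3}$ for some $j\in\{1,2\}$ and $v\in\mathbb{F}_q^{\ast}$, and one absorbs $v$ into $\beta$. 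I do not expect a real obstacle here: all the substance is packaged in Lemma~\ref{genK} (the dependence of a cubic Kummer extension on its defining datum modulo cubes and squaring), and what remains is elementary finite-field bookkeeping together with the standard irreducibility and degree facts recorded in the first paragraph.
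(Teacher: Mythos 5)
Your proof is correct, but it follows a genuinely different route from the paper's for the ``only if'' direction. The paper argues by contraposition via ramification: if $b$ is not of the form $u\beta^3$ with $u$ a non-cube constant, then (using irreducibility of $X^3-b$) some place $\mathfrak{p}$ has $(v_\mathfrak{p}(b),3)=1$, hence ramifies by Kummer theory, and since constant extensions are unramified, $L/\mathbb{F}_q(x)$ cannot be constant. You instead fix a reference constant extension $K(\theta)$ with $\theta^3=u$ for a chosen non-cube $u\in\mathbb{F}_q$, identify it with $\mathbb{F}_{q^3}(x)$, and invoke Lemma~\ref{genK} to conclude $b=u^jc^3$; the finite-field bookkeeping you describe (non-cubes forming the two nontrivial cosets of the cubes in $\mathbb{F}_q^{\ast}$, $u^j$ remaining a non-cube for $j\in\{1,2\}$) is exactly right, and your setup correctly verifies the hypotheses of Lemma~\ref{genK} (irreducibility of $X^3-u$ over $K$ from $\mathbb{F}_q$ being algebraically closed in $\mathbb{F}_q(x)$). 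The ``if'' direction is essentially the paper's substitution $z=y/\beta$ repackaged through the same lemma. What each approach buys: yours is purely algebraic, avoids any appeal to the unramifiedness of constant extensions, and leans entirely on the classification of Kummer generators modulo cubes; the paper's ramification argument is the one that generalizes to the non-Kummer standard form $y^3-3y=a$ treated later (and parallels Theorem~\ref{ASconstant}), which is presumably why the authors chose it.
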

\begin{proof}
If $b = u \beta^3$ with $u\in \mathbb{F}_q$ where $u \in \mathbb{F}_q$ is not a cube and $\beta \in \mathbb{F}_q(x)$, then $z=y/\beta$ generates $L/K$ and $z^3 = u$, by definition of the constant field and the generating equation $z^3 = u$ (Note that if $u$ was a cube then $X^3+u$ would not be irreducible), $L$ is obtained by adjoining constants to $\mathbb{F}_q(x)$, and $L/\mathbb{F}_q(x)$ is constant. Conversely, suppose that $b\neq u \beta^3$ with $u\in \mathbb{F}_q$ where $u$ is not a cube and $\beta \in \mathbb{F}_q(x)$. %By irreducibility of $T(X)$, $b$ cannot be a cube in $\mathbb{F}_q(x)$. 
Then, as  the polynomial is irreducible, there exists a place $\mathfrak{p}$ of $\mathbb{F}_q(x)$ such that $(v_\mathfrak{p}(b),3) = 1$. By Kummer theory \cite[Theorem 5.8.12]{Vil}, it follows that $\mathfrak{p}$ is (fully) ramified in $L$. As constant extensions are unramified (see for example \cite[Theorem 6.1.3]{Vil}), we find that $L/\mathbb{F}_q(x)$ cannot be constant. 
\end{proof}
We finish this section by giving an integral basis for Kummer extensions (\cite[Theorem 3]{MadMad}).
\begin{theoreme}
Let $L$ be an Kummer extension of $\mathbb{F}_q (x)$, and let $y^3 = a$, where the factorisation of $a$ is given by $$a = \prod_{i=1}^l P_i^{\lambda_i},$$ where $P_i \in \mathbb{F}_q[x]$, $1\leq \lambda_i \leq 2$ for each $i=1,\ldots,l$.  
(This is known to exist;  see \cite[Example 5.8.9]{Vil}.) Then $\{1, \frac{y}{S_1},  \frac{y^2}{S_2}\}$ is an integral basis of $L$ over $\mathbb{F}_q(x)$, where for each $j=1,2$,
$$ S_j= \prod_{i=1}^l P_i^{r_{i,j}},$$ with $r_{i,j} = \left\lfloor \frac{j \lambda_i}{3} \right\rfloor$, where $ \left\lfloor x \right\rfloor$ denotes the integral part of $x$.
\end{theoreme}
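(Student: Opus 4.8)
The plan is to realise the free $\mathbb{F}_q[x]$-module
\[
M=\mathbb{F}_q[x]\cdot 1+\mathbb{F}_q[x]\cdot\tfrac{y}{S_1}+\mathbb{F}_q[x]\cdot\tfrac{y^2}{S_2}
\]
inside the integral closure $\mathcal{O}_L$ of $\mathbb{F}_q[x]$ in $L$, and then to force $M=\mathcal{O}_L$ by a discriminant count. Note at the outset that $q\equiv 1\bmod 3$ forces $p\neq 3$, so $-27$ is a unit in $\mathbb{F}_q[x]$; this is used below. For the inclusion $M\subseteq\mathcal O_L$, observe that $r_{i,j}=\lfloor j\lambda_i/3\rfloor$ gives $3r_{i,j}\le j\lambda_i$, whence
\[
\Bigl(\tfrac{y^j}{S_j}\Bigr)^3=\frac{a^j}{S_j^3}=\prod_{i=1}^l P_i^{\,j\lambda_i-3r_{i,j}}\in\mathbb{F}_q[x]\qquad(j=1,2),
\]
so $\frac{y}{S_1}$ and $\frac{y^2}{S_2}$ satisfy monic polynomials over $\mathbb{F}_q[x]$ and hence lie in $\mathcal{O}_L$; together with $1$ they are an $\mathbb{F}_q(x)$-basis of $L$, so $M\subseteq\mathcal{O}_L$ is free of rank $3$. (In fact $1\le\lambda_i\le 2$ gives $r_{i,1}=0$, so $S_1=1$.)

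Next I would compute the discriminant of this basis by rescaling the standard one. Since $\mathrm{disc}(1,y,y^2)=\mathrm{disc}(X^3-a)=-27a^2$ and scaling the three basis vectors by $c_1,c_2,c_3$ multiplies the discriminant by $(c_1c_2c_3)^2$,
\[
\mathrm{disc}\Bigl(1,\tfrac{y}{S_1},\tfrac{y^2}{S_2}\Bigr)=\frac{-27a^2}{(S_1S_2)^2}=-27\prod_{i=1}^l P_i^{\,2(\lambda_i-r_{i,1}-r_{i,2})}.
\]
Checking the two cases $\lambda_i=1$ (then $r_{i,1}=r_{i,2}=0$) and $\lambda_i=2$ (then $r_{i,1}=0$, $r_{i,2}=1$) gives $\lambda_i-r_{i,1}-r_{i,2}=1$ in both, so $\mathrm{disc}(M)=-27\prod_{i=1}^l P_i^2$, which generates the ideal $\bigl(\prod_{i=1}^l P_i^2\bigr)$ of $\mathbb{F}_q[x]$.

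To finish, compare with the discriminant of $\mathcal{O}_L$ itself. As $1\le\lambda_i\le 2$, each $\lambda_i$ is coprime to $3$, so by Theorem \ref{KGal}(2) the finite places of $\mathbb{F}_q(x)$ ramified in $L$ are exactly $P_1,\dots,P_l$, each totally ramified with $e=3$, $f=1$ and differential exponent $2$; consequently the discriminant ideal of $\mathcal{O}_L$ over $\mathbb{F}_q[x]$ is $\prod_{i=1}^l P_i^{\,f(\mathfrak P_i\mid P_i)\,d(\mathfrak P_i\mid P_i)}=\prod_{i=1}^l P_i^2$. Since $\mathrm{disc}(M)=[\mathcal{O}_L:M]^2\cdot\mathrm{disc}(\mathcal{O}_L)$ as ideals of the PID $\mathbb{F}_q[x]$, and the two discriminant ideals coincide, $[\mathcal{O}_L:M]$ is a unit, i.e.\ $M=\mathcal{O}_L$, which is the assertion that $\{1,\tfrac{y}{S_1},\tfrac{y^2}{S_2}\}$ is an integral basis.

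The only substantive ingredient is the value $2$ for the differential exponent at each $P_i$ in the last step: this is tame ramification ($p\neq 3=e$), and I would simply cite Theorem \ref{KGal}(2) for it rather than recompute. Everything else is routine floor-function bookkeeping; the only points demanding a little attention are remembering that $-27$ is a unit (which fails precisely when $p=3$, a case excluded by $q\equiv 1\bmod 3$) and invoking that $\mathbb{F}_q[x]$ is a PID in order to pass from equality of discriminant ideals to equality of modules.
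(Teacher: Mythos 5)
Your proposal is correct. Note first that the paper itself does not prove this statement: it is quoted verbatim from \cite[Theorem 3]{MadMad}, so there is no in-paper argument to compare against. Your discriminant-comparison proof is a complete and standard one: the integrality of $y^j/S_j$ follows from $3r_{i,j}\le j\lambda_i$; the rescaling identity $\mathrm{disc}(1,y/S_1,y^2/S_2)=-27a^2/(S_1S_2)^2$ together with the case check $\lambda_i-r_{i,1}-r_{i,2}=1$ gives $\mathrm{disc}(M)=(\prod_i P_i^2)$ up to the unit $-27$; and Theorem \ref{KGal}(2) (tame, totally ramified, differential exponent $2$ at exactly the $P_i$, since each $\lambda_i$ is coprime to $3$) identifies this with the discriminant ideal of $\mathcal{O}_L$ over $\mathbb{F}_q[x]$, forcing the index $[\mathcal{O}_L:M]$ to be a unit. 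The two points that genuinely need care are both handled: $-27$ is a unit because $q\equiv 1\bmod 3$ excludes $p=3$, and the possible ramification of the infinite place is irrelevant because the discriminant ideal over $\mathbb{F}_q[x]$ records only finite places. This is the same ramification-theoretic bookkeeping that underlies the cited result, so nothing further is needed.
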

\section{Extensions with generating equation $y^3 -3y-a=0$}
\subsection{The Galois criterion}
In the next Theorem, we investigate when a cubic extension with generating equation $y^3-3y -a=0$ is Galois, similarly to Theorem \ref{AS} and Corollary \ref{kummer} for Artin-Schreier and Kummer extensions. 
%For two polynomials $f(x),g(x) \in \mathbb{F}_q[x]$, we let $(f(x),g(x))$ denote the greatest common divisor of $f(x)$ and $g(x)$.
\begin{theorem}\label{qneq1mod3Galois} Let $q \equiv -1 \mod 3$. Then,  a cubic extension $L/\mathbb{F}_q(x)$ is Galois if, and only if,
\begin{enumerate}[(a)] 
\item  If $p\neq 2$, $L/\mathbb{F}_q(x)$ has a primitive element $z$ with minimal polynomial of the form 
$$T(X) = X^3 -3 X - b,$$ where $b = \frac{P}{Q}$, for some $P,Q \in \mathbb{F}_q[x]$ such that $(P,Q) = 1$, and
$$\left\{ \begin{array}{lll} P&=& 2(A^2- 3^{-1} B^2)\\
Q &=& A^2+ 3^{-1} B^2 \end{array}\right.$$ 
%$$2P(x) = c A(x)^2+ d B(x)^2, \quad\quad \text{and}\quad\quad  4Q(x) =cA(x)^2-  d B(x)^2$$ 
for some $A,B \in \mathbb{F}_q[x]$, $(A,B)=1$.
%, and $c \in \mathbb{F}_q^*$. \\ %$cd=-27^{-1}$.

\item If $p=2$, there exists a primitive element $z$ of $L/\mathbb{F}_q(x)$ with minimal polynomial 
$$T(X)=X^3-3X-b,$$ where $b = \frac{P}{Q}$, for some $P,Q \in \mathbb{F}_q[x]$ such that $(P,Q) = 1$, and
$$\left\{ \begin{array}{lll} P&=& A^2\\
Q &=& A^2 + A B+ B^2 
,\end{array}\right.$$ 
for some $A$ and $B\in \mathbb{F}_q[x]$ and $(A,B)=1$.
%$$b = \frac{A(x)^2}{A(x)^2 + A(x) B(x)+ B(x)^2}$$ with $A(x)$ and $B(x)\in k[x]$ and $(A(x),B(x))=1$.
%and there exists (at least) one $w \in K$ such that $w \in K$ such that $w^2-w=\frac{1}{b^2}+1$. 
\end{enumerate} 
\end{theorem}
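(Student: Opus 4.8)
The plan is to treat the two characteristics in parallel, reducing each to an explicit parametrisation. Since $q\equiv -1\pmod{3}$, the constant field $\mathbb{F}_q$ --- and hence $\mathbb{F}_q(x)$ --- contains no primitive cube root of unity; so by Corollary \ref{purely} a Galois cubic extension of $\mathbb{F}_q(x)$ is never purely cubic, and by Corollary \ref{linearthree} every Galois cubic $L/\mathbb{F}_q(x)$ therefore has a primitive element $z$ with minimal polynomial $T(X)=X^3-3X-b$ for some $b\in\mathbb{F}_q(x)$ (irreducibility of $T$ excludes the finitely many $b$ with $b(b^2-4)=0$). It thus suffices to decide, for such $b$, when $L/\mathbb{F}_q(x)$ is Galois and to match the answer with the stated normal form; the converse implication (a $b$ of the stated form yields a Galois extension) will then be a short computation.

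Take first the case $p\neq 2$. By Lemma \ref{disc}, $L/\mathbb{F}_q(x)$ is Galois if and only if the discriminant $D=-27(b^2-4)$ is a square in $\mathbb{F}_q(x)$. Writing $-27=9\cdot(-3)$ and noting that $-3$ is a non-square in $\mathbb{F}_q$ precisely because $q\equiv -1\pmod{3}$, hence a non-square in $\mathbb{F}_q(x)$, this is equivalent to $-3(b^2-4)$ being a square, i.e. to the existence of $v\in\mathbb{F}_q(x)$ with $b^2+3v^2=4$ (and necessarily $v\neq 0$, as $b\neq\pm 2$). So $(b,v)$ is a rational point of the conic $\mathcal{C}\colon U^2+3V^2=4$ other than $(\pm 2,0)$. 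I would parametrise $\mathcal{C}$ by the pencil $V=m(U-2)$ of lines through the rational point $(2,0)$: substituting, cancelling $U-2\neq 0$, and writing the slope $m=A/B$ with $A,B\in\mathbb{F}_q[x]$, $(A,B)=1$, gives exactly $b=\dfrac{2(A^2-3^{-1}B^2)}{A^2+3^{-1}B^2}=P/Q$, the form in the statement. That $(P,Q)=1$ is automatic: up to the unit $3$, $\gcd(P,Q)$ divides $\gcd(6A^2,2B^2)$, which is a unit since $2,3\in\mathbb{F}_q^{*}$ and $(A,B)=1$. For the converse, if $b=2(3A^2-B^2)/(3A^2+B^2)$ then $b^2-4=-48A^2B^2/(3A^2+B^2)^2$, so $D=-27(b^2-4)=\bigl(36AB/(3A^2+B^2)\bigr)^2$ is a square and Lemma \ref{disc} applies.

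Now take $p=2$; here the discriminant test is unavailable, so I would use the quadratic resolvent: by Lemma \ref{resolventl} the resolvent of $T(X)=X^3-3X-b=X^3+X+b$ is $R(X)=X^2+bX+(1+b^2)$, and by Theorem \ref{resolvent}(a) $L/\mathbb{F}_q(x)$ is Galois if and only if $R$ is reducible over $\mathbb{F}_q(x)$; the substitution $X=bY$ shows this happens iff $1+1/b^2$ lies in the image of the Artin--Schreier operator $\wp(Y)=Y^2-Y$. For the forward direction I would write $b=P/Q$ in lowest terms, expand $1+1/b^2=(P+Q)^2/P^2$, take a solution $\wp(U/S)=(P+Q)^2/P^2$ in lowest terms, and use coprimality, unique factorisation in $\mathbb{F}_q[x]$, and the characteristic-$2$ facts that squaring is bijective on $\mathbb{F}_q$ and that $u^2+v^2=(u+v)^2$, to deduce successively that $S=P$ up to a unit, that $U$ and $U+P$ are coprime squares $A^2$ and $C^2$, hence $P=(A+C)^2$ and $P+Q=AC$; putting $D=A+C$ then yields $b=D^2/(D^2+DA+A^2)$ with $(D,A)=1$ and $(P,Q)=1$ automatic, which is the stated form. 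Conversely, if $b=A^2/(A^2+AB+B^2)$ with $(A,B)=1$ then $1/b=1+\wp(B/A)$, so $1+1/b^2=\wp(B/A)^2=\wp\bigl((B/A)^2\bigr)$ is in the image of $\wp$; hence $R$ is reducible and $L/\mathbb{F}_q(x)$ is Galois.

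The main obstacle, in both characteristics, is this last descent in the forward direction: passing from the bare solvability of a conic (resp. Artin--Schreier) equation over $\mathbb{F}_q(x)$ to the precise polynomial normal form for $b$, together with the claimed coprimality of $A,B$ and of the numerator and denominator. The argument is elementary but fiddly --- one must track the unit $3\in\mathbb{F}_q^{*}$ and the factor $2$, and in characteristic $2$ use that $1\notin\wp(\mathbb{F}_q(x))$ (because $X^2+X+1$ has no root when $q\equiv -1\pmod{3}$) as well as the bijectivity of squaring on $\mathbb{F}_q$. I expect this bookkeeping, especially the characteristic-$2$ case, to be the portion of the proof demanding the most care.
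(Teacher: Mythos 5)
Your proposal is correct, and its overall architecture coincides with the paper's: reduce to the form $X^3-3X-b$ via Corollaries \ref{linearthree} and \ref{purely}, test whether the extension is Galois by the discriminant (Lemma \ref{disc}) when $p\neq 2$ and by reducibility of the quadratic resolvent (Theorem \ref{resolvent}) when $p=2$, and then descend to the stated parametrisation of $b=P/Q$. The genuine difference is in how you execute the descent in part (a): the paper writes the square condition as $R^2=-27(P-2Q)(P+2Q)$, observes that $P-2Q$ and $P+2Q$ are coprime, and invokes unique factorisation to make each factor a unit times a square, which entails some bookkeeping with constants $c,d$ satisfying $cd=-27^{-1}$ up to squares; you instead parametrise the conic $U^2+3V^2=4$ by the pencil of lines through the rational point $(2,0)$ and read $b$ off the slope $m=A/B$ written in lowest terms. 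The two are equivalent, but the conic parametrisation produces the coprime pair $(A,B)$ and the coprimality of the displayed numerator and denominator in one stroke, with less unit-chasing. In part (b) your route is essentially the paper's: the paper first substitutes $w\mapsto w+1/b$ to reduce to $\wp(w')=(b+1)/b$ and then matches lowest-terms representations, whereas you keep $\wp(Y)=(P+Q)^2/P^2$ and extract polynomial square roots afterwards; the points you flag as delicate (units of $\mathbb{F}_q$ are squares in characteristic $2$, $U$ and $U+P$ are coprime so each is a square, hence $P=(A+C)^2$ and $P+Q=AC$) are exactly the steps needed, and they do go through.
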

 \begin{proof} 
 Let $q \equiv -1 \mod 3$. 
 \begin{enumerate}[(a)] 
\item Suppose that $L/\mathbb{F}_q(x)$ is Galois and $p \neq 2$. By Corollaries \ref{linear} and \ref{purely}, there exists a primitive element $z$ with minimal polynomial of the form $T(X) = X^3 -3  X - b$. It follows that the discriminant of $L/\mathbb{F}_q(x)$ satisfies $d_{L/\mathbb{F}_q(x)} = -27(-4+b^2)$. As in the statement of the Theorem, we write $b = P /Q$ with $P,Q \in \mathbb{F}_q[x]$ and $\gcd(P, Q) =1$. By Lemma \ref{disc}, $L/\mathbb{F}_q(x)$ is Galois if, and only if, there exists $R \in \mathbb{F}_q[x]$ such that $$R^2 = -27(-4Q^2 + P^2) = -27(-2Q + P)(2Q + P).$$  The polynomials $-2Q + P$ and $2Q + P$ are relatively prime; indeed, for if $f$ divides $-2Q+ P$ and $2Q+ P$, then $f$ divides both $4Q = (2Q+ P) -(-2Q + P)$ and $2P = (2Q + P) + (-2Q + P)$, contradicting $\gcd(P, Q) =1$. Thus, by unique factorisation in $\mathbb{F}_q[x]$, it follows that, up to elements of $\mathbb{F}_q^*$, $2Q + P$ and $-2Q + P$ are squares in $\mathbb{F}_q[x]$. Therefore, there exist $c, d \in \mathbb{F}_q^*$ with $cd$ equal to $-27^{-1}$ up to a square in $\mathbb{F}_q^*$ and $A, B\in \mathbb{F}_q[x]$ such that $$2Q + P= c A^2 \quad\quad\text{and}\quad\quad -2Q + P= d B^2.$$ Thus $2P= c A^2+ d B^2$ and $4Q =cA^2-  d B^2$, and $$4Q= cA^2-  d B^2= c^{-1} (c^2A^2-  cd B^2) = c^{-1}  ((cA)^2+3^{-1} (3^{-1}B)^2).$$ Therefore,
$$\left\{ \begin{array}{ccc} 4cQ  &= & A'^2+ 3^{-1} B'^2\\ 
2cP&= & A'^2- 3^{-1} B'^2,
\end{array} \right.$$
where $A'= cA$ and $B' = 3^{-1} B$.  Given $A',B'$ as before, for any $c \in \mathbb{F}_q$, $b$ takes the same value 
$$b=2\frac{A'^2- 3^{-1} B'^2}{A'^2+ 3^{-1} B]^2}.$$ Thus, without loss of generality, we have 
$$\left\{ \begin{array}{ccc} Q  &= & A''^2+ 3^{-1} B''^2\\ 
P&= & 2 (A''^2- 3^{-1} B''^2),
\end{array} \right.$$
for some $A''$ and $B''\in \mathbb{F}_q[x]$ with $(A'', B'' )=1$. 

Conversely, suppose that
$$\left\{ \begin{array}{ccc} Q &=&  A^2+ 3^{-1} B^2\\ 
P & =& 2( A^2- 3^{-1} B^2)\end{array} \right.$$ 
for some $A$ and $B $ in $\mathbb{F}_q[x]$, and $(A,B)=1$. Then \begin{align*} d_{L/\mathbb{F}_q(x)} &= -27 (-4+b^2)\\ 
&=-27 \left(-4 + 4\left( \frac{A^2- 3^{-1} B^2}{A^2+ 3^{-1} B^2}\right)^2\right)\\
& = -3 \times 6^2 \left( \frac{-(A^2+ 3^{-1} B^2)^2+ (A^2- 3^{-1} B^2)^2}{(A^2+ 3^{-1} B^2)^2}\right) \\
&=  -3 \times 6^2\left( \frac{-4\times 3^{-1} A^2 B^2}{(A^2+ 3^{-1} B^2)^2}\right) \\
&= 12^2\left(\frac{A^2 B^2}{(A^2+ 3^{-1} B^2)^2}\right),
\end{align*} whence $d_{L/\mathbb{F}_q(x)}$ is a square. By Lemma \ref{disc}, it follows that $L/\mathbb{F}_q(x)$ is Galois.
\item Suppose that $L/\mathbb{F}_q(x)$ is Galois, and that $p=2$. By Lemma \ref{quadratic} and Corollary \ref{purely}, there is a primitive element $z$ of $L/\mathbb{F}_q(x)$ with minimal polynomial of the form $T(X)= X^3 -3 X -b$. By Lemma \ref{resolvent}, we know that $L/\mathbb{F}_q(x)$ is Galois if, and only if, the resolvent polynomial $$R(X)= X^2 +3 b X + (-27+9b^2)$$ of $T(X)$ is reducible. This is the same as requiring that the polynomial $X^2 + X = 3/b^2-1$ is reducible, which (as this is a polynomial of degree 2) is equivalent to the existence of at least one $w \in K$ such that $w^2-w=3/b^2-1 = 1/b^2 +1$. As $p=2$, we have $(w+1/b) ^2 - (w+1/b) =1 + 1/b$. The latter is equivalent to the existence of $w' \in \mathbb{F}_q(x)$ such that 
$$w'^2 - w' =\frac{ b+1}{b}.$$ We write $b = \frac{P}{Q}$ where $P$, $Q\in \mathbb{F}_q[x]$ with $(P, Q)=  1$ and $w' =\frac{ C}{D}$ where $C$, $D\in \mathbb{F}_q[x]$ with $(C, D)=  1$. We thus find that $$ \frac{ C^2}{D^2} - \frac{ C}{D}= \frac{ P + Q}{P}$$ and 
$$ \frac{ C^2 -C D}{D^2} = \frac{ P + Q}{P}.$$ As $(C^2 -C D, D^2) = 1$ and $(P + Q, P)=1$, it follows that up to a constant not affecting the value of $b$, 
$$\left\{ \begin{array}{rcc} P &=& D^2\\ 
P + Q&=& C^2 -C D ;\end{array}\right.$$
equivalently,
$$\left\{ \begin{array}{rcl} P &=& D^2\\ 
 Q   &=& C^2 -C D +D^2. \end{array}\right.$$
Conversely, suppose $b= \frac{P}{Q}$, where
 $$\left\{ \begin{array}{rcl} P &=& D^2\\ 
 Q   &=& C^2 -C D +D^2 \end{array}\right.$$
where $C$, $D\in \mathbb{F}_q[x]$ with $(C, D)=  1$. Then the polynomial $X^2 - X=\frac{ b+1}{b}$ has $\frac{C}{D}$ as root and the Theorem follows.
\end{enumerate} 
\end{proof} 

\begin{remarque} \label{constantsremark} We note that under the assumptions and notation of the previous Theorem, $Q$ is constant if, and only if, $b$ is constant. 
%We note that if the polynomial $Q(x)$ in Theorem \ref{qneq1mod3Galois} is constant, then so must $b = P(x)/Q(x)$ be constant. This is a consequence of the following argument: 
Indeed, if $Q$ is constant, then with the notation of the previous Theorem, we write $Q=A^2+ 3^{-1} B^2$. Letting $$A = a_m x^m + a_{m-1} x^{m-1} + \cdots + a_0 \quad\quad\text{and}\quad\quad B = b_n x^n + b_{n-1} x^{n-1} + \cdots + b_0,$$ it follows from the fact that $Q$ is constant that if either of $m$ or $n$ is positive, then $m=n$.  This implies that $ a_n^2 +3^{-1} b_n^2 = 0$, and hence that $(-3)^{-1} = (b_n/a_n)^2$ where $b_n / a_n\in \mathbb{F}_q$, contradicting that $q \equiv -1 \mod 3$. \end{remarque} 
More precisely, we have the following Theorem, which characterises the Galois extensions of the form $y^3-3y=a$ in terms of the factorisation of the denominator of $a$. 
\begin{theoreme} \label{even}
Let $q \equiv -1 \mod 3$. Suppose that $L/\mathbb{F}_q(x)$ is a Galois cubic extension. By Theorem \ref{linear}, $L/\mathbb{F}_q(x)$ has a primitive element $z$ with minimal polynomial of the form $$T(X) = X^3 -3 X - b.$$ Write $b = \frac{P}{Q}$ with $P,Q \in \mathbb{F}_q[x]$ such that $(P,Q) = 1$.  Then, $$Q=w \prod_i Q_i ^{e_i}$$ with $Q_i $ unitary irreducible of even degree, $w \in \mathbb{F}_q^*$, and $e_i$ a positive integer. 
\end{theoreme}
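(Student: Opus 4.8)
The plan is to reduce the assertion to a statement about the residue fields of the irreducible factors of $Q$ and then to play it off against the hypothesis $q \equiv -1 \bmod 3$. First I would invoke Theorem \ref{qneq1mod3Galois}: since $L/\mathbb{F}_q(x)$ is Galois and $q \equiv -1 \bmod 3$, we may take the primitive element $z$ so that $b = P/Q$ with $(P,Q)=1$ and, in the notation of that Theorem, $Q = A^2 + 3^{-1}B^2$ when $p \neq 2$ and $Q = A^2 + AB + B^2$ when $p = 2$, with $A,B \in \mathbb{F}_q[x]$ and $(A,B)=1$. (Here $3^{-1}$ makes sense because $q \equiv -1 \bmod 3$ forces $p \neq 3$.) By unique factorisation in $\mathbb{F}_q[x]$ it then suffices to prove that every monic irreducible factor $Q_0$ of $Q$ has even degree, which gives the stated factorisation $Q = w\prod_i Q_i^{e_i}$ with $w \in \mathbb{F}_q^*$.

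So fix a monic irreducible $Q_0 \mid Q$, put $d = \deg Q_0$, and let $\kappa = \mathbb{F}_q[x]/(Q_0) \cong \mathbb{F}_{q^d}$ be its residue field. The key local observation is that $\kappa$ contains a primitive cube root of unity. Indeed, $(A,B)=1$ together with $Q_0 \mid Q$ forces $Q_0 \nmid A$ and $Q_0 \nmid B$: if $Q_0$ divided one of them, the defining relation for $Q$ would force it to divide the other as well, contradicting $(A,B)=1$. Hence the images $\bar A,\bar B \in \kappa$ are units. If $p=2$, reducing $Q = A^2 + AB + B^2$ modulo $Q_0$ and dividing by $\bar B^2$ shows that $\bar A\bar B^{-1}$ is a root of $X^2+X+1$ in $\kappa$, i.e. a primitive cube root of unity. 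If $p \neq 2$, reducing $Q = A^2 + 3^{-1}B^2$ gives $(\bar A\bar B^{-1})^2 = -3^{-1}$, so $-3$ is a square in $\kappa$, say $-3 = s^2$; then $\omega = \tfrac{-1+s}{2} \in \kappa$ satisfies $\omega^2+\omega+1=0$, and $\omega \neq 1$ since $s=3$ would give $12 = 0$, impossible as $p \notin \{2,3\}$. So in all cases $\kappa$ contains a primitive cube root of unity $\omega$.

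Finally I would conclude by a simple order count in $\kappa^\times = \mathbb{F}_{q^d}^\times$: $\omega$ has multiplicative order $3$, so $3 \mid q^d - 1$, i.e. $q^d \equiv 1 \bmod 3$. Since $q \equiv -1 \bmod 3$ this reads $(-1)^d \equiv 1 \bmod 3$, which forces $d$ even. As $Q_0$ was an arbitrary monic irreducible factor of $Q$, the Theorem follows.

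I do not expect a serious obstacle: the argument is essentially a translation of the identities for $Q$ supplied by Theorem \ref{qneq1mod3Galois} into the statement $\mathbb{F}_q(\omega) = \mathbb{F}_{q^2} \subseteq \mathbb{F}_{q^d}$. The only points needing a little care are treating the characteristic-$2$ and odd-characteristic forms of $Q$ separately, verifying the coprimality claim $Q_0 \nmid A,B$ so the reductions are not vacuous, and noting that $\omega \notin \mathbb{F}_q$ (equivalently $3 \nmid q-1$, which is exactly the hypothesis $q \equiv -1 \bmod 3$) so that adjoining $\omega$ genuinely gives a degree-$2$ extension.
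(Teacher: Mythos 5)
Your proof is correct, and it reaches the conclusion by a genuinely different mechanism than the paper's. Both arguments begin the same way, by invoking Theorem \ref{qneq1mod3Galois} to write $Q = A^2 + 3^{-1}B^2$ (resp.\ $Q = A^2+AB+B^2$ when $p=2$) with $(A,B)=1$; the difference lies in how that identity is exploited. The paper works globally in $\mathbb{F}_{q^2}[x]$: it realises $Q$ as the norm $N(A+tB)$ for the quadratic constant-field extension, uses multiplicativity of $N$ together with unique factorisation in $\mathbb{F}_{q^2}[x]$ to reduce to a single irreducible factor $Q_i$, and then observes that $Q_i = N(A_i+tB_i)$ with $B_i\neq 0$ and $(A_i,B_i)=1$ forces $Q_i$ to split into two conjugate factors over $\mathbb{F}_{q^2}$, which happens exactly when $\deg Q_i$ is even. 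You instead argue locally at each monic irreducible divisor $Q_0$ of $Q$: coprimality of $A$ and $B$ guarantees that their images in the residue field $\kappa \cong \mathbb{F}_{q^{\deg Q_0}}$ are units, reducing the identity for $Q$ modulo $Q_0$ shows that $\kappa$ contains a primitive cube root of unity, and the order count $3 \mid q^{\deg Q_0}-1$ combined with $q \equiv -1 \bmod 3$ forces $\deg Q_0$ to be even. Your route is shorter and avoids the somewhat delicate bookkeeping about which polynomials are norms of elements with coprime coordinates; the paper's norm-map setup, on the other hand, is not wasted, since it is reused in Lemma \ref{decomQ} to count and explicitly construct the pairs $(A,B)$ realising a given admissible $Q$, information your local argument does not produce. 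The small verifications you flag (that $Q_0$ divides neither $A$ nor $B$, the separate treatment of $p=2$, and that a root of $X^2+X+1$ is a primitive cube root of unity because $p\neq 3$, which is forced by $q\equiv -1 \bmod 3$) all check out.
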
 
\begin{proof} By \cite[Theorem 3.46]{LiNi}, an irreducible polynomial over $\mathbb{F}_q$ of degree $n$ factors over $\mathbb{F}_{q^k}[x]$ into $\gcd(k,n)$ irreducible polynomials of degree $n/\gcd(k,n)$. It follows that an irreducible polynomial over $\mathbb{F}_{q}$ factors in $ \mathbb{F}_{q^2}$ into polynomials of smaller degree if, and only if, $\gcd(2,n) > 1$, i.e., $2 | n$. We note that the rings $\mathbb{F}_q[x]$ and $\mathbb{F}_{q^2}[x]= \mathbb{F}_q(u)[x]$ for some $u \in \mathbb{F}_{q^2}\backslash \mathbb{F}_q$ are both unique factorisation domains. Also, the irreducible polynomials in $\mathbb{F}_{q^2}[x]$ are those irreducible polynomials of odd degree in $\mathbb{F}_q[x]$ or the irreducible polynomials occurring as factors of irreducible polynomials of even degree in $\mathbb{F}_q[x]$.
\begin{enumerate} 
\item If $p\neq 2$, let $t$ be a root of $x^2 +3^{-1}$ in $\mathbb{F}_{q^2}$. From the previous Theorem, we know that, $$Q = A^2+ 3^{-1} B^2,$$ where $A,B \in \mathbb{F}_q[x]$ and $(A,B)=1$. We consider the norm map 
$$\begin{array}{cccl} N : & \mathbb{F}_{q^2}[x]&  \rightarrow & \mathbb{F}_q[x]\\ 
& \alpha +t \beta & \mapsto & \alpha^2+3^{-1} \beta^2 =(\alpha+ t \beta )(\alpha- t \beta )=(\alpha + t \beta)^{q+1}.\end{array}$$  
As, the element  $-t=t^{q}$ is the other root of the polynomial $X^2+3^{-1}=0$ in $\mathbb{F}_{q^2}$. Indeed, the coefficients of $S(X) = X^2+3^{-1}$ are in $\mathbb{F}_{q}$, whence 
$$S(t^q) = S(t)^q =0,$$ and $t\neq t^q$, as $t\notin \mathbb{F}_{q}$. 

Clearly, the map $N$ is multiplicative, and it follows that a polynomial $Q=w \prod_i Q_i ^{e_i}$ is of the form $N(A + t B)= Q$ with $A,B \in \mathbb{F}_q[x]$ coprime if, and only if, each $Q_i$ is of the form $N(A_i + t B_i)= Q_i$, with $A_i,B_i \in \mathbb{F}_q[x]$ coprime. \\
Indeed, observe that
\begin{itemize} 
\item[$\circ$] If $U$ is an irreducible polynomial of odd degree in $\mathbb{F}_q[x]$, then
$$N(U)= Q_i^2,$$
\item[$\circ$] If $U$ is an irreducible polynomial occurring as a factor of an irreducible polynomial of even degree in $\mathbb{F}_q[x]$, then it is of the form  $U= A+ t B$ with $A, B \in \mathbb{F}_q[x]$, $B \neq 0$ and $(A,B)=1$. Then
$$N(U)= A^2 + (-3)^{-1} B^2,$$
\end{itemize}
Thus, $N(A_i + t B_i)= Q_i$, with $A_i,B_i \in \mathbb{F}_q[x]$ coprime if only if $A_i + t B_i$ is an irreducible polynomial occurring as a factor of an irreducible polynomial of even degree in $\mathbb{F}_q[x]$. We therefore conclude the Theorem if $p\neq 2$.
\item If $p=2$, similarly, let $\xi$ be a primitive $3^{rd}$ root of unity in $\mathbb{F}_{q^2}$, whence $\xi^2 + \xi +1 =0$. From the previous Theorem, we know that, $$Q = A^2+AB+  B^2,$$ where $A,B \in \mathbb{F}_q[x]$ and $(A,B)=1$. We consider now the norm map 
$$\begin{array}{cccl} N: & \mathbb{F}_{q^2}[x]&  \rightarrow & \mathbb{F}_q[x]\\ 
& \alpha +\xi \beta & \mapsto & \alpha^2+ \alpha \beta + \beta^2 = ( \alpha + \xi \beta) ( \alpha + \xi^2 \beta) =( \alpha + \xi \beta)^{q+1} \end{array}$$  
Indeed, $\xi^2 = \xi^q$, as it is a root distinct from $\xi$ of the polynomial $X^2 + X + 1$, which has coefficients in $\mathbb{F}_q$.
As before, the map $N$ is multiplicative. 
%Indeed, for $ \alpha + t \beta, \alpha' + t \beta' \in \mathbb{F}_{q^2}[x]$, we have
%$$\begin{array}{ccc} N((\alpha +\xi \beta) (\alpha' + \xi \beta'))&=& N((\alpha \alpha' + \beta \beta' ) + \xi ( \beta \alpha' + 
 %N(\alpha + t \beta)N(\alpha' + t \beta')$$
%and similarly,
%$$N_{\mathbb{F}_{q^2}(x)/\mathbb{F}_{q}(x)}((\alpha +\xi \beta) (\alpha' + \xi \beta'))= N_{\mathbb{F}_{q^2}(x)/\mathbb{F}_{q}(x)}(\alpha + \xi \beta)N_{\mathbb{F}_{q^2}(x)/\mathbb{F}_{q}(x)}(\alpha' + \xi \beta').$$
We thus deduce the Theorem as in the previous case. 
\end{enumerate}
%By the multiplicativity of the $N$ map and  from the uniqueness of the factorisation into irreducible,  let $g(x)=c\prod_i p_i (x)^e_i \prod_i q_i (x)^f_i$ with $p_i(x)$ irreducible polynomial of odd degree in $\mathbb{F}_q[x]$ and $q_i(x)$  factors of even irreducible polynomial n $\mathbb{F}_q[x]$. 
\end{proof} 
Any $Q$ of the form given in the previous Theorem is realisable as the denominator of $b$. Here, we give a recipe to construct Galois cubic extensions without primitive $3^{rd}$ roots of unity with a given $Q$ as in the previous Theorem. We begin with the following Lemma.
\begin{lemme} Suppose $q\equiv -1 \mod 3$. Given $w \in \mathbb{F}_q$, there are $q+1$ ways to write $w$ as 
\begin{enumerate}
\item $w= u^2 + (-3)^{-1} v^2$, for some $u , v\in \mathbb{F}_q$, if $p \neq 2$;
\item $w= u^2 + uv + v^2$, for some $u , v\in \mathbb{F}_q$, if $p=2$. 
\end{enumerate}
\end{lemme}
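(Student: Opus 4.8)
The plan is to recognise the two binary quadratic forms in $(1)$ and $(2)$ as the norm form of the quadratic extension $\mathbb{F}_{q^2}/\mathbb{F}_q$ --- the same norm maps already exploited in the proof of Theorem~\ref{even} --- and then to count representations using only the structure of the norm map over finite fields. First I would record two reformulations of the hypothesis $q\equiv -1\pmod 3$: it forces $p\neq 3$; it is equivalent to $X^2+X+1$ being irreducible over $\mathbb{F}_q$, so that $\mathbb{F}_{q^2}$ contains a primitive third root of unity $\xi$ while $\mathbb{F}_q$ does not; and, when $p\neq 2$, it is equivalent to $-3$ (hence also $(-3)^{-1}$) being a non-square in $\mathbb{F}_q$ (via $\xi=\tfrac{-1+\sqrt{-3}}{2}$), so that $\mathbb{F}_{q^2}=\mathbb{F}_q(t)$ with $t^2=(-3)^{-1}$.

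Next I would identify the forms explicitly. Write $N$ for the norm $N_{\mathbb{F}_{q^2}/\mathbb{F}_q}$. If $p=2$, take the $\mathbb{F}_q$-basis $\{1,\xi\}$ of $\mathbb{F}_{q^2}$; since the nontrivial element of $\mathrm{Gal}(\mathbb{F}_{q^2}/\mathbb{F}_q)$ sends $\xi$ to $\xi^q=\xi^2$, one gets, for $u,v\in\mathbb{F}_q$,
\[
N(u+\xi v)=(u+\xi v)(u+\xi^2 v)=u^2+(\xi+\xi^2)uv+\xi^3 v^2=u^2+uv+v^2,
\]
using $\xi+\xi^2=1$ and $\xi^3=1$ in characteristic $2$, which is the form of $(2)$. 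If $p\neq 2$, take the basis $\{1,t\}$; then $N(u+tv)=(u+tv)(u-tv)=u^2-t^2v^2=u^2+3^{-1}v^2$, the form of $(1)$ (written via $-t^2$, equivalently as $u^2-(-3)^{-1}v^2$, exactly the denominator norm form appearing in Theorem~\ref{even}). In both cases the coordinate map $(u,v)\mapsto u+\theta v$, with $\theta=\xi$ (resp. $\theta=t$), is a bijection $\mathbb{F}_q^2\to\mathbb{F}_{q^2}$, so the number of ways to write a given $w$ by the form equals $\#\{x\in\mathbb{F}_{q^2}:N(x)=w\}$.

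It then remains to count preimages. The norm $N\colon\mathbb{F}_{q^2}^{\times}\to\mathbb{F}_q^{\times}$ is the group homomorphism $x\mapsto x^{q+1}$; it is surjective (image and target both have order $q-1$), and its kernel $\{x\in\mathbb{F}_{q^2}^{\times}:x^{q+1}=1\}$ has order $\gcd(q+1,q^2-1)=q+1$. Hence every $w\in\mathbb{F}_q^{\times}$ has exactly $q+1$ preimages under $N$, i.e.\ exactly $q+1$ representations by the form, which is the assertion. (For $w=0$ the only representation is $(u,v)=(0,0)$, since the forms are anisotropic; but the lemma is applied only to a leading coefficient $w\in\mathbb{F}_q^{\times}$ of a denominator $Q$, cf.\ Theorem~\ref{even}, so this edge case does not arise.) I do not expect a genuine obstacle here: the lemma simply repackages the classical fact that an anisotropic binary quadratic form over $\mathbb{F}_q$ --- equivalently the norm form of $\mathbb{F}_{q^2}/\mathbb{F}_q$ --- represents each nonzero scalar exactly $q+1$ times. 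The only steps meriting care are checking that the explicit forms really are norm forms for the \emph{correct} quadratic extension, which is precisely where the hypothesis $q\equiv -1\pmod 3$ enters (it guarantees $\mathbb{F}_q(\xi)=\mathbb{F}_q(t)=\mathbb{F}_{q^2}$), and the sign bookkeeping in the case $p\neq 2$.
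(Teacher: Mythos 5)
Your proof is correct and follows essentially the same route as the paper: both identify the two forms with the norm form of $\mathbb{F}_{q^2}/\mathbb{F}_q$ (with $\mathbb{F}_{q^2}=\mathbb{F}_q(t)=\mathbb{F}_q(\xi)$ precisely because $q\equiv -1 \bmod 3$) and then count the fibre of $x\mapsto x^{q+1}$ over $w$; you do the count via the kernel and image of the norm homomorphism, where the paper enumerates powers of a primitive $(q^{2}-1)$-st root of unity, but this is the same fact. Your observation that $w=0$ admits only the trivial representation (so the count $q+1$ genuinely requires $w\in\mathbb{F}_q^{*}$, as in the application to the leading coefficient of $Q$) is a caveat the paper's own proof silently shares.
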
 
\begin{proof} 
\begin{enumerate}
\item If $p\neq 2$, as in the previous Theorem, let $t$ be one of the solutions of the equation $X^2+3^{-1}=0$ in $\mathbb{F}_{q^2}$.
%The element  $-t=t^{q}$ is the other root of the polynomial $X^2+3^{-1}=0$ in $\mathbb{F}_{q^2}$. %Indeed, the coefficients of $S(X) = X^2+3^{-1}$ are in $\mathbb{F}_{q}$, whence $$S(t^q) = S(t)^q =0,$$ and $t\neq t^q$, as $t\notin \mathbb{F}_{q}$. 
As, $-t= t^q$, we have,
$$w = (u+ t v)(u-tv) =(u+ t v)(u+t^qv) = (u+t v)^{q+1}\quad\text{and}\quad 1= w^{q-1}= (u+t v)^{q^2-1}.$$ 

If $w=1$ then $u+tv$ is a $q+1^{th}$ root of unity and there are $q+1$ of those in $\mathbb{F}_{q^2}$. Otherwise, $u+tv$ is a $q^2-1^{st}$ root of unity, say $u+tv= \zeta^m$ where $\zeta$ be a primitive $q^2-1^{st}$ root of unity and $m$ a positive integer. 
Also, $w=\zeta^{l}$ for some $l$ positive integer; moreover, $w^{q-1}=1$, whence $m(q-1)$ is divisible by $q^2-1$. It follows that $l = s(q+1)$ for some positive integer $s$, and $m=s, \ s+ (q-1),\ \ldots,\ s+q(q-1)$ give $q+1$ distinct $u+tv = \zeta^m$. Hence $q+1$ distinct ways to write $w$ as $w= N(u+ t v)=u^2+3^{-1} v^2$. 
\item If $p=2$, let $\xi$ be one of the solutions of the equation $X^2+X+1=0$ in $\mathbb{F}_{q^2}$. Then $\xi^2=\xi +1 = \xi^q$ is also the other root of the polynomial $X^2 +X+1$ thus the argument developed for $p\neq2$ can be applied in this case too. 
\end{enumerate}
\end{proof} 
\begin{lemme}\label{decomQ}
 Let $$Q=w \prod_{i=1}^r Q_i ^{e_i}$$ with $Q_i $ unitary irreducible in $\mathbb{F}_q[x]$ of even degree, $w \in \mathbb{F}_q^*$, and $e_i$ a positive integer. Then there are at most  $2^{r+1} (q+1)$, $(A,B) \in \mathbb{F}_q[x] \times \mathbb{F}_q[x]$ relatively prime such that 
 \begin{enumerate}
\item $Q= A^2 + (-3)^{-1} B^2$, for some $A , B\in \mathbb{F}_q$, if $p \neq 2$;
\item $Q= A^2 + AB + B^2$, for some $A , B \in \mathbb{F}_q$, if $p=2$. 
\end{enumerate}
\end{lemme}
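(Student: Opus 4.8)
The plan is to count factorisations in the unique factorisation domain $\mathbb{F}_{q^2}[x]$ by means of the norm map $N$ already used in the proof of Theorem~\ref{even}. For $\mu\in\mathbb{F}_{q^2}[x]$ write $\overline{\mu}$ for the polynomial obtained by applying the nontrivial element of $\Gal(\mathbb{F}_{q^2}/\mathbb{F}_q)$ to each coefficient of $\mu$, so that $N(\mu)=\mu\overline{\mu}\in\mathbb{F}_q[x]$. When $p\neq2$ take $t$ a root of $X^2+3^{-1}$ in $\mathbb{F}_{q^2}$ (so $\overline{t}=-t$), and when $p=2$ take $\xi$ a primitive third root of unity in $\mathbb{F}_{q^2}$ (so $\overline{\xi}=\xi^2=\xi+1$); then $N(A+tB)=(A+tB)(A-tB)$, resp.\ $N(A+\xi B)=(A+\xi B)(A+\xi^2B)$, is precisely the quadratic form in $A,B$ occurring in the corresponding case of Lemma~\ref{decomQ}. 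Since $\{1,t\}$ (resp.\ $\{1,\xi\}$) is an $\mathbb{F}_q$-basis of $\mathbb{F}_{q^2}$, the assignment $\mu=A+tB$ (resp.\ $\mu=A+\xi B$) gives a bijection between the pairs $(A,B)$ counted in Lemma~\ref{decomQ} and the elements $\mu\in\mathbb{F}_{q^2}[x]$ with $N(\mu)=Q$; so it suffices to bound the number of such $\mu$ for which $A$ and $B$ are moreover coprime.

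I would first record how $Q$ decomposes over $\mathbb{F}_{q^2}$. As each $Q_i$ is monic irreducible of even degree over $\mathbb{F}_q$, it factors in $\mathbb{F}_{q^2}[x]$ (as in the proof of Theorem~\ref{even}, via \cite[Theorem 3.46]{LiNi}) as $Q_i=\pi_i\overline{\pi_i}$ with $\pi_i$ monic irreducible of degree $\tfrac12\deg Q_i$; moreover $\pi_i$ and $\overline{\pi_i}$ are non-associate, for otherwise $Q_i$ would be a unit times a square and hence reducible over $\mathbb{F}_q$. Thus $Q=w\prod_{i=1}^{r}(\pi_i\overline{\pi_i})^{e_i}$ in $\mathbb{F}_{q^2}[x]$, and for $\mu$ with $\mu\overline{\mu}=Q$, unique factorisation forces $\mu=c\prod_{i=1}^{r}\pi_i^{a_i}\overline{\pi_i}^{b_i}$ with $c\in\mathbb{F}_{q^2}^{*}$, nonnegative integers $a_i+b_i=e_i$, and $c\overline{c}=w$.

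The heart of the proof is the equivalence: $\gcd(A,B)=1$ in $\mathbb{F}_q[x]$ if and only if $\gcd(\mu,\overline{\mu})=1$ in $\mathbb{F}_{q^2}[x]$. A monic common divisor $g$ of $\mu$ and $\overline{\mu}$ is conjugation-invariant, hence lies in $\mathbb{F}_q[x]$; writing $A$ and $B$ as $\mathbb{F}_{q^2}$-linear combinations of $\mu$ and $\overline{\mu}$ (for $p\neq2$, $A=(\mu+\overline{\mu})/2$ and $B=(\mu-\overline{\mu})/(2t)$; for $p=2$, $B=\mu+\overline{\mu}$ and $A=\mu+\xi B$) shows $g$ divides both $A$ and $B$, whence $g$ is constant; conversely any common divisor of $A$ and $B$ in $\mathbb{F}_q[x]$ divides $\mu$ and $\overline{\mu}$. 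Since the $\pi_i,\overline{\pi_i}$ are pairwise non-associate primes, $\gcd(\mu,\overline{\mu})=1$ forces $\min(a_i,b_i)=0$ for every $i$, i.e.\ $(a_i,b_i)\in\{(e_i,0),(0,e_i)\}$, giving two possibilities per index. Finally $c$ ranges over the fibre $N^{-1}(w)$ of the norm $N\colon\mathbb{F}_{q^2}^{*}\to\mathbb{F}_q^{*}$, a surjective homomorphism whose kernel has order $(q^2-1)/(q-1)=q+1$, so there are $q+1$ admissible $c$. As $\mu$ determines $(A,B)$ uniquely, there are at most $2^{r}(q+1)$ admissible pairs --- in fact exactly $2^{r}(q+1)$, since conversely any such choice of exponents and constant produces a coprime pair --- and in particular at most $2^{r+1}(q+1)$, as claimed.

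I expect the main obstacle to be the characteristic-$2$ half of the coprimality equivalence, where one must use $\overline{\mu}=(A+B)+\xi B$ to recover $A$ and $B$ from $\mu$ and $\overline{\mu}$; the other ingredients --- unique factorisation in $\mathbb{F}_{q^2}[x]$, the splitting of even-degree irreducibles there, and the surjectivity of the norm on unit groups --- are all already in place in the proof of Theorem~\ref{even}.
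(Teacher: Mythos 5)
Your proof is correct and follows essentially the same route as the paper's: pass to $\mathbb{F}_{q^2}[x]$ via the norm, split each $Q_i$ into conjugate primes $\pi_i\overline{\pi_i}$, observe that coprimality of $A$ and $B$ forces each $Q_i^{e_i}$ to contribute either $\pi_i^{e_i}$ or $\overline{\pi_i}^{e_i}$ (two choices per index), and count the $q+1$ constants of norm $w$ — your version simply replaces the paper's explicit coefficient computations with unique factorisation in $\mathbb{F}_{q^2}[x]$ and the clean equivalence $\gcd(A,B)=1\Leftrightarrow\gcd(\mu,\overline{\mu})=1$, which also yields the sharper count $2^{r}(q+1)$. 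One small precision: an arbitrary monic common divisor of $\mu$ and $\overline{\mu}$ need not be conjugation-invariant (e.g.\ $\pi_i$ itself could be one); apply that observation to the monic $\gcd(\mu,\overline{\mu})$, which is fixed by conjugation since conjugation permutes the common divisors.
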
 
\begin{proof}
The degree of $Q_i$ is even, whence by \cite[Theorem 3.46]{LiNi}, each $Q_i$ factors in $\mathbb{F}_q(t)[x]$ into $gcd(2,n) = 2$ irreducible polynomials of degree $n/gcd(2,n) = n/2$. 
\begin{enumerate} 
\item Suppose that $p\neq 2$. %we explain how we can write $$Q= A^2 +3^{-1}B^2.$$ From the previous Theorem, we know that $$Q= w\prod_{i=1}^r Q_i (x)^{e_i},$$ with $Q_i(x)$ is an unitary irreducible polynomial of even degree in $\mathbb{F}_{q}[x]$ and $w$ a constant. 
We have that $$Q_i  = (A_i + t B_i) (A_i - t B_i) = A_i^2 + 3^{-1} B_i^2,$$ The factors of $Q_i$ are then $(A_i+tB_i)$ and $(A_i - tB_i)$, which are unique up to a unit. Also, as $Q_i$ is irreducible in $\mathbb{F}_q [x]$, we have that $A_i$ and $B_i$ are coprime. Suppose that $Q_i = N(C_i + tD_i) = N(A_i + tB_i)$. As $\mathbb{F}_{q^2}[x]$ is a unique factorisation domain, we have $$ C_i + t D_i= a_i (A_i \pm tB_i)\quad \text{ and } \quad C_i - t D_i= b_i (A_i \mp tB_i)$$
for some $a_i , b_i \in \mathbb{F}_{q^2}=\mathbb{F}_q(t)$, $a_ib_i=1$ with $A_i$ and $B_i\in \mathbb{F}_{q}[x]$. 
 We write $a_i= u_i +t v_i $ and $b_i = w_i + t z_i$ where $u_i, v_i , w_i, z_i \in \mathbb{F}_{q}$. As $a_i b_i =(u_i w_i -3^{-1} v_i z_i ) +t (v_i w_i + u_iz_i) =1$, it follows that
 $$\left\{ \begin{array}{ccc} 
1&=& u_iw_i -3^{-1} v_iz_i \\ 
0 &=&v_iw_i+ u_i z_i 
\end{array}\right. $$
 Thus, 
$$\left\{ \begin{array}{ccc} 
C_i + tD_i &=& (u_i + tv_i)(A_i \pm tB_i) = u_iA_i \mp 3^{-1}v_iB_i + (A_iv_i\pm B_iu_i)t\\
C_i - tD_i & =& (w_i + tz_i)(A_i \mp tB_i) = w_iA \pm 3^{-1}z_iB + (Az_i\mp Bw_i)t
\end{array}\right. $$
As a consequence, 
$$\left\{ \begin{array}{ccc}
C_i = u_iA_i \mp 3^{-1}v_iB_i & and & D_i = A_i v_i \pm B_i u_i \\
C_i = wA_i \pm 3^{-1}zB_i & and &  D_i = -(A_i z_i \mp B_i w_i )
\end{array}\right. $$
Subtracting $C_i = u_iA_i \mp 3^{-1}v_iB_i $ and $C_i = w_iA_i \pm 3^{-1}z_iB_i $, we find 
$$(u_i-w_i) A_i = \mp 3^{-1} (v_i +z_i) B_i $$
As $(A_i,B_i)=1$, $u_i=w_i$ and $v_i=-z_i$, whence
$$1 = u_i^2 +3^{-1} v_i^2.$$
Thus, we can write $Q_i^{e_i}$ as an element in the image of the norm map $N$, that is, $$Q_i^{e_i}=N\left( \prod_{l=1}^{e_i} ( u_{i,l}+ t v_{i,l}) \prod_{l=1}^{e_i} (A_i+ \epsilon_{i,l} t B_i) \right),$$
where $u_i$ and $v_i \in \mathbb{F}_q$ satisfy $1 = u_{i,l}^2 +3^{-1} v_{i,l}^2 $ and $\epsilon_{i,l}\in \{\pm 1\}$. Moreover, if in this product we have at least one $\epsilon_{i,l_0} =1$ and at least one $\epsilon_{i,l_1} =-1$ for some $l_0 \neq l_1$, then it would lead to $A$ and $B$ sharing a divisor, which violates that they are coprime. It follows that the only possibility is to write either $$Q_i^{e_i}=N\left(  \left(\prod_{l=1}^{e_i} ( u_{i,l} + t v_{i,l})\right) (A_i + t B_i)^{e_i} \right)\quad\text{or}\quad Q_i^{e_i}=N\left(\left( \prod_{l=1}^{e_i} ( u_{l,i} + t v_{l,i}) \right) (A_i - t B_i)^{e_i} \right)$$ By the previous Lemma, we write $w$ as $w=N(\mu + t \nu)$ for some $\mu $ and $\nu \in \mathbb{F}_q$, and
$$ Q = N\left( ( \mu +t \nu) \prod_{i=1}^r \left( \prod_{l=1}^{e_i} ( u_{i,l} + t v_{i,l}) \right)(A_i + \epsilon_i t B_i)^{e_i} \right),$$
where $\epsilon_i = \pm 1$ with $\mu , \nu , u_{i,l} , v_{i,l} \in \mathbb{F}_q$. We have $ ( \mu +t \nu) \prod_{i=1}^r  \prod_{l=1}^{e_i} ( u_{i,l} + t v_{i,l}) ) = \alpha + t \beta $ for some $\alpha, \beta \in \mathbb{F}_q$ and $w = N(\alpha + t\beta)$. By Lemma 5.4, there exist $q+1$ such $\alpha$ and $\beta$. Furthermore, $$A+ tB= (\alpha + t \beta)\prod_{i=1}^r   (A_i + \epsilon_i t B_i)^{e_i}.$$ The result follows.
\item Suppose that $p= 2$. We have that $$Q_i  = (A_i + \xi B_i) (A_i + \xi^2 B_i) = A_i^2 + A_i B_i + B_i^2,$$ The factors of $Q_i$ are then $(A_i+\xi B_i)$ and $(A_i +\xi ^2 B_i)$ unique up to a unit. Also, as $Q_i$ is irreducible in $\mathbb{F}_q$, we have that $A_i$ and $B_i$ are coprime. Suppose that $Q_i = N(C_i + \xi D_i) = N(A_i + \xi B_i)$. As $\mathbb{F}_{q^2}[x]$ is a unique factorisation domain, we have 
$$ C_i + \xi D_i= a_i (A_i + \xi^2 B_i)\quad \text{ and } \quad C_i + \xi^2 D_i= b_i (A_i + \xi B_i)$$
or
$$ C_i + \xi D_i= a_i (A_i + \xi B_i)\quad \text{ and } \quad C_i + \xi^2 D_i= b_i (A_i + \xi^2 B_i)$$
for some $a_i , b_i \in \mathbb{F}_{q^2}=\mathbb{F}_q(t)$, such that $a_ib_i=1$, as $Q_i$ is supposed unitary. We write $a_i= u_i +\xi  v_i $, $b_i = w_i + \xi z_i$ where $u_i, v_i , w_i, z_i \in \mathbb{F}_{q}$. As 
 %$$\begin{array}{ccl} a_i b_i &=&(u_i +\xi  v_i ) (w_i + \xi z_i)\\
 %&=& u_i w_i +\xi u_i z_i + \xi v_i w_i + \xi^2 v_i z_i \\ 
%& =& u_i w_i +\xi u_i z_i + \xi v_i w_i + \xi v_i z_i  + v_i z_i \\ 
$$a_i b_i= (u_i w_i + z_i v_i) + \xi ( u_i z_i + v_i w_i +v_i z_i )= $$
  then 
 $$\left\{ \begin{array}{ccl} 
1&=&u_i w_i + z_i v_i\\ 
0 &=&u_i z_i + v_i w_i +v_i z_i
\end{array}\right. $$
\begin{enumerate}
\item {\sf Case 1: }
%$$\begin{array}{ccl} (C_i + \xi D_i) &=& ( u_i + \xi v_i)(A_i + \xi^2 B_i) \\
%&=& u_i A_i + v_i B_i + v_i A_i \xi + u_i B_i \xi^2 \\
%&=& u_i A_i + v_i B_i + v_i A_i \xi + u_iB_i (\xi + 1)\\
%&=&(u_i A_i + v_i B_i + u_i B_i) + (v_i A_i + u_i B_i )\xi \end{array}$$
$$C_i + \xi D_i=a_i (A_i + \xi^2 B_i)= (u_i A_i + v_i B_i + u_i B_i) + (v_i A_i + u_i B_i )\xi$$ and 
$$C_i + \xi^2 D_i= b_i (A_i + \xi B_i)= (w_i A_i + z_i B_i) + (z_i B_i + z_i A_i + w_i B_i )\xi$$
Thus 
$$\left\{ \begin{array}{ccl} 
C_i &=& u_i A_i + v_i B_i + u_i B_i \\ 
D_i &=& v_i A_i + u_i B_i\\ 
C_i &=& w_i A_i  + z_i A_i + w_i B_i\\ 
D_i &=& z_i B_i + z_i A_i + w_i B_i
\end{array} \right.$$
\begin{comment}
and 
$$\begin{array}{ccl} C_i + \xi^2 D_i&=&  C_i + D_i +\xi D_i \\
&=& ( w_i + \xi z_i)(A_i + \xi B_i) \\
&=& w_i A_i + \xi^2 z_i B_i +\xi  z_i A_i  + w_i B_i \xi \\
&=& w_i A_i + z_i B_i  + \xi z_i B_i + z_i A_i \xi + w_i B_i \xi \\
&=&(w_i A_i + z_i B_i) + (z_i B_i + z_i A_i + w_i B_i )\xi \end{array}$$

Thus, 
$$\left\{ \begin{array}{ccl} 
C_i+ D_i&=& w_i A_i + z_i B_i\\ 
D_i &=& z_i B_i + z_i A_i + w_i B_i
\end{array} \right.$$
That is, 
$$\left\{ \begin{array}{ccl} 
C_i &=& w_i A_i  + z_i A_i + w_i B_i\\ 
D_i &=& z_i B_i + z_i A_i + w_i B_i
\end{array} \right.$$
And 
\end{comment}
Hence,
$$\left\{ \begin{array}{ccl} 
(u_i + w_i + z_i)A_i&=& ( v_i + u_i +w_i )B_i\\ 
( v_i + z_i )A_i &=& (u_i + z_i + w_i ) B_i
\end{array} \right.$$
Then, as $(A_i , B_1)=1$,
$$\left\{ \begin{array}{lll} 
 v_i +z_i=0 \\
  u_i + z_i + w_i =0 \\
  u_i w_i + z_i v_i =1 \\ 
u_i z_i + v_i w_i +v_i z_i=0
\end{array} \right.$$
Hence, 
$$\left\{ \begin{array}{lll} 
 v_i +z_i=0 \\
  u_i + z_i + w_i =0 \\
  N(u_i +\xi v_i )= u_i ^2 + u_iv_i +  v_i^2 =1 \\ 
\end{array} \right.$$
\item {\sf Case 2: } 
$$ C_i + \xi D_i = a_i (A_i + \xi B_i)= (u_i A_i + v_i B_i ) + (v_i A_i + u_i B_i +v_i B_i )\xi$$
and 
$$C_i + \xi^2 D_i= b_i (A_i + \xi^2 B_i)= (w_i A_i + z_i B_i+ w_i B_i ) + (z_i A_i + w_i B_i )\xi$$
Thus 
$$\left\{ \begin{array}{ccl} 
C_i &=& u_i A_i + v_i B_i \\ 
D_i &=& v_i A_i + u_i B_i +v_i B_i\\
C_i &=& w_i A_i  + z_i A_i + z_i B_i\\ 
D_i &=& z_i A_i + w_i B_i
\end{array} \right.$$
\begin{comment} 
Similarly, 
$$\begin{array}{ccl} C_i &=& w_i A_i  + z_i A_i + w_i B_i\\  &=& ( u_i + \xi v_i)(A_i + \xi B_i) \\
&=& u_i A_i + \xi^2 v_i B_i + v_i A_i \xi + \xi u_i B_i  \\
&=& u_i A_i + \xi v_i B_i+v_i B_i + v_i A_i \xi + \xi u_i B_i  \\
&=&(u_i A_i + v_i B_i ) + (v_i A_i + u_i B_i +v_i B_i )\xi \end{array}$$
Thus 
$$\left\{ \begin{array}{ccl} 
C_i &=& u_i A_i + v_i B_i \\ 
D_i &=& v_i A_i + u_i B_i +v_i B_i
\end{array} \right.$$
and
$$\begin{array}{ccl} C_i + \xi^2 D_i&=&  C_i + D_i +\xi D_i \\
&=& ( w_i + \xi z_i)(A_i + \xi^2 B_i) \\
&=& w_i A_i +  z_i B_i +\xi  z_i A_i  + w_i B_i \xi^2  \\
&=& w_i A_i +  z_i B_i +\xi  z_i A_i  + w_i B_i \xi +  w_i B_i  \\
&=&(w_i A_i + z_i B_i+ w_i B_i ) + (z_i A_i + w_i B_i )\xi \end{array}$$
Thus, 
$$\left\{ \begin{array}{ccl} 
C_i+ D_i&=& w_i A_i + z_i B_i+ w_i B_i \\ 
D_i &=& z_i A_i + w_i B_i
\end{array} \right.$$
That is, 
$$\left\{ \begin{array}{ccl} 
C_i &=& w_i A_i  + z_i A_i + z_i B_i\\ 
D_i &=& z_i A_i + w_i B_i
\end{array} \right.$$
And 
\end{comment}
Hence,
$$\left\{ \begin{array}{ccl} 
(u_i + w_i + z_i)A_i&=& ( v_i + z_i )B_i\\ 
( v_i + z_i )A_i &=& (u_i + v_i + w_i ) B_i
\end{array} \right.$$
Thus, as before
$$\left\{ \begin{array}{lll} 
 v_i +z_i=0 \\
  u_i + z_i + w_i =0 \\
N(u_i +\xi v_i)=  u_i ^2 + u_iv_i +  v_i^2 =1 \\ 
\end{array} \right.$$
\end{enumerate}
As a consequence, we can always write $Q_i^{e_i}$ as an element in the image of the norm map $N$, whence $$Q_i^{e_i}=N\left( \prod_{l=1}^{e_i} ( u_{i,l}+ \xi v_{i,l}) \prod_{l=1}^{e_i} \theta_{i,l}  \right),$$ where $u_{i,l}$ and $v_{i,l} \in \mathbb{F}_q$ satisfies $1 = u_{i,l}^2 + u_{i,l} v_{i,l} + v_{i,l}^2 $ where $\theta_{i,l}  \in \{ A_i + \xi B_i , A_i+\xi^2 B_i \}$. Moreover, if in this product we have at least one $\theta_{i,l_0}= A_i + \xi B_i $ and at least one $\theta_{i,l_1} =A_i + \xi^2 B_i$ for some $l_0 \neq l_1$, then it would lead to $A$ and $B$ that would not be coprime. It follows that the only possibility is to write either $$Q_i^{e_i}=N\left(  \left(\prod_{l=1}^{e_i} ( u_{i,l} + \xi v_{i,l})\right) (A_i + \xi B_i)^{e_i} \right)\quad\text{or}\quad Q_i^{e_i}=N\left(\left( \prod_{l=1}^{e_i} ( u_{l,i} + \xi v_{l,i}) \right) (A_i +\xi^2 B_i)^{e_i} \right)$$ By Lemma 5.4, we write $w$ as $w=N(\mu + \xi \nu)$ for some $\mu$ and $\nu \in \mathbb{F}_q$, and $$ Q = N\left( ( \mu +\xi \nu) \prod_{i=1}^r \left( \prod_{l=1}^{e_i} ( u_{i,l} + \xi v_{i,l}) \right)\theta_i^{e_i} \right),$$ where $\epsilon_i = \pm 1$ with $\mu , \nu , u_{i,l} , v_{i,l} \in \mathbb{F}_q$  where $\theta_{i}  \in \{ A_i + \xi B_i , A_i+\xi^2 B_i \}$. We have $ ( \mu +\xi \nu) \prod_{i=1}^r  \prod_{l=1}^{e_i} ( u_{i,l} + \xi v_{i,l}) ) = \alpha + \xi \beta $ for some $\alpha, \beta \in \mathbb{F}_q$ and $w = N(\alpha + \xi \beta)$, and by Lemma 5.4, there are again $q+1$ such $\alpha$ and $\beta$. Finally, $$A+ \xi B= (\alpha + \xi \beta)\prod_{i=1}^r   \theta_i ^{e_i},$$ which yields the result in this case.
\end{enumerate} 
\end{proof}
\subsection{The irreducibility criterion} 
In the following Theorem and Corollary, we see that if $L/K$ is a Galois cubic extension with a generating equation of the form $y^3 -3y =a$ where $a= \frac{P}{Q}$ and $P$, $Q\in \mathbb{F}_q[x]$ with $(P, Q)=1$, then $Q$ cannot be a cube, up to a unit. 
\begin{theorem} 
Let $q \equiv -1 \mod 3$. Let $P(X)= X^3-3X- a$ be a polynomial, where $a=\frac{P}{Q}$, $P, Q\in \mathbb{F}_q[x]$, and $(P,Q)=1$.
\begin{enumerate}
\item \large If $p\neq 2$, suppose that $P=2 (A^2-3^{-1}B^2)$ and $Q =A^2+3^{-1}B^2$, for some $A, B \in \mathbb{F}_q[x]$ with $(A,B)=1$. Let $t \in \mathbb{F}_{q^2}$ be a root of the polynomial $X^3+ 3^{-1}$. Then $T(X)$ is a reducible polynomial over $\mathbb{F}_q(x)$ if, and only if,  $(A+tB)$ is a cube in $\mathbb{F}_{q^2}[x]$.
\item  \large If $p= 2$, suppose that $P=A^2$ and $Q =A^2 + AB + B^2$, for some $A, B \in \mathbb{F}_q[x]$ with $(A,B)=1$. Let $\xi \in \mathbb{F}_{q^2}$ be a primitive third root of unity. Then $T(X)$ is a reducible polynomial over $\mathbb{F}_q(x)$ if, and only if,  $(B+\xi A)$ is a cube in $\mathbb{F}_{q^2}[x]$.
\end{enumerate} \end{theorem}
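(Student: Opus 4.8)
The strategy is to pass to the constant-field extension $E:=\mathbb{F}_{q^{2}}(x)$, where the relevant cubic becomes a Kummer (radical) extension, and then descend the cube condition. First I would observe that, since $\deg T=3$ and $[E:\mathbb{F}_q(x)]=2$ are coprime, $T$ is reducible over $\mathbb{F}_q(x)$ if and only if it is reducible over $E$: if $y$ is a root of an irreducible $T$ over $\mathbb{F}_q(x)$, then $\mathbb{F}_q(x)(y)/\mathbb{F}_q(x)$ has odd degree $3$ and hence cannot contain the quadratic subextension $E$, forcing $[E(y):E]=3$; the reverse implication is trivial. Because $q\equiv -1\bmod 3$, the field $E$ contains a primitive cube root of unity $\xi$, and in case (1) also a square root $t$ of $-3^{-1}$ which is genuinely quadratic over $\mathbb{F}_q$ (since $-3$ is a nonsquare there — the ``$X^{3}$'' in the statement should read $X^{2}$, as in Theorem~\ref{even} and Lemma~\ref{decomQ}). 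Moreover the form of $P,Q$ from Theorem~\ref{qneq1mod3Galois} makes the discriminant $-27(a^{2}-4)$ of $T$ (resp., in case (2), the quadratic resolvent of $T$) already split over $\mathbb{F}_q(x)$; hence $T$ is Galois over $\mathbb{F}_q(x)$ and over $E$, so over each of these fields $T$ is either irreducible or splits completely.

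Next I would use the classical substitution $X=w+w^{-1}$, which turns $X^{3}-3X=a$ into $w^{3}+w^{-3}=a$, so that $w^{3}$ is a root of $Z^{2}-aZ+1$, and the roots of $T$ are $\xi^{j}w+\xi^{2j}w^{-1}$ for $j=0,1,2$. In case (1), using $-3^{-1}=t^{2}$ one computes $a^{2}-4=(4tAB/Q)^{2}$ and $Q=(A-tB)(A+tB)$, so that $w^{3}=\tfrac12\bigl(a+\sqrt{a^{2}-4}\bigr)=\dfrac{(A+tB)^{2}}{Q}=\dfrac{A+tB}{A-tB}$. In case (2) (where $3=1$, so $T$ reads $X^{3}+X=a$), the same substitution together with $Q=(B+\xi A)(B+\xi^{2}A)$ gives $w^{3}=\dfrac{B+\xi A}{B+\xi^{2}A}$ (one checks $w^{3}+w^{-3}=A^{2}/Q=a$). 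Now over $E$: if $T$ splits, then $w+w^{-1}$ and $\xi w+\xi^{2}w^{-1}$ lie in $E$, and since $\xi\in E$ one solves this linear pair for $w$, so $w\in E$ and $w^{3}$ is a cube in $E$; conversely, if $w^{3}$ is a cube $c^{3}$ in $E$, then $w:=c\in E$ makes $w+w^{-1}\in E$ a root of $T$. Combining with the previous paragraph, $T$ is reducible over $\mathbb{F}_q(x)$ if and only if $\dfrac{A+tB}{A-tB}$ (resp.\ $\dfrac{B+\xi A}{B+\xi^{2}A}$) is a cube in $E=\mathbb{F}_{q^{2}}(x)$.

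Finally I would show that $\dfrac{A+tB}{A-tB}$ is a cube in $\mathbb{F}_{q^{2}}(x)$ if and only if $A+tB$ is a cube in $\mathbb{F}_{q^{2}}[x]$ (likewise for $B+\xi A$, noting that $A-tB$ and $B+\xi^{2}A$ are the images of $A+tB$ and $B+\xi A$ under the nontrivial $\sigma\in\Gal(\mathbb{F}_{q^{2}}/\mathbb{F}_q)$). One direction is immediate: $A+tB=u^{3}$ gives $A-tB=\sigma(u)^{3}$, and the quotient is $(u/\sigma(u))^{3}$. For the converse, $(A,B)=1$ makes $A+tB$ and $A-tB$ coprime in the UFD $\mathbb{F}_{q^{2}}[x]$ (a common divisor divides $2A$ and $2tB$, hence $A$ and $B$); writing the given cube as a reduced fraction $u^{3}/v^{3}$ and comparing it with the reduced fraction $(A+tB)/(A-tB)$ forces $A+tB=\epsilon u^{3}$ for some $\epsilon\in\mathbb{F}_{q^{2}}^{*}$, and applying $\sigma$ shows that $\epsilon^{\,q-1}=\sigma(\epsilon)/\epsilon$ is a cube in $\mathbb{F}_{q^{2}}^{*}$. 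The decisive use of $q\equiv -1\bmod 3$ is now that on $\mathbb{F}_{q^{2}}^{*}/(\mathbb{F}_{q^{2}}^{*})^{3}$, a group of order $3$, raising to the $(q-1)$-th power is the identity (because $q-1\equiv 1\bmod 3$); hence $\epsilon^{\,q-1}$ is a cube if and only if $\epsilon$ is, so $\epsilon\in(\mathbb{F}_{q^{2}}^{*})^{3}$ and $A+tB$ is a cube in $\mathbb{F}_{q^{2}}[x]$, as claimed.

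The main obstacle is precisely this last descent: a priori the unit $\epsilon$ obstructs passing from ``$(A+tB)/(A-tB)$ is a cube'' to ``$A+tB$ is a cube,'' and it is exactly the cyclotomic hypothesis that annihilates $\epsilon$ in $\mathbb{F}_{q^{2}}^{*}/(\mathbb{F}_{q^{2}}^{*})^{3}$. Beyond this, the only care needed is the sign bookkeeping in the $w$-substitution (which root of $Z^{2}-aZ+1$, which square root of $a^{2}-4$) and the degenerate case $Q\in\mathbb{F}_q^{*}$ — equivalently, by Remark~\ref{constantsremark}, $a$ constant (e.g.\ $a=\pm2$) — in which $T$ is visibly reducible and $A+tB$ is a nonzero element of $\mathbb{F}_q$ (or of $\mathbb{F}_q\cdot t$), which is automatically a cube in $\mathbb{F}_{q^{2}}$ because $\gcd(3,q-1)=1$ makes every element of $\mathbb{F}_q^{*}$ a cube already in $\mathbb{F}_q$.
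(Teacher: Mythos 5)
Your proposal is correct, and it reaches the conclusion by a genuinely different route than the paper. You pass to $E=\mathbb{F}_{q^2}(x)$ at the outset (the degree argument for ``reducible over $\mathbb{F}_q(x)$ iff reducible over $E$'' is fine), and then use the Vieta substitution $X=w+w^{-1}$ to identify $w^3$ with $\frac{A+tB}{A-tB}$ (resp.\ $\frac{B+\xi A}{B+\xi^2 A}$), so that splitting over $E$ becomes exactly the statement that this quotient is a cube in $E$. The paper instead first shows that reducibility over $\mathbb{F}_q(x)$ forces $Q$ to be a cube, passes to $\mathbb{F}_{q^2}(x)$ via the explicit purely cubic generator $u=y^2+ky-2$ of Theorem \ref{purelycubic} (arriving at essentially the same element $\frac{A\pm tB}{A\mp tB}$), and proves the converse by exhibiting an explicit rational root $r=2(\ca^2-3^{-1}\cb^2)/(\ca^2+3^{-1}\cb^2)$ when $A+tB=(\ca+t\cb)^3$. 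Your final descent -- writing $A+tB=\epsilon u^3$ from coprimality of the conjugate factors and killing the unit $\epsilon$ by observing that $\epsilon^{q-1}$ being a cube forces $\epsilon$ to be a cube because $q-1\equiv 1 \bmod 3$ -- is cleaner and more explicit than the paper's treatment of the unit ambiguity, which is distributed between Lemma \ref{decomQ} and the subsequent Corollary. You also correctly spot that ``$X^3+3^{-1}$'' in the statement is a typo for ``$X^2+3^{-1}$''.

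One correction to your closing remarks: your handling of the degenerate case $Q\in\mathbb{F}_q^*$ is wrong, though harmlessly so. When $Q$ is constant, Remark \ref{constantsremark} shows $A$ and $B$ are both constants, so $A+tB$ is a general element of $\mathbb{F}_{q^2}^*$ (not of $\mathbb{F}_q\cup\mathbb{F}_q t$), and such an element need \emph{not} be a cube in $\mathbb{F}_{q^2}$, since $3\mid q^2-1$. Correspondingly $T$ is \emph{not} ``visibly reducible'' there: for constant $a$ the polynomial can be irreducible and define the constant extension $\mathbb{F}_{q^3}(x)$ (this is precisely the content of the Corollary that follows this Theorem in the paper). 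Fortunately your main argument never needs this case to be treated separately -- the $\epsilon$-descent applies verbatim with $u,v$ units -- so the aside should simply be deleted rather than repaired.
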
 
\begin{proof}
First, if $T(X)$ is reducible over $\mathbb{F}_q(x)$ then there is $\frac{f}{g}$ with $f,g \in \mathbb{F}_q[x]$ and $(f,g) = 1$ such that 
$$T\left( \frac{f}{g} \right) =0$$ 
That implies that 
$$ \left( \frac{f}{g}\right)^3- 3 \left(\frac{f}{g}\right) -b=0$$ 
That is 
$$ \frac{f^3  - 3 f g^2 }{g^3}= \frac{P}{Q}$$
As $(f^3  - 3 f g^2, g^3)=1$ and $(P , Q)=1$ then $Q= g^3$ up to a constant.\\

Suppose that $Q$ is a cube up to a unit $c \in \mathbb{F}_q^*$. Then up to multiplication by $c^2$, without loss of generality, we can suppose that $Q$ is a cube say $Q= g^3$ with $\mathbb{F}_q(x)$. 

\begin{enumerate}
\item If $p \neq 2$, let $t$ be a root of $x^2 +3^{-1}$ in $\mathbb{F}_q^2$. 
Suppose $L/\mathbb{F}_q(x) $ has a generating equation of the form $y^3 -3y =a$ where $a=\frac{P}{Q}$ where $P=2 (A^2-3^{-1}B^2)$ and $Q =A^2+3^{-1}B^2$, for some $A, B \in \mathbb{F}_q[x]$ and $(A,B)=1$. By Lemma \ref{decomQ}, $A+tB = (u+tv) (\ca \pm t\cb)^3$ and $A-tB = (u-tv) (\ca \mp t\cb)^3$, for some $u$, $v\in \mathbb{F}_q$ and $\ca $, $\cb \in \mathbb{F}_q(t)$ with $(\ca , \cb)=1$, as $A^2+3^{-1} B^2 = N(A+tB)= (A+tB)(A-tB) =Q= g^3$. 
%For each choice of $\ca$ and $\cb$, we have
%$$(\ca + t \cb )^3 = \cb (3\ca^2+(-3)^{-1}\cb^2)t+\ca (\ca^2-\cb^2)$$ and
%$$(\ca - t \cb)^3 = -\cb (3\ca^2+(-3)^{-1}\cb^2 )t+\ca (\ca^2-\cb^2).$$ 
%$$\begin{array}{ccl} c^3 Q(x) &=& N( (a(x) + t b(x))^3) = N( (a(x) - t b(x))^3)\\&=& (a(x)^3 -a(x)^2b(x)^2)^2 +3^{-1} (3 a(x)^2 b(x)-3^{-1} b(x)^3)^2\end{array}$$  
%where $t$ is a root of $X^2 +3=0$ in $\mathbb{F}_{q^2}$. 
%Thus, without loss of generality, we can assume that
%$$A= \ca (\ca^2-\cb^2)\quad\text{and}\quad B =\cb (3\ca^2+(-3)^{-1}\cb^2).$$ 
%Moreover, all $A$ and $B$ are obtained this way. 

We now examine the polynomial $X^3 -3X-a$ over $\mathbb{F}_q(t) (x)$. We have 
$$a^2 -4 = \left( 4 t \frac{AB}{A^2+ 3^{-1} B^2}\right)^2 = \delta^2,$$ where 
$$ \delta =4 t \frac{AB}{A^2+ 3^{-1} B^2}.$$ By the proof of Theorem \ref{purelycubic}, we have that as $a^2 -4$ is a square over $\mathbb{F}_q(t)$, whence $\mathbb{F}_q(t)(x) L/ \mathbb{F}_q(t) (x)$ is purely cubic, so that there is $w =  y^2 + k y -2$ such that $w^3 = b$, where 
% $$ b = -\frac{ (a^2-4)^2 \pm a  (a^2-4) \delta }{ 2} ,$$ and thus
$$ k =  \frac{-a \pm \delta }{2}=\frac{ - A^2 + 3^{-1} B^2 \pm 2 t AB}{A^2+3^{-1} B^2}=\frac{ - A^2 + 3^{-1} B^2 \pm 2 t AB}{A^2+3^{-1} B^2}=\frac{ - (A \mp tB)^2}{Q},$$ 
and furthermore,
$$ b= \delta^3k.$$ 
As we supposed $Q=A^2+3^{-1} B^2$ to be a cube and $A+tB = (u+tv) (\ca \pm t\cb)^3$, then we have that $X^3-b$ is reducible over $\mathbb{F}_q(t) (x)$ if, and only if, $X^3 -3X-a$ is reducible over $\mathbb{F}_q(t) (x)$, which is true if, and only if, $A + tB$ is a cube in $\mathbb{F}_q(t) (x)$. (This follows from the fact that $A+tB$ is a cube if, and only if, $A-tB$ is a cube, as $(A+tB)(A-tB) = A^2 + 3^{-1} B^2=Q=g^3$.)
%if, and only if, $u + tv$ is a cube in $\mathbb{F}_q(t) (x)$. (As, $u+tv$ is a cube if, and only if, $u-tv$ is a cube.)
%As $A+tB = (u+tv) (\ca \pm t\cb)^3$, for some $u$, $v\in \mathbb{F}_q$ and $\ca $, $\cb \in \mathbb{F}_q(t)$ with $(\ca , \cb)=1$, we have $(u+tv)$ is a cube in $\mathbb{F}_q(t) (x)$ if, and only if, $(A+tB)$ is a cube in $\mathbb{F}_q(t) (x)$. 
Also, $X^3 -3X-a$ is irreducible over $\mathbb{F}_q(t) (x)$ implies $X^3 -3X-a$ is irreducible over $\mathbb{F}_q(x)$. Thus, if $T(X)$ is reducible over $\mathbb{F}_q (x)$, then $(A+ tB)$ is a cube in $\mathbb{F}_{q^2} (x)$.

For the converse, suppose that $Q= A^2+3^{-1}B^2$ and $A+tB$ is a cube in $\mathbb{F}_q(t)[x]$, say, $A+tB= (\ca +t \cb)^3$. Then
$$ A+t B= (\ca + t \cb )^3 = \cb (3\ca^2+(-3)^{-1}\cb^2)t+\ca (\ca^2-\cb^2)$$ and
$$ A = \cb (3\ca^2+(-3)^{-1}\cb^2) \quad and \quad B= \ca (\ca^2-\cb^2).$$ We now let 
$$r= \frac{2(\ca^2-3^{-1} \cb^2)}{\ca^2+3^{-1} \cb^2}\in  \mathbb{F}_q(x).$$  We observe that
$$r^3 -3 r= 2\frac{ (\ca^3 -\ca^2\cb^2)^2 -3^{-1} (3 \ca^2 \cb-3^{-1} \cb^3)^2}{(\ca^3 -\ca^2\cb^2)^2 +3^{-1} (3 \ca^2 \cb-3^{-1} \cb^3)^2}= \frac{P}{Q}.$$ It follows that $T(X)$ is reducible.

\item If $p=2$, let $\xi$ be a primitive $3^{rd}$ root of unity in $\mathbb{F}_{q^2}$. Suppose that $L/K$ has a generating equation of the form $y^3 -3y =a$, where $a=\frac{P}{Q}$, $P=A^2$, and $Q =A^2+AB+B^2$ for some $A$ and $B \in \mathbb{F}_q[x]$ and $(A,B)=1$. From Lemma \ref{decomQ}, $A+\xi B = (u+\xi v) (\ca + \xi \cb)^3$ and $A+\xi^2 B = (u+\xi^2 v) (\ca + \xi^2 \cb)^3$, as $A^2+AB+ B^2 = N(A+\xi B)= (A+\xi B)(A+\xi^2 B) =Q$, for some $u$, $v\in \mathbb{F}_q$ and $\ca $, $\cb \in \mathbb{F}_q(t)$ with $(\ca , \cb)=1$. 

We examine the polynomial $X^3 -3X-a$ over $\mathbb{F}_q(\xi ) (x)$. We have $$a= \frac{Z^2}{W(Z+W)}$$ for  $W=\xi A +B$, $Z =A$ and $(W, Z)=1$ as $(A, B)=1$. By Theorem \ref{purelycubic}, $w= y^2 +ky$ is such that $w^3=  b$, where 
$$ k = \frac{ W^7(Z+W)^5}{Z^{12}} \quad and \quad  b= \frac{W^4 (Z+W)^2}{P^3}= \left( \frac{W}{P}\right)^3 W(\xi^2 A+B)^2= \left( \frac{W}{P}\right)^3 Q(\xi^2 A+B).$$ As $Q$ is a cube by supposition and $A+\xi B = (u+\xi v) (\ca + \xi \cb)^3$ for some $u$, $v\in \mathbb{F}_q$ and $\ca $, $\cb \in \mathbb{F}_q(t)$ with $(\ca , \cb)=1$, we have $X^3-b$ is reducible over $\mathbb{F}_q(t) (x)$ if, and only if, $X^3 -3X-a$ is reducible over $\mathbb{F}_q(t) (x)$, which is true if, and only if, $B + \xi A$ is a cube in $\mathbb{F}_q(t) (x)$. 
Also, irreducibility of $X^3 -3X-a$ over $\mathbb{F}_q(\xi ) (x)$ implies that $X^3 -3X-a$ is irreducible over $\mathbb{F}_q(x)$. Thus, if $T(X)$ is reducible over $\mathbb{F}_q (x)$, then $(B+ \xi A)$ is a cube in $\mathbb{F}_{q^2} (x)$.  (This follows from the fact that $B+\xi A$ is a cube if, and only if, $B+\xi^2A$ is a cube, as $(B+\xi A)(B+\xi^2 A) = A^2 + AB+ B^2=Q=g^3$.)

Conversely, suppose that $B+\xi A$ is a cube in $\mathbb{F}_q(\xi)$, say, $(B+\xi A) = ( \cb + \xi \ca )^3$. Then 
\begin{align*} (B+\xi A)= & (\cb + \xi \ca)^3\\
 = &\ca^3\xi^3+3\cb \ca^2\xi^2+3\cb^2\ca\xi+\cb^3\\
=& \ca^3 +3 \cb \ca^2 (\xi +1)+3 \cb^2 \ca \xi+\cb^3\\
=& \ca^3 + \cb \ca^2 +\cb^3 +\cb \ca ( \ca  + \cb  )\xi,\\
\end{align*}
whence $B= \ca^3+\cb\ca^2+\cb^3\quad\text{and}\quad A= \cb \ca ( \ca  + \cb  )$. We now let $$r = \frac{\ca^2 + \cb^2}{\cb^2 + \cb \ca+ \ca^2} \in  \mathbb{F}_q(x).$$ We then observe that, for this choice of $r$,
\begin{align*} 
r^3 -3 r = \frac{P}{Q},
\end{align*} 
whence $T(X)$ is reducible.
\end{enumerate} 
\end{proof} 

\begin{corollaire} Suppose that $q\equiv -1 \text{ mod } 3 $, and that $L/\mathbb{F}_q(x)$ is cubic and Galois, so that there exists a primitive element $z$ of $L/\mathbb{F}_q(x)$ with irreducible polynomial $T(X)=X^3 - 3X-b$ (see Theorem \ref{qneq1mod3Galois}).  
\begin{enumerate}
\item If $p \neq 2$, then $b=\frac{P}{Q}$ where $P=2 (A^2-3^{-1}B^2)$ and $Q =A^2+3^{-1}B^2 $, for some $A,B \in \mathbb{F}_q[x]$ with $(A,B)=1$. The extension $L/\mathbb{F}_q(x)$ is constant if, and only if, $(A+ tB) = (u+ tv)( \ca + t \cb)^3$, with $u$,$v\in \mathbb{F}_q[x]$ and $u+ tv$ not a cube in $\mathbb{F}_{q^2}$, where $t \in \mathbb{F}_{q^2}$ is a root of the polynomial $X^3+ 3^{-1}$. 
\item When $p=2$, $b=\frac{P}{Q}$ where $P=A^2$ and $Q =A^2+AB+B^2$, for some $A, B \in \mathbb{F}_q[x]$ with $(A,B)=1$. The extension $L/\mathbb{F}_q(x)$ is constant if, and only if, $(B+\xi A) = (u+\xi v)( \cb + \xi \ca)^3$, with $u$, $v\in \mathbb{F}_q[x]$ and $u+\xi v$ not a cube in $\mathbb{F}_{q^2}$, where $\xi \in \mathbb{F}_{q^2}$ is a primitive third root of unity.
\end{enumerate}
In either case, if $L/\mathbb{F}_{q}(x)$ is constant, then $L = \mathbb{F}_{q^3}(x)$.
\end{corollaire}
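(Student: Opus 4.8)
The plan is to transfer everything to the quadratic constant extension $\mathbb{F}_{q^2}(x)$, where a primitive cube root of unity is available, and then to read off the answer from the Kummer constancy criterion (Theorem \ref{Kconstant}).

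\emph{Reduction.} Put $L':=L\cdot\mathbb{F}_{q^2}(x)$. Since $[L:\mathbb{F}_q(x)]=3$ and $[\mathbb{F}_{q^2}(x):\mathbb{F}_q(x)]=2$ are coprime, $L$ and $\mathbb{F}_{q^2}(x)$ are linearly disjoint over $\mathbb{F}_q(x)$, so $[L':\mathbb{F}_{q^2}(x)]=3$ and $\Gal(L'/\mathbb{F}_{q^2}(x))\cong\Gal(L/\mathbb{F}_q(x))$. First I would show that $L/\mathbb{F}_q(x)$ is constant if and only if $L'/\mathbb{F}_{q^2}(x)$ is constant: if $L=\mathbb{F}_{q^3}(x)$ then $L'=\mathbb{F}_{q^6}(x)$; conversely, if $L'/\mathbb{F}_{q^2}(x)$ is constant then the degree forces $L'=\mathbb{F}_{q^6}(x)$, and since $\mathbb{F}_{q^6}(x)/\mathbb{F}_q(x)$ is Galois with cyclic group $\mathbb{Z}/6\mathbb{Z}$ it has a unique intermediate field of degree $3$ over $\mathbb{F}_q(x)$, namely $\mathbb{F}_{q^3}(x)$, whence $L=\mathbb{F}_{q^3}(x)$. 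In particular, this also yields the last sentence of the Corollary.

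\emph{The Kummer parameter modulo cubes.} As $q^2\equiv1\bmod3$, Corollary \ref{kummer} lets us write $L'=\mathbb{F}_{q^2}(x)(w)$ with $w^3=b'\in\mathbb{F}_{q^2}(x)^*$, where $b'$ is not a cube since $[L':\mathbb{F}_{q^2}(x)]=3$. I would identify the class of $b'$ in $\mathbb{F}_{q^2}(x)^*/(\mathbb{F}_{q^2}(x)^*)^3$ from the explicit purely cubic generator furnished by Theorem \ref{purelycubic}, exactly as in the proof of the preceding irreducibility Theorem. For $p\ne2$, with $t^2=-3^{-1}$, that computation gives $w^3=\delta^3 k$ with $\delta=4t\,AB/Q$ and $k=-(A\mp tB)^2/Q$; since $-1$, $4^3$, $t^3$, $A^3$, $B^3$, $Q^3$ are cubes and $Q=(A+tB)(A-tB)$, the class of $b'$ equals that of $\frac{A-tB}{A+tB}$. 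For $p=2$, using $\xi^2+\xi+1=0$ and Theorem \ref{purelycubic}(2) in the same way, the class of $b'$ equals that of $Q(\xi^2A+B)=(\xi A+B)(\xi^2A+B)^2$. In both cases, write $\theta=A+tB$ (resp. $\theta=B+\xi A$) and let $\bar\theta$ be its image under the nontrivial element $\tau$ of $\Gal(\mathbb{F}_{q^2}(x)/\mathbb{F}_q(x))$; then $Q=\theta\bar\theta$, and for every prime $\pi$ of $\mathbb{F}_{q^2}[x]$ one has $v_\pi(b')\equiv m\,v_\pi(\theta)+n\,v_\pi(\bar\theta)\bmod3$ with fixed exponents $m,n$ not divisible by $3$.

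\emph{Divisor bookkeeping and conclusion.} In the UFD $\mathbb{F}_{q^2}[x]$ one checks $\gcd(\theta,\bar\theta)=1$: a common prime divisor would divide suitable $\mathbb{F}_{q^2}$-linear combinations of $\theta$ and $\bar\theta$ equal to $A$ and to $B$ (here $p\ne2$, resp. $\xi+\xi^2=1$ in characteristic $2$, is used), contradicting $\gcd(A,B)=1$. By Theorem \ref{Kconstant} applied over $\mathbb{F}_{q^2}(x)$, $L'/\mathbb{F}_{q^2}(x)$ is constant precisely when $b'=u'\beta'^3$ for a non-cube $u'\in\mathbb{F}_{q^2}^*$ and $\beta'\in\mathbb{F}_{q^2}(x)^*$; since $b'$ is never a cube, this amounts to $v_\pi(b')\equiv0\bmod3$ for all $\pi$. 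As at most one of $v_\pi(\theta),v_\pi(\bar\theta)$ is nonzero for each $\pi$ and $m,n$ are prime to $3$, this is equivalent to $v_\pi(\theta)\equiv0\bmod3$ for all $\pi$, i.e. $\theta=c\mu^3$ with $c\in\mathbb{F}_{q^2}^*$ and $\mu\in\mathbb{F}_{q^2}[x]$; moreover $c$ is then necessarily a non-cube, for otherwise $\theta$, hence (applying $\tau$) $\bar\theta$, hence $b'$, would be a cube. Expanding $c$ and $\mu$ in an $\mathbb{F}_q$-basis $\{1,t\}$ (resp. $\{1,\xi\}$) of $\mathbb{F}_{q^2}$ reproduces exactly the factorisation claimed in the Corollary, with non-cube constant part. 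Combined with the reduction step, this proves the statement.

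\emph{Expected main obstacle.} The delicate part is the second step: obtaining the class of $b'$ modulo cubes correctly from the explicit generator of Theorem \ref{purelycubic} (the polynomial discriminant governs only the Galois property, not constancy) and verifying that the exponents $m,n$ of $\theta,\bar\theta$ are prime to $3$. Once $b'$ is pinned down modulo cubes, the coprimality $\gcd(\theta,\bar\theta)=1$ and Theorem \ref{Kconstant} reduce the rest to a routine valuation count; the only further care needed is that non-cube-ness transfers correctly between $\theta$, $\bar\theta$ and $b'$, which follows because $\tau$ preserves cubes and $b'$ is never a cube.
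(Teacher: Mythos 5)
Your proposal is correct and follows essentially the same route as the paper: base-change to $\mathbb{F}_{q^2}(x)$, extract the Kummer class of the purely cubic generator from Theorem \ref{purelycubic}, use coprimality of $A+tB$ and $A-tB$ (resp. $B+\xi A$ and $B+\xi^2 A$) in the UFD $\mathbb{F}_{q^2}[x]$, and conclude via Theorem \ref{Kconstant}. The only cosmetic difference is in the reduction step, where you invoke the Galois correspondence for $\mathbb{F}_{q^6}(x)/\mathbb{F}_q(x)$ while the paper argues via the absence of unramified geometric extensions of $\mathbb{F}_q(x)$; both are valid.
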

\begin{proof}
We begin by noting that $\mathbb{F}_{q^2}L/ \mathbb{F}_{q^2}(x)$ is constant if, and only if, $L/\mathbb{F}_{q}(x)$ is constant. 
Indeed, if $L/\mathbb{F}_{q}(x)$ is constant, then clearly $\mathbb{F}_{q^2}L/ \mathbb{F}_{q^2}(x)$ is constant. If $L/\mathbb{F}_{q}(x)$ is not constant, then as there is no unramified extension of $\mathbb{F}_{q}(x)$ (see \cite[Theorem 6.1.3]{Vil}), there would be a ramified place in $\mathbb{F}_q(x)$ in $L$, and then a place of $\mathbb{F}_{q^2}(x)$ above this ramified place would also ramify in $\mathbb{F}_{q^2}L$, so that $\mathbb{F}_{q^2}L/\mathbb{F}_{q^2}(x)$ would also not be constant.\\
\begin{enumerate} 
\item If $p \neq 2$, $b=\frac{P}{Q}$ where $P=2 (A^2-3^{-1}B^2)$ and $Q =A^2+3^{-1}B^2 $, for some $A$ and $B \in \mathbb{F}_q[x]$ and $(A,B)=1$. In $\mathbb{F}_{q^2}(x)$, using the notation of the proof of the previous Theorem, we know that there is $w $ a generator of  $\mathbb{F}_{q^2}L/ \mathbb{F}_{q^2}(x)$ such that $w^3 = b$ where 
% $$ b = -\frac{ (a^2-4)^2 \pm a  (a^2-4) \delta }{ 2} ,$$ and thus
 $$ b= \delta^3\left( \frac{ - (A \pm tB)^2}{A^2+3^{-1} B^2}\right) =\delta^3\left( \frac{ - (A \pm tB)^2}{(A+tB) (A-tB)} \right) =\delta^3\left( \frac{ - A \pm tB}{A\mp tB}.\right)$$  
and taking $$z = \frac{-w}{\delta },$$ we have that $z^3 = \frac{A \pm t B}{A \mp t B}$. 
As $\mathbb{F}_q(\xi )[x]$ is a unique factorisation domain and $ ( A + tB)$ and $(A -t B)$ are coprime, we know that $ \frac{A \pm t B}{A \mp t B}$ is a cube if, and only if, $ (A+t B)$ and $(A -t B)$ are a cube. Also, $(A+tB)$ is a cube if, and only if, $A-tB$ is a cube, indeed  $(A+tB) (A-tB) = Q$ and $Q \in \mathbb{F}_q[x]$, thus each factor of $(A+tB)$ have their conjugates appearing to the same power.
Thus, $\mathbb{F}_{q^2}L/ \mathbb{F}_{q^2}(x)$ is constant if, and only if, $A+ t B$ is a cube up to a non-cube constant in $\mathbb{F}_{q^2}$ (see Lemma \ref{Kconstant}). 
\item If $p=2$, $b=\frac{P}{Q}$ where $P=A^2$ and $Q =A^2+AB+B^2$, for some $A$ and $B \in \mathbb{F}_q[x]$ and $(A,B)=1$. In $\mathbb{F}_{q^2}(x)$, using the notation of the proof of the previous Theorem, we know that there is $w $ a generator of  $\mathbb{F}_{q^2}L/ \mathbb{F}_{q^2}(x)$ such that $w^3 = b$ where 
$$b= \left( \frac{W}{P}\right)^3(\xi^2 A+ B) (\xi A + B) (\xi^2 A+B)=\left( \frac{W}{P}\right)^3(\xi^2 A+ B)^2 (\xi A + B) $$ 
and taking $z = \frac{Pw}{W }$, then we obtain $z^3= (\xi^2 A+ B)^2 (\xi A + B) $. 
As $\mathbb{F}_q(\xi )[x]$ is a unique factorisation domain and $ (\xi^2 A+ B)$ and $(\xi A + B)$ are coprime, we know that $ (\xi^2 A+ B)^2 (\xi A + B)$ is a cube if, and only if, $ (\xi^2 A+ B)$ and $(\xi A + B)$ are a cube. Note that $(\xi A+B)$ is a cube if, and only if, $\xi^2 A+ B$ is a cube, indeed  $(\xi A+B) (\xi^2 A+ B) = Q$ and $Q \in \mathbb{F}_q[x]$, thus each factor of $\xi A+B$ have their conjugates appearing to the same power.
Thus, $\mathbb{F}_{q^2}L/ \mathbb{F}_{q^2}(x)$ is constant if, and only if, $\xi A+ B$ is a cube up to a non-cube constant in $\mathbb{F}_{q^2}$ (once again, we refer the reader to Lemma \ref{Kconstant}). 
\end{enumerate} 
%Clearly, if $b \in \mathbb{F}_q$, it follows that $L/\mathbb{F}_q(x)$ is constant, and thus that $L=\mathbb{F}_{q^3}(x)$. Conversely, suppose that $L/\mathbb{F}_q(x)$ is constant. By Theorem 5.5, $T(X)=X^3 - 3X-b$ is irreducible if, and only if, the denominator of $b$ contains a non-cube in $\mathbb{F}_q(x)$. It follows from this that a place of $\mathbb{F}_q(x)$ dividing such a non-cube ramifies in $L$, whence $L/\mathbb{F}_q(x)$ cannot be constant.
\end{proof}
\subsection{The Galois action on the generator} 
The following Theorem describes the Galois action on the generator $z$ of a cubic Galois extension satisfying a minimal equation of the form $X^3 -3X -b=0$. This is not as simple as for the Artin-Schreier or Kummer generators (see Theorems \ref{ASGal} and \ref{KGal}), but may be concisely described, which we now do here. We note that in the following Theorem, we do not assume that $K=\mathbb{F}_q(x)$.
\begin{theorem}\label{explicite}
Suppose $q \equiv -1 \mod 3$. Let $L/K$ be a Galois cubic extension, and let $z$ be the element of Corollary \ref{linearthree} which has minimal polynomial of the form $T(X)=X^3 -3 X - a$. Then, the Galois action of $L/K$ on $z$ is given in the following way:
$$\sigma (z) = -\frac{1+2f}{a} z^2 +f z + \frac{2(1+2f)}{a}$$ and $$\sigma^2 (z)= \frac{1+2f}{a} z^2 +(-1-f) z - \frac{2(1+2f)}{a}$$ for $\sigma \in \text{Gal}(L/K)$, where $f$ is one root of the polynomial $$S(X)=\left(1 - \frac{4}{a^2}\right) X^2 + \left( 1- \frac{4}{a^2}\right) X + \left(1-\frac{1}{a^2}\right). $$
(Note that $-1-f$ is the other root of $S(X)$.)
Moreover,
\begin{enumerate}[(a)] 
\item If $p \neq 2$, then $$f= \frac{- 3\left(a^2 -4 \right)\pm a \delta }{ 6 \left(a^2 - 4 \right)}$$
%=  \frac{- \left(1 - \frac{4}{a^2}\right)\pm \frac{1}{3a} \delta }{ 2 \left(1 - \frac{4}{a^2}\right)},$$ 
where $\delta^2 = D$, %$= -3\left(1-\frac{4}{a^2}\right)= \frac{1}{9a^2}D$, 
with $D=-27(a^2-4)$ is the discriminant of the polynomial $P(X)=X^3 -3 X-a$.
\item If $p=2$, $af$ is a root of the resolvent polynomial $R(X)=X^2 + a X + (1+a^2)$ of $T(X)$. 
\end{enumerate}
\end{theorem}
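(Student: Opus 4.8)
The plan is to exploit the explicit purely-cubic description of the Galois closure already established in Theorem \ref{purelycubic}. Recall from that Theorem (and its proof) that the only primitive elements of $L/K$ of the shape $jy^2+ky+l$ are those with $j=1$, $l=-2$, and $k$ satisfying $k^2+ak+1=0$; moreover the element $u=z^2+kz-2$ satisfies $u^3=b$ for an explicit $b$. Since $L/K$ is Galois of group $\ZZ/3\ZZ$, any automorphism $\sigma$ must send $z$ to one of the three roots of $T(X)=X^3-3X-a$, and each such image is again a primitive element, hence expressible as $j'z^2+k'z+l'$. The first step is therefore to write the generic image $\sigma(z)=\alpha z^2+\beta z+\gamma$ and use the three defining relations obtained by substituting into $T$ — equivalently, use that $\{z,\sigma(z),\sigma^2(z)\}$ are the three roots, so their elementary symmetric functions are $0$, $-3$, $a$. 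Combined with the fact (from the proof of Theorem \ref{purelycubic}, applied over the Galois closure, or directly over $K$ once one knows $L/K$ is Galois and $z$ is a root of $T$) that $\sigma(z)$ must satisfy the same $l=-2$, $j=1$ normalisation is not available directly over $K$; instead I would argue as follows.

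First I would set $w_0=z^2+kz-2$ where $k$ is a root of $k^2+ak+1=0$; by Theorem \ref{purelycubic} (applied to the extension $L^{G_K}/K(k)$, noting $k\in\overline{K}$ generates at most a quadratic extension of $K$), $w_0^3\in K(k)$, so $\sigma$ acts on $w_0$ by multiplication by a cube root of unity $\xi$, i.e. $\sigma(w_0)=\xi w_0$ (here one must be slightly careful about whether $\xi\in K$; since $q\equiv-1\bmod 3$ it is not, but the computation of $\sigma(z)$ only needs the relation $\sigma(z)^2+k\,\sigma(z)-2=\xi(z^2+kz-2)$ together with $\sigma(z)$ being a root of $T$, and these two polynomial conditions pin down $\sigma(z)$ as a $K$-linear combination of $1,z,z^2$). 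Next I would extract $\sigma(z)$ from those relations: writing $\sigma(z)=\alpha z^2+\beta z+\gamma$ and using $z^3=3z+a$ to reduce, the condition ``$\sigma(z)$ is a root of $T$'' gives three equations in $\alpha,\beta,\gamma$; solving, I expect to land exactly on $\alpha=-\tfrac{1+2f}{a}$, $\beta=f$, $\gamma=\tfrac{2(1+2f)}{a}$ for a parameter $f$ constrained by a quadratic. The cleanest route to identifying that quadratic as $S(X)=(1-4/a^2)X^2+(1-4/a^2)X+(1-1/a^2)$ is to impose the remaining symmetric-function condition (e.g. $z\cdot\sigma(z)+\sigma(z)\cdot\sigma^2(z)+\sigma^2(z)\cdot z=-3$, or $z\sigma(z)\sigma^2(z)=a$), which becomes a single polynomial identity in $f$ after reduction modulo $X^3-3X-a$; that identity should be exactly $S(f)=0$. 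The formula for $\sigma^2(z)$ then follows by applying $\sigma$ again, or equivalently by observing that $\sigma^2(z)$ must be the ``other'' root, corresponding to the other root $-1-f$ of $S$, and checking $\sigma(z)+\sigma^2(z)+z=0$ forces the stated signs.

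For parts (a) and (b): once $f$ is known to be a root of $S(X)$, I would simply solve the quadratic. When $p\neq2$, the discriminant of $S$ (after clearing the factor $1-4/a^2=(a^2-4)/a^2$) works out to a constant multiple of $a^2(a^2-4)^3$, i.e. to the square $\delta^2$ where $\delta^2=D=-27(a^2-4)$; matching constants gives $f=\dfrac{-3(a^2-4)\pm a\delta}{6(a^2-4)}$, which one verifies is genuinely in $K$ precisely because $L/K$ Galois forces $\sqrt{D}\in K$ (Lemma \ref{disc}). When $p=2$, $S(X)$ is an Artin–Schreier-type quadratic; substituting $X=af'$ or comparing with the quadratic resolvent $R(X)=X^2+aX+(1+a^2)$ of $T(X)$ from Lemma \ref{resolventl}, one checks directly that $af$ is a root of $R$, and $R$ has a root in $K$ exactly because $L/K$ is Galois (Theorem \ref{resolvent}).

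The main obstacle I anticipate is purely organisational rather than conceptual: carrying out the reduction of $\sigma(z)=\alpha z^2+\beta z+\gamma$ being a root of $X^3-3X-a$ modulo the relation $z^3=3z+a$ produces a messy system, and then re-substituting $k=\tfrac{-a\pm\delta}{2}$ (resp. the $p=2$ analogue) to eliminate $k$ in favour of $a$ and $\delta$ (resp. $R$) requires care to keep the $\pm$ signs consistent between $\sigma$ and $\sigma^2$ and to confirm that the final coefficients are independent of the choice of root $k$ (they must be, since the answer is stated intrinsically in terms of $a$ and $f$). I would handle this by introducing $f$ as the organising parameter from the start — i.e. guess the form $\sigma(z)=-\tfrac{1+2f}{a}z^2+fz+\tfrac{2(1+2f)}{a}$, verify by direct substitution that this is a root of $T$ for every $f$ with $S(f)=0$, check it is not the identity, and invoke that $L/K$ has exactly three $K$-embeddings so this must be $\sigma$ up to swapping $\sigma\leftrightarrow\sigma^2$ — thereby sidestepping the derivation entirely and reducing everything to one finite verification plus the two quadratic-solving steps for (a) and (b).
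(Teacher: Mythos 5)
Your main route --- writing $\sigma(z)=\alpha z^2+\beta z+\gamma$ in the basis $\{1,z,z^2\}$, imposing that the elementary symmetric functions of $z,\sigma(z),\sigma^2(z)$ equal $0,-3,a$, solving the resulting system to obtain $\gamma=-2\alpha$ and $\alpha=-(1+2f)/a$ with $S(f)=0$ (using that $\alpha\neq 0$ since $K$ has no primitive cube root of unity), and then deriving (a) and (b) from the discriminant of $S$ (a square because $L/K$ is Galois) and from the resolvent $R$ --- is exactly the paper's proof. The purely-cubic detour in your opening paragraph is unnecessary and the ``guess and verify'' shortcut at the end is only a cosmetic reorganisation of the same finite computation, so the proposal is correct and essentially identical to the published argument.
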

\begin{proof} Suppose that $q \equiv -1 \mod 3$. Let $\sigma \in Gal (L/K)$. As $\sigma (z) \in L$, we let 
$$\sigma (z) = e z^2 + fz +g,$$ with $e,f,g \in K$. Then \begin{align*}&\sigma^2(z) \\=& e (e z^2 + fz +g)^2 + f (e z^2 + fz +g) +g \\ =& e^3 z^4 +ef^2 z^2 + e g^2 + 2 e^2f z^3 +2e^2g z^2 + 2 efgz + f ez^2 + f^2 z + fg  + g \\ =& 3 e^3 z^2 + e^3 a z + ef^2 z^2 + eg^2  +  6 e^2f z +  2 e^2f a +2e^2g z^2 + 2 efgz    +fez^2  + f^2 z + fg + g \\ =& (3e^3+ ef^2 + fe+ 2 e^2 g) z^2 + ( e^3 a + f^2 +  6 e^2f + 2 efg)z + (eg^2 + fg +g  +  2 e^2f a).
\end{align*}
Moreover, as $z$, $\sigma(z)$ and $\sigma^2(z)$ are the three roots of $P(X)$ and $P(X)= X^3 -3X-a$, we have $$Tr(z)= z+ \sigma (z) + \sigma^2 (z)=0.$$ That is, $$(3e^3+ ef^2 + fe+ 2 e^2 g +e ) z^2 + (  e^3 a + f^2 +  6 e^2f + 2 efg +f +1 )z + (eg^2 + fg +g  +  2 e^2f a +g)=0.$$ As $1, z, z^2$ is a basis for $L$ over $K$, we obtain the system 
$$\left\{ \begin{array}{llll} 3e^3+ ef^2 + fe+ 2 e^2 g +e & =& 0 & \\ 
e^3 a + f^2 +  6 e^2f + 2 efg +f +1&=&0 & \\ 
eg^2 + fg +g  +  2 e^2f a +g &=& 0 & 
\end{array}\right.  $$
%\begin{enumerate}[(i)]\item $3e^3+ ef^2 + fe+ 2 e^2 g +e =0$, \item $e^3 a + f^2 +  6 e^2f + 2 efg +f +1=0$, and \item $eg^2 + fg +g  +  2 e^2f a +g =0$.\end{enumerate} As these equalities are satisfied, we obtain that $\sigma^2 (z)=-e z^2 - ( 1+f)z -g$. 
We note that $e\neq 0$, as $K$ does not contain primitive $3^{rd}$ roots of unity, using the second equation of the system. Thus, the previous system simplifies to
$$\left\{ \begin{array}{llll} 3e^2+ f^2 + f+ 2 e g +1 & =& 0 & \\ 
e^3 a + f^2 +  6 e^2f + 2 efg +f +1&=&0 & \\ 
eg^2 + fg +g  +  2 e^2f a +g &=& 0 & 
\end{array}\right. \text{(I)}$$
%\begin{enumerate}[(i)]\item $3e^2+ f^2 + f+ 2 e g +1 =0$, \item $e^3 a + f^2 +  6 e^2f + 2 efg +f +1=0$, and \item $eg^2 + fg +2g  +  2 e^2f a  =0$.\end{enumerate} 
We denote $z_1 := \sigma (z)$, then $z_2 := \sigma^2 (z)= -z_1  -z$. As the linear coefficient of $P(X)$ is equal to $-3$, we have that \begin{align*} -3 &= z z_1  + z z_2 + z_1 z_2 \\ &=  zz_1 + z(-z_1 -z) + z_1 (-z_1 -z) \\ &= -z^2 - z_1^2 - z_1 z\\ &= -z^2 - (e z^2 + f z + g)^2 -z(e z^2 + f z + g) \\ &= -z^2 - e^2 z^4 - f^2 z^2 - g^2 - 2ef z^3 - 2egz^2 - 2fg z-e z^3 - f z^2 - gz   \\ &= -z^2 - 3 e^2 z^2 - a e^2 z - f^2 z^2 - g^2 - 6 ef z - 2efa  - 2egz^2 - 2fg z-3e z - ea - f z^2 - gz \\ &=-(1+ 3e^2 + f^2 + 2eg + f)z^2 - (ae^2+6ef + 2fg + 3e+g) z - (g^2 + ea +2efa). \end{align*} 
We therefore obtain the following system:
$$\left\{ \begin{array}{llll} 1+ 3e^2 + f^2 + 2eg + f & =& 0 &  \\ 
ae^2+6ef + 2fg + 3e+g&=&0 & \\ 
g^2 + ea +2efa-3 &=& 0 & 
\end{array}\right. \text{(II)}$$
%\begin{enumerate}[(i)]\addtocounter{enumi}{3} \item $1+ 3e^2 + f^2 + 2eg + f=0$, \item $ae^2+6ef + 2fg + 3e+g=0$, and \item $g^2 + ea +2efa-3=0$. \end{enumerate} 
As the constant coefficient is equal to $-a$, we must have \begin{align*} a & = z z_1 z_2 = z z_1 (-z_1 - z)= -z z_1^2 - z^2 z_1 \\ &= - [z(e z^2 + f z + g)^2 + z^2 (e z^2 + f z + g)] \\ &= -(e^2 z^5 + f^2 z^3 + g^2 z+ 2ef z^4 + 2egz^3 + 2fg z^2 + e z^4 + f z^3 + g z^2) \\ &= -(3 e^2 z^3 + e^2 a z^2 + 3f^2 z + f^2 a + g^2 z +6 ef z^2 + 2efaz + 6eg z + 2eg a \\ & \quad\;\;+2fg z^2 + 3e z^2 +ea z + 3 fz + fa + gz^2) \\ &= -(9 e^2 z + 3e^2 a + e^2 a z^2 + 3f^2 z + f^2 a + g^2 z +6 ef z^2 + 2efaz + 6eg z + 2eg a \\ &\quad\;\; +2fg z^2 + 3e z^2 +ea z + 3 fz + fa + gz^2 ) \\ &= -(e^2 a +6ef + 2fg +3e+g)z^2 - (9 e^2 + 3f^2 +g^2 + 2efa + 6eg +ea+3f)z \\ &\quad\;\; - ( 3e^2 a +f^2 a + 2eg a + fa). \end{align*} This leads 
$$\left\{ \begin{array}{llll}e^2 a +6ef + 2fg +3e+g & =& 0 &  \\ 
9 e^2 + 3f^2 +g^2 + 2efa + 6eg +ea+3f &=&0 & \\ 
3e^2 a +f^2 a + 2eg a + fa+a &=& 0 & 
\end{array}\right. $$
%\begin{enumerate}[(i)]\addtocounter{enumi}{6} \item $e^2 a +6ef + 2fg +3e+g=0$, \item $9 e^2 + 3f^2 +g^2 + 2efa + 6eg +ea+3f=0$, and \item $3e^2 a +f^2 a + 2eg a + fa+a =0$, \end{enumerate} 
As $a\neq 0$, the system becomes
$$\left\{ \begin{array}{llll}e^2 a +6ef + 2fg +3e+g & =& 0 &  \\ 
9 e^2 + 3f^2 +g^2 + 2efa + 6eg +ea+3f &=&0 & \\ 
3e^2  +f^2 + 2eg  + f+1 &=& 0 & 
\end{array}\right. \text{(III)}$$
 %\begin{enumerate}[(i)]\addtocounter{enumi}{6} \item $e^2 a +6ef + 2fg +3e+g=0$, \item $9 e^2 + 3f^2 +g^2 + 2efa + 6eg +ea+3f=0$, and \item $3e^2  +f^2  + 2eg  + f+1 =0$. \end{enumerate} 
 Together, the systems $\text{(I)}$, $\text{(II)}$, $\text{(III)}$ yield the system of six equations $$\left\{ \begin{array}{cclc} 3e^2+ f^2 + f+ 2 e g +1& =& 0 & (1) \\ e^3 a + f^2 +  6 e^2f + 2 efg +f +1&=&0 & (2) \\ eg^2 + fg +2g  +  2 e^2f a  &=& 0 &(3) \\ ae^2+6ef + 2fg + 3e+g &=& 0 & (4) \\ g^2 + ea +2efa-3 &=&0 & (5) \\ 
9 e^2 + 3f^2 +g^2 + 2efa + 6eg +ea+3f &=& 0& (6)\\ \end{array}\right. $$ Subtracting Eq. (2) from (1) then yields $$0=3e^2+ f^2 + f+ 2 e g +1-(e^3 a + f^2 +  6 e^2f + 2 efg +f +1)=3e^2+ 2 ge -e^3 a -  6 e^2f - 2 fge.$$
 Thus, as $e \neq 0$ (because $K$ does not contain primitive $3^{rd}$ roots of unity), division by $e$ yields $$3e+ 2 g -e^2 a -  6 ef - 2 fg =0. \ \quad\quad\quad\quad\quad\quad\quad\quad (7)$$ The sum of Eqs. $(4)$ and $(7)$ implies $6e + 3g=0$. As $p \neq 3$, it follows that 
 $$g=-2e. \ \ \ (*)$$ By substitution of this identity in Eq. $(4)$, we obtain
 \begin{align*}ae^2 + 2ef + e=ae^2 + 6ef-4ef +3e - 2e=0.\quad\quad \end{align*} As $e\neq 0$ and $a \neq 0$, it follows that $ae + 2f + 1 = 0$, and hence that 
 $$e=- \frac{1+2f}{a}. \ \ \ (**)$$
  Eq. (1) then reads as \begin{align*} 0 &= 3e^2+ f^2 + f- 4 e^2 +1 = -e^2 + f^2 + f+1 \\ &= -\left(-\frac{1+2f}{a}\right)^2 + f^2 + f+1 = -\frac{(1+4f +4f^2)}{a^2} + f^2 + f+1 \quad\quad \\ & = \left(1-\frac{4}{a^2}\right) f^2 +\left(1-\frac{4}{a^2}\right)f + \left(1-\frac{1}{a^2}\right)   \end{align*}
\begin{enumerate} 
\item[$\circ$] If $p\neq 2$, the discriminant of the polynomial $$S(X):=\left(1-\frac{4}{a^2}\right) X^2 +\left(1-\frac{4}{a^2}\right)X + \left(1 - \frac{1}{a^2}\right)$$ is equal to \begin{align*} \Delta &=\left(1-\frac{4}{a^2}\right)^2 -4\left(1-\frac{4}{a^2}\right)\left(1-\frac{1}{a^2}\right)\\&=\left(1-\frac{4}{a^2}\right)\left( \left(1-\frac{4}{a^2}\right)-4\left(1-\frac{1}{a^2}\right)\right)\\&=-3\left(1-\frac{4}{a^2}\right)\\&= \frac{1}{9a^2}D,\end{align*} where $D=-27(a^2-4)$ is the discriminant of the polynomial $X^3 -3 X - a$, which is a square as $L/K$ is a Galois. Thus $\Delta$ is a square. Thus $S(X)$ has a root and $f$ can be chosen as one of these roots. 
The latter together with $(*)$ and $(**)$ give part $(a)$ of the Theorem. 
% As $f$ is a root of $S(X)$, it follows that $D$, and hence $\Delta$ is a square in $K$.
\item[$\circ$]  If $p=2$, %then the polynomial $S(X)$ evaluated at $f$ gives $f^2 +f + 1 + 1/a^2=0$.
by definition, the resolvent $R(X)$ of $T(X)$ is equal to $$R(X) = X^2 +a X + 1+a^2$$ and has a root that we denote $\gamma$, as $L/K$ is Galois. We observe that
$$S(X)=R(X)/a^2= Y^2 + Y + 1 + 1/a^2$$ where $Y = X/a$. Thus, $\gamma a$ is a root of $S(X)$. The later together with $(*)$ and $(**)$ give part $(b)$ of the Theorem. 
 %and we can take $f$ as this root and $g= \frac{f}{e}= fa$ or $g=0$. This yields the two conjugates of $z$ by $\sigma$ as $$z_1 :=\sigma (z) = \frac{1}{a} z^2 + f z + g$$ and \begin{align*} z_2 := \sigma^2 (z) & = (\frac{1}{a^3} + \frac{1}{a} f^2 + \frac{1}{a}f )z^2 + ( \frac{1}{a^2} + f^2 ) z + (\frac{1}{a}g^2 + fg +g) \\&=  \frac{1}{a}z^2 + ( 1 +f ) z  + (\frac{1}{a}g^2 + fg +g)\\ &= \sigma(z) + z.\end{align*} 
\end{enumerate} 
 %In either case $p \neq 2$ or $p=2$ of Theorem \ref{explicite}, it may be verified that $e, f$, and $g$ as defined as in the proof of the Theorem satisfy Eqs. $(1)-(6)$. We give the verification here. We have the 6 equations $$\left\{ \begin{array}{cclc} 3e^2+ f^2 + f+ 2 e g +1& =& 0 & (1) \\ e^3 a + f^2 +  6 e^2f + 2 efg +f +1&=&0 & (2) \\ eg^2 + fg +2g  +  2 e^2f a  &=& 0 &(3) \\ ae^2+6ef + 2fg + 3e+g &=& 0 & (4) \\ g^2 + ea +2efa-3 &=&0 & (5)\\ 9 e^2 + 3f^2 +g^2 + 2efa + 6eg +ea+3f &=& 0.& (6)\\ \end{array}\right. $$ 
Lastly, we give a brief verification that $e$, $g$, and $f$ as defined satisfy the equations $(1)-(6)$. Equations $(1)$ and $(4)$ have already been verified before. Via the substitution $g=-2e$, Eq. $(2)$ becomes $$0=e^3 a + f^2 +  6 e^2f + 2 efg +f +1=e^3 a +  6 e^2f + 2 efg  +e^2,$$ which becomes $(4')$ by division of $e^2$. Via the substitutions $g=-2e$ and $e=-\frac{1+2f}{a}$, Eq. $(3)$ becomes \begin{align*} eg^2 + fg +2g  +  2 e^2f a &= 4e^3 -2fe -4e + 2 e^2f a\\ &= e( 4e^2 -2f -4 + 2 ef a)\\&= e(4e^2 -2f-4 -2f-4f^2)\\&= -4e( -e^2 +1+ f +f^2)=0,\end{align*} 
which is satisfied as Eq. $(1)$ is satisfied. Once again via substitution with $g=-2e$ and $e=- \frac{1+2f}{a}$, Eq. $(5)$ becomes $$0 = 4e^2 + ea +2efa -3 =4 e^2 -4f -4f^2 -4= -4(-e^2 +f +f^2 +1),$$ which is satisfied as Eq. $(1)$ is satisfied. Finally, via substitution with $g=-2e$ and $f+ f^2 = e^2 -1$, Eq. $(6)$ becomes $$0=9e^2 +3f^2 + 4e^2 + 2ef -12e^2 + ea + 3f =4e^2 -3 + ea + 2efa,$$ which is satisfied as $(5)$ is satisfied.
\end{proof} 

%\begin{remarque} 
%Given a cubic extension with a primitive element $y$ whose minimal polynomial has the form $X^3 -3X-a$, the previous Theorem gives the Galois conjugates of $y$ in the Galois closure, that is, all the roots of $X^3-3X-a$ in terms of $y$ in the Galois closure. We can deduce from this result the relationship between the root of any cubic in the Galois closure. {\color{red} Is this true? Could you perhaps explain a bit more?}
%\end{remarque}
\subsection{Generators with minimal equation $X^3-3X-a=0$}
The following Theorem gives the form of any generator with minimal equation $X^3 -3X -a=0$ for some $a \in K$. This result is equivalent to Lemma \ref{genAS} and \ref{genK}. In particular, we discover that there are infinitely many of those for a given Galois extension of this form. We give the proof in the Appendix for clarity and brevity, as the argument is quite long. 
\begin{theorem}\label{reallylongtheorem}
Suppose that $q \equiv -1 \mod 3$. Let $L_i = K(z_i)/K$ ($i=1,2$) be two cyclic extensions of degree $3$ such that $z_i^3 -3z_i=a_i\in K$. The following are equivalent: 
\begin{enumerate}
\item $L_1= L_2$;
\item $z_2=\phi  z_1^2 +\chi z_1 -2\phi$, where $\phi ,\chi \in K$ satisfy the equation $\chi^2+a_1\phi \chi+\phi^2=1.$ 
\end{enumerate}
Moreover, if $K = \mathbb{F}_q(x)$, when the above conditions are satisfied, then:
\begin{enumerate}[(a)] 
\item If $p\neq 2$,  there are relatively prime polynomials $C,D\in \mathbb{F}_q[x]$ 
%and $\xi,\zeta \in \mathbb{F}_q^*$ such that $\xi \zeta = -27^{-1}$ and 
$$\phi = \frac{ 12C DQ_1}{\delta ( D^2 +3^{-1} C^2)}\quad\quad\text{and}\quad\quad \chi =\frac{-6P_1CD\pm \delta(C^2 -3^{-1} D^2)}{\delta( D^2 +3^{-1}  C^2)},$$ 
%= \frac{-a_1 \cP \pm \gamma}{2},$$
%\frac{-6P_1CD+ \delta(C^2 -3^{-1} D^2)}{\delta( D^2 +3^{-1}  C^2)},$$ 

%\frac{-a_1 \phi}{2} \pm \frac{ A^2 -3^{-1} B^2}{ B^2 +3^{-1}  A^2},$$ 
where $\delta \in K$ is such that $\delta^2=Q_1^2 d_{L/\mathbb{F}_q(x)}$ and $a_1= P_1/Q_1$, $P_1$, $Q_1\in \mathbb{F}_q[x]$ with $(P_1, Q_1)=1$ and $$a_2 =(-2 + a_1^2) \phi^3 + 3 a_1 \phi^2 \chi + 6 \phi \chi^2 + a_1 \chi^3.$$
\item If $p=2$,
\begin{enumerate}[(i)]
\item if $\phi =0$, then $\chi =\pm 1$. If $\chi =1$, then $a_2=a_1$, and if $\chi =-1$, then $a_2=-a_1$.
\item if $\phi \neq 0$, then $\chi /(a_1 \phi)$ is a solution of 
$$X^2-X-\frac{\phi^2-1}{a_1 \phi }=0$$ and 
$$a_2 = a_1(a_1 \phi^3 +   \phi^2 \chi  +  \chi^3).$$
\end{enumerate}
\end{enumerate}\end{theorem}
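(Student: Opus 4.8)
The plan is to establish the equivalence of (1) and (2) by a trace/symmetric-function computation entirely parallel to the proof of Theorem~\ref{explicite}, and then to obtain the explicit parametrisations (a) and (b) by parametrising the conic cut out by the relation in (2). For (1)$\Rightarrow$(2): both $L_1,L_2$ are cyclic of degree $3$, so once $L_1=L_2=:L$ they share a Galois group $\langle\sigma\rangle$, with $z_i,\sigma(z_i),\sigma^2(z_i)$ the roots of $X^3-3X-a_i$; then $z_2\in K(z_1)$, so $z_2=\phi z_1^2+\chi z_1+\psi$ with $\phi,\chi,\psi\in K$. Using $z_1^3=3z_1+a_1$ one computes $\Tr(1)=3$, $\Tr(z_1)=0$, $\Tr(z_1^2)=6$, $\Tr(z_1^3)=3a_1$, $\Tr(z_1^4)=18$ (directly, or via Newton's identities for $X^3-3X-a_1$). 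The minimal polynomial of $z_2$ over $K$ is $X^3-\Tr(z_2)X^2+e_2X-N_{L/K}(z_2)$ with $e_2=\frac{1}{2}(\Tr(z_2)^2-\Tr(z_2^2))$, and all three coefficients automatically lie in $K$; hence $z_2$ satisfies an equation of the shape $X^3-3X-a_2$ with $a_2\in K$ precisely when $\Tr(z_2)=0$ and $\Tr(z_2^2)=6$. Substituting $z_2=\phi z_1^2+\chi z_1+\psi$ and reducing powers, the first condition forces $\psi=-2\phi$, and then the second collapses to $\chi^2+a_1\phi\chi+\phi^2=1$, which is (2). Conversely, given such $\phi,\chi$, set $z_2:=\phi z_1^2+\chi z_1-2\phi$ and $a_2:=z_2\sigma(z_2)\sigma^2(z_2)=N_{L_1/K}(z_2)\in K$; the same computation gives $\Tr(z_2)=0$, $\Tr(z_2^2)=6$, so $z_2^3-3z_2=a_2$, while $z_2\notin K$ (the $z_1^2$-component is nonzero if $\phi\ne0$, and if $\phi=0$ then $\chi=\pm1$ and $z_2=\pm z_1$), whence $L_2=K(z_2)=L_1$. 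Expanding $z_2^3-3z_2$ modulo $z_1^3-3z_1-a_1$ and simplifying with $\chi^2+a_1\phi\chi+\phi^2=1$ yields the stated $a_2=(-2+a_1^2)\phi^3+3a_1\phi^2\chi+6\phi\chi^2+a_1\chi^3$.

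\textbf{The explicit forms over $\mathbb{F}_q(x)$.} It then remains to describe all $(\phi,\chi)\in\mathbb{F}_q(x)^2$ with $\chi^2+a_1\phi\chi+\phi^2=1$. This is a nondegenerate conic (nondegeneracy being equivalent to separability of $X^3-3X-a_1$, which holds) carrying the rational point $(0,1)$, hence is rationally parametrised by the pencil of lines $\chi=1+t\phi$ through $(0,1)$; writing $t=C/D$ with coprime $C,D\in\mathbb{F}_q[x]$, clearing denominators, and substituting $a_1=P_1/Q_1$ produces $\phi$ and $\chi$ as rational functions in $C,D,P_1,Q_1$. When $p\ne2$, completing the square with $\chi':=\chi+\frac{a_1}{2}\phi$ rewrites the relation as $\chi'^2=1+\frac{a_1^2-4}{4}\phi^2$; since $d_{L/\mathbb{F}_q(x)}=-27(a_1^2-4)$ and $\delta^2=Q_1^2\,d_{L/\mathbb{F}_q(x)}$, the coefficient $\frac{a_1^2-4}{4}$ equals $-\delta^2/(108Q_1^2)$, and running the parametrisation in these coordinates gives exactly the stated closed forms for $\phi$ and $\chi$, the sign ambiguity reflecting the choice of square root $\pm\delta$. (Equivalently, over $\mathbb{F}_{q^2}(x)$ the extension is Kummer by Theorem~\ref{purelycubic}, and one may instead deduce the same data from Lemma~\ref{genK} applied to the two Kummer generators and then descend; this is where the norm form $D^2+3^{-1}C^2$ and the factor $\delta$ arise naturally.) When $p=2$ one cannot complete the square; instead, dividing the relation of (2) through by an appropriate square converts it into an Artin--Schreier equation $X^2-X=(\text{rational function of }\phi)$ whose solvability and solutions are governed by the quadratic resolvent $R(X)=X^2+a_1X+(1+a_1^2)$ of $X^3-3X-a_1$ already used in Theorem~\ref{qneq1mod3Galois}; the degenerate case $\phi=0$ forces $\chi=\pm1$ with $a_2=\pm a_1$, and for $\phi\ne0$ one reads $\chi$ off that resolvent and gets $a_2=a_1(a_1\phi^3+\phi^2\chi+\chi^3)$ as the characteristic-$2$ specialisation of the formula for $a_2$ above.

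\textbf{Main obstacle.} There is no conceptual obstacle: (1)$\Leftrightarrow$(2) is bookkeeping parallel to Theorem~\ref{explicite}, and the conic in (a)--(b) is parametrisable at once because it has an obvious rational point. The real work — the reason this is relegated to the Appendix — is the long, careful algebra that turns the raw conic parametrisation into the precise closed forms claimed in (a): keeping track of which constants in $\mathbb{F}_q^*$ can be absorbed without altering $(\phi,\chi)$, checking coprimality of the resulting numerators and denominators, and correctly pairing the $\pm$ signs with $\pm\delta$ (and, in characteristic $2$, carrying out the analogous bookkeeping with Artin--Schreier resolvents in place of square roots).
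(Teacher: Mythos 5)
Your strategy for (1)$\Leftrightarrow$(2) is the same as the paper's in substance (impose that the elementary symmetric functions of $z_2,\sigma(z_2),\sigma^2(z_2)$ match the coefficients of $X^3-3X-a_2$, using the explicit Galois action of Theorem \ref{explicite}), and your observation that only the three conditions $\mathrm{Tr}(z_2)=0$, $e_2=-3$, $N(z_2)=a_2$ are needed is a clean way to organise what the paper does by brute force. But there is a genuine gap in how you encode the middle condition: you replace $e_2=-3$ by $\mathrm{Tr}(z_2^2)=6$ via $e_2=\tfrac12(\mathrm{Tr}(z_2)^2-\mathrm{Tr}(z_2^2))$, and this identity divides by $2$. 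In characteristic $2$ one has $\mathrm{Tr}(z_2^2)=\mathrm{Tr}(z_2)^2$ identically and $6=0$, so once $\mathrm{Tr}(z_2)=0$ the condition $\mathrm{Tr}(z_2^2)=6$ is vacuous and the relation $\chi^2+a_1\phi\chi+\phi^2=1$ --- which is the entire content of (2) and the input to part (b) --- is never derived. Since the theorem explicitly treats $p=2$, you must, as the paper does, compute the second elementary symmetric function $z_2\sigma(z_2)+z_2\sigma^2(z_2)+\sigma(z_2)\sigma^2(z_2)=-3$ directly and reduce it using $u^2=f^2+f+1$ and $ua_1=2f+1$; the computation is characteristic-free and is where $(\star\star)$ actually comes from.

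For part (a) your route genuinely differs from the paper's: you parametrise the conic $\chi^2+a_1\phi\chi+\phi^2=1$ by lines through $(0,1)$, whereas the paper solves the quadratic in $\chi$, demands that the discriminant $\Gamma=(a_1^2-4)\phi^2+4$ be a square, and then runs a unique-factorisation argument on $\delta^2U'^2=-27(W-2V')(W+2V')$ to force $W\mp 2V'$ to be squares up to units. Your approach is not wrong, but the step you wave at (``running the parametrisation in these coordinates gives exactly the stated closed forms'') is precisely the hard part: the slope parametrisation $\chi=1+t\phi$, $t=C/D$ naturally produces the denominator $Q_1C^2+P_1CD+Q_1D^2$, not $D^2+3^{-1}C^2$, and converting one binary form into the other over $\mathbb{F}_q[x]$ while keeping $C,D$ polynomial and coprime and correctly inserting the factor $\delta$ (a non-constant polynomial with $\delta^2=Q_1^2d_{L/\mathbb{F}_q(x)}$) amounts to redoing the paper's norm-form/UFD analysis. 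As written, the closed forms in (a) are asserted rather than proved. The characteristic-$2$ reduction in (b) to an Artin--Schreier equation is fine and matches the paper, modulo the fact that it presupposes the relation in (2), which your argument has not yet established in that characteristic.
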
 
\subsection{Ramification and splitting}
The next Theorem studies the ramification for cubic extensions with a primitive element $y$ whose minimal equation is of the form $y^3-3y-a=0$. From it, we obtain the ramification of the extension via the decomposition of the denominator of $a$ into prime factors. Before doing this, we observe via the following Lemma that the valuation of such an element $y$ at an infinite place is always nonnegative. 
\begin{lemme}\label{infinity} Let $p \neq 3$ and $q \neq 1 \mod 3$. Let $L/\mathbb{F}_q(x)$ be a Galois cubic extension with primitive element $z$, which has minimal polynomial over $\mathbb{F}_q(x)$ equal to $T(X) =X^3 - 3X - a$. Then the place $\mathfrak{p}_{\infty}$ corresponding to the pole divisor of $x$ in $\mathbb{F}_q(x)$ satisfies $v_{\mathfrak{p}_\infty}(b)\geq 0$. \end{lemme}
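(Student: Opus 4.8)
The plan is to reduce at once to showing $v_{\mathfrak{p}_\infty}(a)\geq 0$. Indeed, if $z$ had a pole at $\mathfrak{p}_\infty$, then in $z^3-3z=a$ the term $z^3$ would strictly dominate $3z$ (here $p\neq 3$ guarantees $v_{\mathfrak{p}_\infty}(3)=0$), forcing $v_{\mathfrak{p}_\infty}(a)=3\,v_{\mathfrak{p}_\infty}(z)<0$; so it is enough to bound $a$. I will then argue by contradiction, assuming $v_{\mathfrak{p}_\infty}(a)=-m<0$. The feature of the hypothesis $q\equiv -1\bmod 3$ that I will use is that $\mathbb{F}_q$ contains no primitive third root of unity, equivalently (for $p\neq 2$) that $-3$ is a non-square in $\mathbb{F}_q^{*}$. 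The argument splits according to how the Galois property is detected, namely by the discriminant when $p\neq 2$ and by the quadratic resolvent when $p=2$, and in each case the local behaviour of $a$ at the \emph{rational} place $\mathfrak{p}_\infty$ conflicts with that obstruction.

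Concretely, since $\mathfrak{p}_\infty$ has degree one its residue field is $\mathbb{F}_q$; with the uniformiser $t=1/x$ I write $a=c\,t^{-m}+\cdots$ with $c\in\mathbb{F}_q^{*}$. If $p\neq 2$, Lemma \ref{disc} says that the discriminant $D=-27(a^2-4)$ of $T(X)$ is a square in $\mathbb{F}_q(x)$, say $D=\delta^2$. Since $v_{\mathfrak{p}_\infty}(4)=0$ there is no cancellation, so $v_{\mathfrak{p}_\infty}(D)=-2m$ with leading coefficient $-27c^2$; comparing with $\delta^2$ forces $-27c^2$, hence $-3$, to be a square in $\mathbb{F}_q$, which is the desired contradiction. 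If $p=2$, the quadratic resolvent of $T(X)$ is $R(X)=X^2+aX+(1+a^2)$ (Lemma \ref{resolventl}), and by Theorem \ref{resolvent} the extension is Galois exactly when $R$ has a root $w\in\mathbb{F}_q(x)$. A short Newton-polygon analysis of $w^2+aw+(1+a^2)=0$ at $\mathfrak{p}_\infty$ shows that the only admissible value is $v_{\mathfrak{p}_\infty}(w)=-m$; matching the coefficients of $t^{-2m}$ then yields $w_0^2+cw_0+c^2=0$, so that $w_0/c\in\mathbb{F}_q$ is a primitive third root of unity, again a contradiction. Hence $v_{\mathfrak{p}_\infty}(a)\geq 0$, and with the reduction above $v_{\mathfrak{p}_\infty}(z)\geq 0$.

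I expect the genuine work to lie in the characteristic-$2$ case: one must carefully exclude every value of $v_{\mathfrak{p}_\infty}(w)$ other than $-m$ by comparing the three term-valuations $2v_{\mathfrak{p}_\infty}(w)$, $v_{\mathfrak{p}_\infty}(w)-m$, $-2m$ and checking that they can be tied only at $v_{\mathfrak{p}_\infty}(w)=-m$, and then track the leading coefficients cleanly. The odd-characteristic case is essentially a single leading-coefficient comparison once Lemma \ref{disc} is invoked. A small but essential point throughout is that $\mathfrak{p}_\infty$ is rational over $\mathbb{F}_q$, so that the leading coefficients above genuinely lie in $\mathbb{F}_q$; this is precisely why the statement is restricted to $K=\mathbb{F}_q(x)$ and to this distinguished place.
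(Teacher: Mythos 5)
Your proof is correct, but it takes a genuinely different route from the paper's. The paper invokes Theorem \ref{qneq1mod3Galois}: writing $a=P/Q$ with $(P,Q)=1$ and $Q=A^2+3^{-1}B^2$, the assumption $v_{\mathfrak{p}_\infty}(a)<0$ forces $\deg P>\deg Q$, hence a cancellation of leading coefficients in $A^2+3^{-1}B^2$, giving $(-3)^{-1}=(b_n/a_n)^2$ --- the same contradiction you reach, but obtained from the global norm-form parametrization rather than from a local expansion. Your argument bypasses Theorem \ref{qneq1mod3Galois} entirely and works directly with the Galois criterion (Lemma \ref{disc} for $p\neq 2$, the resolvent via Theorem \ref{resolvent} for $p=2$) at the rational place $\mathfrak{p}_\infty$; both ultimately rest on the same arithmetic fact that $\mathbb{F}_q$ has no primitive cube root of unity when $q\equiv -1 \bmod 3$. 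What your approach buys: it is more self-contained, the leading-coefficient comparisons ($-27c^2$ a square, resp.\ $w_0^2+cw_0+c^2=0$) are clean because $k(\mathfrak{p}_\infty)=\mathbb{F}_q$, and --- notably --- you treat $p=2$ explicitly, whereas the paper's written proof only invokes the $p\neq 2$ form $Q=A^2+3^{-1}B^2$ and leaves the $p=2$ case ($Q=A^2+AB+B^2$) implicit. Two cosmetic points: the $b$ in the statement is a typo for $a$, and your opening reduction about ``$z$ having a pole at $\mathfrak{p}_\infty$'' should strictly be phrased in terms of a place $\mathfrak{P}$ of $L$ above $\mathfrak{p}_\infty$ (with $v_{\mathfrak{P}}(a)=e(\mathfrak{P}|\mathfrak{p}_\infty)v_{\mathfrak{p}_\infty}(a)$), though this does not affect the substance.
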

\begin{proof} By contradiction, suppose that the pole divisor $\mathfrak{p}_\infty$ of $x$ in $\mathbb{F}_q(x)$ satisfies $v_{\mathfrak{p}_\infty}(a) < 0$. %and $(v_{\mathfrak{p}_\infty}(b),3) = 1$. 
By Theorem \ref{qneq1mod3Galois}, with $a = P/Q$ for relatively prime $P, Q$, there exist $A,B \in \mathbb{F}_q[x]$ relatively prime such that $$P=  2(A^2- 3^{-1} B^2), \quad\quad \text{and}\quad\quad Q =A^2+ 3^{-1} B^2.$$ In particular, as $v_{\mathfrak{p}_\infty}(a) < 0$, it follows that $$\deg\left(\frac{1}{2}[A^2- 3^{-1} B^2]\right) = \deg(P)  > \deg(Q)=\deg\left(\frac{1}{4}[A^2+ 3^{-1} B^2]\right).$$ Let $m = \deg(A)$ and $n = \deg(B)$, with $A = \sum_{i=0}^m a_i x^i$ and $B = \sum_{j=0}^n b_j x^j$. By the previous inequality, we obtain $\deg(A^2+3^{-1} B^2) < \max\{m,n\}$, which implies that $m = n$ and $a_n^2 + 3^{-1} b_n^2 = 0$. Thus $(-3)^{-1}= (b_n/a_n)^2$, which is a square in $\mathbb{F}_q$, contradicting that  $q \equiv -1 \mod 3$. 
\end{proof}

In general, let $L/\mathbb{F}_q(x)$ be a cubic function field. The discriminant $\text{disc}_x(L)$ of $L$ over $\mathbb{F}_q[x]$ is equal to the discriminant of any basis of the integral closure $\mathfrak{O}_L$ of $L$ over $\mathbb{F}_q[x]$. If $\omega$ is a generator of $L/\mathbb{F}_q(x)$ which is integral over $\mathbb{F}_q[x]$, the discriminant $\Delta(\omega)$ of $\omega$ differs from the discriminant $\text{disc}_x(L)$ by an integral square divisor, which we denote by $I = \text{ind}(\omega )$; in particular, we have $\Delta(\omega ) = I^2 \text{disc}_x(L)$. By definition, we also have $(\text{disc}_x(L))_{\mathbb{F}_q[x ]} = (\partial_{L/\mathbb{F}_q(x)})_{\mathbb{F}_q[x]}$, where  $\partial_{L/\mathbb{F}_q(x)}$ is the discriminant ideal of $L/\mathbb{F}_q(x)$ \cite[Definition 5.6.8]{Vil}. In the following Theorem, we prove for our extensions that the place $\mathfrak{p}_\infty$ at infinity for $x$ is unramified in $L$, which in turn implies that $\mathfrak{p}_\infty$ does not appear in $\partial_{L/\mathbb{F}_q(x)}$, whence $\partial_{L/\mathbb{F}_q(x)} = (\partial_{L/\mathbb{F}_q(x)})_{\mathbb{F}_q[x]}$.
 
\begin{theorem}\label{ramification}
Suppose $q \equiv - 1 \mod 3$. Let $L/\mathbb{F}_q(x)$ be a Galois cubic  geometric extension and $y$ a primitive element with $f(y) = y^3-3y-a=0$. Then the ramified places of $\mathbb{F}_q(x)$ in $L$ are precisely the places $\mathfrak{p}$ of $\mathbb{F}_q(x)$ such that $v_{\mathfrak{p}}(a)<0$ and $(v_{\mathfrak{p}}(a), 3)=1$. 
\end{theorem}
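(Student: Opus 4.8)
The plan is to pass to the quadratic constant extension $\mathbb{F}_{q^2}(x)$, where the extension becomes Kummer, apply the known ramification theory for Kummer extensions (Theorem \ref{KGal}), and then descend. Since $q\equiv-1\pmod 3$ we have $p\neq 3$ and $3\mid q+1\mid q^2-1$, so $\mathbb{F}_{q^2}$ contains a primitive cube root of unity. Set $M=\mathbb{F}_{q^2}L$. Because $L/\mathbb{F}_q(x)$ is geometric and Galois of degree $3$, the extension $M/\mathbb{F}_{q^2}(x)$ is again geometric (its constant field is $\mathbb{F}_{q^2}$) and Galois of degree $3$, with $[M:L]=2$; moreover $y$ still satisfies $y^3-3y=a$ and generates $M/\mathbb{F}_{q^2}(x)$. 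By Corollary \ref{kummer}, $M/\mathbb{F}_{q^2}(x)$ is therefore purely cubic.

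Next I would produce a Kummer generator of $M/\mathbb{F}_{q^2}(x)$ whose cube has valuations directly governed by those of $a$, via the ``Chebyshev'' substitution. Let $u$ be a root of $U^2-yU+1$, so $u\neq 0$, the other root is $u^{-1}$, and $u+u^{-1}=y$; the polynomial identity $(u+u^{-1})^3-3(u+u^{-1})=u^3+u^{-3}$ gives $u^3+u^{-3}=y^3-3y=a$. Hence $b:=u^3$ is a root of $T^2-aT+1\in\mathbb{F}_{q^2}(x)[T]$, the other root is $b^{-1}$, and $b+b^{-1}=a$. This quadratic splits over $\mathbb{F}_{q^2}(x)$: since $M/\mathbb{F}_{q^2}(x)$ is purely cubic with generating equation $y^3-3y=a$, Theorem \ref{purelycubic} gives that either $p\neq 2$ and the discriminant $a^2-4$ of $T^2-aT+1$ is a square in $\mathbb{F}_{q^2}(x)$, or $p=2$ and $X^2-X=a^{-2}$ has a root $c\in\mathbb{F}_{q^2}(x)$, in which case $ac$ is a root of $T^2-aT+1$. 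Thus $b\in\mathbb{F}_{q^2}(x)$ with $u^3=b$; and $\mathbb{F}_{q^2}(x)(u)$ is a Kummer extension of degree $1$ or $3$ which contains $y=u+u^{-1}$, so it cannot have degree $1$ and hence equals $M$, with $b$ not a cube in $\mathbb{F}_{q^2}(x)$.

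I would then fix a place $\mathfrak{p}$ of $\mathbb{F}_q(x)$ and a place $\mathfrak{P}$ of $\mathbb{F}_{q^2}(x)$ above it. As the constant extension $\mathbb{F}_{q^2}(x)/\mathbb{F}_q(x)$ is unramified \cite[Theorem 6.1.3]{Vil}, we have $e(\mathfrak{P}|\mathfrak{p})=1$ and $v_{\mathfrak{P}}(a)=v_{\mathfrak{p}}(a)$. Multiplying ramification indices along the towers $\mathbb{F}_q(x)\subseteq L\subseteq M$ and $\mathbb{F}_q(x)\subseteq\mathbb{F}_{q^2}(x)\subseteq M$ at a common place of $M$, and using that $3=[L:\mathbb{F}_q(x)]=[M:\mathbb{F}_{q^2}(x)]$ is prime while $[M:L]=2$, one checks in both directions that $\mathfrak{p}$ ramifies in $L$ if and only if $\mathfrak{P}$ ramifies in $M$. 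By Theorem \ref{KGal} (applicable since $q^2\equiv 1\pmod 3$), $\mathfrak{P}$ ramifies in $M$ if and only if $3\nmid v_{\mathfrak{P}}(b)$. Finally I would read off $v_{\mathfrak{P}}(b)\bmod 3$ from $v_{\mathfrak{p}}(a)$ using only $b+b^{-1}=a$ and $b\cdot b^{-1}=1$, hence $v_{\mathfrak{P}}(b)+v_{\mathfrak{P}}(b^{-1})=0$: if $v_{\mathfrak{p}}(a)\geq 0$, the non-archimedean inequality forces $v_{\mathfrak{P}}(b)=v_{\mathfrak{P}}(b^{-1})=0$, so $3\mid v_{\mathfrak{P}}(b)$; if $v_{\mathfrak{p}}(a)<0$, then $v_{\mathfrak{P}}(b)\neq v_{\mathfrak{P}}(b^{-1})$ and $\{v_{\mathfrak{P}}(b),v_{\mathfrak{P}}(b^{-1})\}=\{v_{\mathfrak{p}}(a),-v_{\mathfrak{p}}(a)\}$, so $v_{\mathfrak{P}}(b)\equiv\pm v_{\mathfrak{p}}(a)\pmod 3$ and $3\mid v_{\mathfrak{P}}(b)$ exactly when $3\mid v_{\mathfrak{p}}(a)$. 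Chaining the equivalences yields that $\mathfrak{p}$ ramifies in $L$ precisely when $v_{\mathfrak{p}}(a)<0$ and $3\nmid v_{\mathfrak{p}}(a)$ (the infinite place of $\mathbb{F}_q(x)$ being handled by the same argument, in accordance with Lemma \ref{infinity}).

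The main obstacle, and the only place where the Galois hypothesis is genuinely needed, is the assertion that $T^2-aT+1$ splits over $\mathbb{F}_{q^2}(x)$ — obtained through Corollary \ref{kummer} and Theorem \ref{purelycubic} — particularly in characteristic $2$, where this is an Artin-Schreier-type solvability condition rather than a square condition; I would also take care with the ramification-index bookkeeping in the two towers to be sure the unramifiedness of the constant extension really yields an ``if and only if''. Once $b$ with $b+b^{-1}=a$ is in hand, the concluding valuation computation is short and uniform in the characteristic.
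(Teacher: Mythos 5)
Your proposal is correct, and it takes a genuinely different route from the paper. The paper never leaves $\mathbb{F}_q(x)$: for $v_{\mathfrak{p}}(a)<0$ with $3\nmid v_{\mathfrak{p}}(a)$ it runs the valuation argument on $y$ itself; for $v_{\mathfrak{p}}(a)\geq 0$ it shows that no place of $L$ above $\mathfrak{p}$ divides $f'(y)=3(y-1)(y+1)$, using the trace relation $y+\sigma(y)+\sigma^2(y)=0$ together with $a_0+a_1+a_2\neq 0$ for $a_i\in\{\pm1\}$ when $p\neq 3$, and then invokes the different; and for $v_{\mathfrak{p}}(a)<0$ with $3\mid v_{\mathfrak{p}}(a)$ it clears denominators to get an integral generator whose discriminant is a unit at $\mathfrak{p}$. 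You instead base-change to $\mathbb{F}_{q^2}(x)$, where Corollary \ref{kummer} makes $M=\mathbb{F}_{q^2}L$ Kummer, manufacture an explicit Kummer generator through the substitution $y=u+u^{-1}$, $u^3=b$, $b+b^{-1}=a$, apply Theorem \ref{KGal}, and descend through the two unramified constant extensions. Your version is more uniform — all three valuation regimes collapse into one ultrametric computation on the pair $\{v_{\mathfrak{P}}(b),v_{\mathfrak{P}}(b^{-1})\}$ — and it explains conceptually why the criterion mirrors the Kummer one. The price is the splitting of $T^2-aT+1$ over $\mathbb{F}_{q^2}(x)$, which you correctly identify as the one point where the Galois hypothesis enters and which does require the characteristic-$2$ Artin--Schreier branch of Theorem \ref{purelycubic}, plus the (routine but worth writing out) bookkeeping $e(\mathfrak{Q}|\mathfrak{p})=e(\mathfrak{Q}|\mathfrak{P})e(\mathfrak{P}|\mathfrak{p})=e(\mathfrak{Q}|\mathfrak{q})e(\mathfrak{q}|\mathfrak{p})$ with $e(\mathfrak{P}|\mathfrak{p})=e(\mathfrak{Q}|\mathfrak{q})=1$ for the constant extensions, which is what turns "unramified upstairs" into the needed if-and-only-if; here one should also note that the Galois hypothesis guarantees all places of $L$ over $\mathfrak{p}$ have the same ramification index. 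I find no gap: the argument as sketched is complete and, if anything, shorter than the paper's.
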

\begin{proof} We first prove that all places $\mathfrak{p}$ of $\mathbb{F}_q(x)$ such that $v_{\mathfrak{p}}(a)<0$ and $(v_{\mathfrak{p}}(a), 3)=1$ are fully ramified in $L$. 
Suppose that $\mathfrak{p}$ is a place of $\mathbb{F}_q(x)$, $v_\mathfrak{p}(a) < 0$ and $(v_\mathfrak{p}(a),3) = 1$. Let $\mathfrak{P}$ be a place of $L$ which lies above a place $\mathfrak{p}$ of $K$, we denote $e(\mathfrak{P}|\mathfrak{p})$ the ramification index. As $$v_\mathfrak{P}(a) = v_\mathfrak{P}(y^3 - 3y) < 0,$$ then $v_\mathfrak{P}(y) < 0$. Indeed, if $v_\mathfrak{P}(y)=0$, then $v_\mathfrak{P}(y^3 - 3y)\geq v_\mathfrak{P}(y)=0$, and if $v_P(y)\neq 0$ then $v_\mathfrak{P}(y^3)\neq v_P(-3y)$, and if $v_\mathfrak{P}(y)>0$, then $v_\mathfrak{P}(y^3 - 3y)= v_\mathfrak{P}(y)>0$. It follows that $v_\mathfrak{P}(y^3 - y) = 3v_\mathfrak{P}(y)<0$. As $v_\mathfrak{P}(a) = e(\mathfrak{P}|\mathfrak{p})v_\mathfrak{p}(a)$, we obtain that $3$ divides $e(\mathfrak{P}|\mathfrak{p})v_\mathfrak{p}(a)$. As $(v_\mathfrak{p}(a),3) = 1$, it follows that $3$ must divide $e(\mathfrak{P}|\mathfrak{p})$. In particular, $e(\mathfrak{P}|\mathfrak{p}) \geq 3$. As the other direction of the inequality holds by basic number theory, it follows that $e(\mathfrak{P}|\mathfrak{p}) = 3$, and that $\mathfrak{p}$ is fully ramified in $L$.
We now prove that a place $\mathfrak{p}$ of $\mathbb{F}_q(x)$ is unramified in $L$ whenever $v_\mathfrak{p}(a) \geq 0$. For the minimal polynomial $f(X) = X^3 - 3X - a$, we have $$f'(X) = 3X^2 - 3 = 3(X^2 - 1) = 3(X-1)(X+1).$$ 
Let $\mathfrak{P}$ be a place of $L$ which lies above the place $\mathfrak{p}$ of $K$. By \cite[Theorem 3.5.10(a)]{Sti}, if $\mathfrak{P}$ does not divide $f'(y)=3(y-1)(y+1)$, then $0 \leq d(\mathfrak{P}|\mathfrak{p}) \leq v_\mathfrak{P}(f'(y)) = 0$. Hence $\mathfrak{P}$ would not appear in the different, and $\mathfrak{p}$ would be unramified in $\mathfrak{P}$. The same is also true for any root of $f(X)$; these roots are precisely $y$, $\sigma(y)$, and $\sigma^2(y)$. Furthermore, as $v_\mathfrak{p}(a) \geq 0$ and each of $y$, $\sigma(y)$, and $\sigma^2(y)$ is a root of $f(X)$, it follows that $v_{\mathfrak{P}}(y)$, $v_{\mathfrak{P}}(\sigma(y))$ and $v_{\mathfrak{P}}(\sigma^2(y))$ are all nonnegative. Suppose then for the sake of contradiction that $\mathfrak{P}$ divides $3(y-1)(y+1)$. Then it is necessary that $\mathfrak{P}$ divides $(y-1)$ or $(y+1)$, as $\mathfrak{P}$ is prime. The place $\mathfrak{P}$ must also divide $(\sigma(y)-1)$ or $(\sigma(y)+1)$, and $(\sigma^2(y)-1)$ or $(\sigma^2(y)+1)$ (Ibid.). Let $a_0, a_1, a_2 \in \{-1,1\}$ such that $\mathfrak{P} | (y +a_0 )$,  $\mathfrak{P} | (\sigma(y) +a_1)$, and  $\mathfrak{P} | (\sigma^2(y) +a_2)$. Thus we have $$(y + a_0) + (\sigma(y) + a_1) + (\sigma^2(y) + a_2) \equiv 0 \mod \mathfrak{P}.$$ By construction, we have that the trace of $f(X)$ is equal to $y + \sigma(y) + \sigma^2(y) = 0$. We therefore obtain $$a_0 + a_1 + a_2= (y + a_0) + (\sigma(y) + a_1) + (\sigma^2(y) + a_2) \equiv 0 \mod \mathfrak{P}.$$ As $a_0,a_1,a_2 \in \{-1.1\}$ and $p \neq 3$, this is a contradiction. Thus $\mathfrak{P}$ does not divide $3(y-1)(y+1)$ and $\mathfrak{P}$ does not appear in the different. As the constant field is finite, all places are separable, in particular $\mathfrak{p}$, and so by \cite[Proposition 5.6.9]{Vil}, it follows that $\mathfrak{p}$ is unramified in $L$.

We now suppose that $v_\mathfrak{p}(a) < 0$ and that $3|v_\mathfrak{p}(a)$. %\begin{enumerate} \item Case 1: 
In particular we have that $\mathfrak{p} \neq \mathfrak{p}_{x,\infty}$. We write $a = \frac{\alpha}{\gamma^3 \beta}$ with $\alpha,\beta,\gamma\in \mathbb{F}_p[x]$,  $\beta$ is cube-free and $(\alpha, \gamma^3 \beta)=1$. It follows that $\mathfrak{p}$ appears in $\gamma$, and that $v_\mathfrak{p}(\alpha) = v_\mathfrak{p}(\beta) = 0$. The transformation $y = z/\gamma$ then yields $$\left(\frac{z}{\gamma}\right)^3-3\left(\frac{z}{\gamma}\right)-\frac{\alpha}{\gamma^3 \beta} = 0, \quad \quad \quad \text{or} \quad\quad\quad z^3 - 3\gamma^2z - \frac{\alpha}{\beta} = 0.$$ Substituting $u/\beta =z$ gives us $$\left(\frac{u}{\beta}\right)^3 - 3\gamma^2\left(\frac{u}{\beta}\right) - \frac{\alpha}{\beta} = 0,\quad \quad \quad \text{or} \quad\quad\quad u^3 - 3 \gamma^2 \beta^2 u - \beta^2 \alpha=0.$$ The element $u$ is integral over $\mathbb{F}_q[x]$. In particular, the $\mathbb{F}_q[x]$-ideal $\mathfrak{I}_u$ generated by $\{1,u,u^2\}$ is contained in the integral closure $\mathcal{O}_L$ of $\mathbb{F}_q[x]$ in $L$. The determinant of the transformation matrix from an integral basis of $\mathcal{O}_L$ to $\mathfrak{I}_u$, which is equal to a constant multiple of $\text{ind}(u)$, is contained in $\mathbb{F}_q[x]$, from which it then follows that $\text{ind}(u) \in \mathbb{F}_q[x]$. 
We also have by definition that $$\text{disc}(u) = \text{ind}(u)^2\text{disc}_x(L).$$ Thus $\text{disc}_x(L)|\text{disc}(u)$ in $\mathbb{F}_q[x]$. By definition, the irreducible polynomial of $u$ over $K$ is equal to $S(X) = X^3 - AX + B$ where $A =  3 \gamma^2 \beta^2$ and $B = - \beta^2 \alpha$. Thus $$\text{disc}(S) := \text{disc}(u) = 4A^3 -27B^2 = 4(3 \gamma^2 \beta^2)^3 - 27( - \beta^2 \alpha)^2.$$ As $v_\mathfrak{p}(\alpha) = v_\mathfrak{p}(\beta) = 0$ and $v_\mathfrak{p}(\gamma) > 0$, it follows by the strict triangle inequality that $v_\mathfrak{p}(\text{disc}(u)) = 0$. As $\text{disc}_x(L)|\text{disc}(u)$ in $\mathbb{F}_q[x]$, it follows that $v_\mathfrak{p}(\text{disc}_x(L)) = 0$. As the constant field is finite, all places are separable, in particular $\mathfrak{p}$, and so by \cite[Proposition 5.6.9]{Vil}, it follows that $\mathfrak{p}$ is unramified in $L$.
\end{proof}
\begin{comment}
I changed the P and Q's in the last Theorem so we don't have multiple P's and Q's.
\end{comment}
By Lemma 5.9 and Theorem 5.10, we immediately obtain the following Corollary.
\begin{corollaire} Suppose $q \equiv - 1 \mod 3$. Let $L/\mathbb{F}_q(x)$ be a cubic extension and $y$ a primitive element with $f(y) = y^3-3y-a=0$. Then the place $\mathfrak{p}_\infty$ at infinity for $x$ is unramified in $L$.\end{corollaire}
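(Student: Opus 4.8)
The plan is to deduce the corollary directly from Lemma~\ref{infinity} and Theorem~\ref{ramification}; since both inputs are already established, I do not expect any genuine obstacle, only a small point of care concerning the hypotheses in play.

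First I would observe that the shape of the generating equation $y^3-3y-a=0$ forces the characteristic of $\mathbb{F}_q$ to be different from $3$, so that Lemma~\ref{infinity} applies in the present situation. Since $q\equiv -1 \bmod 3$, the lemma gives $v_{\mathfrak{p}_\infty}(a)\ge 0$, where $\mathfrak{p}_\infty$ is the place of $\mathbb{F}_q(x)$ which is the pole of $x$. Here $L/\mathbb{F}_q(x)$ is understood to be Galois, in accordance with the hypotheses of the two results being combined; this restriction is essential, since for a non-Galois cubic extension of this form $a$ may well have a pole at $\mathfrak{p}_\infty$, and $\mathfrak{p}_\infty$ may then be (fully) ramified, as the argument of Theorem~\ref{ramification} shows.

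Next I would apply Theorem~\ref{ramification}, which identifies the set of places of $\mathbb{F}_q(x)$ ramified in $L$ as exactly those $\mathfrak{p}$ satisfying $v_{\mathfrak{p}}(a)<0$ and $\gcd(v_{\mathfrak{p}}(a),3)=1$. In particular, a necessary condition for a place $\mathfrak{p}$ to ramify in $L$ is $v_{\mathfrak{p}}(a)<0$. By the first step $v_{\mathfrak{p}_\infty}(a)\ge 0$, so $\mathfrak{p}_\infty$ fails this necessary condition and is therefore unramified in $L$, which is precisely the assertion of the corollary.
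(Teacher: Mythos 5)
Your proposal is correct and is exactly the paper's argument: the corollary is stated there as an immediate consequence of Lemma \ref{infinity} (which gives $v_{\mathfrak{p}_\infty}(a)\ge 0$) combined with Theorem \ref{ramification} (which makes $v_{\mathfrak{p}}(a)<0$ a necessary condition for ramification). Your remark that the extension must be understood as Galois for these two inputs to apply is a fair reading of the hypotheses implicitly carried over from those results.
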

\begin{remarque}
By Theorem \ref{even}, we know that only places corresponding to polynomials of even degree can be ramified in cubic extensions with a primitive element $y$ whose minimal equation is of the form $y^3-3y-a=0$. 
\end{remarque} 

\begin{theorem}[Riemann-Hurwitz] Suppose $q \equiv - 1 \mod 3$. Let $L/\mathbb{F}_q(x)$ be a Galois cubic  geometric extension and $y$ a primitive element with $f(y) = y^3-3y-a=0$. Then the genus $g_L$ of $L$ is given according to the formula $$g_L =  - 2 +  \sum_{\substack{ v_\mathfrak{p}(a) < 0 \\ ( v_\mathfrak{p}(a),3) = 1}} \deg(\mathfrak{p}),$$ where $\deg(\mathfrak{p})$ denotes the degree of a place $\mathfrak{p}$ of $\mathbb{F}_q(x)$.
\end{theorem}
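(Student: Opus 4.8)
The plan is to read off the genus directly from the Riemann--Hurwitz formula, using the ramification data already established in Theorem \ref{ramification}. First I would record the structural hypotheses. Since $q \equiv -1 \mod 3$ forces $p \neq 3$ (otherwise $q = p^n \equiv 0 \mod 3$), the degree-$3$ Galois extension $L/\mathbb{F}_q(x)$ is separable; and since it is geometric, the field of constants of $L$ is $\mathbb{F}_q$, while the base $\mathbb{F}_q(x)$ has genus $0$. Writing $\mathfrak{D}$ for the different divisor of $L/\mathbb{F}_q(x)$, Riemann--Hurwitz then gives
$$2g_L - 2 = 3(2 \cdot 0 - 2) + \deg \mathfrak{D} = -6 + \deg \mathfrak{D},$$
that is, $g_L = -2 + \tfrac{1}{2} \deg \mathfrak{D}$.

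Next I would evaluate $\deg \mathfrak{D}$ using Theorem \ref{ramification} together with its proof. That Theorem identifies the ramified places of $\mathbb{F}_q(x)$ in $L$ as exactly the places $\mathfrak{p}$ with $v_\mathfrak{p}(a) < 0$ and $(v_\mathfrak{p}(a),3) = 1$, and its proof shows each such $\mathfrak{p}$ is \emph{totally} ramified, so the unique place $\mathfrak{P}$ of $L$ above it satisfies $e(\mathfrak{P}|\mathfrak{p}) = 3$; the fundamental identity $efg = 3$ in the Galois case then forces $f(\mathfrak{P}|\mathfrak{p}) = 1$, hence $\deg \mathfrak{P} = \deg \mathfrak{p}$. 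Because $e(\mathfrak{P}|\mathfrak{p}) = 3$ is coprime to $p$, the ramification is tame, so Dedekind's different theorem yields $d(\mathfrak{P}|\mathfrak{p}) = e(\mathfrak{P}|\mathfrak{p}) - 1 = 2$. Summing over ramified places,
$$\deg \mathfrak{D} = \sum_{\mathfrak{p} \text{ ramified}} d(\mathfrak{P}|\mathfrak{p}) \deg \mathfrak{P} = 2 \sum_{\substack{v_\mathfrak{p}(a) < 0 \\ (v_\mathfrak{p}(a),3) = 1}} \deg \mathfrak{p}.$$
Substituting this into $g_L = -2 + \tfrac{1}{2}\deg\mathfrak{D}$ gives the asserted formula. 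As a consistency check one may note that $\mathbb{F}_q(x)$ has no unramified geometric extension, so the displayed sum is nonempty whenever $L \neq \mathbb{F}_q(x)$, whence $g_L \geq 0$.

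I do not expect a genuine obstacle here: the result is essentially a formal consequence of Theorem \ref{ramification}. The only point requiring care is verifying that every hypothesis feeding into Riemann--Hurwitz and into the different computation is actually in force --- separability and geometricity of $L/\mathbb{F}_q(x)$, total ramification at precisely the predicted places, tameness (i.e. $p \neq 3$), residue degree $1$ over ramified places, and the resulting different exponent $2$ --- all of which follow immediately from $q \equiv -1 \mod 3$ and Theorem \ref{ramification}. After that the argument is pure bookkeeping.
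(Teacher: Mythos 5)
Your proposal is correct and follows essentially the same route as the paper: both invoke Theorem \ref{ramification} to identify the ramified places, use that $(3,p)=1$ makes the ramification tame so the different exponent is $e-1=2$, note that total ramification in a prime-degree extension forces $\deg\mathfrak{P}=\deg\mathfrak{p}$, and then substitute into Riemann--Hurwitz with $g_{\mathbb{F}_q(x)}=0$. The only cosmetic quibble is the reuse of the letter $g$ both for the genus and in the fundamental identity $efg=3$; otherwise the bookkeeping matches the paper's exactly.
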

\begin{proof} As $[L:\mathbb{F}_q(x)] = 3$ and $L/\mathbb{F}_q(x)$ is Galois, it follows that all ramification indices are either equal to 1 or 3. Thus for a place $\mathfrak{p}$ of $\mathbb{F}_q(x)$ which ramifies in $L$, we have for $\mathfrak{P}|\mathfrak{p}$ that, with $p = \text{char}(\mathbb{F}_q)$, $(e(\mathfrak{P}|\mathfrak{p}),p) = (3,p) = 1$, whence by \cite[Theorem 5.6.3]{Vil}, the differential exponent $\alpha(\mathfrak{P}|\mathfrak{p})$ satisfies $$\alpha(\mathfrak{P}|\mathfrak{p})  = e(\mathfrak{P}|\mathfrak{p}) - 1 = 2.$$ Furthermore, by Theorem 5.10, the places $\mathfrak{p}$ of $\mathbb{F}_q(x)$ which ramify in $L$ are precisely those for which $ v_\mathfrak{p}(a) < 0$ and $( v_\mathfrak{p}(a),3) = 1$. We let $d_L$ denote the degree function on the divisors of $L$; we therefore also let $d_L(\mathfrak{P})$ denote the degree of a place $\mathfrak{P}$ of $L$. As $L/\mathbb{F}_q(x)$ is of prime degree, it follows that $d_L(\mathfrak{P}) = \deg(\mathfrak{p})$ for all ramified places $\mathfrak{p}$ of $\mathbb{F}_q(x)$ in $L$ and $\mathfrak{P}|\mathfrak{p}$. We thus obtain by the Riemann-Hurwitz formula (see for example [Ibid., Theorem 9.4.2]) \begin{align*} g_L&= 1 +[L:\mathbb{F}_q(x)](g_{\mathbb{F}_q(x)} - 1) + \frac{1}{2} d_L\left(\mathfrak{D}_{L/\mathbb{F}_q(x)}\right)  \\& = -2 + \frac{1}{2} \sum_{\substack{\mathfrak{P}|\mathfrak{p} \\ v_\mathfrak{p}(a) < 0 \\ ( v_\mathfrak{p}(a),3) = 1}} d_L(\mathfrak{P}^2) \\& = -2 + \sum_{\substack{ v_\mathfrak{p}(a) < 0 \\ ( v_\mathfrak{p}(a),3) = 1}} \deg(\mathfrak{p}).\end{align*} Hence the result. \end{proof}

We now examine the valuation of a generator $y$ of a cubic extension whose minimal equation is of the form $y^3-3y-a=0$. 

\begin{theorem}
Suppose that $q\equiv -1 \text{ mod } 3$, and that $L/\mathbb{F}_q(x)$ is cubic and Galois. For any place $\mathfrak{p}$ of $K$, denote by $\mathfrak{P}$ a place above $\mathfrak{p}$ in $L$. We denote by $\mathfrak{p}_\infty$ the place at infinity in $\mathbb{F}_q(x)$ for $x$. Let $y$ of $L/\mathbb{F}_{q}(x)$ be a primitive element with minimal polynomial $f(X) = X^3 - 3X - a$ where $a \in \mathbb{F}_{q}(x)$. Let $a = \alpha/\gamma^3 \beta$ be the factorisation of $a$ in $\mathbb{F}_{q}(x)$, where $\alpha,\beta,\gamma \in \mathbb{F}_{q}[x]$, $(\alpha,\gamma^3 \beta) = 1$, and $\beta$ is cube-free. Let $\mathfrak{P}_1= \sigma (\mathfrak{P})$ and $\mathfrak{P}_2 =  \sigma^2 (\mathfrak{P})$, where $\sigma$ is a generator of $\text{\emph{Gal}}(L/\mathbb{F}_q(x))$. Then:
\begin{enumerate}
\item If $\mathfrak{p}| \beta$, then $v_{\mathfrak{P}} (y) = v_\mathfrak{p}(a) = -v_{\mathfrak{p}} (\gamma^3 \beta)$.
\item  If $\mathfrak{p}| \gamma$ and $\mathfrak{p}\nmid \beta$, then $v_{\mathfrak{P}} (y) = -v_{\mathfrak{p}} (\gamma)$.
\item If $\mathfrak{p}| \alpha$, then the places $\mathfrak{P},\mathfrak{P}_1,\mathfrak{P}_2$ are distinct, and exactly one element of $\{v_{\mathfrak{P}} (y), v_{\mathfrak{P}_2} (y),v_{\mathfrak{P}_1} (  y)\}$ is equal to $v_{\mathfrak{p}} (a)$, and the other valuations in this set are equal to $0$.
\item For the place $\mathfrak{p}_\infty$: \begin{enumerate} \item If $v_{\mathfrak{p}_\infty}(a)=0$, then $v_{{\mathfrak{P}_\infty}}(y)=v_{{\mathfrak{P}_\infty}_1}(y)=v_{{\mathfrak{P}_\infty}_2}(y)=0$.
\item If $v_{\mathfrak{p}_\infty}(a)>0$ then the places $\{\mathfrak{P}_\infty, {\mathfrak{P}_\infty}_1,{\mathfrak{P}_\infty}_2\}$ are distinct, exactly one of $v_{\mathfrak{P}_\infty} (y )$, $v_{{\mathfrak{P}_\infty}_2} ( y)$, $v_{{\mathfrak{P}_\infty}_1} (  y)$ is equal to $v_{\mathfrak{p}_\infty} (a)$, and the two other valuations in this set are equal to $0$.\end{enumerate}
\item For any other place $\mathfrak{p}$ of $K$, $v_{\mathfrak{P}} (y) = 0$.
\end{enumerate}

\end{theorem}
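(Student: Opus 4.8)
The plan is to read off $v_{\mathfrak{p}}(a)$ from the factorisation $a=\alpha/(\gamma^3\beta)$ in each of the five cases and then to reduce to two situations: places with $v_{\mathfrak{p}}(a)<0$, handled by direct manipulation of $y^3-3y=a$ together with the ramification description of Theorem \ref{ramification}; and places with $v_{\mathfrak{p}}(a)\ge 0$, where $y$ and its conjugates become integral and Vieta's formulas for $f(X)=X^3-3X-a$, combined with $\text{Gal}(L/\mathbb{F}_q(x))=\langle\sigma\rangle$ being cyclic of order $3$, pin down the valuations. Three preliminary facts will be used throughout. First, writing $y_0=y$, $y_1=\sigma(y)$, $y_2=\sigma^2(y)$ for the roots of $f$, Vieta gives $y_0+y_1+y_2=0$, $\ y_0y_1+y_0y_2+y_1y_2=-3$, and $\ y_0y_1y_2=a$. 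Second, for $\tau\in\text{Gal}(L/\mathbb{F}_q(x))$ one has $v_{\mathfrak{P}}(\tau z)=v_{\tau^{-1}\mathfrak{P}}(z)$; since $\sigma^3=\mathrm{id}$, as $i$ runs over $\{0,1,2\}$ the place $\sigma^{-i}\mathfrak{P}$ runs over $\{\mathfrak{P},\mathfrak{P}_1,\mathfrak{P}_2\}$, so the multiset $\{v_{\mathfrak{P}}(y_0),v_{\mathfrak{P}}(y_1),v_{\mathfrak{P}}(y_2)\}$ coincides with $\{v_{\mathfrak{P}}(y),v_{\mathfrak{P}_1}(y),v_{\mathfrak{P}_2}(y)\}$, which reduces everything to computing the valuations of all three roots at a single fixed $\mathfrak{P}\mid\mathfrak{p}$. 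Third, when $v_{\mathfrak{p}}(a)\ge 0$ the element $a$ is integral at $\mathfrak{p}$, hence each $y_i$, being a root of the monic $f$, is integral at $\mathfrak{p}$ and $v_{\mathfrak{P}}(y_i)\ge 0$ for every $\mathfrak{P}\mid\mathfrak{p}$.

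For a place $\mathfrak{p}$ with $v_{\mathfrak{p}}(a)<0$ — this covers part (1), where $\mathfrak{p}\mid\beta$ forces $v_{\mathfrak{p}}(\alpha)=0$ and hence $v_{\mathfrak{p}}(a)=-v_{\mathfrak{p}}(\gamma^3\beta)$, which is prime to $3$ because $\beta$ is cube-free (so $v_{\mathfrak{p}}(\gamma^3\beta)\equiv v_{\mathfrak{p}}(\beta)\not\equiv 0\bmod 3$); and part (2), where $\mathfrak{p}\mid\gamma$, $\mathfrak{p}\nmid\beta$ give $v_{\mathfrak{p}}(\alpha)=v_{\mathfrak{p}}(\beta)=0$ and $v_{\mathfrak{p}}(a)=-3v_{\mathfrak{p}}(\gamma)$ — I would argue exactly as in the proof of Theorem \ref{ramification}. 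For $\mathfrak{P}\mid\mathfrak{p}$ one has $v_{\mathfrak{P}}(a)<0$, which is incompatible with $v_{\mathfrak{P}}(y)\ge 0$ (that would give $v_{\mathfrak{P}}(y^3-3y)\ge 0$, using $v_{\mathfrak{P}}(3)=0$ since $p\neq 3$); hence $v_{\mathfrak{P}}(y)<0$, so the ultrametric inequality gives $v_{\mathfrak{P}}(y^3-3y)=3v_{\mathfrak{P}}(y)$, i.e. $3v_{\mathfrak{P}}(y)=v_{\mathfrak{P}}(a)$. By Theorem \ref{ramification}, $e(\mathfrak{P}\mid\mathfrak{p})=3$ in part (1) and $e(\mathfrak{P}\mid\mathfrak{p})=1$ in part (2); substituting $v_{\mathfrak{P}}(a)=e(\mathfrak{P}\mid\mathfrak{p})\,v_{\mathfrak{p}}(a)$ yields $v_{\mathfrak{P}}(y)=v_{\mathfrak{p}}(a)=-v_{\mathfrak{p}}(\gamma^3\beta)$ in part (1), and $v_{\mathfrak{P}}(y)=\frac{1}{3}v_{\mathfrak{p}}(a)=-v_{\mathfrak{p}}(\gamma)$ in part (2).

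For a place $\mathfrak{p}$ with $v_{\mathfrak{p}}(a)\ge 0$, $\mathfrak{p}$ is unramified in $L$ (Theorem \ref{ramification}; when $\mathfrak{p}=\mathfrak{p}_\infty$ this applies because $v_{\mathfrak{p}_\infty}(a)\ge 0$ by Lemma \ref{infinity}), so $e(\mathfrak{P}\mid\mathfrak{p})=1$. Fixing $\mathfrak{P}\mid\mathfrak{p}$ and setting $k_i=v_{\mathfrak{P}}(y_i)\ge 0$, the relation $y_0y_1y_2=a$ gives $k_0+k_1+k_2=v_{\mathfrak{p}}(a)$, while $y_0y_1+y_0y_2+y_1y_2=-3$ with $v_{\mathfrak{P}}(-3)=0$ forces $\min_{i<j}(k_i+k_j)=0$; as the $k_i\ge 0$ this means the two smallest of $k_0,k_1,k_2$ vanish, so $\{k_0,k_1,k_2\}=\{0,0,v_{\mathfrak{p}}(a)\}$. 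When $v_{\mathfrak{p}}(a)=0$ — this is part (4)(a) and part (5), where $\mathfrak{p}$ divides none of $\alpha,\beta,\gamma$ (respectively $v_{\mathfrak{p}_\infty}(a)=0$) — we conclude $v_{\mathfrak{P}}(y)=0$. When $v_{\mathfrak{p}}(a)>0$ — part (3), where $\mathfrak{p}\mid\alpha$ gives $v_{\mathfrak{p}}(a)=v_{\mathfrak{p}}(\alpha)>0$, and part (4)(b) — the place $\mathfrak{p}$ cannot be inert: an inert place $\mathfrak{P}$ is $\sigma$-stable, so $v_{\mathfrak{P}}\circ\sigma=v_{\mathfrak{P}}$ and $k_0=k_1=k_2$, contradicting $\{k_0,k_1,k_2\}=\{0,0,v_{\mathfrak{p}}(a)\}$ with $v_{\mathfrak{p}}(a)\ne 0$. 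Hence $\mathfrak{p}$ splits completely, $\mathfrak{P},\mathfrak{P}_1,\mathfrak{P}_2$ are distinct, and since $\{v_{\mathfrak{P}}(y),v_{\mathfrak{P}_1}(y),v_{\mathfrak{P}_2}(y)\}=\{k_0,k_1,k_2\}=\{0,0,v_{\mathfrak{p}}(a)\}$, exactly one of these three valuations equals $v_{\mathfrak{p}}(a)$ and the other two are $0$. The only step requiring genuine care is this split-versus-inert dichotomy in parts (3) and (4)(b): everything else is bookkeeping of $v_{\mathfrak{p}}(a)$ and an ultrametric computation, whereas here the cyclic Galois hypothesis ($q\equiv -1\bmod 3$) is used essentially — the two Vieta relations force two of the three local root-valuations to vanish at each place over $\mathfrak{p}$, while an inert place would force all three to coincide.
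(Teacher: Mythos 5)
Your proof is correct and follows essentially the same route as the paper's: an ultrametric analysis of $y^3-3y=a$ combined with Theorem \ref{ramification} at the poles of $a$, and Vieta's relations together with the identity $v_{\mathfrak{P}}(\sigma^i(y))=v_{\sigma^{-i}(\mathfrak{P})}(y)$ (plus the split-versus-inert dichotomy for a cyclic cubic) at the remaining places. The only cosmetic difference is that you use the second symmetric function $y_0y_1+y_0y_2+y_1y_2=-3$ to force two of the three local root valuations to vanish, whereas the paper deduces $v_{\mathfrak{P}}(w)\in\{0,v_{\mathfrak{p}}(a)\}$ directly from each conjugate's own generating equation before summing; both are equally valid.
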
 
\begin{proof} Suppose that $\mathfrak{p}| \beta$. By Theorem \ref{ramification}, we know that $\mathfrak{p}$ is (fully) ramified in $L$. By the identity $y^3 - 3y = a$, it follows as in the proof of Theorem 5.11 that, for the unique place $\mathfrak{P}$ of $L$ above $\mathfrak{p}$,  $v_{\mathfrak{\mathfrak{P}}}(y) < 0$. Thus $$3v_\mathfrak{P}(y)=\min\{v_\mathfrak{P}(y^3),v_\mathfrak{P}(3y)\} = v_{\mathfrak{P}}(y^3 - 3y)  = v_{\mathfrak{P}}(a) = 3 v_{\mathfrak{p}}(a),$$ whence $v_\mathfrak{P}(y) = v_\mathfrak{p}(a)$. 

Suppose that $\mathfrak{p}| \gamma$ and $\mathfrak{p}\nmid \beta$. By Theorem \ref{ramification}, we know that $\mathfrak{p}$ is unramified in $L$. Let $\mathfrak{P}$ be a place of $L$ above $\mathfrak{p}$. Analogously to the previous case, we have that as $v_{\mathfrak{p}}(a) < 0$, so must $$3v_\mathfrak{P}(y)=\min\{v_\mathfrak{P}(y^3),v_\mathfrak{P}(3y)\} = v_{\mathfrak{P}}(y^3 - 3y)  = v_{\mathfrak{P}}(a) = v_{\mathfrak{p}}(a) < 0,$$ and thus that $$3v_\mathfrak{P}(y) = v_\mathfrak{p}(a) = - v_\mathfrak{p}(\gamma^3) = -3v_\mathfrak{p}(\gamma).$$ Hence $v_\mathfrak{P}(y) = -v_\mathfrak{p}(\gamma)$. 

Suppose that $\mathfrak{p}|\alpha$.  From the generating polynomial $X^3 - 3X - a$ for $y$ and its conjugates $\sigma(y)$ and $\sigma^2(y)$, we obtain by the non-Archimedean property that if $v_\mathfrak{P}(w) > 0$ for one of $w \in \{y,\sigma(y),\sigma^2(y)\}$, then $v_\mathfrak{P}(w) = v_\mathfrak{p}(a)$.  \\
Furthermore, $v_\mathfrak{p}(a) \geq 0$ implies that $v_\mathfrak{P}(y) \geq 0$, $v_\mathfrak{P}(\sigma(y)) \geq 0$, and $v_\mathfrak{P}(\sigma^2(y)) \geq 0$. By definition of the minimal polynomial for $y$, we have $$X^3 - 3X - a = (X-y)(X-\sigma(y))(X-\sigma^2(y)),$$ and thus that $-y \cdot (-\sigma(y)) \cdot (-\sigma^2(y) )= -a$, whence $y \sigma(y) \sigma^2(y) = a$. Therefore, $$v_\mathfrak{p}(a) = v_\mathfrak{P}(a)  =  v_\mathfrak{P}(y\sigma(y)\sigma^2(y))
=v_\mathfrak{P}(y) + v_\mathfrak{P}(\sigma(y))+v_\mathfrak{P}(\sigma^2(y)).$$ 
As $v_\mathfrak{P}(w)\in \{ 0 , v_\mathfrak{p}(a)\}$, 
%Together, this yields if $v_\mathfrak{P}(w) > 0$ that \begin{itemize} \item $v_\mathfrak{P}(w) =v_\mathfrak{p}(a)$, and that \item $ v_\mathfrak{P}(w')=v_\mathfrak{P}(w'') = 0$ for the remaining $w',w'' \in \{y,\sigma(y),\sigma^2(y)\}$. \end{itemize} On the other hand, if $v_\mathfrak{P}(y) = v_\mathfrak{P}(\sigma(y)) = v_\mathfrak{P}(\sigma^2(y)) = 0$, then we would have
%\begin{equation}\label{traceyero} 0 = v_\mathfrak{P}(y) + v_\mathfrak{P}(\sigma(y))+v_\mathfrak{P}(\sigma^2(y)) = v_\mathfrak{p}(a),\end{equation}contradicting $v_\mathfrak{p}(a) > 0$. 
therefore, we obtain that one, and only one, $w \in \{y, \sigma(y),\sigma^2(y)\}$ has $v_\mathfrak{P}(w) \neq 0$, for this $w$ that $v_\mathfrak{P}(w)=v_{\mathfrak{p}}(a)$, and that the valuations of the other two conjugates at $\mathfrak{P}$ are equal to 0. Note that
$$v_\mathfrak{P}(\sigma(w))= v_{\sigma (\sigma^{-1}(\mathfrak{P}))}(\sigma(w))= v_{\sigma^{-1}(\mathfrak{P})}(w)= v_{\mathfrak{P}_2}(w)$$
and  
$$v_\mathfrak{P}(\sigma^2 (w))= v_{\sigma^2 (\sigma^{-2}(\mathfrak{P}))}(\sigma^2(w))= v_{\sigma^{-2}(\mathfrak{P})}(w)= v_{\mathfrak{P}_1}(w).$$
Thus, as one of $\{v_{\mathfrak{P}_1}(w),v_{\mathfrak{P}_2}(w)\}$ is distinct from $v_\mathfrak{P}(w)$, it follows that at least two of the places $\{\mathfrak{P},\mathfrak{P}_1,\mathfrak{P}_2\}$ are distinct. By \cite[Corollary 5.2.23]{Vil}, as $L/\mathbb{F}_q(x)$ is Galois, we have $efr = [L:\mathbb{F}_q(x)] = 3$, where $e= e(\mathfrak{P}|\mathfrak{p})$ is the ramification index of $\mathfrak{P}|\mathfrak{p}$, $f = f(\mathfrak{P}|\mathfrak{p})$ the inertia degree of $\mathfrak{P}|\mathfrak{p}$, and $r$ the number of places of $L$ above $\mathfrak{p}$. We have shown that $r > 1$. As $r \mid [L:\mathbb{F}_q(x)] = 3$, it follows that $r = 3$, and that $\mathfrak{P}$, $\mathfrak{P}_1$, and $\mathfrak{P}_2$ are distinct. The result follows.
%As the Galois group acts transitively on the places above $\mathfrak{p}$, it follows furthermore that the places $\mathfrak{P}$, $\mathfrak{P}_1$, and $\mathfrak{P}_2$ are distinct. 

For the place $\mathfrak{p}=\mathfrak{p}_\infty$, we have by Lemma \ref{infinity}, that $v_{\mathfrak{p}_\infty}(a)\geq 0$. If $v_{\mathfrak{p}_\infty}(a)=0$, then automatically $$v_{\mathfrak{P}_\infty}(y)=v_{{\mathfrak{P}_\infty}_1}(y)=v_{{\mathfrak{P}_\infty}_2}(y)=0=v_{\mathfrak{p}_\infty}(a).$$ If $v_{\mathfrak{p}_\infty}(a)>0$, then the valuation of $y$ is positive at one, and only one, place $\mathfrak{P}_\infty$ of $L$ above $\mathfrak{p}_\infty$; the proof of this is just the same as that for a place $\mathfrak{p}$ dividing $\alpha$, and for the place ${\mathfrak{P}_\infty}$, we obtain $v_{\mathfrak{P}_\infty}(y)=v_{\mathfrak{p}_\infty}(a)$. %Also as for places dividing $\alpha$, we obtain that the places ${\mathfrak{P}_\infty}$, ${\mathfrak{P}_\infty}_1$, and ${\mathfrak{P}_\infty}_2$ are distinct.
\end{proof}

Finally, we study the splitting behaviour of the unramified places for cubic extensions with a primitive element $y$ whose minimal equation is of the form $y^3-3y-a=0$. (Note that as $[L:\mathbb{F}_q(x)]=3$, splitting is trivial for all (fully) ramified places.)

\begin{theorem}
Let $q \equiv -1 \mod 3$, and let $L/\mathbb{F}_q(x)$ be a Galois cubic geometric extension with generating equation $X^3 - 3X - a = 0$, where $a \in \mathbb{F}_q(x)$. Let $\mathfrak{p}$ be a place of $\mathbb{F}_q(x)$ which is unramified in $L$ and $k(\mathfrak{p})$ the residue field of $\mathbb{F}_q(x)$ at $\mathfrak{p}$. \begin{enumerate}\item If $v_\mathfrak{p}(a) > 0$, then $\mathfrak{p}$ is totally split in $L$. 
\item  If $v_\mathfrak{p}(a)< 0$, then $ v_\mathfrak{p}(a)=-3m$, the place $\mathfrak{p}$ is finite, and $|k(\mathfrak{p})| \equiv 1 \mod 3$. The place $\mathfrak{p}$ splits completely in $L$ if, and only if, the reduction $f_{\mathfrak{p}}^{3m}a \mod \mathfrak{p}$ is a cube in $k(\mathfrak{p})$ where $f_{\mathfrak{p}} \in \mathbb{F}_q[x]$ is the irreducible polynomial associated with $\mathfrak{p}$. Otherwise, $\mathfrak{p}$ is inert in $L$.
\item If $v_\mathfrak{p}(a) = 0$, then
\begin{enumerate} \item If $p > 3$, then  $\mathfrak{p}$ is inert in $L$ if, and only if, \begin{enumerate} \item  $|k(\mathfrak{p})| \equiv 1 \mod 3$ and $\frac{1}{2} (\overline{a} + \delta)$ is not a cube in $k(\mathfrak{p})$, where $\delta^2 = \overline{a}^2-4$, or \item $|k(\mathfrak{p})|=q=5$ and $\overline{a}= \pm 1$. \end{enumerate}
Furthermore, with $a = \frac{P}{Q}$ with $P,Q\in \mathbb{F}_q[x]$, $(P,Q)=1$, Let $A,B \in \mathbb{F}_q[x]$ relatively prime be given as in Theorem 5.1 such that $$P = 2(A^2 - 3^{-1} B^2) \quad \text{and} \quad Q= A^2 + 3^{-1} B^2.$$ If $v_\mathfrak{p}(a) = 0$ and $|k(\mathfrak{p})| \equiv 1 \mod 3$, then $\mathfrak{p}$ is inert if, and only if, $\frac{A + \sqrt{-3}^{-1}B}{A - \sqrt{-3}^{-1}B}$ is not a cube mod $\mathfrak{p}$.
\item If $p = 2$, then with $|k(\mathfrak{p})| = 2^n$ and $tr: k(\mathfrak{p}) \rightarrow \mathbb{F}_2$ the trace map $$tr(\alpha)= \alpha + \alpha^2 + \alpha^{2^2} + \cdots + \alpha^{2^{n-1}},$$ the place $\mathfrak{p}$ is inert in $L$ if, and only if, $tr(1/\overline{a}^2) = tr(1)$ and \begin{enumerate}
\item $|k(\mathfrak{p})| \equiv 1 \mod 3$ and the roots of $T^2 + \overline{a}T +1$ are not cubes in $\mathbb{F}_{2^n} = k(\mathfrak{p})$, or 
\item $|k(\mathfrak{p})| \equiv 2 \mod 3$ and the roots of $T^2 +\overline{a}T +1$ are not cubes in $\mathbb{F}_{2^{2n}}= k(\mathfrak{p})(\sqrt{-3})$.
\end{enumerate}
We note that the roots of $T^2 +\overline{a}T +1$ lie in $\mathbb{F}_{2^n}$, respectively, $\mathbb{F}_{2^{2n}}$, depending on whether $tr(1/\overline{a}^2) = 0$ or $tr(1/\overline{a}^2) = 1$.
\end{enumerate}
  \end{enumerate}
\end{theorem}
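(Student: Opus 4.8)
## Proof plan for the splitting theorem

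The plan is to reduce every case to decomposition behaviour in a Kummer extension via a base change to the quadratic constant extension $\mathbb{F}_{q^2}(x)$, where a primitive third root of unity is available, and then use Theorem~\ref{KGal} together with the explicit purely cubic model supplied by Theorem~\ref{purelycubic}. First I would dispose of the two degenerate cases. When $v_\mathfrak{p}(a)>0$, reduction of $f(X)=X^3-3X-a$ modulo $\mathfrak{p}$ gives $X^3-3X = X(X-1)(X+1)$ over $k(\mathfrak{p})$ (using $p\neq 3$, and noting $3\neq 0$ in $k(\mathfrak{p})$; the characteristic $2$ subcase is handled since $X^3-3X=X^3+X=X(X+1)^2$ still has three roots $0,1,1$ — actually one must be slightly careful here and instead observe that $f$ has the root $y$ with $v_\mathfrak{P}(y)>0$ forcing, via $y\sigma(y)\sigma^2(y)=a$ as in Theorem~5.13, that all three roots are distinct modulo $\mathfrak{P}$), so by Kummer--Dedekind $\mathfrak{p}$ splits completely. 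When $v_\mathfrak{p}(a)<0$, the ramification analysis of Theorem~\ref{ramification} forces $3\mid v_\mathfrak{p}(a)$ for $\mathfrak{p}$ to be unramified; write $v_\mathfrak{p}(a)=-3m$, clear the pole by the substitution used in Theorem~\ref{ramification} (replace $y$ by $f_\mathfrak{p}^{m} y$, giving a generator with minimal polynomial $X^3 - 3f_\mathfrak{p}^{2m}X - f_\mathfrak{p}^{3m}a$ that is $\mathfrak{p}$-integral with $f_\mathfrak{p}$-unit constant term), reduce mod $\mathfrak{p}$ to $X^3 - \overline{f_\mathfrak{p}^{2m}a\cdot\text{(unit)}}$... more cleanly: reduce to $X^3 = \overline{f_\mathfrak{p}^{3m}a}$ after the linear term dies modulo $\mathfrak{p}$ (it has positive valuation), and then splitting of $\mathfrak{p}$ is governed by whether $\overline{f_\mathfrak{p}^{3m}a}$ is a cube in $k(\mathfrak{p})$, which requires $|k(\mathfrak{p})|\equiv 1\bmod 3$ for a non-split-inert dichotomy; the absence of a ``partially split'' case is because $[L:\mathbb{F}_q(x)]=3$ is prime.

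The heart of the theorem is case $v_\mathfrak{p}(a)=0$. Here the key step is: $\mathfrak{p}$ is unramified, and by Theorem~5.13(5) the generator $y$ is a $\mathfrak{p}$-unit at every place above $\mathfrak{p}$, so the splitting of $\mathfrak{p}$ is read off from the factorisation of $f(X)\bmod\mathfrak{p} = X^3 - 3X - \overline{a}$ over the residue field $k(\mathfrak{p})$. So I must decide when $X^3-3X-\overline a$ is irreducible over the finite field $k(\mathfrak{p})$. For $p>3$: over $k(\mathfrak{p})$ I pass to a quadratic extension where $\sqrt{-3}$, hence a primitive cube root of unity $\xi$, exists; by the computation in the proof of Theorem~\ref{purelycubic}, over the field containing $\sqrt{\overline a^2-4}$ the polynomial becomes purely cubic with $u^3 = (\overline a^2-4)\delta k$ where $k=\tfrac{-\overline a\pm\delta}{2}$ and $\delta^2=\overline a^2-4$, so irreducibility over $k(\mathfrak{p})$ is equivalent to non-triviality of a cube class of $\tfrac12(\overline a+\delta)$ in the appropriate extension. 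When $|k(\mathfrak{p})|\equiv 1\bmod 3$, $\xi\in k(\mathfrak{p})$ and the Galois criterion of Lemma~\ref{disc} (discriminant $-27(\overline a^2-4)$ a square) together with the polynomial identity from Theorem~\ref{even}/\ref{qneq1mod3Galois} writing $Q=A^2+3^{-1}B^2$ lets me replace the cube-class of $\tfrac12(\overline a + \delta)$ by the cleaner $\tfrac{A+\sqrt{-3}^{-1}B}{A-\sqrt{-3}^{-1}B}$; this is the same normalisation used in the constancy corollary following the irreducibility theorem, and I would cite that argument verbatim. The sporadic exception $|k(\mathfrak{p})|=5$, $\overline a=\pm1$ arises precisely because there $\overline a^2-4=0$, so $\delta=0$ and the ``purely cubic'' reduction degenerates: one checks directly that $X^3-3X\mp1$ is irreducible over $\mathbb{F}_5$ (it has no root: $0,\pm1,\pm2$ all fail), and this is the only field of characteristic $>3$ with $|k(\mathfrak{p})|\equiv 1\bmod 3$ where $\overline a^2=4$ can occur with $\overline a\neq\pm2$ being impossible, i.e. the only genuinely new behaviour. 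The main obstacle is organising this $\delta=0$ degeneracy cleanly: when $\overline a = \pm 2$ the cubic factors (it has a repeated root $\mp 1$ and a simple root $\pm 2$), so those residue fields give a split place, and only $|k(\mathfrak p)| = 5$ with $\overline a = \pm 1$ — where $\overline a^2 - 4 = \mp 3 = \mp 3$, and in $\mathbb F_5$ one has $-3 = 2$, $\sqrt 2\notin\mathbb F_5$ — escapes the Kummer reduction and must be treated by hand.

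For $p=2$: the discriminant criterion is replaced throughout by the quadratic resolvent $R(X)=X^2+\overline aX+1$ (the reduction mod $\mathfrak{p}$ of $X^2+aX+(1+a^2)$), and Galois-ness over $k(\mathfrak{p})$ — equivalently, whether $X^3-3X-\overline a = X^3+X+\overline a$ has its splitting field of degree $1,2,3,$ or $6$ over $k(\mathfrak{p})$ — is controlled by the Artin--Schreier condition: by the proof of Theorem~\ref{purelycubic} case $p=2$, the resolvent splits over $k(\mathfrak{p})$ iff $X^2-X-1/\overline a^2$ has a root iff $tr_{k(\mathfrak{p})/\mathbb{F}_2}(1/\overline a^2)=0$. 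I would split into the two cases according to $tr(1/\overline a^2)$: if it differs from $tr(1)$ — wait, more precisely, $X^3+X+\overline a$ has a root in $k(\mathfrak{p})$ iff it factors, and one shows using the purely-cubic reduction $u^3 = b$ with $u=y^2+ky$ that, after adjoining the resolvent root (which costs a further quadratic exactly when $tr(1/\overline a^2)=1$, and in that quadratic $\sqrt{-3}$ becomes available iff $|k(\mathfrak{p})|\equiv 2\bmod 3$), $\mathfrak{p}$ is inert iff the roots of $T^2+\overline aT+1$ fail to be cubes in $\mathbb{F}_{2^n}$ respectively $\mathbb{F}_{2^{2n}}$. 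The condition ``$tr(1/\overline a^2)=tr(1)$'' isolating the inert case then emerges by combining: the place is inert only if the cubic is irreducible, which needs the Galois closure to have degree divisible by $3$ but the cubic itself to have no linear factor; tracking the Frobenius action on the three roots, inertia of $\mathfrak{p}$ forces Frobenius to be a $3$-cycle, which (given the $S_3$-versus-$A_3$ dichotomy from Theorem~\ref{resolvent}) pins down both the trace condition and the cube condition. I would close by remarking, as the statement does, that the roots of $T^2+\overline aT+1$ lie in $\mathbb{F}_{2^n}$ or $\mathbb{F}_{2^{2n}}$ according to $tr(1/\overline a^2)=0$ or $1$, which is immediate from $X^2+X+c$ having a root in $\mathbb{F}_{2^n}$ iff $tr(c)=0$ applied with $c=1/\overline a^2$ after the substitution $T\mapsto \overline a T$.
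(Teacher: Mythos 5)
Your architecture for cases (1) and (2) matches the paper's: case (1) rests on the theorem giving the valuations of $y$ (which shows three distinct places above $\mathfrak{p}$ when $v_\mathfrak{p}(a)>0$), and you are right to retreat to it, since the naive reduction of $X^3-3X-a$ is $X(X^2-3)$, not $X(X-1)(X+1)$, and in characteristic $2$ it even acquires a double root; case (2) uses the same pole--clearing substitution $z=f_\mathfrak{p}^m y$ and the cube criterion for $X^3-\overline{f_\mathfrak{p}^{3m}a}$. One repair needed there: the clause $|k(\mathfrak{p})|\equiv 1\bmod 3$ is a \emph{conclusion}, which the paper extracts from Theorem \ref{even} (only even-degree primes divide the denominator of $a$); saying the split/inert dichotomy ``requires'' it does not prove it. For $v_\mathfrak{p}(a)=0$ you genuinely diverge from the paper: the paper reads irreducibility of $X^3-3X-\overline a$ off Dickson's classification of irreducible cubics over finite fields (and, for $p=2$, off Williams' criterion), while you re-run the purely cubic reduction of Theorem \ref{purelycubic} inside the residue field. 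For $|k(\mathfrak{p})|\equiv 1\bmod 3$ this works and is more self-contained than citing Dickson; your $p=2$ discussion, by contrast, is only a sketch --- ``tracking the Frobenius action\ldots pins down both the trace condition and the cube condition'' is exactly the content of Williams' theorem, which you should either cite (as the paper does) or actually prove.

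The genuine gap is subcase (3)(a)(ii), $p>3$ and $|k(\mathfrak{p})|\equiv 2\bmod 3$. You assert that only $|k(\mathfrak{p})|=5$, $\overline a=\pm 1$ ``escapes the Kummer reduction'', attributing this to a degeneracy $\delta=0$; but $\overline a=\pm1$ gives $\overline a^2-4=-3\neq 0$, and, more importantly, your own framework carried through honestly yields a different criterion, not the claimed dichotomy: over $k(\mathfrak{p})(\sqrt{-3})$ the cubic is irreducible if and only if $\mu=\tfrac12(\overline a+\delta)$ is not a cube there, and $\mu$ is a norm-one element of $k(\mathfrak{p})(\sqrt{-3})^*$; non-cube norm-one elements exist for \emph{every} $|k(\mathfrak{p})|\equiv 2\bmod 3$, so nothing in your argument forces $|k(\mathfrak{p})|=5$. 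Concretely, $X^3-3X-5$ has no root in $\mathbb{F}_{11}$ (hence is irreducible, with square discriminant $5=4^2$), and $\overline a=5$ is realised at the place $x=3$ by the Galois datum $A=x$, $B=1$ of Theorem \ref{qneq1mod3Galois} (so $a=(2x^2+3)/(x^2+4)$), giving an inert place with $|k(\mathfrak{p})|=11$. So this subcase cannot be dispatched as a single sporadic check ``by hand''; nor can you simply defer to the paper here, since the divisibility step in its deduction from Dickson's classification (inferring $\tfrac13(|k(\mathfrak{p})|-2)(|k(\mathfrak{p})|+1)\mid |k(\mathfrak{p})|^2-1$ from $\nu$ killing both exponents) is not valid --- only the order of $\nu$ divides their gcd. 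Any correct treatment of this subcase has to confront the cube class of the norm-one element $\tfrac12(\overline a+\delta)$ in $k(\mathfrak{p})(\sqrt{-3})$ directly, and your proposal does not.
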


\begin{proof} Throughout what follows in this proof, we will use $\mathfrak{P}$ to denote a place of $L$ above $\mathfrak{p}$.
\begin{enumerate}[1.] \item  This is immediate from Theorem 5.15 (3) and (4).
\item  If $v_{\mathfrak{p}} (a) < 0$, then by Theorem 5.11 and Corollary 5.12, $3|v_{\mathfrak{p}} (a)$ and the place $\mathfrak{p}$ must be finite. Moreover, $\mathfrak{p}$ is of even degree by Theorem 5.3, and as a consequence, $|k(\mathfrak{p})|\equiv 1\text{ mod } 3$. Let $f_\mathfrak{p}$ denote the irreducible polynomial corresponding to the place $\mathfrak{p}$, and let $m \in \mathbb{Z}$ be such that $v_{\mathfrak{p}}(a)= -3m$. Then, $z=f_\mathfrak{p}^m y$ is a root of the polynomial $X^3 -3 f_\mathfrak{p}^{2m} X -f_\mathfrak{p}^{3m} a$, and $v_{\mathfrak{p}} (f_\mathfrak{p}^{3m} a)=0$. In particular, we have $$X^3 -3 f_\mathfrak{p}^{2m} X -f_\mathfrak{p}^{3m} a \equiv X^3  -f_\mathfrak{p}^{3m} a \mod \mathfrak{p}.$$ Thus, $\mathfrak{p}$ is inert in $L$ if, and only if, $X^3  -f_\mathfrak{p}^{3m} a$ is irreducible over $k (\mathfrak{p})$, which occurs if, and only if, $f_\mathfrak{p}^{3m} a \mod \mathfrak{p} \in k(\mathfrak{p})$ is not a cube in $k(\mathfrak{p})$, and otherwise, as $L/\mathbb{F}_q(x)$ is Galois of prime degree $3$, $\mathfrak{p}$ is completely split in $L$.

\item If $v_\mathfrak{p}(a)=0$, we let $\overline{a} \in k(\mathfrak{p})$ be the reduction of $a$ modulo $\mathfrak{p}$.\begin{enumerate}[(a)] \item We first suppose that $p > 3$.

 \begin{enumerate}[i.] \item Suppose that $|k(\mathfrak{p})| \equiv 1 \mod 3$. Thus, $-3$ is a square in $k(\mathfrak{p})$ and $k(\mathfrak{p})(\sqrt{-3})=k(\mathfrak{p})$. The discriminant of $X^3 - 3X - a$ is equal to $\Delta = -27( a^2 -4)$, and it is a square as $L/\mathbb{F}_q(x)$ is Galois. As $-3$ is a square, the same is true for $a^2-4$. Thus, there is $\delta \in \mathbb{F}_q(x)$ such that $a^2 -4=\delta^2$. By \cite[Theorem 3]{Dickson}, the reduced polynomial $X^3 - 3X - \overline{a}$ is irreducible over $k(\mathfrak{p})$ if, and only if, (1) its discriminant %$$\Delta \mod \mathfrak{p} = - 27(\overline{a}^2 - 4)$$  
 is a square in $k(\mathfrak{p})$ and (2) the element $\frac{1}{2} (\overline{a} + \delta) $ is not a cube in $k(\mathfrak{p})$. % $k(\mathfrak{p})(\sqrt{-3}) = k(\mathfrak{p})$, where $- 27(\overline{a}^2 - 4) = 81 \mu^2$ $(\mu \in k(\mathfrak{p}))$. As the extension $L/ \mathbb{F}_q(x)$ is Galois, we know that the discriminant $\Delta$ of $X^3 - 3X - a$ is a square in $\mathbb{F}_q[x]$, hence a square mod $\mathfrak{p}$. Therefore, $$\overline{a}^2 -4 = -3  \mu^2= \delta^2$$ for some $\delta \in k(\mathfrak{p})$, and $\frac{1}{2} (\overline{a} + \sqrt{-3} \mu)= \frac{1}{2} (\overline{a} + \delta)$. (We note that as $|k(\mathfrak{p})| \equiv 1 \mod 3$, $-3$ is a square in $k(\mathfrak{p})$.) 
As $(1)$ is always true as $\Delta$ is a square, it follows that $\mathfrak{p}$ is inert if, and only if, $\frac{1}{2}(\overline{a} + \delta)$ is not a cube in $k(\mathfrak{p})$.

Moreover, by Theorem 5.1, writing $a = \frac{P}{Q}$ with $P,Q\in \mathbb{F}_q[x]$, $(P,Q)=1$, we know that there exist coprime $A,B \in \mathbb{F}_q[x]$ such that $$P = 2(A^2 - 3^{-1} B^2) \quad \text{and} \quad Q= A^2 + 3^{-1} B^2,$$ whence $$\Delta = 12^2 \frac{A^2 B^2 }{(A+3^{-1}B^2)^2} .$$ By definition, we also have 
$$\delta^2 = \overline{a}^2 - 4 \equiv -\frac{1}{3}\frac{16 A^2 B^2}{(A^2 + 3^{-1} B^2)^2}  \mod \mathfrak{p},$$
%\begin{align*} \delta^2 &= \overline{a}^2 - 4 \\&\equiv \frac{4 (A^2 - 3^{-1} B^2)^2}{(A^2 + 3^{-1} B^2)^2} - 4 \mod \mathfrak{p} \\& \equiv -\frac{1}{3}\frac{16 A^2 B^2}{(A^2 + 3^{-1} B^2)^2}  \mod \mathfrak{p},\end{align*} 
and hence 
$$\delta \equiv 4\sqrt{-3}^{-1} \left(\frac{AB}{A^2 + 3^{-1} B^2} \right) \mod \mathfrak{p}.$$ Therefore, 
\begin{align*} \frac{1}{2} (\overline{a} + \delta) &\equiv  \frac{1}{2}\left( \frac{2(A^2 - 3^{-1} B^2)}{A^2 + 3^{-1} B^2} +\frac{4}{\sqrt{-3}} \left(\frac{AB}{A^2 + 3^{-1} B^2} \right)\right)\mod \mathfrak{p} 
%\\& \equiv \frac{A^2 - 3^{-1} B^2 + \frac{2}{\sqrt{-3}}AB}{A^2 + 3^{-1} B^2} \mod \mathfrak{p} \\& \equiv \frac{A^2 - 3^{-1} B^2 + \frac{2}{\sqrt{-3}}AB}{(A + \sqrt{-3}^{-1}B)(A - \sqrt{-3}^{-1}B)} \mod \mathfrak{p} \\& \equiv \frac{(A + \sqrt{-3}^{-1}B)^2}{(A + \sqrt{-3}^{-1}B)(A - \sqrt{-3}^{-1}B)} \mod \mathfrak{p} 
\\& \equiv \frac{A + \sqrt{-3}^{-1}B}{A - \sqrt{-3}^{-1}B} \mod \mathfrak{p}. \end{align*}
%As $A + \sqrt{-3}^{-1}B$ and $A - \sqrt{-3}^{-1}B$ are coprime, it follows $ \frac{1}{2} (\overline{a} + \delta) $ is not a cube if, and only if, $A + \sqrt{-3}^{-1}B$ or $A - \sqrt{-3}^{-1}B$ is not a cube mod $\mathfrak{p}$. But $Q = A^2 + 3^{-1} B^2= (A + \sqrt{-3}^{-1}B ) ( A - \sqrt{-3}^{-1}B)\in \mathbb{F}_q[x]$ and $A + \sqrt{-3}^{-1}B $ and $ A - \sqrt{-3}^{-1}B$ are coprime, thus $A + \sqrt{-3}^{-1}B$ is a cube if, and only if, $A - \sqrt{-3}^{-1}B$ is a cube. 

\item If $|k(\mathfrak{p})| \equiv 2 \mod 3$, then again by \cite[Theorem 3]{Dickson}, the irreducible cubic polynomials in $k(\mathfrak{p})[X]$ are given by \begin{equation}\label{Dickson2mod3}X^3 - 3X \nu^{-\frac{1}{3}(|k(\mathfrak{p})|-2)(|k(\mathfrak{p})| +1)} - \nu - \nu^{|k(\mathfrak{p})|} = 0,\end{equation} where $\nu \in k(\mathfrak{p})(\sqrt{-3})$ is not a cube in $k(\mathfrak{p})(\sqrt{-3})$. Thus, if $X^3 - 3X - \overline{a}$ is an irreducible polynomial in $k(\mathfrak{p})[X]$,  then $$\nu^{-\frac{1}{3}(|k(\mathfrak{p})|-2)(|k(\mathfrak{p})| +1)}  = 1$$ for a non-cube $\nu\in k(\mathfrak{p})(\sqrt{-3})$. We also have $$\frac{1}{3}(|k(\mathfrak{p})|-2)(|k(\mathfrak{p})| +1) =\frac{1}{3}|k(\mathfrak{p})|^2 - \frac{1}{3}|k(\mathfrak{p})| - \frac{2}{3} < |k(\mathfrak{p})|^2-1.$$ As $\nu\in k(\mathfrak{p})(\sqrt{-3})$, it must also be true that $\nu$ is a $(|k(\mathfrak{p})|^2-1)^{st}$ root of unity. Thus $$\frac{1}{3}(|k(\mathfrak{p})|-2)(|k(\mathfrak{p})| +1)  \Big|(|k(\mathfrak{p})|^2-1) = (|k(\mathfrak{p})|-1)(|k(\mathfrak{p})| +1) ,$$ so that $\frac{1}{3}(|k(\mathfrak{p})|-2)$ divides $(|k(\mathfrak{p})|-1)$, i.e., $$(|k(\mathfrak{p})|-2) \Big| 3(|k(\mathfrak{p})|-1).$$ As $(|k(\mathfrak{p})|-2,|k(\mathfrak{p})|-1)=1$, we obtain $(|k(\mathfrak{p})|-2 )| 3$. As $|k(\mathfrak{p})| \geq p > 3$, it follows that $|k(\mathfrak{p})| = p = 5$. Hence $$1 =\nu^{-\frac{1}{3}(|k(\mathfrak{p})|-2)(|k(\mathfrak{p})| +1)}  = \nu^{-6},$$ so that by \eqref{Dickson2mod3}, $\overline{a} = \nu^5 + \nu$ for the non-cube $6^{th}$ root of unity $\nu$. Let $\zeta \in k(\mathfrak{p})(\sqrt{-3}) = \mathbb{F}_{25}$ be a primitive $6^{th}$ root of unity. As $\nu$ is a non-cube in $\mathbb{F}_{25}$, it follows that $\nu = \zeta^i$ for some $i=1,2,4,5$, and thus that $\nu$ is either a primitive $3^{rd}$ or $6^{th}$ root of unity. In the case that $\nu$ is a primitive $3^{rd}$ root of unity, we have $$\overline{a} = \nu + \nu^5= \nu + \nu^2= -1,$$ whereas when $\nu$ is a primitive $6^{th}$ root of unity, as $\nu^2 = \nu-1$, we have $$\nu + \nu^5= \nu + \nu (\nu -1)^2= \nu^3-2\nu^2 +2\nu= \nu (\nu -1) -2(\nu -1) +2\nu = \nu^2-\nu +2=  1.$$ It follows that $\mathfrak{p}$ is inert in $L$ if, and only if, $|k(\mathfrak{p})|=q=5$ and $\overline{a}= \pm 1$.
  \end{enumerate} 
\item   If  $v_\mathfrak{p}(a) = 0$ and $p = 2$, then by \cite[Theorem 1]{Williams}, the polynomial $$X^3 - 3X - \overline{a} = X^3 + X + \overline{a} \in k(\mathfrak{p})[X]$$ is irreducible if over $k(\mathfrak{p})$, and only if, $tr(-27/\overline{a}^2) = tr(1)$ and the roots of $$T^2 - \overline{a}T + (-3)^3 = T^2  + \overline{a}T + 1 \in k(\mathfrak{p})[T]$$ are not cubes in $k(\mathfrak{p})(\sqrt{-3})$. The result then follows by noting that $\sqrt{-3} \in \mathbb{F}_{2^n} = k(\mathfrak{p})$ if, and only if, $|k(\mathfrak{p})| \equiv 1 \mod 3$, and otherwise that $k(\mathfrak{p})(\sqrt{-3}) = \mathbb{F}_{2^{2n}}$.
\end{enumerate}
\end{enumerate} 
\end{proof} 

\subsection{Integral basis}
The next result gives an explicit integral basis for a Galois extension with generating equation $y^3-3y-a=0$. We treat the cases $p \neq 2$ and $p = 2$ separately within this Theorem, as discriminants exhibit different properties in each case.
%The perfect symmetry all along the paper between $p=2$ and $p\neq 2$ is now broken. 
\begin{theorem} \label{integralbasis} Let $q = -1 \mod 3$. Let $L/\mathbb{F}_q(x)$ be a Galois cubic geometric extension with generator $y$ which satisfies the equation $y^3 - 3y - a=0$, where $a \in \mathbb{F}_q(x)$. As before, we let $a= \alpha/(\gamma^3 \beta)$ where $(\alpha,\beta\gamma) = 1$ and $\beta$ is cube-free. Furthermore, let $\beta = \beta_1 \beta_2^2$, where $\beta_1$ and $\beta_2$ are squarefree, let $\mathfrak{O}_L$ be the integral closure of $\mathbb{F}_q[x]$ in $L$, and let $\omega = \gamma \beta_1 \beta_2 y$. 

\begin{enumerate} \item Suppose that $p \neq 2$. Let $A$ and $B$ be as in Theorem \ref{qneq1mod3Galois}. Then $\theta,\kappa \in \mathbb{F}_q[x]$ may be chosen so that $$\theta \equiv -\alpha(2\gamma^2)^{-1}\beta_2^{-1} \mod (AB)^2\quad\text{and}\quad \theta \equiv \gamma \beta_1 \beta_2 \mod \beta_1^2,$$ $$\kappa \equiv -2(\gamma \beta_1 \beta_2 )^2 \mod (AB)^2 \beta_1^2,$$ $\delta \in \mathbb{F}_q[x]$ may be chosen freely, and the set $$\mathfrak{I}=\left\{1,\omega+\delta,\;(AB)^{-1} \beta_1^{-1}(\omega^2+\theta \omega+ \kappa)\right\}$$ forms a basis of $\mathfrak{O}_L$ over $\mathbb{F}_q[x]$.

 \item Suppose that $p = 2$. Let $A$ and $B$ be as in Theorem \ref{qneq1mod3Galois}. An integral basis of the form $\mathfrak{B}=\{ 1 , \omega +S , (\omega^2 +T\omega  +R)/I\}$ exists for some $S, T, R\in \mathbb{F}_q[x]$, where $T = \gamma \beta_1 \beta_2 + A \beta_1 H$ and $H\in \mathbb{F}_p[x]$ is chosen such that 
$$A+ B  =AG+  \beta_1\gamma^2 \beta_2 H,$$ for some $G \in \mathbb{F}_q[x]$.
 % $A$ is a square, say $A= A_2^2$ where $ A_2 \in \mathbb{F}_q[x]$,  $\gamma \beta_1$ is a square modulo $A$ and the congruence $$    \gamma^2 \beta_2 \beta_1  F^2 + A_2 F+  U_0 +\gamma \beta_2Z_0 \equiv 0 \mod A$$ has a solution, where $(Z_0, U_0)$ is a solution of the equation $AZ = \gamma \beta_1 U^2 +1$. If so, we may choose $T= \gamma \beta_1 \beta_2 + (U_0 + A_2 F)A \beta_1$, so that the set $$\mathfrak{I} = \left\{ 1, \omega+T, \frac{1}{I} (\omega^2+ T\omega + T^2 +(\gamma \beta_1 \beta_2)^2 ) \right\}$$ forms an integral basis of $L/ \mathbb{F}_q(x)$. 
\end{enumerate}\end{theorem}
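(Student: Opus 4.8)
The plan is to write down an $\mathbb{F}_q[x]$-basis of $\mathfrak{O}_L$ of exactly the prescribed triangular shape and to pin it down by a discriminant count, reducing the theorem to three computations: the discriminant of $\omega$, the value of $\text{disc}_x(L)$, and the integrality of the top basis element (this last being where the congruences on $\theta,\kappa$ are used). Throughout $p\neq 3$. By Theorem \ref{qneq1mod3Galois} we may scale by a unit of $\mathbb{F}_q^{*}$ so that $\alpha=P$ and $\gamma^{3}\beta=\gamma^{3}\beta_{1}\beta_{2}^{2}=Q$, where for $p\neq 2$ one has $P=2(A^{2}-3^{-1}B^{2})$, $Q=A^{2}+3^{-1}B^{2}$ with $(A,B)=1$. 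Every prime dividing $\beta$ divides $Q$, and a prime dividing $Q$ together with $A$ or $B$ would divide both; hence $\gcd(AB,\beta)=1$, in particular $\gcd(AB,\beta_{1})=1$. Substituting $y=\omega/(\gamma\beta_{1}\beta_{2})$ into $y^{3}-3y-a=0$ shows $\omega$ is a root of $g(X)=X^{3}-3dX-e$ with $d=\gamma^{2}\beta_{1}^{2}\beta_{2}^{2}$ and $e=\alpha\beta_{1}^{2}\beta_{2}$; in particular $1,\omega\in\mathfrak{O}_L$, and $\omega+\delta\in\mathfrak{O}_L$ for every $\delta\in\mathbb{F}_q[x]$.

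For $p\neq 2$ one has $\text{disc}(g)=-4(-3d)^{3}-27e^{2}=27(4d^{3}-e^{2})=27\beta_{1}^{4}\beta_{2}^{2}\bigl(4(\gamma^{3}\beta_{1}\beta_{2}^{2})^{2}-\alpha^{2}\bigr)$, and $4Q^{2}-P^{2}=(2Q-P)(2Q+P)=(4\cdot 3^{-1}B^{2})(4A^{2})=16\cdot 3^{-1}A^{2}B^{2}$, so $\text{disc}(\omega)=(12\beta_{1}^{2}\beta_{2}AB)^{2}$ up to a square unit. On the other hand, by Theorem \ref{ramification} the places of $\mathbb{F}_q(x)$ ramified in $L$ are precisely those dividing $\beta$, each totally ramified; as $p\neq 3$ this is tame, with differential exponent $e(\mathfrak{P}\mid\mathfrak{p})-1=2$, so $\text{disc}_x(L)$ is, up to a unit, the square of the radical of $\beta$, namely $(\beta_{1}\beta_{2})^{2}$. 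Now for $\mathfrak{I}=\{1,\ \omega+\delta,\ (AB)^{-1}\beta_{1}^{-1}(\omega^{2}+\theta\omega+\kappa)\}$ the transition matrix from $\{1,\omega,\omega^{2}\}$ is lower triangular with determinant $((AB)\beta_{1})^{-1}$, hence $\text{disc}(\mathfrak{I})=((AB)\beta_{1})^{-2}\,\text{disc}(\omega)=144(\beta_{1}\beta_{2})^{2}$, which equals $\text{disc}_x(L)$ up to a unit of $\mathbb{F}_q^{*}$. Thus, once the three elements of $\mathfrak{I}$ are seen to be integral, $\mathbb{F}_q[x]\mathfrak{I}\subseteq\mathfrak{O}_L$ is a full-rank submodule of equal discriminant, so its index is a unit and $\mathbb{F}_q[x]\mathfrak{I}=\mathfrak{O}_L$. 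This also shows $\delta$ is free (changing it alters $\omega+\delta$ by an $\mathbb{F}_q[x]$-multiple of $1$) and that $\theta,\kappa$ matter only modulo $(AB)\beta_{1}$, so prescribing them by the stated congruences — which coexist since $(AB)^{2}$ and $\beta_{1}^{2}$ are coprime — is legitimate.

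It remains to prove $\psi:=(AB)^{-1}\beta_{1}^{-1}(\omega^{2}+\theta\omega+\kappa)$ is integral. Writing the characteristic polynomial of $\eta:=\omega^{2}+\theta\omega+\kappa$ over $\mathbb{F}_q[x]$ as $X^{3}-t_{1}X^{2}+t_{2}X-t_{3}$, this is equivalent to $m\mid t_{1}$, $m^{2}\mid t_{2}$, $m^{3}\mid t_{3}$ with $m=(AB)\beta_{1}$. From $\sum\omega_{i}=0$, $\sum_{i<j}\omega_{i}\omega_{j}=-3d$, $\prod\omega_{i}=e$ one gets $t_{1}=3(2d+\kappa)$ and $t_{2}=3(3d^{2}+4d\kappa+\kappa^{2}-d\theta^{2}-e\theta)$, and reducing $g$ modulo $X^{2}+\theta X+\kappa$ gives $t_{3}=k^{2}-\theta k\ell+\kappa\ell^{2}$ with $k=\theta\kappa-e$ and $\ell=\theta^{2}-\kappa-3d$. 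Since $\gcd(AB,\beta_{1})=1$ the divisibilities can be checked one prime at a time. At a prime dividing $AB$, the congruences $\kappa\equiv-2d$ and $\theta\equiv-e/(2d)=-\alpha(2\gamma^{2})^{-1}\beta_{2}^{-1}\pmod{(AB)^{2}}$ force $k\equiv 0$ and $\ell\equiv(e^{2}-4d^{3})/(4d^{2})$, whereupon the identity $4d^{3}-e^{2}=16\cdot 3^{-1}\beta_{1}^{4}\beta_{2}^{2}A^{2}B^{2}$ obtained above supplies the exact power of $AB$ dividing $t_{2}$ and an excess in $t_{3}$. At a prime dividing $\beta_{1}$, the congruences $\kappa\equiv-2(\gamma\beta_{1}\beta_{2})^{2}$ and $\theta\equiv\gamma\beta_{1}\beta_{2}\pmod{\beta_{1}^{2}}$ make $2d+\kappa$ and $\theta^{2}-d$, hence $\ell$, divisible by $\beta_{1}^{2}$, which yields $\beta_{1}\mid t_{1}$, $\beta_{1}^{2}\mid t_{2}$, $\beta_{1}^{3}\mid t_{3}$. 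This local bookkeeping is the main obstacle: the condition $m^{3}\mid t_{3}$ is the tightest of the three, and it is precisely what forces the moduli to be the squares $(AB)^{2}$ and $\beta_{1}^{2}$ rather than $AB$ and $\beta_{1}$.

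For $p=2$ the scheme is the same, with the following changes. The universal identity $\text{disc}(X^{3}+pX+q)=-4p^{3}-27q^{2}$ specialises to $q^{2}$, and $\alpha=P=A^{2}$ by Theorem \ref{qneq1mod3Galois}(b), so $\text{disc}(\omega)=(A^{2}\beta_{1}^{2}\beta_{2})^{2}$, while $\text{disc}_x(L)=(\beta_{1}\beta_{2})^{2}$ still (the ramification remains tame since $2\neq 3$); matching discriminants forces the denominator $I=A^{2}\beta_{1}$ up to a unit and gives $\text{disc}(\mathfrak{B})=\text{disc}_x(L)$, so it again suffices to produce $S,T,R\in\mathbb{F}_q[x]$ making the three elements integral. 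The polynomial $H$ with $A+B=AG+\beta_{1}\gamma^{2}\beta_{2}H$ exists because $\gcd(A,\beta_{1}\gamma^{2}\beta_{2})=1$ (every prime of $\beta_{1}\gamma$ divides $Q=A^{2}+AB+B^{2}$, hence not $A$), and $T=\gamma\beta_{1}\beta_{2}+A\beta_{1}H$ together with a suitable $R$ (and arbitrary $S$) plays the role of the congruence conditions above: here the $t_{i}$ must be recomputed directly, since the characteristic-$\neq 2$ symmetric-function identities for $t_{2}$ break down, and this $T$ is chosen so that the characteristic-$2$ cancellation $\theta^{2}-d=(A\beta_{1}H)^{2}$ makes $\ell$ highly divisible, after which $R$ is chosen (a congruence condition on $R$ modulo $I$ and $I^{2}$) so that $I\mid t_{1}$, $I^{2}\mid t_{2}$, $I^{3}\mid t_{3}$. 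The verification is once more a valuation computation at the primes dividing $A$ and $\beta_{1}$, entirely parallel to the odd case.
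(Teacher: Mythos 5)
Your part (1) is correct and follows the same overall architecture as the paper --- compute $\Delta(\omega)$, compute $\partial_{L/\mathbb{F}_q(x)}=((\beta_1\beta_2)^2)$ from the ramification theorem, and conclude by matching the discriminant of the candidate basis against the field discriminant --- but your integrality verification takes a genuinely different and arguably cleaner route. The paper squares $\eta=\omega^2+\theta\omega+\kappa$, expands in the basis $\{1,\omega,\omega^2\}$, and checks that each coefficient is divisible by $((AB)\beta_1)^2$; you instead compute the characteristic polynomial $X^3-t_1X^2+t_2X-t_3$ of $\eta$ and verify $m\mid t_1$, $m^2\mid t_2$, $m^3\mid t_3$ with $m=AB\beta_1$, prime by prime. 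Your reduction of $g$ modulo $X^2+\theta X+\kappa$ giving $t_3=k^2-\theta k\ell+\kappa\ell^2$ with $k=\theta\kappa-e$, $\ell=\theta^2-\kappa-3d$ is correct, the local checks at primes of $AB$ and of $\beta_1$ go through as you describe, and your observation that the tight constraint $m^3\mid t_3$ is what forces the moduli $(AB)^2$ and $\beta_1^2$ rather than $AB$ and $\beta_1$ is a genuine insight the paper does not articulate. Your justification of $\gcd(AB,\beta)=1$ and of the value of $\partial_{L/\mathbb{F}_q(x)}$ via tameness are both sound.

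Part (2), however, contains a real gap. You correctly set up $I=A^2\beta_1$ by the discriminant count and correctly note that $H$ exists because $\gcd(A,\beta_1\gamma^2\beta_2)=1$, but the substantive content of part (2) is precisely the derivation that the divisibility conditions on $t_1,t_2,t_3$ (equivalently, the paper's congruences $T^2+(\gamma\beta_1\beta_2)^2\equiv 0 \bmod \beta_1A^2$ and $T^3+(\gamma\beta_1\beta_2)^2T+\alpha\beta_1^2\beta_2\equiv 0\bmod \beta_1^2A^4$, obtained from Scheidler's criterion) reduce, after writing $T=\gamma\beta_1\beta_2+A\beta_1H$ and a chain of eliminations modulo powers of $A$, to exactly the linear condition $A+B=AG+\beta_1\gamma^2\beta_2H$. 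You assert that this $T$ "makes $\ell$ highly divisible" and that "a suitable $R$" can then be found, deferring the verification as "entirely parallel to the odd case" --- but it is not parallel: in characteristic $2$ one has $t_1=\kappa$, the symmetric-function shortcuts change, and the modulus $A^4$ (rather than $A^2$) in the cubic congruence is what generates the nontrivial constraint tying $H$ to $A+B$; without carrying out that reduction you have not shown that the stated $H$ is the right choice, nor that the required $R$ exists. This computation needs to be supplied for part (2) to be a proof rather than a plausible outline.
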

\begin{remarque} Such a choice of $H$ as in Theorem \ref{integralbasis}(2) always exists, as $(B,\beta_1 \gamma \beta_2) = 1$ from $(A,B)=1$. \end{remarque}
\begin{proof}  The element $z = \gamma\beta y$ satisfies the equation $$z^3 - 3\gamma^2 \beta^2 z - \alpha \beta^2 = 0,$$ and $z$ is integral over $\mathbb{F}_q[x]$. Furthermore, for each finite place $\mathfrak{p}$ of $\mathbb{F}_q(x)$, let $f_\mathfrak{p} \in \mathbb{F}_q[x]$ be the polynomial associated with $\mathfrak{p}$, and let $$\beta_1=\prod_{\substack{\mathfrak{p}|\beta \\ v_\mathfrak{p}(\beta) = 1}} f_\mathfrak{p} \quad\text{and}\quad \beta_2=\prod_{\substack{\mathfrak{p}|\beta \\ v_\mathfrak{p}(\beta) = 2}}  f_\mathfrak{p}.$$ Hence $\beta=\beta_1 \beta_2^2$, and the generating equation for $z$ may therefore be written as \begin{align*} z^3 - 3\gamma^2 \beta_1^2 \beta_2^4 z - \alpha \beta_1^2 \beta_2^4 =  0.\end{align*} 
Division of this equation by $\beta_2^3$ and replacing $z$ with $\omega = \beta_2^{-1} z = \gamma \beta_1 \beta_2 y$ yields the equation $$\omega^3 - 3\gamma^2 \beta_1^2 \beta_2^2 \omega - \alpha \beta_1^2 \beta_2 = 0.$$ 

By definition, the discriminant of $\omega$ is equal to $$\Delta(\omega) = 108 \gamma^6 \beta_1^6 \beta_2^6 - 27 \alpha^2 \beta_1^4 \beta_2^2 = 27 \beta_1^4 \beta_2^2 (4\gamma^6 \beta_1^2 \beta_2^4 - \alpha^2)= 27 \beta_1^4 \beta_2^2 (4\gamma^6 \beta^2 - \alpha^2).$$ 

Let $\mathfrak{D}_{L/\mathbb{F}_q(x)}$ be the different of $L/\mathbb{F}_q(x)$ \cite[Section 5.6]{Vil}.
As the residue field extensions of a (fully) ramified prime $\mathfrak{p}$ of $\mathbb{F}_q(x)$ in $K$ are trivial, it follows from the definition of $\mathfrak{D}_{L/\mathbb{F}_q(x)}$ and the fact that the place $\mathfrak{p}_\infty$ is unramified in $K$ that $$\partial_{L/\mathbb{F}_q(x)}=N_{L/\mathbb{F}_q(x)}(\mathfrak{D}_{L/\mathbb{F}_q(x)}) = N_{L/\mathbb{F}_q(x)}\left(\prod_{\mathfrak{P}|\beta} \mathfrak{P}^2 \right) = \prod_{\mathfrak{P}|\mathfrak{p}|\beta} \mathfrak{p}^{2 f (\mathfrak{P}|\mathfrak{p})} = \prod_{\mathfrak{p}|\beta} \mathfrak{p}^2= ((\beta_1 \beta_2)^2)_{\mathbb{F}_q[x]},$$ where we let $\mathfrak{P}$ denote a place of $K$ above $\mathfrak{p}$. 

%\begin{enumerate} 
%\item[$\circ$] 
\begin{enumerate} \item Suppose that $p\neq 2$. By Theorem \ref{qneq1mod3Galois}, there exist coprime $A,B \in \mathbb{F}_q[x]$ such that $$\alpha = 2(A^2 - 3^{-1} B^2) \quad \text{and} \quad \gamma^3 \beta_1 \beta_2^2= \gamma^3 \beta = A^2 + 3^{-1} B^2.$$ It follows that \begin{align*} 4\gamma^6 \beta^2 - \alpha^2 &= 4(\gamma^3 \beta)^2 - \alpha^2 \\& = 4\left(A^2 + 3^{-1} B^2 \right)^2 - \left(2\left[A^2 - 3^{-1} B^2  \right]\right)^2 \\&=\frac{16}{3} (AB)^2. \end{align*} 
Thus, $$\Delta(\omega) = \frac{16}{3} \beta_1^4 \beta_2^2  (AB)^2.$$  We wish to show that $$\mathfrak{I}=\left\{1,\omega,\;(AB)^{-1} \beta_1^{-1}(\omega^2+\theta \omega+ \kappa)\right\}$$ forms an integral basis of $\mathfrak{O}_K$ over $\mathbb{F}_q[x]$, where $\theta$ and $\kappa$ are polynomials in $\mathbb{F}_q[x]$ which are chosen (the former by the Chinese Remainder Theorem) so that $$\theta \equiv -\alpha(2\gamma^2)^{-1}\beta_2^{-1} \mod (AB)^2\quad\text{and}\quad \theta \equiv \gamma \beta_1 \beta_2 \mod \beta_1^2,$$ and $$\kappa\equiv -2(\gamma \beta_1 \beta_2 )^2 \mod (AB)^2 \beta_1^2.$$ We first prove that $(AB)^{-1} \beta_1^{-1}(\omega^2+\theta \omega+ \kappa)$ is integral over $\mathbb{F}_q[x]$, i.e., 
$$ \omega^2+\theta \omega+ \kappa \equiv 0  \mod  AB \beta_1.$$ We have \begin{align*} (\omega^2 + \theta \omega + \kappa)^2 &= \omega^4 + 2\theta \omega^3 + (2\kappa + \theta^2) \omega^2 + 2\theta \kappa \omega + \kappa^2 \\& =  (\omega + 2\theta) \omega^3 + (2\kappa + \theta^2) \omega^2 + 2\theta \kappa \omega + \kappa^2 \\& = (\omega + 2\theta) (3\gamma^2 \beta_1^2 \beta_2^2 \omega + \alpha \beta_1^2 \beta_2) + (2\kappa + \theta^2) \omega^2 + 2\theta \kappa \omega + \kappa^2 \\&= (3\gamma^2 \beta_1^2 \beta_2^2 + 2\kappa + \theta^2) \omega^2 + (\alpha \beta_1^2 \beta_2+ 6\theta \gamma^2 \beta_1^2 \beta_2^2+ 2\theta \kappa) \omega + 2\theta \alpha \beta_1^2 \beta_2 + \kappa^2.  \end{align*} 
By definition of $\theta$, we have  $$\theta \equiv -\alpha(2\gamma^2)^{-1} \beta_2^{-1} \mod (AB)^2,$$ and hence that \begin{align*} \theta^2 &\equiv \alpha^2 (4\gamma^4)^{-1} \beta_2^{-2} \mod (AB)^2 \\&\equiv \gamma^2 \beta^2 \beta_2^{-2} \mod (AB)^2 \\&\equiv \gamma^2 \beta_1^2 \beta_2^4 \beta_2^{-2} \mod (AB)^2 \\& \equiv \gamma^2 \beta_1^2 \beta_2^2 \mod (AB)^2.\end{align*} Therefore \begin{align*} 3\gamma^2 \beta_1^2 \beta_2^2 + 2\kappa + \theta^2 & = 3\gamma^2 \beta_1^2 \beta_2^2 + 2\kappa +\theta^2  \\& \equiv 3\gamma^2 \beta_1^2 \beta_2^2  - 4 (\gamma \beta_1 \beta_2 )^2 + (\gamma \beta_1 \beta_2 )^2 \mod (AB)^2  \\&\equiv 0 \mod (AB)^2.\end{align*}
Also by definition of $\theta$, we obtain that \begin{align*} \theta^2 & \equiv  (\gamma \beta_1 \beta_2 )^2 \mod \beta_1^2  \\&\equiv 0\mod \beta_1^2.\end{align*} Thus, by definition of $\kappa$, it follows that \begin{align*} 3\gamma^2 \beta_1^2 \beta_2^2 + 2\kappa + \theta^2  &=  3\gamma^2 \beta_1^2 \beta_2^2  + 2\kappa +\theta^2 \mod  \beta_1^2 \\&\equiv 3\gamma^2 \beta_1^2 \beta_2^2  -4(\gamma \beta_1 \beta_2 )^2 \mod  \beta_1^2 \\& \equiv   - \gamma^2 \beta_1^2 \beta_2^2 \mod  \beta_1^2  \\& \equiv  0 \mod  \beta_1^2.\end{align*} 
As $(AB,\beta) = 1$, we have therefore proven that $$3 \gamma^2 \beta_1^2 \beta_2^2 + 2\kappa + \theta^2 \equiv 0 \mod  (AB)^2 \beta_1^2.$$
We also find that \begin{align*} \alpha \beta_1^2 \beta_2+ 6\theta \gamma^2 \beta_1^2 \beta_2^2+ 2\theta \kappa &= 2\theta(3 \gamma^2 \beta_1^2 \beta_2^2+\kappa) + \alpha \beta_1^2 \beta_2 \\& \equiv -2\alpha(2\gamma^2)^{-1}\beta_2^{-1}(3 \gamma^2 \beta_1^2 \beta_2^2 -2 (\gamma \beta_1 \beta_2 )^2) + \alpha \beta_1^2 \beta_2 \mod (AB)^2  \\& \equiv 2  (-\alpha(2\gamma^2)^{-1}\beta_2^{-1}(\gamma \beta_1 \beta_2 )^2) + \alpha \beta_1^2 \beta_2 \mod (AB)^2 \\& \equiv 0 \mod (AB)^2,\end{align*} 
and also, \begin{align*} \alpha \beta_1^2 \beta_2+ 6\theta \gamma^2 \beta_1^2 \beta_2^2+ 2\theta \kappa &= 2\theta(3 \gamma^2 \beta_1^2 \beta_2^2+\kappa) + \alpha \beta_1^2 \beta_2 \\& \equiv 2\gamma\beta_1 \beta_2 (3\gamma^2 \beta_1^2 \beta_2^2 -2 (\gamma\beta_1 \beta_2)^2) + \alpha \beta_1^2 \beta_2 \mod \beta_1^2 \\& \equiv 2\gamma^3\beta_1^3 \beta_2^3 + \alpha \beta_1^2 \beta_2 \mod \beta_1^2  \\& \equiv (2\gamma^3\beta_1 \beta_2^2 + \alpha ) \beta_1^2 \beta_2\mod \beta_1^2  \\&=0 \mod \beta_1^2.\end{align*} 
We have therefore shown that $$\alpha \beta_1^2 \beta_2 + 6\theta \gamma^2 \beta_1^2 \beta_2^2+ 2\theta \kappa \equiv 0 \mod (AB)^2 \beta_1^2.$$
Finally, we have \begin{align*} 2\theta \alpha \beta_1^2 \beta_2 + \kappa^2 & \equiv -2\alpha(2\gamma^2)^{-1}\beta_2^{-1}  \alpha \beta_1^2 \beta_2 + (-2(\gamma \beta_1 \beta_2 )^2)^2 \mod (AB)^2 \\& \equiv - \alpha^2 \beta_1^2(\gamma^2)^{-1} + 4\gamma^4\beta_1^4 \beta_2^4 \mod (AB)^2 \\& \equiv \beta_1 ^2(4\gamma^4\beta^2 - \alpha^2 (\gamma^2)^{-1}) \mod (AB)^2  \\& \equiv \beta_1^2(\gamma^2)^{-1}(4\gamma^6\beta^2 - \alpha^2) \mod (AB)^2 \\& \equiv 0 \mod (AB)^2. \end{align*} 
and \begin{align*} 2\theta \alpha \beta_1^2 \beta_2 + \kappa^2 &\equiv 2\gamma \beta_1 \beta_2 \alpha \beta_1^2 \beta_2 + (-2(\gamma \beta_1 \beta_2 )^2)^2 \mod \beta_1^2 \\&\equiv 2\gamma\alpha\beta_1^3 \beta_2^2 + 4\gamma^4\beta_1^4 \beta_2^4 \mod \beta_1^2 \\&\equiv \beta_1^3(2\gamma\alpha\beta_2^2 + 4\gamma^4 \beta_1 \beta_2^4) \mod \beta_1^2 \\&\equiv 0 \mod \beta_1^2.\end{align*} 
We have thus proven that with $\theta$ and $\kappa$ chosen as mentioned, \begin{align*} (\omega^2 + \theta \omega + \kappa)^2 &=  (3\gamma^2 \beta_1^2 \beta_2^2 + 2\kappa + \theta^2) \omega^2 + (\alpha \beta_1^2 \beta_2+ 6\theta \gamma^2 \beta_1^2 \beta_2^2+ 2\theta \kappa) \omega + 2\theta \alpha \beta_1^2 \beta_2 + \kappa^2 \\& \equiv 0 \mod (AB)^2 \beta_1^2,\end{align*} and hence that $$\omega^2 + \theta \omega + \kappa \equiv 0 \mod AB \beta_1.$$ It follows that the element $(AB)^{-1}\beta_1^{-1}(\omega^2+\theta \omega+ \kappa)$ is integral over $\mathbb{F}_q[x]$. As the extension $L/\mathbb{F}_q(x)$ is of degree 3 and $\omega$ generates $L$ over $\mathbb{F}_q(x)$, it follows that the three integral elements $1$, $\omega+\delta$, and $(AB)^{-1} \beta_1^{-1}(\omega^2 + \theta \omega + \kappa)$ are linearly independent over $\mathbb{F}_q(x)$, and hence that $\mathfrak{I}$ is a basis of $L /\mathbb{F}_q(x)$. Finally, the discriminant of the basis $\mathfrak{I}$ is equal to
\begin{align*} \Delta ( \mathfrak{I}) &= \left( \det  \left( \begin{array}{ccc}1&  \delta  & (AB)^{-1} \beta_1^{-1}\kappa \\0 & 1 &  (AB)^{-1} \beta_1^{-1}\theta \\0 & 0 &  (AB)^{-1} \beta_1^{-1}  \end{array} \right)\right)^2 \Delta(\omega ) \\& = \left( \det  \left( \begin{array}{ccc}1&  0  & 0\\0 & 1 & 0 \\0 & 0 & (AB)^{-1} \beta_1^{-1}  \end{array} \right)\right)^2 \Delta(\omega )  \\&= (AB)^{-2} \beta_1^{-2} \left(\frac{16}{3}  \beta_1^4 \beta_2^2  (AB)^2\right)\\&=\frac{16}{3}(\beta_1\beta_2)^2, \end{align*} and hence that $$(\Delta ( \mathfrak{I}))_{\mathbb{F}_q[x]}  = ((\beta_1\beta_2)^2)_{\mathbb{F}_q[x]}  = \partial_{L/\mathbb{F}_q(x)}.$$ By basic theory (see for example \cite[p. 398]{Scheidler}), it follows that $\mathfrak{I}$ is an integral basis for $\mathfrak{O}_L$ over $\mathbb{F}_q[x]$.  
\item Suppose that $p=2$. By Theorem \ref{qneq1mod3Galois}, there exist coprime $A,B \in \mathbb{F}_q[x]$ such that $$\alpha = A^2 \quad \text{and} \quad \gamma^3 \beta_1 \beta_2^2= \gamma^3 \beta = A^2 + AB+ B^2.$$ 
It therefore follows from $p=2$ that $$\Delta(\omega) = 27 \beta_1^4 \beta_2^2 (4\gamma^6 \beta^2 - \alpha^2)  =  \beta_1^4 \beta_2^2 A^4=  (\beta_1A^2)^2 \partial_{L/\mathbb{F}_q(x)}.$$
%Thus, $$\Delta(\omega) = \beta_1^4 \beta_2^2 A^4=  (\beta_1A^2)^2 \partial_{L/\mathbb{F}_q(x)}.$$ 
By \cite[Lemma 3.1, Corollary 3.2]{Scheidler},  a basis of the form $\mathfrak{B}=\{ 1 , y+S , (y^2 +Ty +R)/I\}$ for some $S, T, R\in \mathbb{F}_q[x]$ exists, if and only if, there exists $T \in \mathbb{F}_q[x]$ such that $$T^2 +(\gamma \beta_1 \beta_2)^2\equiv 0 \mod \beta_1A^2\quad \text{ and }\quad T^3 + (\gamma \beta_1 \beta_2)^2T +  \alpha \beta_1^2 \beta_2 \equiv 0 \mod \beta_1^2 A^4,$$
And if so, the set $$\mathfrak{I} = \left\{ 1, \omega+T, \frac{1}{I} (\omega^2+ T\omega + T^2 +(\gamma \beta_1 \beta_2)^2 ) \right\}$$ forms an integral basis of $L/ \mathbb{F}_q(x)$. We therefore investigate when such a $T$ exists. First, we note that the condition $$T^2 +(\gamma \beta_1 \beta_2)^2\equiv 0 \mod \beta_1A^2$$ 
%Implies $$T^3 + (\gamma \beta_1 \beta_2)^2T +  \alpha \beta_1^2 \beta_2 \equiv T(\gamma \beta_1 \beta_2)^2+ (\gamma \beta_1 \beta_2)^2T +  \alpha \beta_1^2 \beta_2 \equiv \alpha \beta_1^2 \beta_2 \mod \beta_1^2 A^4,$$
is equivalent to 
%$$T^2 \equiv 0 \mod \beta_1$$ 
%and 
%$$ T^2 \equiv (\gamma \beta_1 \beta_2)^2 \mod A^2$$ 
%That is in turn equivalent to 
$$T \equiv \gamma \beta_1 \beta_2  \mod A \ \text{and } T \equiv 0 \mod \beta_1.$$ 
This is equivalent to the existence of a polynomial $H \in \mathbb{F}_q[x]$ such that 
$$T = \gamma \beta_1 \beta_2 + A \beta_1 H.$$ 
Let us now choose such a $T$. Then, by definition we clearly have
$$ T^3 + (\gamma \beta_1 \beta_2)^2T +  \alpha \beta_1^2 \beta_2 \equiv 0 \mod \beta_1^2. $$
Moreover, $T$ is invertible mod $\beta_1$, since $(A, \gamma \beta_1 \beta_2)=1$. Thus, the condition
$$T^3 + (\gamma \beta_1 \beta_2)^2T +  \alpha \beta_1^2 \beta_2 \equiv 0 \mod \ A^4$$ 
is equivalent to 
$$T^4 + (\gamma \beta_1 \beta_2)^2T^2 +  \alpha \beta_1^2 \beta_2 T \equiv 0 \mod \ A^4.$$ 
We have 
\begin{align*} 
T^4 + (\gamma \beta_1 \beta_2)^2T^2 +  \alpha \beta_1^2 \beta_2  T \equiv & (\gamma \beta_1 \beta_2)^4 + (\gamma \beta_1 \beta_2)^2T^2 +  \alpha \beta_1^2 \beta_2 T \mod A^4\\
\equiv &\beta_1^2 \beta_2 ( \gamma^4 \beta_2^3 \beta_1^2 + \gamma^2 \beta_2 T^2 +  \alpha T))\mod  A^4 
%\equiv &\beta_1^2 \beta_2 ( \gamma^2\beta_2( (\gamma \beta_1 \beta_2)^2 +  T^2 )+  \alpha T ) \\
%\equiv & \alpha \beta_1^3 \beta_2 (\gamma^2 \beta_2 \beta_1 H^2 +  \gamma \beta_2 + H )   \mod A^4
 \end{align*}
Thus, the condition $$T^3 + (\gamma \beta_1 \beta_2)^2T +  \alpha \beta_1^2 \beta_2 \equiv 0 \mod \ A^4$$  
is equivalent to 
$$\gamma^4 \beta_2^3 \beta_1^2 + \gamma^2 \beta_2 T^2 +  \alpha T \equiv 0 \mod  A^4.$$ 
 Since $(\gamma^2 \beta_2, 1)=1$, this condition is in turn equivalent to 
$$\gamma^6 \beta_2^4 \beta_1^2 + \gamma^4 \beta_2^2 T^2 +  \alpha \gamma^2 \beta_2 T \equiv 0 \mod  A^4$$ 
and 
$$ B^4 + A^2 B^2 + \gamma^4 \beta_2^2 T^2 +  A^2 \gamma^2 \beta_2 T \equiv 0 \mod  A^4.\qquad\qquad(*)$$
That is, taking this equivalence mod $A^2$, we find in particular that
$$ B^4  + \gamma^4 \beta_2^2 T^2  \equiv 0 \mod  A^2.$$ Hence $$ \gamma^2 \beta_2 T \equiv B^2 \mod A.$$ We write $\gamma^2 \beta_2 T = B^2 + AG_0$ where $G_0 \in \mathbb{F}_q[x]$. From this and $(*)$, we obtain 
$$ A^2 G_0^2 +  A^3 G_0= A^2 G_0 ( G_0 + A) \equiv 0 \mod  A^4.$$ This implies in particular that $G_0 \equiv 0 \mod  A$.  Thus, $G_0 = A G$ for some $G \in \mathbb{F}_q[x]$, so that 
$$\gamma^2 \beta_2 T = B^2 + A^2G$$
Since 
$$T = \gamma \beta_1 \beta_2 + A \beta_1 H,$$ 
we have
$$ \gamma^2 \beta_2 T = \gamma^3 \beta_1 \beta_2^2 + A \beta_1\gamma^2 \beta_2 H.$$ 
We therefore obtain the reduction 
$$B^2 + \gamma^3 \beta_1 \beta_2^2  =A(AG+  \beta_1\gamma^2 \beta_2 H)$$
$$A^2 + AB  =A(AG+  \beta_1\gamma^2 \beta_2 H)$$
$$A+ B  =AG+  \beta_1\gamma^2 \beta_2 H.$$
As $(A,  \beta_1\gamma^2 \beta_2)=1$, such polynomials $G$ and $H$ must exist. In then follows by a similar argument to that for the discriminant $\Delta(\mathfrak{I})$ in the proof of part (1) that the desired integral basis $\mathfrak{I}$ exists, and that $T = \gamma \beta_1 \beta_2 + A \beta_1 H$ where $H\in \mathbb{F}_q[x]$ and $G \in \mathbb{F}_q[x]$ are chosen to satisfy
$$A+ B  =AG+  \beta_1\gamma^2 \beta_2 H.$$ Hence the result.
 \end{enumerate}
\end{proof}

\section*{Appendix}

Here, we restate Theorem \ref{reallylongtheorem} and give its proof.\\

\noindent {\bf Theorem \ref{reallylongtheorem}}.
{\it Suppose that $q \equiv -1 \mod 3$. Let $L_i = K(z_i)/K$ ($i=1,2$) be two cyclic extensions of degree $3$ such that $z_i^3 -3z_i=a_i\in K$. The following are equivalent: 
\begin{enumerate}
\item $L_1= L_2$;
\item $z_2=\phi  z_1^2 +\chi z_1 -2\phi$, where $\phi ,\chi \in K$ satisfy the equation $\chi^2+a_1\phi \chi+\phi^2=1.$ 
\end{enumerate}
Moreover, if $K = \mathbb{F}_q(x)$, when the above conditions are satisfied, then:
\begin{enumerate}[(a)] 
\item If $p\neq 2$,  there are relatively prime polynomials $C,D\in \mathbb{F}_q[x]$ 
%and $\xi,\zeta \in \mathbb{F}_q^*$ such that $\xi \zeta = -27^{-1}$ and 
$$\phi = \frac{ 12C DQ_1}{\delta ( D^2 +3^{-1} C^2)}\quad\quad\text{and}\quad\quad \chi =\frac{-6P_1CD\pm \delta(C^2 -3^{-1} D^2)}{\delta( D^2 +3^{-1}  C^2)},$$ 
%= \frac{-a_1 \cP \pm \gamma}{2},$$
%\frac{-6P_1CD+ \delta(C^2 -3^{-1} D^2)}{\delta( D^2 +3^{-1}  C^2)},$$ 

%\frac{-a_1 \phi}{2} \pm \frac{ A^2 -3^{-1} B^2}{ B^2 +3^{-1}  A^2},$$ 
where $\delta \in K$ is such that $\delta^2=Q_1^2 d_{L/\mathbb{F}_q(x)}$ and $a_1= P_1/Q_1$, $P_1$, $Q_1\in \mathbb{F}_q[x]$ with $(P_1, Q_1)=1$ and $$a_2 =(-2 + a_1^2) \phi^3 + 3 a_1 \phi^2 \chi + 6 \phi \chi^2 + a_1 \chi^3.$$
\item If $p=2$,
\begin{enumerate}[(i)]
\item if $\phi =0$, then $\chi =\pm 1$. If $\chi =1$, then $a_2=a_1$, and if $\chi =-1$, then $a_2=-a_1$.
\item if $\phi \neq 0$, then $\chi /(a_1 \phi)$ is a solution of 
$$X^2-X-\frac{\phi^2-1}{a_1 \phi }=0$$ and 
$$a_2 = a_1(a_1 \phi^3 +   \phi^2 \chi  +  \chi^3).$$
\end{enumerate}
\end{enumerate}}  
\begin{proof} Suppose that $L_1=L_2$. As $\{1, z_1,z_1^2 \}$ is a basis of $L_1=L_2$ over $K$, it follows that there are $\phi,\chi  ,\psi \in  K$ such that 
$$z_2 = \phi z_1^2 +\chi z_1 +\psi.$$ By Theorem \ref{explicite}, we have that $$\sigma (z_1) = -u z_1^2 +f z_1 + 2u$$ and $$\sigma^2 (z_1)= u z_1^2 +(-1-f) z_1 - 2u$$ for a generator $\sigma$ of $\text{Gal}(L/K)$ and where $u=\frac{1+2f}{a_1}$, $u^2 = f^2 +f+1$, and $f$ is one root of the polynomial $$S(X)=\left(1 - \frac{4}{a_1^2}\right) x^2 + \left( 1- \frac{4}{a_1^2}\right) x + \left(1-\frac{1}{a_1^2}\right).$$ 
Thus, we have
\begin{align*} \sigma (z_2) &=  \phi \sigma (z_1)^2 +\chi \sigma(z_1) +\psi 
\\&= \phi (-u z_1^2 +f z_1 + 2u)^2 + \chi (-u z_1^2 +f z_1 + 2u) + \psi 
\\ &= ( 3 \phi u^2 + \phi f^2 - 4  u^2 \phi -\chi u)z_1^2 + ( \phi u^2 a_1 -2 fu\phi + f\chi )z_1 \\&\quad\;\; + (4 u^2 \phi -2ufa_1\phi + 2u\chi+\psi )
\end{align*} and 
\begin{align*} \sigma^2 (z_2) &=  \phi \sigma^2 (z_1)^2 +\chi \sigma^2(z_1) +\psi 
\\&= \phi  (u z_1^2 -(f+1) z_1 - 2u)^2 + \chi (u z_1^2 -(f+1) z_1 - 2u) + \psi 
\\ &= ( 3 \phi u^2 + \phi (f+1)^2 - 4  u^2 \phi +\chi u)z_1^2 + ( \phi u^2 a_1 -2 (f+1)u\phi - \chi (1+f) )z_1 
\\&\quad\;\;+ (4 u^2 \phi  -2u(f+1)a_1\phi - 2u\chi +\psi ).\end{align*}
As $z_2$ satisfies $z_2^3 -3 z_2 =a_2$, we have 
$$Tr(z_2) = z_2 + \sigma (z_2) + \sigma^2 (z_2) =0.$$ 
Thus, as $ua_1= 2f+1$, we obtain 
\begin{align*} 0 &= z_2 + \sigma (z_2) + \sigma^2 (z_2) 
\\ &= (\phi + 3 \phi u^2 + \phi f^2 - 4  u^2 \phi  -\chi u + 3 \phi u^2 + \phi (f+1)^2 - 4  u^2 \phi +\chi u )z_1^2 
\\ & \quad\;\;+ (\chi +\phi u^2 a_1 -2 fu\phi + f\chi + \phi u^2 a_1 -2 (f+1)u\phi - \chi (1+f)) z_1 
\\& \quad\;\;+ (\psi + 4 u^2 \phi -2ufa_1\phi  + 2u\chi +\psi + 4 u^2 \phi  -2u(f+1)a_1\phi - 2u\chi +\psi) 
\\&= (2\phi -2 \phi u^2 + 2\phi f^2+2\phi f )z_1^2  + (2\phi u^2 a_1 -4 fu\phi   -2 u\phi ) z_1 
\\&\quad\;\; + (3\psi + 8 u^2 \phi -4ufa_1\phi  -2ua_1\phi ) 
\\&= 2\phi (- u^2 + f^2+f +1)z_1^2  + 2u\phi (u a_1 -2 f   -1) z_1 
\\& \quad\;\;+ (3\psi + 2u\phi (4 u  -2fa_1  -a_1 ) 
\\&= 2\phi (- u^2 + f^2+f +1)z_1^2  + 2u\phi (u a_1 -2 f   -1)  z_1 
\\& \quad\;\;+ ( 3r\psi + 2u^2\phi (4-a_1^2)).\end{align*}
This yields to the system
$$\left\{ \begin{array}{cccr} 
2\phi (- u^2 + f^2+f +1) &=& 0 & \\ 
 2u\phi (u a_1 -2 f   -1)  &=& 0 &\\
3\psi + 2u^2\phi (4-a_1^2) &=& 0 &  \\
\end{array} \right.$$
The first two simplify to zero identically, which leaves only $ 3\psi  + 2u^2\phi (4-a_1^2)=0$. As $u^2 a_1^2= (2f+1)^2$ and $u^2 = f^2 +f +1$, we obtain
\begin{align}\label{star} u^2 (4-a_1^2)&=4 u^2 -a_1^2 u^2= 4 (f^2 + f +1) - (2f+1)^2=4-1 =3. \end{align}
Thus the last equation of the previous system simplifies to $0=3\cR +6\cP=3(\cR +2\cP )$, whence 
$$\psi =-2\phi \ \ \  ( \star ) $$
Inspection of the linear term yields $$z_2 \sigma(z_2) + z_2 \sigma^2 (z_2)+ \sigma (z_2) \sigma^2 (z_2) = - z_2 \sigma(z_2) - z_2^2 - \sigma (z_2)^2 = -3.$$ 
To simplify the computation, we write $$\sigma (z_2) = \phi ' z_1^2 + \chi' z_1 + \psi',$$ with 
$$\left\{ \begin{array}{ll}
\phi ' &= - \phi u^2 + \phi f^2  -\chi u \\ 
\chi ' &= \phi u^2 a_1 -2 fu\phi + f\chi \\  
\psi ' &= 4 u^2 \phi -2ufa_1\phi + 2u\chi +\psi .
\end{array}\right.  \ (\square )$$
We thus obtain
\begin{align*} 3 &=  z_2 \sigma(z_2) + z_2^2 + \sigma (z_2)^2
\\ &= ( \phi z_1^2 + \chi z_1 + \psi) ( \phi ' z_1^2 + \chi' z_1 + \psi')+ ( \phi z_1^2 + \chi z_1 + \psi)^2 
\\
&\quad\;\;+ ( \phi' z_1^2 + \chi' z_1 + \psi')^2\\ &= (3\phi \phi' + \phi \psi'+\phi \phi' + \psi \phi' +3\phi^2  + \chi^2 +2\phi \psi + 3\phi'^2+ \chi '^2 +2\phi ' \psi' )z_1^2 
\\ &\quad\;\; + (\phi \phi'a_1 + 3\phi \chi' + 3\chi \phi' + \psi' \chi+ \psi \chi'+ \phi^2 a_1 
\\ 
&\quad\;\; + 6\phi \chi + 2\chi \psi +\phi '^2 a_1 + 6\phi '\chi' + 2 \chi' \psi')z_1
\\ &\quad\;\; + (  \phi \chi' a_1 +\chi \phi'a_1 + \psi \psi' +\psi^2 + 2\phi \chi a_1 +\psi'^2 + 2\phi' \chi' a_1). \end{align*}
As a consequence, we obtain the system 
$$\left\{ \begin{array}{llll} 
0&=& 3\phi \phi ' + \phi \psi'+\cQ \cQ' + \cR \cP' +3\cP^2  + \cQ^2 +2\cP \cR + 3\cP'^2+ \cQ'^2 +2\cP' \cR & \ \text{(i)} \\ 
0 &=& \cP \cP'a_1 + 3\cP \cQ' + 3\cQ \cP' + \cR' \cQ+ \cR \cQ'+ \cP^2 a_1 + 6\cP \cQ \\
&&+ 2\cQ \cR+\cP'^2 a_1 + 6\cP' \cQ' + 2 \cQ' \cR' & \ \text{(ii)} \\
0 &=& \cP \cQ' a_1 +\cQ \cP'a_1 + \cR \cR' +\cR^2 + 2\cP \cQ a_1 +\cR'^2 + 2\cP' \cQ' a_1 -3 &  \ \text{(iii)}
\end{array}\right.$$
We now work with Eq. (i). By definition, Eq. (i) simplifies to
\begin{align*} 
0&= 3\cP \cP' + \cP \cR'+\cQ \cQ' + \cR \cP' +3\cP^2  + \cQ^2 +2\cP \cR + 3\cP'^2+ \cQ'^2 +2\cP' \cR'\\ &= 3\cP(- \cP u^2 + \cP f^2  -\cQ u ) + \cR (- \cP u^2 + \cP f^2  -\cQ u ) +\cQ (\cP u^2 a_1 -2 fu\cP + f\cQ) \\&\quad\;\; +\cP(4 u^2 \cP -2ufa_1\cP + 2u\cQ +\cR ) +3\cP^2  + \cQ^2 +2\cP \cR + 3(- \cP u^2 + \cP f^2  -\cQ u )^2
\\&\quad\;\; + (\cP u^2 a_1 -2 fu \cP + f \cQ )^2 +2(- \cP u^2 + \cP f^2 -\cQ u ) (4 u^2 \cP -2ufa_1\cP + 2u \cQ+ \cR)
\\&= \cP^2(u^2 + 3f^2 -2ufa_1 -5u^4 +3+ 3f^4 + u^4a_1^2 +6 u^2 f^2 -4uf^3a_1) \\ &\quad\;\; + \cQ^2(f+ 1-u^2+ f^2) -3u \cQ \cR +3 \cP \cR(1 + f^2- u^2)\\&\quad\;\; +\cP \cQ u(-1  +u a_1 -2 f -6 u^2 -6f^2+6ufa_1).
\end{align*}
Using that $f^2 +f +1 -u^2 =0$ and $ua_1=2f+1$, the previous equation becomes
\begin{align*} 
0&= \cP^2(1+f+f^2 + 3f^2 -2f(2f+1) -5(1+f+f^2)^2  -4f^3(2f+1) \\&\quad\;\;+ (1+f+f^2)(2f+1)^2 +6 (1+f+f^2) f^2 +3+ 3f^4) -3u \cQ \cR -3f \cP \cR \\ &\quad\;\;   +\cP \cQ u(-1  +2f+1 -2 f -6 (f^2+f+1) -6f^2+6f(2f+1))\\&= -6f \cP^2  -3\cQ \cR u -3 f\cP \cR +\cP \cQ u( -6 f^2-6f-6 -6f^2+12f^2 +6f)\\&= -6f \cP^2  -3\cQ \cR u -3 f\cP \cR -6\cP \cQ u.
\end{align*}
Finally, as $\cR =- 2\cP$, the equation $(i)$ $$-6f \cP^2  -3\cQ \cR u -3 f\cP \cR -6\cP \cQ u= -6f \cP^2  +6\cP \cQ u +6 f\cP^2 -6\cP \cQ u =0$$ is always satisfied.\\ 
For Eq. (ii), we find
\begin{align*}
    0 &= \cP \cP'a_1 + 3\cP \cQ' + 3\cQ \cP' + \cR' \cQ+ \cR \cQ'+ \cP^2 a_1 + 6\cP \cQ + 2\cQ \cR+\cP'^2 a_1 + 6\cP'\cQ' + 2 \cQ'\cR'  \\& = \cP(- \cP u^2 + \cP f^2  -\cQ u )a_1 + 3\cP (\cP u^2 a_1 -2 fu\cP + f\cQ) + 3\cQ(- \cP u^2 + \cP f^2  -\cQ u ) 
    \\&\quad\;\; + (4 u^2 \cP -2ufa_1\cP + 2u\cQ+\cR) \cQ+ \cR (\cP u^2 a_1 -2 fu\cP + f\cQ )+ \cP^2 a_1 + 6\cP \cQ \\&\quad\;\;+ 2\cQ \cR +(- \cP u^2 + \cP f^2  -\cQ u )^2a_1 + 6(- \cP u^2 + \cP f^2  -\cQ u )(\cP u^2 a_1 -2 fu\cP + f\cQ ) \\&\quad\;\; + 2 (\cP u^2 a_1 -2 fu\cP + f\cQ )(4 u^2 \cP -2ufa_1\cP + 2u\cQ +\cR )\\& =\cP^2(2 u^2a_1 + f^2a_1 -6 fu+  a_1+ 3u^4a_1 -4u^3 fa_1^2-4 fu^3+ 12f^2u^2a_1  + f^4a_1 -12uf^3  )\\&\quad\;\; +\cQ^2(-u+u^2a_1-2fu) + rq(3+ 3f)+ rp( -6 fu +3u^2 a_1 ) \\&\quad\;\;+\cP \cQ (-ua_1   + 3f +u^2 + 3f^2 + 6  -2ufa_1 + 6fu^2  -6 f^2ua_1  + 6f^3 )\\& =\cP^2(2 u^2a_1 + f^2a_1 -6 fu+  a_1+ 3u^4a_1 -4u^3 fa_1^2-4 fu^3+ 12f^2u^2a_1  + f^4a_1 -12uf^3  )\\&\quad\;\; +\cQ^2 (-u(2f+1)+u^2a_1) + 3\cQ \cR (f+1)+ \cP \cR (3u^2 a_1 -6 fu ) \\&\quad\;\;+\cP \cQ (-ua_1   + 3f +u^2 + 3f^2 + 6  -2ufa_1 + 6fu^2  -6 f^2ua_1  + 6f^3 ) \end{align*}
As $f^2 +f+1 -u^2=0$ and $ua_1=2f+1$, this simplifies to \begin{align*}0 & =\cP^2[2 u(2f+1) + f^2a_1 -6 fu+  a_1+ 3u^4a_1 -4u^3 fa_1^2-4 fu^3+ 12f^2u^2a_1  + f^4a_1 -12uf^3 ]\\& \quad\;\;+ 3\cQ \cR (f+1)+ \cP \cR [3u(ua_1-2f)] +\cP \cQ [-(2f+1)   + 3f +f^2+f +1 + 3f^2 + 6 \\& \quad\;\; -2f(2f+1) + 6f(f^2+f+1) -6 f^2(2f+1)  + 6f^3]\\& =\cP^2[2 u^2a_1 + f^2a_1 -6 fu+  a_1+ 3u^4a_1 -4u^3 fa_1^2-4 fu^3+ 12f^2u^2a_1  + f^4a_1 -12uf^3 ]\\& \quad\;\; + 3\cR \cQ(f+1)+ 3u\cP \cR +\cP \cQ( 6    +6f   ).\\& =\cP^2(2 u^2a_1 + f^2a_1 -6 fu+  a_1+ 3u^4a_1 -4u^3 fa_1^2-4 fu^3+ 12f^2u^2a_1  + f^4a_1 -12uf^3 -6u)\end{align*} where the last equality holds as $\cR=-2\cP$. By $f^2 =-f -1 +u^2$ and $ua_1 = 2f+1$, we therefore obtain
\begin{align*} 0 &= 2 u^2a_1 + f^2a_1 -6 fu+  a_1+ 3u^4a_1 -4u^3 fa_1^2-4 fu^3+ 12f^2u^2a_1  + f^4a_1 -12uf^3 -6u \\ &= 2 u^2a_1 + (-f-1+u^2)a_1 -6 fu+  a_1+ 3u^4a_1 -4u^3 fa_1^2-4 fu^3+ 12(-f-1 +u^2) u^2a_1 \\&\quad\;\; + (-f-1 +u^2)^2a_1 -12uf(-f-1 +u^2) -6u\\ &=  fa_1 -11u^2 a_1 +6 fu+ 16u^4a_1 -4u^3 fa_1^2-16 fu^3 -14u^2 a_1f   + f^2 a_1+a_1    +12uf^2 -6u\\ &=  fa_1 -11u^2 a_1 +6 fu+ 16u^4a_1 -4u^3 fa_1^2-16 fu^3 -14u^2 a_1f   + (-f-1+u^2)a_1+a_1 \\&\quad\;\; +12u(-f-1+u^2) -6u\\ &=   -10u^2 a_1 -6 fu+ 16u^4a_1 -4u^3 fa_1^2-16 fu^3 -14u^2 a_1f  -18u+12u^3 \\&=   -10u (2f+1)-6 fu+ 16u^3(2f+1) -4u f(2f+1)^2-16 fu^3 -14uf(2f+1)  -18u+12u^3 \\&=  -44fu -28u +16u^3 -44f^2u -16f^3 u  +12u^3 \\&=  -44fu -28u +16u(f^2+f+1)f-16f^3 u  -44f^2u   +28u^3\\ &= -28 fu -28u -28 f^2 u + 28 u^3 \\ &= -28 u(f+ f^2 +1 -u^2), \end{align*}
which is always satisfied. \\

For Eq. (iii), we have
\begin{align*} 0&= \cP \cQ' a_1 +\cQ \cP'a_1 + \cR \cR' +\cR^2 + 2\cP \cQ a_1 +\cR'^2 + 2\cP'\cQ' a_1 -3\\ &= \cP a_1( \cP u^2 a_1 -2 fu\cP+ f\cQ ) +\cQ a_1(- \cP u^2 + \cP f^2  -\cQ u) + \cR (4 u^2 \cP -2ufa_1\cP + 2u\cQ +\cR ) \\&\quad\;\;+\cR^2 + 2\cP \cQ a_1 +(4 u^2 \cP -2ufa_1\cP + 2u\cQ +\cR )^2 \\&\quad\;\;+ 2(- \cP u^2 + \cP f^2  -\cQ u)( \cP u^2 a_1 -2 fu\cP + f\cQ ) a_1 -3\\ &= \cP^2 (-2 a_1fu - 4a_1f^3u + a_1^2u^2 + 6 a_1^2f^2u^2 - 12a_1f u^3 + 16u^4 - 2a_1^2u^4)\\& \quad\;\;+ \cP \cR (-6a_1fu+12u^2)+\cP \cQ(2a_1+a_1f+a_1f^2 +2a_1f^3-a_1u^2-6a_1fu^2+16u^3-2 a_1^2u^3)\\&\quad\;\;+\cQ^2(-a_1u-2a_1fu+4u^2)+6\cQ \cR u+3\cR^2-3. \end{align*} 
As $\cR=-2\cP$, this becomes %{\color{blue} Note: There are still P's and Q's here}
\begin{align*}0&= \cP^2 (-2 afu - 4a_1f^3u + a_1^2u^2 + 6 a_1^2f^2u^2 - 12a_1f u^3 + 16u^4 - 2a_1^2u^4)\\&\quad\;\; +\cP \cQ(2a_1+a_1f+a_1f^2 +2a_1f^3-a_1u^2-6a_1fu^2+16u^3-2 a_1^2u^3)\\&\quad\;\;-2 \cP^2(-6a_1fu+12u^2)+\cQ^2(-a_1u-2a_1fu+4u^2)-12\cQ \cP u+12\cP^2-3  \\&= \cP^2 (-2 a_1fu - 4a_1 f^3u + a_1^2u^2 + 6 a_1^2f^2u^2 - 12a_1f u^3 + 16u^4 - 2a_1^2u^4+12a_1fu-24u^2 +12)\\&\quad\;\; +\cP \cQ (2a_1+a_1f+a_1f^2 +2a_1f^3-a_1u^2-6a_1fu^2+16u^3-2 a_1^2u^3-12u)\\& \quad\;\;+\cQ^2(-a_1u-2a_1fu+4u^2)-3. \\&= \cP^2 (- 4a_1f^3u + a_1^2u^2 + 6 a_1^2f^2u^2 - 12a_1f u^3 + 16u^4 - 2a_1^2u^4+10a_1fu-24u^2 +12)\\&\quad\;\; +\cP \cQ(2a_1-a_1u^2-5a_1fu^2+16u^3-2 a_1^2u^3-12u)+u^2(4-a_1^2) \cQ^2-3. \end{align*}
By Eq. \eqref{star}, this is equal to \begin{align*} &\quad\;\;\cP^2 (- 4a_1f^3u + a_1^2u^2 + 6 a_1^2f^2u^2 - 12a_1f u^3 + 16u^4 - 2a_1^2u^4+10a_1fu-24u^2 +12)\\&\quad\;\; +\cP \cQ (2a_1-a_1u^2-5a_1fu^2+a_1f^3+16u^3-2 a_1^2u^3-12u)+3\cQ^2-3=0 \end{align*} As $f^2 +f +1-u^2=0$ and $ua_1 = 2f+1$, the coefficient of $\cP \cQ$ in Eq. (iii) becomes
\begin{align*} 0 &= 2a_1-a_1u^2-5a_1fu^2+a_1f^3+16u^3-2 a_1^2u^3-12u\\ &= 2a_1 -a_1u^2-5a_1fu^2+a_1f(-f-1+ u^2)+16u^3-2 a_1^2u^3-12u \\ &= 2a_1 -a_1u^2-5a_1fu^2-f^2a_1-fa_1+ fu^2a_1+16u^3-2 a_1^2u^3-12u \\ &= 2a_1 -a_1u^2-4a_1fu^2-a_1(-f-1+u^2) -fa_1+16u^3-2 a_1^2u^3-12u \\ &= 3a_1 -2a_1u^2-4a_1fu^2+16u^3-2 a_1^2u^3-12u \\ &= 3a_1 -2a_1u^2-4a_1fu^2+16u^3-2 a_1u^2(2f+1)-12u \\ &= 3a_1 -4a_1u^2-8a_1fu^2+16u^3-12u \\ &= 3a_1 -4u(2f+1)-8(2f+1)fu+16u(f^2+f+1)-12u \\ &= 3a_1. \end{align*}
Also as $f^2 +f +1-u^2=0$ and $ua_1 = 2f+1$, the coefficient of $\cP^2$ in Eq. (vi) is equal to 
\begin{align*} &- 4a_1f^3u + a_1^2u^2 + 6 a_1^2f^2u^2 - 12a_1f u^3 + 16u^4 - 2a_1^2u^4+10a_1fu-24u^2 +12\\ &= - 4f^3(2f+1) + (2f+1)^2 + 6 f^2(2f+1)^2 \\&\quad\;\;- 12f (2f+1)(f^2+f+1) + 16(f^2+f+1)- 2(2f+1)(f^2+f+1)\\&\quad\;\; +10f(2f+1)-24(f^2+f+1) +12\\ &= 3.\end{align*}
Therefore, Eq. (ii) becomes $3\cQ^2+3a_1\cP \cQ+3\cP^2- 3 = 0$, or equivalently, 
$$\cQ^2+a_1\cP \cQ+\cP^2=1 \ \ \ (\star \star )$$

For the norm term in the equation $z_2^3-3z_2-a_2=0$ satisfied by $z_2$, we have $$-a_2=-z_2 \sigma (z_2 ) \sigma^2 (z_2) = z_2\sigma(z_2)(z_2+\sigma(z_2)).$$ Hence
\begin{align*} -a_2 &= z_2\sigma(z_2)(z_2+\sigma(z_2))\\&= (\cP z_1^2 +\cQ z_1 +\cR ) (\cP'z_1^2 +\cQ' z_1 + \cR') ((\cP +\cP')z_1^2+(\cQ +\cQ')z_1+(\cR +\cR'))\\ &= \cP'(\cP +\cP')\cP z_1^6+(\cQ'\cP^2+2\cP'\cP \cQ +2\cP'\cQ'\cP +\cP'^2\cQ' )z_1^5\\&\quad\;\;+(\cR'\cP^2+2\cQ'\cP\cQ +2\cP'\cP\cR +\cP'\cQ^2+2\cP'\cR'\cP +\cQ'^2\cP +2\cP'\cQ'\cQ +\cP'^2\cR )z_1^4\\&\quad\;\;+(2\cR'\cP\cQ +2\cQ'\cP \cR +\cQ'\cQ^2+2\cP'\cQ\cR +2\cR'\cQ'\cP +\cQ'^2\cQ+2\cR'\cP'\cQ +2\cQ'\cP'\cR )z_1^3\\&\quad\;\;+(2\cR' \cP\cR+\cR'\cQ^2+2\cQ'\cQ \cR +\cP'\cR^2+\cR'^2\cP+2\cQ'\cR'\cQ +2\cP'\cR'\cR +\cQ'^2\cR )z_1^2\\&\quad\;\;+(2\cR' \cQ \cR +\cQ'\cR^2 +\cR'^2\cQ +2\cR'\cQ'\cR )z_1+\cR' (\cR +\cR')\cR.
\end{align*}
By construction, $z_1^3 = 3 z_1 +a_1$, so that the previous equation simplifies to
\begin{align*}
0&= (9\cP'\cP^2+a_1\cQ'\cP^2+3\cR'\cP^2+6\cQ'\cP\cQ +2a_1\cP'\cP\cQ +6\cP'\cP\cR +2\cR'\cP\cR +3\cP'\cQ^2\\&\quad\;\;+\cR'\cQ^2+2\cQ'\cQ\cR +\cP'\cR^2+9\cP'^2\cP +2a_1\cP' \cQ'\cP+3\cQ'^2\cP+6\cP'\cR'\cP +\cR'^2\cP \\&\quad\;\;+a_1\cP'^2\cQ+6\cP'\cQ'\cQ +2\cQ'\cR'\cQ +\cQ'^2\cR +3\cP'^2\cR+2\cP'\cR'\cR )z_1^2\\&\quad\;\;+(6\cP'a_1\cP^2+a_1\cR'\cP^2+9\cQ'\cP^2+6\cR'\cP\cQ +18\cP'\cP\cQ +2a_1\cQ'\cP\cQ+6\cQ'\cP\cR\\&\quad\;\;+a_1\cP'\cQ^2+3\cQ'\cQ^2+2\cR'\cQ\cR +6\cP'\cQ\cR +\cQ'\cR^2+18\cP'\cQ'\cP +6\cR'\cQ'\cP \\&\quad\;\;+6\cP'^2a_1\cP+2a_1\cP'\cR'\cP+2a_1\cP'\cP\cR+a_1\cQ'^2\cP +6\cR'\cP'\cQ +2a_1\cP'\cQ'\cQ \\
&\quad\;\;+3\cQ'^2\cQ +9\cP'^2\cQ +\cR'^2\cQ +2\cR'\cQ'\cR +6\cQ'\cP'\cR+a_1\cP'^2\cR )z_1\\&\quad\;\;+3a_1\cQ'\cP^2+6a_1\cP'\cP\cQ +\cP'a_1^2\cP^2+a_1\cQ'\cQ^2+2a_1\cP'\cQ\cR +\cR'\cR^2+6a\cP'\cQ'\cP \\&\quad\;\;+2a\cR'\cQ'\cP +\cP'^2a^2\cP+3a_1\cP'^2\cQ+2a_1\cR'\cP'\cQ +a_1\cQ'^2\cQ+\cR'^2\cR+2a_1\cQ'\cP\c'R\\
&\quad\;\;+2a_1\cR'\cP\cQ +2a_1\cQ'\cP\cR. \end{align*}
As $\{z_1^2,z_1,1\}$ forms a basis of $L/K$, we obtain Eq. (iv),
\begin{align*}0&=9\cP'\cP^2+a_1\cQ'\cP^2+3\cR'\cP^2+6\cQ'\cP\cQ +2a_1\cP'\cP \cQ +6\cP'\cP\cR+2\cR'\cP\cR +3\cP'\cQ^2\\
&\quad\;\;+2\cQ'\cQ\cR +\cP'\cR^2+9\cP'^2\cP +2a_1\cP'\cQ'\cP+3\cQ'^2\cP +6\cP'\cR'\cP +\cR'^2\cP +a_1\cP'^2\cQ \\
&\quad\;\;+\cR'\cQ^2+6\cP'\cQ'\cQ +2\cQ'\cR'\cQ+\cQ'^2\cR +3\cP'^2\cR+2\cP'\cR'\cR,\end{align*}
Eq. (v), 
\begin{align*}0&=6\cP'a_1\cP^2+a_1\cR'\cP^2 +9\cQ'\cP^2 +6\cR'\cP\cQ+18\cP'\cP \cQ +2a_1\cQ'\cP\cQ \\
&\quad\;\;+a_1\cP'\cQ^2+3\cQ'\cQ^2+2\cR'\cQ\cR+6\cP'\cQ \cR +\cQ'\cR^2+18\cP'\cQ'\cP+6\cR' \cQ'\cP\\
&\quad\;\;+a_1\cQ'^2\cP+6\cR'\cP'\cQ+2a_1\cP'\cQ'\cQ +3\cQ'^2\cQ +9\cP'^2\cQ +\cR'^2\cQ +2\cR'\cQ' \cR\\
&\quad\;\;+a_1\cP'^2\cR+6\cP'^2a_1\cP+2a_1\cP'\cP\cR +2a_1\cP'\cR'\cP+6\cQ'\cP'\cR+6\cQ'\cP\cR,\end{align*} 
and Eq. (vi), 
\begin{align*}-a_2&=3a_1\cQ'\cP^2+6a_1\cP'\cP\cQ +\cP'a_1^2\cP^2+a_1\cQ'\cQ^2+2a_1\cP'\cQ\cR +\cR'\cR^2\\
&\quad\;\;+\cP'^2a^2\cP +3a_1\cP'^2\cQ+2a_1\cR'\cP'\cQ +a_1\cQ'^2\cQ +\cR'^2\cR +2a_1\cQ'\cP'\cR\\
&\quad\;\;+6a\cP'\cQ'\cP+2a_1\cR'\cP\cQ +2a_1\cQ'\cP\cR +2a\cR'\cQ'\cP. 
\end{align*} 
By the definitions of $\cP'$, $\cQ'$, and $\cR'$ $(\square )$, Eq. (iv) becomes 
\begin{align*}0&=9\cP'\cP^2+a_1\cQ'\cP^2+3\cR'\cP^2+6\cQ'\cP\cQ +2a_1\cP'\cP\cQ+6\cP'\cP\cR +2\cR'\cP\cR +3\cP'\cQ^2\\
&\quad\;\;+\cR'\cQ^2+2\cQ'\cQ\cR +\cP'\cR^2+9\cP'^2\cP +2a_1\cP'\cQ'\cP +3\cQ'^2\cP +6\cP'\cR'\cP +\cR'^2\cP\\
&\quad\;\;+a_1\cP'^2\cQ+6\cP'\cQ'\cQ+2\cQ'\cR'\cQ +\cQ'^2\cR +3\cP'^2\cR +2\cP'\cR'\cR\\
&=a_1(\cP u^2 a_1 -2 fu\cP + f\cQ )\cP^2+3(4 u^2 \cP -2ufa_1\cP + 2u\cQ +\cR )\cP^2\\
&\quad\;\;+6(\cP u^2 a_1 -2 fu\cP + f\cQ )\cP \cQ +2a_1( - \cP u^2 + \cP f^2  -\cQ u )\cP \cQ\\
&\quad\;\;+9( - \cP u^2 + \cP f^2  -\cQ u )\cP^2+6( - \cP u^2 + \cP f^2  -\cQ u )\cP \cR +( - \cP u^2 + \cP f^2  -\cQ u )\cR^2\\
&\quad\;\;+2( 4 u^2 \cP -2ufa_1\cP + 2u\cQ+\cR )\cP\cR +3( - \cP u^2 + \cP f^2  -\cQ u )\cQ^2\\
&\quad\;\;+( 4 u^2 \cP -2ufa_1\cP + 2u\cQ +\cR )\cQ^2+2(\cP u^2 a_1 -2 fu\cP + f\cQ )\cQ \cR \\
&\quad\;\;+9( - \cP u^2 + \cP f^2  -\cQ u )^2\cP +2a_1( - \cP u^2 + \cP f^2  -\cQ u )(\cP u^2 a_1 -2 fu\cP + f\cQ )\cP\\
&\quad\;\;+3(\cP u^2 a_1 -2 fu\cP + f\cQ )^2\cP +6( - \cP u^2 + \cP f^2  -\cQ u )( 4 u^2 \cP -2ufa_1\cP + 2u\cQ +\cR )\cP \\
&\quad\;\;+( 4 u^2 \cP -2ufa_1\cP + 2u\cQ +\cR )^2\cP +a_1( - \cP u^2 + \cP f^2  -\cQ u )^2\cQ \\
&\quad\;\;+6( - \cP u^2 + \cP f^2  -\cQ u )(\cP u^2 a_1 -2 fu\cP + f\cQ )\cQ+3( - \cP u^2 + \cP f^2  -\cQ u )^2\cR \\
&\quad\;\;+2(\cP u^2 a_1 -2 fu\cP + f\cQ )( 4 u^2 \cP -2ufa_1\cP + 2u\cQ +\cR )\cQ \\
&\quad\;\;+2( - \cP u^2 + \cP f^2  -\cQ u )( 4 u^2 \cP -2ufa_1\cP + 2u\cQ +\cR )\cR +(\cP u^2 a_1 -2 fu\cP + f\cQ )^2\cR \\
&= (a_1^2u^4-12a_1u^3f+18f^2u^2+3u^2-8a_1fu+u^4-16a_1f^3u+6a_1^2f^2u^2+9f^4+a_1^2u^2+9f^2)\cP^3\\
&\quad\;\;+(-3u-4a_1^2u^3f-12fu+12a_1u^2+12a_1f^2u^2-18uf^2-4u^3f-12f^3u-2u^3\\
&\quad\;\;-2a_1^2u^3+4a_1u^2+2a_1f^2+2a_1f^3+3a_1u^4+a_1f^4+a_1f)\cP^2\cQ \\
&\quad\;\;+(4u^2-8a_1fu+12f^2-5u^4+6f^2u^2-4a_1f^3u+3f^4+a_1^2u^4+3)\cP^2\cR\\
&\quad\;\;+(6f^2+2u^2+6fu^2-6a_1uf^2-4a_1fu+6f^3+6f-2a_1u)\cP\cQ^2\\
&\quad\;\;+2u(-3u^2+2a_1u+3a_1fu-2-4f-3f^2)\cP \cQ \cR\\
&\quad\;\;+(-3u^2+3f^2+3)\cP \cR^2+u(a_1u-2f-1)\cQ^3+(-u^2+f^2+4f+1)\cQ^2\cR-3u\cQ \cR^2.
\end{align*} 

As $\cR=-2\cP$, this further simplifies to
\begin{align*} 
0&=(-17u^2-3f^2+6+8a_1fu+11u^4+6f^2u^2-8a_1f^3u+3f^4-a_1^2u^4\\
&\quad\;\;-12a_1u^3f+6a_1^2f^2u^2+a_1^2u^2)\cP^3\\
&\quad\;\;+(10u^3-4a_1u^2-7u+4fu-6uf^2-4a_1^2u^3f+12a_1f^2u^2-4u^3f-12f^3u\\
&\quad\;\;-2a_1^2u^3+2a_1f^2+2a_1f^3+3a_1u^4+a_1f^4+a_1f)\cP^2\cQ\\
&\quad\;\;+(4f^2+4u^2+6fu^2-6a_1uf^2-4a_1fu+6f^3-2f-2a_1u-2)\cP \cQ^2\\
&\quad\;\;+u(a_1u-2f-1)\cQ^3.
\end{align*}
As $f^2 +f+1 =u^2$ and $a_1u=2f+1$, the coefficient of $\cP^3$ in the previous expression is equal to
\begin{align*}
 &\quad\; -17u^2-3f^2+6+8a_1fu+11u^4+6f^2u^2-8a_1f^3u+3f^4-a_1^2u^4\\
&\quad\;\;-12a_1u^3f+6a_1^2f^2u^2+a_1^2u^2\\ 
&= -17(f^2 +f+1)-3f^2+6+8f(2f+1)+11(f^2+f+1)^2\\
&\quad\;\;+6f^2(f^2+f+1)-8f^3(2f+1)+3f^4-(2f+1)^2(f^2+f+1)\\
&\quad\;\;-12f(2f+1)(f^2+f+1)+6f^2(2f+1)^2+(2f+1)^2\\ 
&=0.
\end{align*}
As $f^2 =-f-1+u^2$, the coefficient of $\cP^2\cQ$ is equal to
\begin{align*} &\quad\; 10u^3-4a_1u^2-7u+4fu-6uf^2-4a_1^2u^3f+12a_1f^2u^2-4u^3f-12f^3u\\
&\quad\;\;-2a_1^2u^3+2a_1f^2+2a_1f^3+3a_1u^4+a_1f^4+a_1f\\
&= 10u^3-4a_1u^2-7u+4fu-6u(-f-1+u^2)+12a_1(-f-1+u^2)u^2\\
&\quad\;\;-(12(-f-1+u^2))fu-2a_1^2u^3+2a_1(-f-1+u^2)+2a_1f(-f-1+u^2)\\
&\quad\;\;+3au^4+a_1(-f-1+u^2)^2+a_1f-4a_1^2u^3f -4u^3f \\ 
&=(-a_1+12u)f^2+(-4a_1^2u^3+22u-16u^3-a_1-12a_1u^2)f+4u^3\\
&\quad\;\;-16a_1u^2-u+16a_1u^4-a_1-2a_1^2u^3\\ 
&=(-a_1+12u)(-f-1+u^2)+(-4a_1^2u^3+22u-16u^3-a_1-12a_1u^2)f\\
&\quad\;\;+4u^3-16a_1u^2-u+16a_1u^4-a_1-2a_1^2u^3\\
&= -2u(-5+2a_1^2u^2+8u^2+6a_1u)f-u(17a_1u+13-16u^2-16u^3a_1+2a_1^2u^2)\\
&=-2u(-5+2(2f+1)^2+8(f^2+f+1)+6(2f+1))f-u(17(2f+1)\\
&\quad\;\;+13-16(f^2+f+1)-(16(2f+1))(f^2+f+1)+2(2f+1)^2)\\
&=0
\end{align*}
For the same reason, the coefficient in $\cQ^2\cP$ is equal to
\begin{align*} &\quad\; 4f^2+4u^2+6fu^2-6a_1uf^2-4a_1fu+6f^3-2f-2a_1u-2\\
&=4f^2+4(f^2+f+1)+6f(f^2+f+1)-6f^2(2f+1)\\
&\quad\;\; -4f(2f+1)+6f^3-2f-2(2f+1)-2\\
&=0
\end{align*}
Finally, the coefficient in $\cQ^3$ is also $0$, whence the Eq. (iv) is always satisfied. \\ 
 
For Eq. (v), substitution of $\cP'$, $\cQ'$ and $\cR'$ via $(\square)$ yields
\begin{align*}0&=6\cP'a_1\cP^2+a_1\cR'\cP^2+9\cQ'\cP^2+6\cR'\cP\cQ +18\cP'\cP \cQ +2a_1\cQ'\cP \cQ +6\cQ'\cP\cR +2a_1\cP'\cP\cR\\
&\quad\;\;+a_1\cP'\cQ^2+3\cQ'\cQ^2+2\cR'\cQ\cR +6\cP'\cQ\cR +\cQ'\cR^2+18\cP'\cQ'\cP +6\cR'\cQ'\cP +6\cP'^2a_1\cP \\
&\quad\;\;+a_1\cQ'^2\cP +6\cR'\cP'\cQ +2a_1\cP'\cQ'\cQ +3\cQ'^2\cQ +9\cP'^2\cQ +\cR'^2\cQ +2\cR'\cQ'\cR +6\cQ'\cP'\cR \\
&\quad\;\;+a_1\cP'^2\cR +2a_1\cP'\cR'\cP \\
&=(4a_1u^4+42a_1f^2u^2+6a_1f^4+7a_1u^2-18fu+a_1^3u^4\\
&\quad\;\;-12a_1^2u^3f+6a_1f^2-4a_1^2f^3u-12u^3f-36f^3u-2a_1^2fu)\cP^3\\
&\quad\;\;+(9f+6a_1^2f^2u^2+18f^3-24a_1uf^2-4a_1u-16a_1fu-12a_1u^3f-16a_1f^3u\\
&\quad\;\;+18fu^2+18f^2u^2+u^4+9f^4+18f^2+6u^2+a_1^2u^4+2a_1^2u^2+6u^2a_1^2f-6u^3a_1)\cP^2\cQ \\
&\quad\;\;+(a_1+3a_1u^4+12a_1f^2u^2+a_1f^4-4u^3f-12f^3u+8a_1u^2+4a_1f^2-24fu-4a_1^2u^3f)\cP^2\cR \\
&\quad\;\;+(4a_1u^2-2u^3+12a_1u^2f-6u-18uf^2+2a_1f+2a_1f^2-12fu-2a_1^2u^3+2a_1f^3)\cP\cQ^2\\
&\quad\;\;+(12f+4u^2-8a_1fu+12f^2+6-6a_1uf^2-4a_1u+6fu^2+6f^3)\cP\cQ\cR \\
&\quad\;\;+3u(a_1u-2f)\cP \cR^2+(u^2-2a_1fu+3f^2-a_1u+3f)\cQ^3+u(a_1u-2f-4)\cQ^2\cR\\
&\quad\;\;+(3+3f)\cQ\cR^2
\end{align*} 
As $\cR =-2\cP$, this becomes 
\begin{align*}
0&=(3a_1u^2+6fu-2a_1-2a_1u^4+18a_1f^2u^2+4a_1f^4-4u^3f-12f^3u\\
&\quad\;\;-2a_1f^2-4a_1^2u^3f+a_1^3u^4-4a_1^2f^3u-2a_1^2fu)\cP^3\\
&\quad\;\;+(-3f+6a_1^2f^2u^2+6f^3-12a_1uf^2+4a_1u-12a_1u^3f-16a_1f^3+6fu^2+18f^2u^2\\
&\quad\;\;+u^4+9f^4-6f^2-2u^2+a_1^2u^4+2a_1^2u^2+6u^2a_1^2f-6u^3a_1)\cP^2\cQ \\
&\quad\;\;+(2a_1u^2-2u^3+12a_1u^2f+2u-18uf^2+2a_1f+2a_1f^2-8fu-2a_1^2u^3+2a_1f^3)\cP\cQ^2\\
&\quad\;\;+(u^2-2a_1fu+3f^2-a_1u+3f)\cQ^3
\end{align*} 
Via $u^2 =f^2 +f+1$ and $ua_1 = 2f+1$, we find that the coefficient in $\cP^3$ is equal to
\begin{align*}
& \ \ \ 3a_1u^2+6fu-2a_1-2a_1u^4+18a_1f^2u^2+4a_1f^4-4u^3f-12f^3u\\
&\quad\;\;-2a_1f^2-4a_1^2u^3f+a_1^3u^4-4a_1^2f^3u-2a_1^2fu\\
&=3a_1u^2+6fu-2a_1-2a_1u^4+18a_1f^2u^2+4a_1f^4-4u^3f-12f^3u\\
&\quad\;\;-2a_1f^2-4a_1u^2f(2f+1)+a_1u^2(2f+1)^2-4a_1f^3(2f+1)-2a_1f(2f+1)\\
&=-2a_1u^4-4fu^3+2a_1(7f^2+2)u^2-6f(-1+2f^2)u-2a_1(f^2+f+1)(2f^2+1)\\
&=-2a_1u^4-4fu^3+2a_1(7f^2+2)u^2-6f(-1+2f^2)u-2a_1u^2(2f^2+1)\\
&=-2u^3(2f+1)-4fu^3+2(7f^2+2)u(2f+1)-6f(-1+2f^2)u\\
&\quad\;\;-2u(2f+1)(2f^2+1)\\
&=(-8f-2)u^3+2(4f+1)(f^2+f+1)u\\
&=(-8f-2)u^3+2(4f+1)u^3\\
&=0.\end{align*} 
Via the same relations, the coefficient in $\cP^2 \cQ$ is equal to
\begin{align*}
&\quad\;-3f+6a_1^2f^2u^2+6f^3-12a_1uf^2+4a_1u-12a_1u^3f-16a_1f^3+6fu^2\\
&\quad\;\;+18f^2u^2+u^4+9f^4-6f^2-2u^2+a_1^2u^4+2a_1^2u^2+6u^2a_1^2f-6u^3a_1\\
&=-3f+6f^2(1+2f)^2+6f^3-12f^2(1+2f)+4(1+2f)-16f^3(1+2f)\\
&\quad\;\;-12f(1+2f)(f^2+f+1)+6f(f^2+f+1)+18f^2(f^2+f+1)\\
&\quad\;\;+(f^2+f+1)^2+9f^4-6f^2-2(f^2+f+1)+(1+2f)^2(f^2+f+1)\\
&\quad\;\;+2(1+2f)^2+6f(1+2f)^2-6(1+2f)(f^2+f+1)\\
&=0.
\end{align*} 
Similarly, we find that the coefficient in $\cP \cQ^2$ is equal to
\begin{align*}&\quad\; 2a_1u^2-2u^3+12a_1u^2f+2u-18uf^2+2a_1f+2a_1f^2-8fu-2a_1^2u^3+2a_1f^3\\
&= 2a_1u^2-2u^3+14a_1u^2f+2u-18uf^2-8fu-2a_1^2u^3\\
&=2 (2f+1)u -2 u(f^2+f+1) + 14 (2f+1) uf+ 2u -18uf^2 -8fu -2u(2f+1)^2\\
&=0 
\end{align*}
and finally, that the coefficient in $\cQ^3$ is equal to
\begin{align*} & \ \ \ u^2-2a_1fu+3f^2-a_1u+3f\\ 
&= (f^2 +f+1) -2f(2f+1) + 3f^2 -(2f+1) +3f\\
&=0. \end{align*}
For Eq. (ix), we substitute $\cP'$, $\cQ'$, and $\cR'$ via $(\square )$ to obtain
\begin{align*}-a_2&=3a_1\cQ'\cP^2+6a_1\cP'\cP\cQ +\cP'a_1^2\cP^2+a_1\cQ'\cQ^2 +2a_1\cP'\cQ\cR +\cR'\cR^2+6a\cP'\cQ'\cP \\
&\quad\;\;+\cP'^2a^2\cP+3a_1\cP'^2\cQ +2a_1\cR'\cP'\cQ +a_1\cQ'^2\cQ +\cR'^2\cR+2a_1\cQ' \cP'\cR\\ 
&\quad\;\; +2a\cR'\cQ'\cP +2a_1\cQ'\cP\cR +2a_1\cR'\cP\cQ \\
&=-a_1(-2a_1u^2+6fu-3a_1u^4+4fu^3-12a_1u^2f^2+12f^3u-a_1f^4+4fa_1^2u^3-a_1f^2)\cP^3\\
&\quad\;\;+a_1(6u^2f-a_1u+6f^3-6a_1uf^2+3f+a_1^2u^4+6f^2u^2-5u^4-4a_1f^3u\\
&\quad\;\;+2u^2+6f^2-4a_1fu+3f^4)\cP^2\cQ\\
&\quad\;\;+2u(8u^3-6a_1fu^2+3ua_1^2f^2-a_1^2u^3-2a_1f^3+2a_1^2u-4fa_1)\cP^2\cR\\
&\quad\;\;+2a_1u(-3u^2+a_1u+3a_1fu-3f^2-1-2f)\cP\cQ^2\\
&\quad\;\;+(4fa_1+2a_1-4a_1u^2+4a_1f^2+16u^3-6a_1fu^2-2a_1^2u^3+2a_1f^3)\cP\cQ\cR\\
&\quad\;\;-6u(-2u+fa_1)\cP\cR^2+a_1(f^2-u^2+f)\cQ^3-2u(-2u+2a_1+fa_1)\cQ^2\cR\\ 
& \quad\;\;+6u\cQ\cR^2+2\cR^3
\end{align*}
As $\cR=-2\cP$, we obtain
\begin{align*}-a_2&=(-5fa_1-4a_1+10a_1u^2-2a_1f^2-32u^3+18a_1fu^2+4a_1^2u^3+2a_1f^3-a_1^2u\\
&\quad\;\;-6ua_1^2f^2+a_1^3u^4+6a_1u^2f^2-5a_1u^4-4a_1^2f^3u-4a_1^2fu+3a_1f^4+24u)\cP^2\cQ \\
&\quad\;\;+2u(a_1^2u+3a_1-3a_1u^2-3a_1f^2+3a_1^2fu-4u)PQ^2+a_1(f^2-u^2+f)\cQ^3\\
&\quad\;\;+(48u^2-14a_1fu-32u^4+20a_1u^3f+7a_1^2u^4-4a_1f^3u-6a_1^2u^2+f^4a_1^2\\
&\quad\;\;-4fa_1^3u^3+a_1^2f^2-16)\cP^3\\
\end{align*}
Via $f^2 +f+1=u^2$, $a_1u = 2f+1$, and $ \left(a^2-4\right) f^2 +\left(a^2-4\right)f + \left(a^2 - 1\right)=0$, the coefficient of (ix) in $\cP^3$ is equal to
\begin{align*}&\quad\;\;48u^2-14a_1fu-32u^4+20a_1u^3f+7a_1^2u^4-4a_1f^3u-6a_1^2u^2+f^4a_1^2-4fa_1^3u^3\\
&\quad\;\;+a_1^2f^2-16\\
&= 48(f^2+f+1)-14f(2f+1)-32(f^2+f+1)^2+20f(2f+1)(f^2+f+1)+a_1^2f^2\\
&\quad\;\;+7(2f+1)^2(f^2+f+1)-4f^3(2f+1)-6(2f+1)^2+a_1^2f^4-4f(2f+1)^3-16\\
&=(a_1^2-4)f^4+(a_1^2-1)f^2-3f+1=-f^3(a_1^2-4)-3f+1\\
&=-f^3a_1^2+(f+1)(-1+2f)^2=-f^3a_1^2+(f+1)(a_1u-2)^2\\ 
&=(-f^3+u^2f+u^2)a_1^2-4(f+1)ua_1+4f+4 \\
&= (-f^3+u^2f+u^2)a_1^2-4(a_1u-f)ua_1+4f+4\\
&=(-3u^2-f^3+u^2f)a_1^2+4fua_1+4f+4\\
&=(-3u^2+f(f+1))a_1^2+4f(2f+1)+4f+4\\
&=(-3u^2+f^2+f)a_1^2+8f^2+8f+4=(-3u^2-1+u^2)a_1^2+8(-1+u^2)+4\\
&=-2u^2a_1^2-a_1^2-4+8u^2=-2(2f+1)^2-a_1^2-4+8(f^2+f+1)\\
&=-a_1^2+2,
\end{align*} 
and the coefficient in $\cP \cQ^2$ is equal to
\begin{align*}&\quad\;\;2u(a_1^2u+3a_1-3a_1u^2-3a_1f^2+3a_1^2fu-4u)\\
&= a_1u(2f+1)+3a_1u-3a_1u(f^2+f+1)-3a_1uf^2+3a_1uf(2f+1)-4u^2\\
&= -4u^2+a_1u(2f+1)\\
&=-8(f^2+f+1)+2(2f+1)^2\\
&=-6.
\end{align*} 
The coefficient in $\cP^2\cQ$ is equal to 
\begin{align*}&\quad\;\;-5fa_1-4a_1+10a_1u^2-2a_1f^2-32u^3+18a_1fu^2+4a_1^2u^3+2a_1f^3-a_1^2u\\
&\quad\;\;-6ua_1^2f^2+a_1^3u^4+6a_1u^2f^2-5a_1u^4-4a_1^2f^3u-4a_1^2fu+3a_1f^4+24u\\
&=3a_1f^4-2(2ua_1-1)a_1f^3-2(3ua_1-3u^2+1)a_1f^2-(4ua_1-18u^2+5)a_1f+4a_1^2u^3\\
&\quad\;\; +a_1^3u^4-ua_1^2+10a_1u^2-4a_1-5a_1u^4-32u^3+24u\\
&=3a_1(-f-1+u^2)^2-2(2ua_1-1)af(-f-1+u^2)-2(3ua_1-3u^2+1)a_1(-f-1+u^2)\\
&\quad\;\;-(4ua_1-18u^2+5)a_1f+4a_1^2u^3+a_1^3u^4-ua_1^2+10a_1u^2-4a_1-5a_1u^4-32u^3+24u\\
&=a_1(4ua_1+1)f^2-a_1(4u^3a_1-6ua_1-8u^2-1)f+4a_1u^4-4a_1u^2+a_1-32u^3+24u\\
&\quad\;\;-2a_1^2u^3+5ua_1^2+a_1^3u^4\\
&=a_1(4ua_1+1)(-f-1+u^2)-a_1(4u^3a_1-6ua_1-8u^2-1)f+4a_1u^4-4a_1u^2+a_1\\
&\quad\;\;-32u^3+24u-2a_1^2u^3+5ua_1^2+a_1^3u^4\\
&=a_1^3u^4+(-4f+2)a^2u^3+4au^4+(2f+1)a^2u+(8f-3)au^2-32u^3+24u\\ 
&=a_1^3u^4+(-2ua_1+4)a_1^2u^3+4a_1u^4+a_1ua_1^2u+(4ua_1-7)a_1u^2-32u^3+24u\\
&=-a_1(a_1^2-4)u^4+8(a_1^2-4)u^3+a_1(-7+a_1^2)u^2+24u,
\end{align*}
which by \eqref{star} yields
\begin{align*}
& \ \ \ 3a_1u^2-24u+a_1(-7+a_1^2)u^2+24u \\ 
&= a_1(a_1^2-4)u^2\\
&=-3a_1,
\end{align*} 
and finally, the coefficient in $\cQ^3$ is equal to $a_1(f^2-u^2+f)= a_1$. 

As a conclusion, $\cP$, $\cQ$, and $\cR$ must only satisfy the following equations:
$$\left\{ \begin{array}{cllr}  \cR &=& -2\cP & (1)
\\ 1&=&\cQ^2+a_1\cP \cQ + \cP^2 & (2)
\\a_2&=&(-2 + a_1^2) \cP^3 + 3 a_1 \cP^2 \cQ+ 6 \cP \cQ^2 + a_1 \cQ^3 & (3)
\end{array}\right.$$
When $K= \mathbb{F}_q(x)$, we have the following:
\begin{enumerate}[(a)] \item If $p \neq 2$, then we write $\cP=\frac{U}{V}$ with and $U,V \in \mathbb{F}_q[x]$ relatively prime. The quadratic equation (2) in $\cQ$, $$\cQ^2+a_1\cP \cQ +\cP^2 -1=0,$$ has a solution in $\mathbb{F}_q(x)$ if, and only if, the discriminant $\Gamma$ of the polynomial $h(X)=X^2+a_1\cP X + \cP^2-1$ is a square in $\mathbb{F}_q(x)$. By definition, $$\Gamma =a_1^2\cP^2 -4 (\cP^2 -1)= (a_1^2-4)\cP^2 +4.$$ 
We write $a_1= P_1/ Q_1$ where $P_1$ and $Q_1\in \mathbb{F}_q[x]$ and $(P_1, Q_1)=1$. We also write $U = gcd(U, Q_1) U'$ and $Q_1V= gcd(Q_1 ,U)V'$, with $U'$ and $V' \in \mathbb{F}_q[x]$.

Also, $\Gamma$ is a square if, and only if, there exists $W$ in $\mathbb{F}_q[x]$ such that $$(P_1^2 -4Q_1^2) U'^2 +4 V'^2 = W^2.$$ Multiplication by $-27$ yields $$Q_1^2d_{L/\mathbb{F}_q(x)} U'^2 = -27 ( W^2 -4V'^2)= -27 (W -2V')(W+2V'),$$ where $d_{L/\mathbb{F}_q(x)}=-27 (a_1^2-4)$ is the discriminant of the minimal polynomial of $z_1$ $T_1(X)= X^3-3X -a_1$. As $L/\mathbb{F}_q(x)$ is Galois, it follows by Lemma \ref{disc} that $d_{L/\mathbb{F}_q(x)} \in \mathbb{F}_q(x)^2$. Let $\delta \in \mathbb{F}_q[x]$ be such that $\delta^2 = Q_1^2 d_{L/\mathbb{F}_q(x)}$. Thus $(\delta U')^2 = -27 (W -2V')(W+2V')$. As $U'$ and $V'$ are relatively prime, it follows that $U'$, $V'$, and $W$ are pairwise coprime. Thus $W-2V'$ and $W+2V'$ too are coprime. Therefore, by unique factorisation in $\mathbb{F}_q[x]$, it follows that, up to a unit, $W-2V'$ and $ W+2V'$ are squares in $\mathbb{F}_q[x]$. Hence, there exist $C, D \in \mathbb{F}_q[x]$ relatively prime such that $W-2V' =\xi  C^2 $ and $W+2V' = \zeta D^2$, where $\xi,\zeta \in \mathbb{F}_q^*$ are such that $\xi \zeta$ is equal to $-27^{-1}$ up to a square in $\mathbb{F}_q^*$. Thus 
$$2W = \xi C^2 + \zeta D^2 \quad\text{and}\quad  \quad 4V' = \zeta D^2 - \xi C^2,$$ 
so that
$$2W =\zeta^{-1}(\zeta \xi C^2 + \zeta^2 D^2) \quad\text{and}\quad \quad 4V' =\zeta^{-1} (\zeta^2 D^2 - \zeta \xi C^2).$$
This is equivalent to
$$2W =\zeta^{-1}(-3^{-1} C'^2 +  D'^2) \quad\text{and}\quad \quad 4V' =\zeta^{-1} (D'^2 +3^{-1} C'^2),$$
where $C' = 3^{-1}C$ and $D'= \zeta D$. Therefore, we have
$$2 W -4 V' = -2 7^{-1} \zeta^{-1} C'^2 \quad\text{and}\quad \quad 2W +4V' = 2 \zeta^{-1} D'^2,$$
and it follows that
$$-27 (W-2V')(W+2V') = 9 \zeta^{-2} D'^2 C'^2 = \delta^2 U'^2.$$ 
By construction, we have
$$\delta U' = \pm 3 \zeta^{-1} D' C'.$$ 
As different values of $\zeta$ result in the same rational function $\phi$, up to multiplication of $C$ by $-1$, we have without loss of generality that $$ \delta U' =  3C'' D''  \quad\text{and}\quad \quad  4Q_1 V' = D''^2 + 3^{-1} C''^2,$$
where $C'', D'' \in \mathbb{F}_q[x]$. 

For the converse, suppose that there exist $C,D \in \mathbb{F}_q[x]$ with $C$ and $D$ relatively prime such that $4Q_1V'= D^2 +3^{-1} C^2$ and $\delta U' = 3CD$. Then we find that 
$$\cP =\frac{12 CD Q_1}{\delta(D^2 + 3^{-1} C^2)}$$
and
\begin{align*} \Gamma &= (a_1^2-4)\cP^2 +4\\
& = \frac{ 12^2C^2 D^2 Q_1^2 (P_1^2 -4 Q_1^2 )+ 4\delta^2 (D^2 +3^{-1}C^2)^2}{\delta^2 (D^2 +3^{-1} C^2)^2}\\
& = \frac{ 4\delta^2 (-4 \cdot 3^{-1} C^2 D^2 + (D^2 +3^{-1}C^2)^2)}{\delta^2 (D^2 +3^{-1} C^2)^2}\\
& = \frac{ 4  (D^2 -3^{-1}C^2)^2}{ (D^2 +3^{-1} C^2)^2}
, \end{align*}

%$$ (P_1(x)^2 -4Q_1(x)^2) U^2 + 4 V^2 &=-27^{-1} (-27(P_1(x)^2 -4Q_1(x)^2) U^2 -27\times  4 V^2 )\\ &= -27^{-1}(A^2 B^2 -\frac{27}{4}(\zeta B^2 - \xi A^2)^2) \\
%&= -27^{-1}(A^2 B^2 -\frac{27}{4}(\zeta B^2 - \xi A^2)^2) \\ 
%&= \frac{1}{4}(\zeta B^2 + \xi A^2)^2,\end{align*}
which is a square in $K$. Hence, $\Gamma=\gamma^2$ with $$\gamma =\frac{2  (D^2 -3^{-1}C^2)}{ D^2 +3^{-1} C^2} , \quad\quad\quad\text{and}\quad\quad\quad \cQ = \frac{-a_1 \cP \pm \gamma}{2}= \frac{-a_1 \cP \pm \frac{2  (D^2 -3^{-1}C^2)}{ D^2 +3^{-1} C^2}}{2}.$$ 
 %whence
% $$\cP = \frac{U}{V} = \frac{ 4A B}{\delta (\zeta B^2 - \xi A^2)}\quad\quad\quad\text{and}\quad\quad\quad \cQ = \frac{-a_1 \cP \pm \gamma}{2}= \frac{-a_1 \cP \pm 2\frac{\xi A^2 + \zeta B^2}{ \zeta B^2 - \xi A^2}}{2}.$$  {\color{blue} Note: There are still P's and Q's here}
 \item If $p=2$, then once more Eq. (2) holds. If $\cP =0$, then Eq. (2) yields $\cQ =\pm 1$. By Eq. (3), if $\cQ =1$, then $a_2=a_1$, whereas if $\cQ =-1$, then $a_2=-a_1$. If $\cP \neq 0$, then as $a_1 \neq 0$, division of Eq. (2) by $(a_1\cP )^2$ yields $$\left(\frac{\cQ}{a_1\cP}\right)^2+\left(\frac{\cQ}{a_1\cP}\right)+\frac{\cP^2-1}{(a_1\cP )^2}=0.$$ Thus, $\cQ /(a_1 \cP )$ is a solution of the Artin-Schreier equation $$X^2-X-(\cP^2-1)/(a_1\cP)^2=0,$$ as claimed.
\end{enumerate}
\end{proof} 
\begin{remarque}
\begin{enumerate} 
\item By Eq. (3), for the solution $\cQ = \frac{-a_1 \cP + \gamma}{2}$, we obtain furthermore that
\begin{align*} a_2&=(-2 + a_1^2) \cP^3 + 3 a_1 \cP^2 \cQ + 6 \cP \cQ^2 + a_1 \cQ^3\\
&=(-2 + a_1^2) \cP^3 + 3 a_1 \cP^2 (-a_1\cP +\gamma)/2+ 6 \cP ((-a_1\cP +\gamma)/2)^2 + a_1 ((-a_1\cP +\gamma )/2)^3\\
&=-1/8(a_1^2-4)^2\cP^3+3/8a_1(a_1^2-4)\cP^2\gamma-3/8(a_1^2-4)\cP\gamma^2+1/8a_1\gamma^3\\
&=-1/8(a_1^2-4)^2\cP^3+3/8a_1(a_1^2-4)\cP^2\gamma-3/8(a_1^2-4)\cP((a_1^2-4)\cP^2+4)\\
& \quad +1/8a_1\gamma((a_1^2-4)\cP^2+4)\\
&=-1/2(a_1^2-4)^2\cP^3+1/2a_1(a_1^2-4)\cP^2\gamma-3/2(a_1^2-4)\cP +1/2a_1\gamma \\
&= -1/2(\gamma^2-1)(a_1^2-4)\cP+1/2a_1\gamma(\gamma^2-3).
\end{align*}
For the solution $\cQ = \frac{-a_1 \cP - \gamma}{2}$, we obtain the analogous
\begin{align*} a_2&=(-2 + a_1^2) \cP^3 + 3 a_1 \cP^2 \cQ + 6 \cP \cQ^2 + a_1 \cQ^3\\
&=(-2 + a_1^2) \cP^3 + 3 a_1 \cP^2 (-a_1\cP-\gamma)/2+ 6 \cP ((-a_1\cP -\gamma)/2)^2 + a_1 ((-a_1\cP -\gamma )/2)^3\\
&=-1/8(a_1^2-4)^2\cP^3-3/8a_1(a_1^2-4)\cP^2\gamma-3/8(a_1^2-4)\cP \gamma^2-1/8a_1\gamma^3\\
&=-1/8(a_1^2-4)^2\cP^3-3/8a_1(a_1^2-4)\cP^2\gamma-3/8(a_1^2-4)\cP((a_1^2-4)\cP^2+4)\\
&\quad -1/8a_1\gamma((a_1^2-4)\cP^2+4)\\
&=-1/2(a_1^2-4)^2\cP^3-1/2a_1(a_1^2-4)\cP^2\gamma-3/2(a_1^2-4)\cP-1/2a_1\gamma \\
&= -1/2(\gamma^2-1)(a_1^2-4)\cP-1/2a_1\gamma(\gamma^2-3).
\end{align*}
\item In the language of Theorem 5.8, the base change matrix from $\{1,z_1,z_1^2\}$ to $\{1,z_2,z_2^2\}$ is given by $$\begin{pmatrix} 1 & 0&0 \\
-2\cP&\cQ &\cP\\	
2\cP \cQ a_1+4\cP^2& \cP^2 a_1 + 2\cP \cQ &- \cP^2+\cQ^2\\
\end{pmatrix},$$
and the inverse of this matrix is equal to $$\begin{pmatrix} 1 & 0&0 \\
\frac{-2\cP}{a_1\cP^3+3\cQ \cP^2-\cQ^3}&\frac{\cP^2-\cQ^2}{a_1\cP^3+3\cQ \cP^2-\cQ^3} & \frac{p}{a_1\cP^3+3\cQ \cP^2-\cQ^3}\\	
\frac{2\cP (4\cP \cQ +a_1(\cP^2+\cQ^2)}{a_1\cP^3+3\cQ \cP^2-\cQ^3}&\frac{\cP (a_1\cP +2\cQ )}{a_1\cP^3+3\cQ \cP^2-\cQ^3}&\frac{-\cQ}{a_1\cP^3+3\cQ \cP^2-\cQ^3}\\
\end{pmatrix}.$$ Therefore, $z_1$ may be expressed in terms of $z_2$ as $$z_1 = \frac{1}{a_1\cP^3+3\cQ \cP^2-\cQ^3}\left[-2\cP + (\cP^2-\cQ^2)z_2 + \cP z_2^2 \right].$$
\end{enumerate}
\end{remarque}

\bibliographystyle{plain}
\raggedright
\bibliography{references}

\begin{thebibliography}{10}

\bibitem{Con}
K.~Conrad.
\newblock Galois groups of cubics and quartics in all characteristics.
\newblock {\em Unpublished note}.

\bibitem{Dickson}
L.E. Dickson.
\newblock Criteria for the irreducibility of functions in a finite field.
\newblock {\em Bull. Amer. Math. Soc.}, pages 1--8, 1906.

\bibitem{LiNi}
R.~Lidl and H.~Niederreiter.
\newblock {\em Introduction to finite fields and their applications}.
\newblock Cambridge, 1986.

\bibitem{MadMad}
M.~Madan and D.~Madden.
\newblock The exponent of class groups in congruence function fields.
\newblock {\em Acta Arith.}, 32(2):183--205, 1977.

\bibitem{Scheidler}
R.~Scheidler.
\newblock Algorithmic aspects of cubic function fields.
\newblock {\em 6th International Symposium, ANTS-VI}, pages 395--410, 2004.

\bibitem{modernreference}
J.~Shurman.
\newblock {\em Geometry of the Quintic}.
\newblock Wiley, 1997.

\bibitem{Sti}
H.~Stichtenoth.
\newblock {\em Algebraic Function Fields and Codes}.
\newblock Springer, 2009.

\bibitem{Vil}
G.D. Villa-Salvador.
\newblock {\em Topics in the Theory of Algebraic Function Fields}.
\newblock Birkh\"{a}user, 2006.

\bibitem{Williams}
K.S. Williams.
\newblock Note on cubics over {GF}$(2^n)$ and {GF}$(3^n)$.
\newblock {\em J. Num. Th.}, 7:361--365, 1975.

\bibitem{Cardano}
T.~Richard Witmer.
\newblock {\em Ars Magna or the Rules of Algebra, by Girolamo Cardano}.
\newblock 1968.

\end{thebibliography}
\vspace{.5cm}

$$\begin{array}{ll}
\text{\emph{Sophie Marques}} & \text{\emph{Kenneth Ward}} \\

\text{\emph{Courant Institute of Mathematical Sciences}} & \text{\emph{Department of Mathematics \& Statistics}} \\

\text{\emph{New York University}}& \text{\emph{American University}}\\

\text{\emph{E-mail: marques@cims.nyu.edu}} & \text{\emph{E-mail: kward@american.edu}} \\

\end{array}$$

\end{document}